\titleformat*{\section}{\bfseries\fontsize{15}{20}\selectfont}
\titleformat*{\subsection}{\bfseries\fontsize{13}{16}\selectfont}
\theoremstyle{definition}
\newtheorem{theorem}{Theorem}[subsection]
\newtheorem{lemma}[theorem]{Lemma}
\newtheorem*{theorem*}{Theorem}
\newtheorem{prop}[theorem]{Proposition}
\newtheorem{definition}[theorem]{Definition}
\newtheorem{remark}[theorem]{Remark}
\newtheorem{thm}{Theorem}[section]
\newtheorem{lem}[thm]{Lemma}
\newtheorem{pro}[thm]{Proposition}
\newtheorem{defn}[thm]{Definition}
\newtheorem{rem}[thm]{Remark}
\newtheorem{cor}[thm]{Corollary}
\newtheorem{lemmat}{Lemma}
\begin{document}

\begin{titlepage}
   \begin{center}
       \vspace*{1cm}

       \huge\textbf{Aschbacher's Theorem for the General Linear Group}

       \vspace{0.5cm}

       \vspace{1.5cm}

       \normalsize\textbf{Kadeem Harrigan}

       \vfill

       This is a MSc thesis submitted at Imperial College London in\\
         September 2021, under the supervision of Professor Martin Liebeck.

       \vspace{0.8cm}

   \end{center}
\end{titlepage}

\clearpage

\pagenumbering{roman}
\begin{center}

\Large\textbf{Abstract}
    
\end{center} 

In 1984, Michael Aschbacher proved a seminal classification theorem for the maximal subgroups of effectively all of the classical groups. In this thesis we give a comprehensive, yet accessible description and proof of Aschbacher's theorem, restricting its scope to the general linear group. The main theorem of this paper classifies the maximal subgroups of the general linear group into nine different classes; eight of which have natural descriptions based on an object that their members act on and stabilise, whilst the ninth class - though not having such a natural description - contains groups that are bound by the property of having a unique normal quasisimple subgroup that acts absolutely irreducibly on the vector space. We give a detailed description of each of the first eight classes before proving that if a subgroup is not contained in a member of one of them, then it must have the properties that make up the ninth class. This paper uses techniques that cross over the fields of group theory, linear algebra and representation theory and it is approachable for anyone with an undergraduate understanding of these subjects.

\clearpage
\tableofcontents
\addtocontents{toc}{\vspace{1cm}}

\clearpage\section*{Preface}
In 1984, Michael Aschbacher proved a seminal result (\cite{aschbacher_maximal_1984}) which classifies the maximal subgroups of a large number of groups, many of which are finite simple, known collectively as the classical groups. We will give a precise definition of these classical groups in the first section of our paper, but roughly speaking, they are a collection of groups of linear maps (and their quotient groups) associated with six specific families; the linear groups \(GL(V)\), the symplectic groups \(Sp(V)\), the unitary groups \(GU(V)\) and three types of orthogonal groups \(O^{\circ}(V)\), \(O^{+}(V)\) and \(O^{-}(V)\). Aschbacher's theorem states that, given one of these classical groups, all of its subgroups are contained in a member of one of eight classes - known as Aschbacher classes - or in a member of an `anomaly' class. 

Each of the eight Aschbacher classes have a natural description based on an object that their members act on and stabilise; they are roughly described as follows.
\begin{enumerate}[label=(\roman*),ref=(\roman*)]

\item The \(\mathcal{C}_1\) class contains subgroups that stabilise a subspace of \(V\).
\item The \(\mathcal{C}_2\) class contains subgroups that stabilise a direct sum decomposition of \(V\).
\item The \(\mathcal{C}_3\) class contains subgroups that stabilise an extension field of the underlying field.
\item The \(\mathcal{C}_4\) class contains subgroups that stabilise a tensor product decomposition, with non-isometric factors, of \(V\).
\item The \(\mathcal{C}_5\) class contains subgroups that stabilise a subfield of the underlying field.
\item The \(\mathcal{C}_6\) class contains subgroups that normalise a symplectic type \(r\)-group (\(r\) a prime) embedded in the classical group.
\item The \(\mathcal{C}_7\) class contains subgroups that stabilise a tensor product decomposition, with isometric factors, of \(V\).
\item The \(\mathcal{C}_8\) class contains subgroups that stabilise a particular form on \(V\).
\end{enumerate}
The anomaly class is set apart from the rest in that it doesn't have such a natural description. The force of Aschbacher's theorem is that the eight Aschbacher classes are easily described and well understood and this anomaly class has very restrictive properties which are outlined more fully in points (i) and (ii) in the statement of the theorem below.

Owing to the breadth of its application to effectively all classical groups, Aschbacher's proof is loaded with complex notation that is surely appropriate for stating and proving results in such generality. Indeed, Aschbacher's ability to utilise and create notation and techniques that were adequate for the scope of this task emphasises his ingenuity. However, the notational complexity makes it quite difficult for the uninitiated reader to understand and appreciate the beauty of the theorem and proof. The aim of this current paper then, is to prove the theorem stated below, which is Aschbacher's theorem restricted to just one of the families of classical groups; \(GL(V)\). This restriction allows us to achieve the aim of providing a comprehensive and detailed introduction to Aschbacher's theorem and proof, whilst remaining highly accessible and notationally light. Indeed, our work should be approachable for anyone who has a undergraduate-level understanding of group theory, linear algebra and representation theory. The main theorem that we prove is as follows.

\begin{theorem*}(Aschbacher's Theorem for the General Linear Group)\\
Let \(F\) be a finite field and let \(V\) be a \(n\)-dimensional \(F\)-vector space, for some positive integer \(n\). If \(H\) is a subgroup of \(GL(V,F)\), not containing \(SL(V,F)\), then \(H\) is either contained in a member of one of the Aschbacher classes \(\mathcal{C}_1 - \mathcal{C}_8\) or the following hold.
\begin{enumerate}[label=(\roman*),ref=(\roman*)]
 \item \(H\) has a unique normal quasisimple subgroup \(L\).
 \item \(V\) is an absolutely irreducible \(FL\)-module that cannot be realised over any proper subfield of \(F\) and \(L\) does not fix any classical form on \(V\).
 \end{enumerate}
\end{theorem*}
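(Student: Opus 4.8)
The plan is to argue by elimination along the classical reduction scheme: assuming $H$ is contained in no member of $\mathcal{C}_1$--$\mathcal{C}_8$, I extract properties (i) and (ii) from the structure of the generalised Fitting subgroup $F^{\ast}(H)$ together with Clifford theory. Note first that $n\ge 2$, since for $n=1$ the group $SL(V,F)$ is trivial and hence lies in $H$; recall also the standard fact $C_H(F^{\ast}(H))=Z(F^{\ast}(H))\le F^{\ast}(H)$. The elementary reductions are these: if $V$ is $H$-reducible then $H\in\mathcal{C}_1$; if $V$ is $H$-imprimitive then $H\in\mathcal{C}_2$; if $V$ is irreducible but not absolutely irreducible then $H$ acts linearly over the field $\operatorname{End}_{FH}(V)\supsetneq F$, so $H$ lies in a $\Gamma L$ over an extension field, i.e. in $\mathcal{C}_3$; and if $V|_H$ is realisable over a proper subfield then $H\in\mathcal{C}_5$. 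So from now on $H$ is absolutely irreducible and primitive, $V$ is not realisable over a proper subfield, and (by absolute irreducibility, as $n\ge 2$) $H$ is non-abelian.

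The core is the structure of $F^{\ast}(H)$. Primitivity forces every normal subgroup of $H$ to act homogeneously on $V$; together with absolute irreducibility and Schur's lemma this makes every abelian normal subgroup of $H$ cyclic, so $O_p(H)=1$ and, for each prime $r\ne p$, every characteristic abelian subgroup of $O_r(H)$ is cyclic. If $F(H)$ is non-scalar, inspect a non-scalar $R=O_r(H)$ on its (homogeneous) restriction to $V$: reducing first to multiplicity one (a larger multiplicity would, via the double centraliser theorem and Skolem--Noether, place $H$ in $\mathcal{C}_3$ or $\mathcal{C}_4$), one finds that either $R$ generates a proper extension subfield of $\operatorname{End}_F(V)$ so that $H\in\mathcal{C}_3$, or $R$ is a non-cyclic symplectic-type $r$-group with $n$ a power of $r$ and $H\le N_{GL(V)}(R)\in\mathcal{C}_6$. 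Hence I may assume $F(H)=Z(H)$ is scalar, so $F^{\ast}(H)=Z(H)E(H)$ with $E(H)\ne 1$ --- for if $E(H)=1$ then $H=C_H(F^{\ast}(H))$ is abelian, forcing $n=1$. Now $V|_{E(H)}$ is homogeneous by primitivity, say $V|_{E(H)}\cong Y^{\oplus\ell}$ with $Y$ irreducible; if $\operatorname{End}_{FE(H)}(Y)\ne F$ then $H$ normalises the associated group of scalars and lies in $\mathcal{C}_3$, so $Y$ is absolutely irreducible, and if $\ell\ge 2$ then the double centraliser theorem and Skolem--Noether give $H\le GL(Y)\otimes GL_\ell(F)$ with $\dim Y\ge 2$, a member of $\mathcal{C}_4$. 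Thus $V$ is an absolutely irreducible module for $E(H)=L_1\circ\cdots\circ L_t$, hence factorises as $V\cong V_1\otimes\cdots\otimes V_t$ with $V_i$ absolutely irreducible for the component $L_i$; since $H$ permutes the $L_i$, grouping them into $H$-orbits places $H$ in $\mathcal{C}_7$ or $\mathcal{C}_4$. Therefore $t=1$: there is a unique component $L$, and $L\trianglelefteq H$ since $E(H)=L$. Any normal quasisimple subgroup of $H$ is subnormal and quasisimple, hence a component, hence equal to $L$, so $L$ is the unique normal quasisimple subgroup of $H$ --- this is (i) --- and by the above $V$ is an absolutely irreducible $FL$-module.

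It remains to establish the other two clauses of (ii). If $V|_L$ were realisable over a proper subfield $F_0$, say $V\cong V_0\otimes_{F_0}F$ for some $F_0L$-module $V_0$, then $V_0$ would be unique up to isomorphism (Hilbert's Theorem 90 / triviality of the relevant Galois cohomology), and since $L\trianglelefteq H$ a standard descent argument using this uniqueness would conjugate $H$ into $\Gamma L(V_0,F_0)$, giving $H\in\mathcal{C}_5$ --- a contradiction. Hence $V|_L$ is not realisable over any proper subfield. If $L$ fixed a non-degenerate classical form $\beta$ on $V$, then absolute irreducibility of $V|_L$ and Schur's lemma would make the space of $L$-invariant forms of the relevant type one-dimensional; $H$ normalises $L$, hence stabilises this line, so $H$ would lie in the group of similarities of $\beta$, a member of $\mathcal{C}_8$ --- again a contradiction. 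Hence $L$ fixes no classical form, and (ii) is proved.

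I expect the middle paragraph to carry the real difficulty. The delicate points are: showing that a non-scalar $O_r(H)$ genuinely forces $H$ into $\mathcal{C}_3$ or $\mathcal{C}_6$ --- including the $r=2$ subtleties in the definition of symplectic type and the careful removal of any multiplicity in $V|_{O_r(H)}$ (which must be certified to land in a genuine member of $\mathcal{C}_3$ or $\mathcal{C}_4$, not merely in the normaliser of some proper subgroup) --- and the passage from a central product of components to an honest tensor decomposition realised inside $GL(V)$ whose factors can be certified inequivalent (for $\mathcal{C}_4$) or permuted (for $\mathcal{C}_7$). Clifford-theoretic homogeneity bookkeeping recurs at nearly every step. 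By contrast, the first-paragraph embeddings into $\Gamma L$, the identification of the stabiliser of $V=Y\otimes F^{\ell}$ with a member of $\mathcal{C}_4$, and the Galois-descent argument behind $\mathcal{C}_5$ should be comparatively routine given the structure theory of the earlier sections.
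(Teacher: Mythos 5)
Your route is, at the large scale, the same as the paper's: reduce step by step, using Clifford theory at each stage, until the last remaining possibility is that $E(H)$ is a single quasisimple component acting absolutely irreducibly, over no proper subfield, fixing no form. The organising device differs: you reach for the generalised Fitting subgroup $F^{\ast}(H)$ and the Bender identity $C_H(F^{\ast}(H))=Z(F^{\ast}(H))$, whereas the paper avoids $F^{\ast}$ entirely and works instead with the collection $\mathcal{L}(H)=\{L\unlhd H\mid L\nleq F^{\times}\}$, proving homogeneity, absolute irreducibility and non-realisability for \emph{every} such $L$ (Lemmas 2--5), and then invoking minimality twice --- once for a minimal solvable $L$ (Lemma 6), once for a minimal $L$ with $L=E(L)=E(H)$ (Lemma 7). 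The payoff of the paper's ``minimal normal $L$'' formulation is concrete: it is exactly what supplies the ``no proper non-central characteristic subgroup'' hypothesis in Proposition 6.7, which is what forces the symplectic-type $r$-group to have minimal exponent and hence qualify for $\mathcal{C}_6$; and it also forces the components of $L$ to be $H$-conjugate in Lemma 7, so that the tensor factors have equal dimension and land automatically in $\mathcal{C}_7$ without an orbit bookkeeping step. Your $\mathcal{C}_5$ argument via uniqueness of the rational form and Galois descent (Hilbert 90) is a legitimate and somewhat heavier alternative to the paper's elementary Proposition 5.6, which proves $N_{GL(V)}(L)\le N_{GL(V)}(W)F^{\times}$ directly from Lemma 4.3.ii.

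The genuine gaps are concentrated exactly where you flagged them, and you have not closed them. (1) In the $F(H)$-nonscalar case you land on ``a non-cyclic symplectic-type $r$-group'' and assert $H\le N_{GL(V)}(R)\in\mathcal{C}_6$. But the paper's $\mathcal{C}_6$ is not the normaliser of an arbitrary symplectic-type $r$-group: it requires minimal exponent, excludes $D_8$ and $2^{1+2m}_{\pm}$ for $m>1$, and imposes $e$ odd and minimal with $p^e\equiv 1\pmod{|Z(R)|}$. Substituting $O_r(H)$ for the paper's \emph{minimal} solvable normal subgroup drops the hypothesis of 6.7 that yields minimal exponent, and your argument as written never shows why the excluded groups cannot occur. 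The paper resolves this by showing that precisely in those excluded cases $R$ fixes a unitary, symplectic or symmetric form (Propositions 6.14, 6.15), so $H$ is deflected to $\mathcal{C}_8$ instead; you need that deflection, not a bare appeal to $\mathcal{C}_6$. (2) In the $E(H)$-step, if the tensor factors have dimension $2$ the grouping into orbits does \emph{not} land in a member of $\mathcal{C}_7$ as defined (which demands factor dimension $m>2$); Remark 7.2.ii is where the paper routes that case to $\mathcal{C}_8$ via $SL_2\cong Sp_2$, and your proposal needs the same escape hatch. (3) In the final clause, your ``one-dimensional space of invariant forms'' argument is correct for $n\ge 3$, but the theorem as stated excises two $2$-dimensional form types, and for $n=2$ the paper deliberately redirects to $\mathcal{C}_5$, $\mathcal{C}_2$ or $\mathcal{C}_3$ rather than $\mathcal{C}_8$ (Lemma 8, citing 8.1 and 8.3.i); your third paragraph says nothing about that boundary case.
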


Some terminology in this definition may be unfamiliar to the reader. We give some brief descriptions here, whilst saving precise definitions for the succeeding sections.
\begin{itemize}
  \item[] A group \(G\) is \textit{quasisimple} if it is perfect and \(G/Z(G)\) is simple. See \hyperlink{quasi}{(0.2.19)}.
  
  \item[] An irreducible \(FG\)-module is said to be \textit{absolutely irreducible} if it remains irreducible when we extend the scalars of the field to encompass a larger field. See \hyperlink{0.3.9}{(0.3.9)} and \hyperlink{0.3.10}{(0.3.10)}.
  
  \item[]A \(FG\)-module \(V\) is said to be \textit{realised} over a subfield \(k\subset F\), if there is a basis of \(V\) such that the \(k\)-span of that basis is stabilised by the \(G\)-action. See \hyperlink{5.4}{(5.4)}.
  \item[]The \textit{classical forms} are a specific collection of sesquilinear and quadratic forms that give rise to the classical groups. A \(FG\)-module is said to \textit{fix} a classical form if that form is \(G\)-invariant. See \hyperlink{0.1.8}{(0.1.8)} and \hyperlink{0.1.18}{(0.1.18)}.
  
\end{itemize}
The reader may also note that we have excluded the overgroups of \(SL(V,F)\) from the classification. This is due to the fact that \(GL(V,F)/SL(V,F)\cong F^{\times}\) and so such overgroups correspond to subgroups of \(F^{\times}\). 

We consider this a somewhat trivial case as such a group is, modulo \(SL(V,F)\), a group of scalars. Thus, in effect, all we are doing is setting aside the well-known \(SL(V,F)\). 

\noindent Having now the general gist of what we are setting out to prove, the specifics are unfolded in the rest of our paper as follows.

We begin with a section of preliminary results where we recall and introduce some fundamental propositions, lemmas and definitions from the three main disciplines of algebra that this theorem crosses: linear algebra, group theory and modules and representation theory. In this section we will give a formal definition of the classical forms, the classical groups, quasisimple groups and absolute irreducibility.

After this, in sections one to eight, we will give full and formal descriptions of the Aschbacher classes one to eight. In each section we define the class, explain its group structure and prove any related results necessary for the proof of the main theorem. We will also comment on any divergence between our definition of that class and Aschbacher's original definition - we do this to give assistance to the reader who wishes to use our paper as a stepping stone to understanding the fullness of Aschbacher's 1984 masterpiece. On this point, it should be noted that Aschbacher's original definitions of the eight classes were not intended to avoid overlap between one another. We however, have followed the convention of \cite{kleidman_subgroup_1990} in erasing such overlap between classes and in each section we discuss when and how we have done this.

Our final section, nine, is where we prove the main theorem. The proof is broken down into eight individual lemmas, each lemma corresponds to one of the Aschbacher classes and makes use of the results stated in the section corresponding to that class. Therefore, after the section of preliminary results (which we have accordingly titled Section \(0\)) our paper has the following correspondence between sections.
\[ \text{Aschbacher class } \mathcal{C}_i \xleftrightarrow{} \text{Section \(i\)} \xleftrightarrow{} \text{ Lemma \(i\) in the main proof}\]

Throughout this paper, the groups and fields are always finite and the vector spaces are always finite dimensional. Unless otherwise stated, \(F\) is a field of size \(q=p^e\), for some prime \(p\) and positive integer \(e\) and \(V\) is a \(n\)-dimensional \(F\)-vector space, for some positive integer \(n\). The group \(GL(V,F)\) will often be abbreviated \(GL(V)\) and often used interchangeably with the matrix group \(GL_n(q)\) where appropriate and useful. We will also regularly make use of the convention of denoting the group of scalars inside \(GL(V,F)\) by \(F^{\times}\). 

Besides \hyperlink{4.3}{(4.3)}, the proofs given in this paper are an original presentation, although many are inspired by and adapted from the references in the bibliography - particularly \cite{aschbacher_maximal_1984} and \cite{kleidman_subgroup_1990}.

\clearpage

\newpagestyle{main}{%
  \sethead[\thepage][][\chaptername\ \thechapter. \chaptertitle]{\thesection\ \sectiontitle}{}{\thepage}
  \headrule
}
\pagestyle{main}

\section{Preliminary Results}
\pagenumbering{arabic}
\setcounter{page}{1}

In this section, we cover the preliminary results and definitions that form the groundwork for the rest the paper. Proofs of lemmas and propositions are omitted, but references are provided.

\subsection{Linear Algebra}

Here we discuss the notion of forms on vector spaces,  with a view towards defining the \textit{classical forms}. 

\begin{definition}

A map \(f: V\times V\longrightarrow F\) is called a \textit{sesquilinear form} if there exists \(\theta\in Aut(F)\) such that for all \(v,w,z\in V\) and \(\lambda \in F\), the following hold.

\begin{enumerate}[label=(\roman*),ref=(\roman*)]
\item \(f(v+w,z)=f(v,z)+f(w,z)\).
\item \(f(v,w+z)=f(v,w)+f(v,z)\).
\item \(f(\lambda v,w)=\lambda f(v,w)\).
\item \(f(v,\lambda w)= \lambda^{\theta} f(v,w)\).
\end{enumerate}

We call \(f\) a \textit{bilinear form} if the above holds and \(\theta\) is the identity map. 
\end{definition}

\begin{definition}
A map \(Q:V\longrightarrow F\) is called a \textit{quadratic form} if for all \(v\in V\) and \(\lambda \in F\) the following hold.

\begin{enumerate}[label=(\roman*),ref=(\roman*)]
\item \(Q(\lambda v)=\lambda^2Q(v)\)
\item The map \(f_Q(v,w):=Q(v+w)-Q(v)-Q(w)\) is a bilinear form
\end{enumerate}
The map \(f_Q\) is referred to as the \textit{associated bilinear form} of \(Q\).
\end{definition}

Given a collection of these forms, we may define another such form on a tensor product space as follows.

\begin{definition}
\hypertarget{0.1.3}{Let} \(f_1\) and \(f_2\) be sesquilinear forms on \(F\)-vector spaces \(V_1\) and \(V_2\), both of which being associated with the same \(\theta \in Aut(F)\). If \(\{v_1,...\,,v_m\}\) and \(\{w_1,...\,,w_k\}\) are bases for \(V_1\) and \(V_2\), then we can define a sesquilinear form \(f_1\otimes f_2\) on \(V_1\otimes V_2\) by: \((f_1\otimes f_2)(v_i\otimes w_j, v_{i'}\otimes w_{j'})=f_1(v_i,v_{i'})f_2(w_j,w_{j'})\), which is extended to the whole of \(V_1\otimes V_2\) by \(F\)-linearity.
\end{definition}

\begin{remark}
This construction can be extended to any number of sesquilinear forms (and tensor product factors) and the same definition can be made for a collection of quadratic forms.

\end{remark}

The following definitions give additional description for sesquilinear and quadratic forms.

\begin{definition}
A sesquilinear form \(f\) is called \textit{non-degenerate} if there are no non-zero vectors \(v\in V\) such that \(f(v,w)=f(w,v)=0\), for all \(w\in V\). A quadratic form \(Q\) is said to be \textit{non-degenerate} if its associated bilinear form is. 
\end{definition}

\begin{definition}
We say that a sesquilinear form \(f\) is:

\begin{enumerate}[label=(\roman*),ref=(\roman*)]
\item \textit{symmetric} if it is bilinear and \(f(v,w)=f(w,v)\), for all \(v,w\in V\).
\item \textit{skew-symmetric} if it is bilinear and \(f(v,w)=-f(w,v)\), for all \(v,w\in V\).
\item \textit{conjugate-symmetric} if the order of the field is a square and for all \(v,w\in V\), we have \(f(v,w)=f(w,v)^{\theta}\), where \(\theta\) is a field automorphism of order two.
\item \textit{alternating} if it is bilinear and \(f(v,v)=0\), for all \(v\in V\).
\end{enumerate}

\end{definition}

\begin{remark}

\hypertarget{0.1.7}{These} four definitions are not mutually exclusive. Indeed, every alternating form is skew-symmetric, since if \(f\) is alternating, then for all \(v,w\in V\):
\[f(v,w)+f(w,v)=f(v,v)+f(v,w)+f(w,v)+f(w,w)=f(v+w,v+w)=0\]
We have further overlap of these definitions that is dependent on the characteristic of \(F\). If \(F=\mathbb{F}_q\) and \(q\) is even, then a skew-symmetric form is evidently the same as a symmetric form. If \(q\) is odd, then a skew-symmetric form is the same as an alternating form, since \(f(v,v)=-f(v,v)\) must imply that \(f(v,v)=0\).  

The characteristic of \(F\) also impacts the associated bilinear form of a quadratic form \(Q\). Indeed, we have seen that \(Q\) induces \(f_Q\). If in addition, \(q\) is odd, then \(f_Q\) also induces \(Q\) by the formula \(Q(v)=\tfrac{1}{2}f_Q(v,v)\). On the other hand, if \(q\) is even we cannot, in general, recover a quadratic form from a bilinear form. 

By definition, an associated bilinear form is always symmetric. However, if we restrict to the case where \(q\) is even, we can also show that the associated form must be alternating, since:
\[f_Q(v,v)=Q(2v)-Q(v)-Q(v)=4Q(v)-2Q(v)=0\]
\end{remark}

We are now ready define the classical forms.

\begin{definition}
Let \(V\) be a \(n\)-dimensional \(F\)-vector space. \hypertarget{0.1.8}{A} sesquilinear or quadratic form on \(V\) will be referred to as a \textit{classical form} if it is one of the following.

\begin{enumerate}[label=(\roman*),ref=(\roman*)]
\item The \textit{zero form}; \(f:V\times V\longrightarrow F;\,(v,w)\mapsto 0\), for all \(v,w\in V\).
\item A non-degenerate alternating bilinear form which we refer to as a \textit{symplectic form}.
\item A non-degenerate conjugate-symmetric sesquilinear form, which we refer to as a \textit{unitary form}.
\item A non-degenerate symmetric bilinear form over a field of odd characteristic.
\item A non-degenerate quadratic form, which we refer to as an \textit{orthogonal form}.
\end{enumerate}

\end{definition}

\noindent Next, we define two different notions of equivalences between forms.

\begin{definition}
Let \(f\) and \(f'\) be two sesquilinear forms on \(V\) and \(g\in GL(V)\).
\begin{enumerate}[label=(\roman*),ref=(\roman*)]
\item We say that \(g\) is an \textit{isometry} between \(f\) and \(f'\) if \(f(g(v),g(w))=f'(v,w)\), for all \(v,w\in V\).
\item We say that \(g\) is a \textit{similarity} between \(f\) and \(f'\) if there exists a \(\lambda_g\in F\) such that \(f(g(v),g(w))=\lambda_gf'(v,w)\), for all \(v,w\in V\).
\end{enumerate}
If there exists such an isometry or similarity, we say that \(f\) and \(f'\) are \textit{isometric} or \textit{similar}. 
\end{definition}

\begin{remark}
The same definitions can be ascribed to a quadratic form \(Q\), where the isometry or similarity is just acting on the single argument of the form.
\end{remark}

\begin{definition}
If \(f=f'\) (or \(Q=Q'\)) in the definition above, then we say that \(g\) is an \textit{isometry/similarity of the form \(f\) (or \(Q\))}. The set of all isometries of \(f\) (or \(Q\)) is denoted \(I(V,f)\) (or \(I(V,Q)\)) and the set of all similarities of \(f\) is denoted \(\Delta(V,f)\) (or \(\Delta(V,Q)\)).
\end{definition}

\begin{remark}
\hypertarget{0.1.12}{Let} \(Q\) be an orthogonal form. In view of our discussion in \hyperlink{0.1.7}{(0.1.7)}, if \(q\) is even, then \(I(V,Q)\subset I(V,f_Q)\), where \(f_Q\) is a symplectic form. If \(q\) is odd, then \(I(V,Q)=I(V,f_Q)\).
\end{remark}

We now state an important classification result for classical forms; which tells us the conditions satisfied by \(V\) and \(F\) for a particular form to exist, and when such forms do exist, how many there are up to isometry or similarity. The result is proved by showing that there are very restricted conditions on the basis of a vector space with a given form attached to it. Thus, the classification also provides information about these bases - we include this information in the following statement only to the degree that it is relevant for our discussions. We commend the reader to (\cite{kleidman_subgroup_1990}, p.22-28) for a more detailed treatment of this classification, including a proof.

\begin{lemma}
\hypertarget{0.1.13}{We} have the following classification of classical forms.
\begin{enumerate}[label=(\roman*),ref=(\roman*)]
\item A symplectic form exists on \(V\)  if and only if \(n\) is even. Moreover, this form is unique up to isometry. 

\item A unitary form exists on \(V\) if and only if \(F=\mathbb{F}_{q}\), where \(q\) is a square. Moreover, this form is unique up to isometry and it admits an orthonormal basis of \(V\).

\item \hypertarget{class}{If} \(n=2m+1\) is odd, there exists a unique orthogonal form up to similarity. Forms in this similarity class are referred to as orthogonal forms of \textit{\(\circ\)-type}.

\item If \(n=2m\) is even, there are two orthogonal forms (up to isometry) that can be distinguished by the dimension of the maximal subspace of \(V\) on which the form is uniformly zero. If this maximal subspace is of dimension \(m\), we say that the orthogonal form is of \textit{plus-type}. If the subspace has dimension \(m-1\), we say that the orthogonal form is of \textit{minus-type}.
A plus-type orthogonal form \(Q\) admits a basis \(\{x_1,...\,,x_m,y_1,...\,,y_m\}\), such that \(Q(x_i)=Q(y_j)=0\) and \(f_Q(x_i,y_j)=\delta_{ij}\), for all \(1\leq i,j \leq m\). A minus-type orthogonal form admits an orthonormal basis when \(q\equiv 3\:(mod\:4)\) and \(m\) is odd. 

\end{enumerate}
\end{lemma}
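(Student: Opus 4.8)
The plan is to work throughout with the Gram matrix of a form relative to a basis and to reduce it, by a sequence of basis changes (an orthogonalisation process), to a block-diagonal normal form built from one- or two-dimensional pieces; the admissible normal forms are then separated by a short list of numerical invariants — the parity of \(n\), a discriminant in \(F^{\times}/(F^{\times})^{2}\), and the dimension of a maximal totally isotropic (or singular) subspace, i.e.\ the Witt index — which yields both the existence conditions and the count up to isometry or similarity. Two facts special to finite fields make everything go through: the norm map \(\mathbb{F}_{q}^{\times}\to\mathbb{F}_{q_{0}}^{\times}\) of a degree-two extension is surjective, and every non-degenerate quadratic form in at least three variables over a finite field is isotropic (equivalently, every element of \(\mathbb{F}_{q}\) is a sum of two squares). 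The uniqueness statements will rest on Witt's extension theorem: an isometry between non-degenerate subspaces extends to an isometry of all of \(V\), so any two bases in the same normal form are interchanged by an isometry.

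\textbf{Symplectic and unitary cases.} For a non-degenerate alternating bilinear form the Gram matrix is skew-symmetric with zero diagonal; a symplectic Gram--Schmidt (pick \(e_{1}\), use non-degeneracy to find \(f_{1}\) with \(f(e_{1},f_{1})=1\), split off the hyperbolic plane \(\langle e_{1},f_{1}\rangle\), recurse on its perp) exhibits \(V\) as an orthogonal sum of hyperbolic planes, forcing \(n=2m\) and producing a symplectic basis \(e_{1},f_{1},\dots,e_{m},f_{m}\); any two symplectic bases differ by an element of \(GL(V)\), so the form is unique up to isometry. For a conjugate-symmetric form, the presence of a field automorphism \(\theta\) of order two forces \(Aut(F)\) to contain an element of order two, i.e.\ \(q\) a square, say \(q=q_{0}^{2}\); one diagonalises the Hermitian form (a Hermitian Gram--Schmidt, using that \(f(v,v)=f(v,v)^{\theta}\) lies in \(\mathbb{F}_{q_{0}}\) and is non-zero for some \(v\) whenever \(f\neq0\)), then rescales \(v_{i}\mapsto\lambda_{i}v_{i}\) with \(\lambda_{i}\lambda_{i}^{\theta}=f(v_{i},v_{i})^{-1}\), possible by surjectivity of the norm; this produces an orthonormal basis, and uniqueness is immediate since any two orthonormal bases are interchanged by an isometry.

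\textbf{Orthogonal case.} I would split by the characteristic. When \(q\) is odd, a non-degenerate quadratic form corresponds to a non-degenerate symmetric bilinear form via \(Q(v)=\tfrac12 f_{Q}(v,v)\); diagonalising gives \(V=\langle v_{1}\rangle\perp\cdots\perp\langle v_{n}\rangle\) with \(Q(v_{i})=a_{i}\neq0\), and the discriminant \(\prod a_{i}\bmod(F^{\times})^{2}\) is an isometry invariant. The key finite-field lemma — two diagonal forms of the same dimension and discriminant are isometric — follows by induction using isotropy of ternary forms to produce a common \(\langle 1\rangle\) summand and then Witt cancellation; hence there are at most two isometry classes in each dimension, one per class in \(F^{\times}/(F^{\times})^{2}\cong\mathbb{Z}/2\). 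Peeling off hyperbolic planes (a non-degenerate form of dimension \(\geq3\) is isotropic, and an isotropic vector sits in a hyperbolic plane) reduces \(V\) to an anisotropic remainder of dimension \(0\), \(1\) or \(2\). If \(n=2m+1\), the remainder is one-dimensional and scaling \(Q\mapsto\lambda Q\) multiplies the discriminant by \(\lambda^{n}\), which runs over all of \(F^{\times}/(F^{\times})^{2}\) since \(n\) is odd, so the two isometry classes coalesce into a single similarity class. If \(n=2m\), the remainder is the zero space (plus type: a hyperbolic basis \(x_{1},\dots,x_{m},y_{1},\dots,y_{m}\) is read off the decomposition, with maximal totally singular dimension \(m\)) or a two-dimensional anisotropic plane (minus type, maximal totally singular dimension \(m-1\)); since \(\lambda^{2m}\) is always a square these two classes stay distinct even up to similarity, and the Witt-index difference shows they are genuinely inequivalent. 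The orthonormal-basis claim for the minus type is then a discriminant computation: the anisotropic plane is isometric to \(\langle v_{1}\rangle\perp\langle v_{2}\rangle\) with \(Q(v_{1})=Q(v_{2})=1\) exactly when the total discriminant \((-1)^{m-1}d_{\mathrm{aniso}}\) is a square, which unwinds to \(q\equiv3\pmod4\) with \(m\) odd. The characteristic-two case is handled in the same spirit, but one works with \(Q\) intrinsically: hyperbolic planes and two-dimensional anisotropic planes (on which \(Q\) has irreducible norm form) are the building blocks, the associated bilinear form is alternating and carries no discriminant, and plus/minus type is again the parity of the number of hyperbolic planes, read off from a maximal totally singular subspace.

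\textbf{Main obstacle.} The difficulty is concentrated in the orthogonal analysis: getting the characteristic-two case right (quadratic forms are not determined by their polarisations there, so the anisotropic two-dimensional germ must be treated directly via its norm form) and keeping the discriminant and Witt-index bookkeeping honest across the odd-dimensional similarity reduction and the two even-dimensional isometry classes, including the proof of the finite-field lemma on diagonal forms. Everything else — the symplectic and unitary cases — is routine once Witt's extension theorem and the surjectivity of the norm map are in hand.
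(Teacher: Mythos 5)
The paper does not prove this lemma: Section~0 states up front that ``[p]roofs of lemmas and propositions are omitted, but references are provided,'' and for this particular result it remarks only that the classification ``is proved by showing that there are very restricted conditions on the basis of a vector space with a given form attached to it'' and then defers entirely to Kleidman--Liebeck, pp.~22--28. So there is no in-paper argument to compare your proposal against; the meaningful comparison is with the standard treatment found there, and your sketch follows essentially the same line: reduce the Gram matrix to an orthogonal sum of hyperbolic planes and a small anisotropic germ, invoke Witt extension/cancellation for uniqueness, use surjectivity of the norm map for the Hermitian rescaling, and separate the orthogonal isometry classes by discriminant and Witt index. The structure, the choice of invariants, and the finite-field inputs (norm surjectivity, isotropy in dimension \(\ge 3\)) all agree with the cited source, so your proposal is a correct outline of the intended proof rather than an alternative route.

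One small imprecision worth fixing in the minus-type orthonormal-basis discussion: you phrase the criterion as the anisotropic two-dimensional germ being isometric to \(\langle 1\rangle \perp \langle 1\rangle\), but the condition you actually want is that the \emph{whole} \(2m\)-dimensional form have trivial discriminant, so that by the same-dimension-same-discriminant lemma it is isometric to \(\langle 1\rangle^{\perp 2m}\). Since a hyperbolic plane has discriminant \(-1\) and the anisotropic plane has discriminant \(-\nu\) for a non-square \(\nu\), the minus-type form in dimension \(2m\) has discriminant \((-1)^{m-1}\cdot(-\nu)=(-1)^{m}\nu\), which is a square precisely when \((-1)^{m}\) is a non-square, i.e.\ \(m\) odd and \(q\equiv 3\pmod 4\). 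The answer you reach is right; the intermediate statement about the germ alone is not the criterion, and when \(q\equiv 3\pmod 4\) the form \(x^{2}+y^{2}\) \emph{is} the anisotropic germ, so the condition there is \(d_{\mathrm{aniso}}=1\), not \((-1)^{m-1}d_{\mathrm{aniso}}=1\). Beyond this bookkeeping slip, and the acknowledged need to handle characteristic two with \(Q\) rather than the polarised form, the proposal is sound.
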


\begin{remark} In part (iii), the equivalence condition is up to similarity (as opposed to isometry as in the other cases). However, we shall see shortly that the set of isometries of these forms have the structure of a group, and it is these isometry groups that we are chiefly concerned with. With this in mind, we note that it follows from the definition that similar forms have isomorphic isometry groups. 

\end{remark}

We conclude our discussion on classical forms with an alternative way of understanding isometries of forms, using matrices. The following definition shows how we may associate particular matrices to a given form.

\begin{definition}
If \(\mathcal{B}=\{v_1,...\,,v_n\}\) is a basis for \(V\) and \(f\) is a sesquilinear form on \(V\), we define the \textit{matrix of f with respect to \(\mathcal{B}\)} as \(B=(a_{ij})\), where \(a_{ij}:=f(v_i,v_j)\).
\end{definition}

Next we define an action of a field automorphism on a matrix over that field.

\begin{definition}
Let \(F=\mathbb{F}_q\), \(A=(a_{ij})\in GL_n(q)\) and \(\theta\in Aut(F)\). We define \(A^{\theta}\) to be the matrix \((b_{ij})\in GL_n(q)\), where \(b_{ij}=\theta(a_{ij})\).
\end{definition}

Recall the following result about field automorphisms. 
 
\begin{prop}
\hypertarget{0.1.17}{If} \(F=\mathbb{F}_{p^e}\), then \(Aut(F)=\{x\mapsto x^{p^j}\,|\,0\leq j \leq e-1\}\cong C_e\).
\end{prop}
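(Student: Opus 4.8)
The plan is to exhibit the Frobenius endomorphism $\phi\colon x\mapsto x^p$ as an automorphism of $F$, verify that the cyclic group it generates has order exactly $e$, and then bound the total number of automorphisms from above by $e$ to conclude that nothing else occurs.

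First I would check that $\phi$ is a field homomorphism. Multiplicativity is immediate, and additivity $(x+y)^p=x^p+y^p$ follows from the binomial theorem together with the fact that $p\mid\binom{p}{k}$ for $0<k<p$ in characteristic $p$. Since any nonzero ring homomorphism out of a field is injective (its kernel is a proper ideal, hence zero) and $F$ is finite, $\phi$ is also surjective, so $\phi\in\mathrm{Aut}(F)$; consequently each power $\phi^j\colon x\mapsto x^{p^j}$ lies in $\mathrm{Aut}(F)$.

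Next I would pin down the order of $\phi$. Since $F^{\times}$ has order $p^e-1$, every $x\in F$ satisfies $x^{p^e}=x$, so $\phi^e=\mathrm{id}$; and for $0<j<e$ the identity $\phi^j=\mathrm{id}$ would force every element of $F$ to be a root of $X^{p^j}-X$, a polynomial with at most $p^j<p^e=|F|$ roots — a contradiction. Hence $\langle\phi\rangle\cong C_e$ is a subgroup of $\mathrm{Aut}(F)$, and it remains only to show $|\mathrm{Aut}(F)|\le e$.

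For the upper bound I would use that $F^{\times}$ is cyclic: fixing a generator $\alpha$ gives $F=\mathbb{F}_p(\alpha)$, and $[\,F:\mathbb{F}_p\,]=e$ means the minimal polynomial $m_\alpha$ of $\alpha$ over $\mathbb{F}_p$ has degree $e$. Any $\sigma\in\mathrm{Aut}(F)$ fixes $0$ and $1$ and is additive, so it fixes the prime subfield $\mathbb{F}_p$ pointwise; thus $\sigma$ is completely determined by $\sigma(\alpha)$, and applying $\sigma$ to $m_\alpha(\alpha)=0$ shows that $\sigma(\alpha)$ is again a root of $m_\alpha$. There are at most $\deg m_\alpha=e$ such roots, so $|\mathrm{Aut}(F)|\le e$, and combined with the previous step $\mathrm{Aut}(F)=\langle\phi\rangle\cong C_e$. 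The only real subtlety is this last step — ruling out ``extra'' automorphisms — which rests on the facts that automorphisms fix the prime field and that $F$ is a simple algebraic extension of $\mathbb{F}_p$ of degree $e$; everything else is routine.
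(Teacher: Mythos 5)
Your proof is correct. The paper states this result in the preliminaries section, where it explicitly omits proofs, so there is no in-paper proof to compare against; your argument is the standard one. You exhibit the Frobenius automorphism $\phi\colon x\mapsto x^{p}$, verify it has order exactly $e$ by a root-counting argument on $X^{p^{j}}-X$, and then bound $|\mathrm{Aut}(F)|$ above by $e$ using the fact that $F=\mathbb{F}_{p}(\alpha)$ for a generator $\alpha$ of the cyclic group $F^{\times}$, so that any automorphism fixes $\mathbb{F}_{p}$ pointwise and is determined by sending $\alpha$ to one of the at most $e$ roots of its minimal polynomial. Each step is sound; the only facts invoked without proof --- additivity of Frobenius via the binomial theorem in characteristic $p$, cyclicity of $F^{\times}$, and that an automorphism of $F$ fixes the prime subfield --- are all entirely standard and appropriate to leave as cited background at this level.
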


From this result we can deduce that if \(F=\mathbb{F}_q\), then there is an order-two automorphism if and only if \(q\) is a square. Furthermore, when it does exist, it is the unique automorphism of its order. This fact allows us to state the next proposition, which follows almost immediately from the definitions of isometries and matrices of forms. See (\cite{bray_maximal_2013}, p.17) for a proof.

\begin{prop}
\hypertarget{0.1.18}{Let} \(V\) be a \(\mathbb{F}_q\)-vector space with classical form \(f\). If \(\mathcal{B}\) is a basis for \(V\) and \(B\) is the matrix of \(f\) with respect to \(\mathcal{B}\), then the following hold.
\begin{enumerate}[label=(\roman*),ref=(\roman*)]

\item \(I(V,f)\cong \{A\in GL_n(q)\,|\, ABA^t=B\}\), when \(f\) is a symplectic or non-degenerate symmetric bilinear form or \(f\) is the zero form.

\item \(I(V,f)\cong \{A\in GL_n(q)\,|\, ABA^{t\theta}=B\}\), when \(f\) is a unitary form and \(\theta\) is the field automorphism of order two.

\item  If \(Q\) is an orthogonal form and \(f=f_Q\) is the associated symmetric bilinear form, then \(I(V,Q)\cong \{g\in I(V,f_Q)\,|\, Q(g(v))=Q(v) \text{, for all } v\in V\}\).
\end{enumerate}
\end{prop}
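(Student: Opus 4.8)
The plan is to fix the basis \(\mathcal{B}=\{v_1,\dots,v_n\}\) once and for all, identify \(V\) with \(F^n\) by taking coordinates in \(\mathcal{B}\), and identify \(GL(V)\) with \(GL_n(q)\) through the map sending a linear map \(g\) to its matrix \([g]\) with respect to \(\mathcal{B}\); with the standard conventions — to be pinned down consistently, as in \cite{bray_maximal_2013}, so that this map is a group homomorphism and so that the coordinate formula for \(f\) below takes the shape \(f(v,w)=xBy^t\) — this is a group isomorphism. Under this identification all three parts reduce to the single assertion that \(g\) is an isometry of \(f\) (resp. of \(Q\)) exactly when \([g]\) satisfies the displayed matrix relation. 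I would also note in passing that each matrix set appearing in (i) and (ii) is closed under multiplication, e.g. \((AA')B(AA')^t=A(A'BA'^t)A^t=ABA^t=B\), hence — the ambient group being finite — is a subgroup of \(GL_n(q)\), so that the restriction of \(g\mapsto[g]\) to \(I(V,f)\) is genuinely an isomorphism onto it.

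For parts (i) and (ii) I would push the defining condition \(f(g(v),g(w))=f(v,w)\) through coordinates. Writing \(v=\sum_i x_iv_i\) and \(w=\sum_j y_jv_j\) with coordinate row vectors \(x,y\), bilinearity (resp. sesquilinearity) together with \(B_{ij}=f(v_i,v_j)\) gives \(f(v,w)=xBy^t\) in the bilinear case and \(f(v,w)=xBy^{t\theta}\) in the unitary case, where \(\theta\) is the order-two automorphism, which exists and is unique by \hyperlink{0.1.17}{(0.1.17)} precisely because \(q\) is a square. Applying \(g\) replaces \(x\) by \(x[g]\) and \(y\) by \(y[g]\) (in case (ii) one uses that \(\theta\) is a ring automorphism, so \((y[g])^{\theta}=y^{\theta}[g]^{\theta}\)), turning \(f(g(v),g(w))\) into \(x\big([g]B[g]^t\big)y^t\), respectively \(x\big([g]B[g]^{t\theta}\big)y^{t\theta}\). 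Since \(x\) and \(y\) range over all of \(F^n\), this agrees with \(f(v,w)\) for every \(v,w\) if and only if \([g]B[g]^t=B\), respectively \([g]B[g]^{t\theta}=B\), which is the required membership condition.

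Part (iii) is essentially a matter of unwinding definitions in light of \hyperlink{0.1.12}{(0.1.12)}. By definition \(I(V,Q)\) consists of the \(g\in GL(V)\) with \(Q(g(v))=Q(v)\) for all \(v\in V\); any such \(g\) automatically lies in \(I(V,f_Q)\), since
\[
f_Q(g(v),g(w))=Q(g(v)+g(w))-Q(g(v))-Q(g(w))=Q(g(v+w))-Q(g(v))-Q(g(w))=f_Q(v,w).
\]
Thus \(I(V,Q)\subseteq I(V,f_Q)\), and intersecting the defining condition with \(I(V,f_Q)\) leaves it unchanged, giving exactly the stated description.

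I do not expect a genuine obstacle here — the reference itself calls this almost immediate — but the one place where a careless write-up could go wrong is the bookkeeping in the first paragraph: one must ensure that \(g\mapsto[g]\) is a homomorphism and not an anti-homomorphism (this is what dictates whether vectors are taken as rows or columns, and whether one records \([g]\) or \([g]^t\)), and in case (ii) that the field automorphism \(\theta\) commutes with matrix multiplication. Once those conventions are fixed to match the form of the statement, each of the three verifications is a one-line computation.
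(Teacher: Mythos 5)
Your proof is correct, but note the paper does not supply its own argument for this proposition: it states that the result ``follows almost immediately from the definitions'' and refers the reader to (\cite{bray_maximal_2013}, p.17). Your write-up is exactly the standard coordinate computation one finds there: passing \(f(g(v),g(w))=f(v,w)\) through the row-vector formula \(f(v,w)=xBy^t\) (resp.\ \(xBy^{t\theta}\)) and reading off the matrix relation, with part (iii) reduced to the observation \(I(V,Q)\subseteq I(V,f_Q)\); the convention caveats you raise at the end are precisely the ones that need care, and you handle them consistently.
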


A subgroup of \(GL_n(q)\) is said to \textit{fix} a classical form \(f\) or \(Q\) if it is contained in the set corresponding to that form in the lemma above. Similar can be said of a subgroup of \(GL(V)\) if, given an arbitrary basis of \(V\), the above definition holds for the corresponding matrix group.

\begin{remark}
\hypertarget{0.1.19}{We} note the following observations.
\begin{enumerate}[label=(\roman*),ref=(\roman*)]
\item If \(f\) is the zero form then \(B\) is the zero matrix and therefore \(I(V,f)\cong GL_n(q)\).

\item If \(f\) is a unitary form, then \hyperlink{0.1.13}{(0.1.13.ii)} tells us that there is a basis \(\mathcal{B}\) such that the matrix of \(f\) with respect to \(\mathcal{B}\) is \(I_n\) and therefore \(I(V,f)\) is isomorphic to the group of unitary matrix.

\item If \(Q\) is a orthogonal form, then \hyperlink{0.1.13}{(0.1.13)} tells us that in some, but not all, cases there exists a basis \(\mathcal{B}\) such that the matrix of \(Q\) with respect to \(\mathcal{B}\) is \(I_n\). In these cases, \(I(V,f_Q)\) is isomorphic to the group of orthogonal matrices.

\end{enumerate}
\end{remark}

\subsection{Group Theory}

We begin this subsection by defining two groups which are generalisations of the well-known dihedral and quaternion groups - we will be referring to these in section six.

\begin{definition}
Let \(n\geq 4\). The \textit{semidihedral group} is the group of order \(2^n\) which can be presented \(SD_{2^n}=\langle x,y\,|\,x^{2^{n-1}}=y^2=1,\,yxy=x^{2^{n-2}-1}\rangle\).
\end{definition}

\begin{definition}
Let \(n\geq 2\). The \textit{generalised quaternion group} is the group of order \(2^{n+1}\) which can be presented \(Q_{2^{n+1}}=\langle i,j\,|\,i^{2^n}=1,\,i^{2^{n-1}}=j^2\,j^{-1}ij=i^{-1}\rangle\).
\end{definition}

\subsubsection*{Group products and extensions}

We now define some group products and extensions which we use throughout our paper.

\begin{definition}
Let \(H\) and \(K\) be groups with a homomorphism \(\phi: K\longrightarrow Aut(H)\) (or equivalently, \(\phi\) is an action of \(K\) on \(H\)). We define the \textit{semidirect product} of \(H\) and \(K\) with respect to \(\phi\), denoted \(H\rtimes K\), to be the group with underlying set \(H\times K\) and group operation:
\[(h',k')\cdot (h,k)=(h'h^{\phi(k')},k'k)\]
\end{definition}

\begin{definition}
\hypertarget{0.2.4}{Let} \(H\) and \(K\) be groups and let \(\phi:K\longrightarrow S_m\) be a homomorphism. The \textit{wreath product} of \(H\) by \(K\) with respect to \(\phi\), denoted \(H\wr K\), is the semidirect product \(H^m\rtimes K\), where \(K\) acts on \(H^m\) by permuting the coordinates via \(\phi\).
\end{definition}

\begin{definition}
Let \(H\) and \(K\) be groups with central subgroups \(H_1\) and \(K_1\) such that there exists an isomorphism \(\phi:H_1\longrightarrow K_1\). An \textit{external central product} of \(H\) and \(K\), denoted \(H\circ K\), is a quotient of the group \(H\times K\) by the subgroup \(Z=\{(h,k)\,|\,h\in H_1,\, k\in K_1,\, \phi(h)=k^{-1}\}\).
A group \(G\) is said to be an \textit{internal central product} of \(H\) and \(K\), if \(H\) and \(K\) are subgroups of \(G\) such that \(G=HK\) and \(H\) and \(K\) commute with each other.
\end{definition}

\begin{definition}
Let \(G\), \(H\) and \(K\) be groups. We say that \(G\) is an \textit{extension} of \(K\) by \(H\) if there exists a surjective homomorphism \(\beta:G\longrightarrow K\) and an injective homomorphism \(\alpha:H\longrightarrow G\) such that \(\alpha(H)\unlhd G\) and \(im\alpha=ker\beta\). 
\end{definition}

Direct and semidirect products are examples of extensions; however, there are many other extensions that don't fall in into these two categories. We will use the notation \(H.K\) for an unspecified extension of \(H\) by \(K\).

\subsubsection*{Classical groups}

The classification of classical forms enables us to define the \textit{classical groups}, which we build up to in the following three definitions.

\begin{definition}
Let \(F=\mathbb{F}_q\) and let \(V\) be a \(n\)-dimensional \(F\)-vector space. \hypertarget{0.2.7}{If} \(f\) or \(Q\) is a classical form on \(V\), then \(I(V,f)\) and \(I(V,Q)\) have the structure of a group (with respect to composition) and are named and denoted as follows.
\begin{enumerate}[label=(\roman*),ref=(\roman*)]

\item If \(f\) is the zero form, then as stated in \hyperlink{0.1.19}{(0.1.19.i)}, \(I(V,f)\) is the general linear group \(GL(V)\).

\item If \(n=2m\) is even and \(f\) is a symplectic form, then \(I(V,f)\) is called the \textit{symplectic group} of \(f\), denoted \(Sp(V)\). Given an arbitrary basis, the corresponding matrix group is denoted \(Sp_{2m}(q)\).

\item If \(q\) is a square and \(f\) is a unitary form, then \(I(V,f)\) is called the \textit{unitary group} of \(f\), denoted \(GU(V)\). Given an arbitrary basis, the corresponding matrix group is denoted \(GU_{n}(q^{1/2})\).

\item  If \(n=2m+1\) is odd and \(Q\) is an orthogonal form, then \(I(V,Q)\) is called the \textit{\(\circ\)-type orthogonal group} of \(Q\), denoted \(O^{\circ}(V)\). Given an arbitrary basis, the corresponding matrix group is denoted \(O^{\circ}_{2m+1}(q)\).

\item  If \(n=2m\) is even and \(Q\) is an orthogonal form of plus-type, then \(I(V,Q)\) is called the \textit{plus-type orthogonal group} of \(Q\), denoted \(O^{+}(V)\). Given an arbitrary basis, the corresponding matrix group is denoted \(O^{+}_{2m}(q)\).

\item  If \(n=2m\) is even and \(Q\) is an orthogonal form of minus-type, then \(I(V,Q)\) is called the \textit{minus-type orthogonal group} of \(Q\), denoted \(O^{-}(V)\). Given an arbitrary basis, the corresponding matrix group is denoted \(O^{-}_{2m}(q)\).

\end{enumerate}
\end{definition}

\begin{definition}
We define some important subgroups, overgroups and quotient groups of those defined above.
\begin{enumerate}[label=(\roman*),ref=(\roman*)] 
\item We define \(S(V,f)\) and \(S(V,Q)\) to be the subgroup of \(I(V,f)\) and \(I(V,Q)\) of determinant one maps. Given an arbitrary basis, the corresponding matrix groups of (i)-(vi) above are denoted:
\[SL_n(q),\: Sp_{2m}(q),\: SU_n(q^{1/2}),\: SO^{\circ}_{2m+1}(q),\: SO^{+}_{2m}(q),\: SO^{-}_{2m}(q)\]
We note that the elements of the symplectic group are already determinant one, hence the notation does not change.

\item For an orthogonal form \(Q\), we define \(\Omega(V,Q)\) to be the derived subgroup of \(I(V,Q)\). Given an arbitrary basis, the  corresponding matrix groups are denoted: 
\[\Omega^{\circ}_{2m+1}(q),\: \Omega^{+}_{2m}(q),\: \Omega^{-}_{2m}(q)\]

\item We have already defined the set of similarities of a form - \(\Delta(V,f)\) and \(\Delta(V,Q)\) - these too form a group. Given an arbitrary basis, the corresponding matrix groups are denoted: 
\[GL_n(q),\: GSp_{2m}(q),\: GO^{\circ}_{2m+1}(q),\: GO^{+}_{2m}(q),\: GO^{-}_{2m}(q)\]

We note that the similarity group of the zero form is the same as its isometry group, hence the notation stays the same. Also, when \(f\) is a unitary form, there is no formal notation for the matrix group corresponding to \(\Delta(V,f)\), but these groups are isomorphic to \(GU_n(q^{1/2})\circ C_{q-1}\).

\item If \(G\) is a subgroup of \(GL(V)\) and \(Z\) is the scalars contained in \(G\), then the \textit{projective group} of \(G\) is \(G/Z\). Given an arbitrary basis, the corresponding matrix notation for these groups is the matrix notation of the group \(G\) with a \(P\) placed in front, i.e. \[PGU_n(q^{1/2}),\: PGO^{\circ}_{2m+1}(q),\: PSL_n(q),\: P\Omega^{+}_{2m}(q)\:\text{ etc. }\]  
\end{enumerate}
\end{definition}

We are now ready to formally define the classical groups. The definition provided here excludes a small number of automorphisms that are often otherwise included in some classical groups (compare with the definitions in (\cite{kleidman_subgroup_1990}, p.13-14) and (\cite{bray_maximal_2013}, p.27-31)). We do so as such an exclusion makes no difference to Aschbacher's theorem for the case of \(GL(V)\).

\begin{definition}
\hypertarget{0.2.9}{A} group \(G\) is called a \textit{classical group} if it satisfies one of the following.
\begin{enumerate}[label=(\roman*),ref=(\roman*)]

\item \(S(V,f)\leq G \leq \Delta(V,f)\), where \(f\) is either the zero form or a symplectic or unitary form.

\item \(\Omega(V,Q)\leq G \leq \Delta(V,Q)\), where \(Q\) is an orthogonal form of plus, minus or \(\circ\)-type. 
\item \(G\) is the projective group of any group satisfying (i) or (ii).

\end{enumerate}

\end{definition}

This definition encompasses \(I(V,f)\), for all classical forms \(f\) (and \(Q\)). However, as the focus of our paper lies mainly with the case where \(f\) is the zero form, and our interest in the other classical groups will be majoritively in relation to the eighth Aschbacher class of \(GL(V)\), we only state a few relevant results concerning the classical groups in general. The reader is referred to \cite{cameron_notes_2000} and \cite{kleidman_subgroup_1990} to gain a fuller understanding of these groups. Proofs for the next two results can be found in (\cite{kleidman_subgroup_1990}, p.43-46).

\begin{lemma}
\hypertarget{0.2.10}{The} following isomorphisms of classical groups hold.
\begin{enumerate}[label=(\roman*),ref=(\roman*)]

\item \(PSL_2(2) \cong S_3\)
\item \(PSL_2(3) \cong A_4\)
\item \(SL_2(q)\cong Sp_2(q)\cong SU_2(q)\)
\item \(O^{\pm}_2(q)\cong D_{2(q\mp 1)}\) 
\item \(SO^{\pm}_2(q)\cong C_{q\mp 1}.C_{(2,q)}\)
\item For \(q\) odd, \(\Omega^{\circ}_3(q)\cong PSL_2(q)\)
\item \(\Omega^{+}_4(q)\cong SL_2(q)\circ SL_2(q)\)
\item \(\Omega^{-}_4(q)\cong PSL_2(q^2)\)
\item For \(q\) odd, \(\Omega^{\circ}_5(q)\cong PSp_4(q)\)
\item \(P\Omega^{+}_6(q)\cong PSL_4(q)\)
\item \(P\Omega^{-}_6(q)\cong PSU_4(q)\)
\end{enumerate}
\end{lemma}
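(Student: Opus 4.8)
These eleven statements are the classical \emph{exceptional isomorphisms} among small classical groups, and the plan is to prove each by a common three-step recipe: first, exhibit an explicit module carrying a classical form so that the group on the left of the claimed isomorphism acts on it by isometries (or at worst similarities), hence maps to the group on the right; second, compute the kernel of that map; third, compare orders --- using the standard order formulas for $SL_n$, $Sp_{2m}$, $SU_n$ and the orthogonal groups, together with the classification in \hyperlink{0.1.13}{(0.1.13)} --- to upgrade the resulting injection to an isomorphism. The cases fall into four families according to which module one uses.

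\textbf{The small and projective-line cases (i)--(v).} For (iv) and (v) one puts the $2$-dimensional orthogonal form into the normal form of \hyperlink{0.1.13}{(0.1.13)}: a plus-type form is the hyperbolic $Q(\alpha x+\beta y)=\alpha\beta$, whose isometries are the maps $x\mapsto\lambda x,\ y\mapsto\lambda^{-1}y$ together with the interchange of $x$ and $y$ (giving $C_{q-1}\rtimes C_2=D_{2(q-1)}$), while a minus-type form is a scalar multiple of the norm form of $\mathbb{F}_{q^2}/\mathbb{F}_q$, whose isometry group is $C_{q+1}\rtimes C_2=D_{2(q+1)}$; intersecting with the determinant-one subgroup gives (v). For (iii), a nonzero alternating form on a $2$-dimensional space is the determinant form up to a scalar, so its isometries are exactly the determinant-one maps, giving $Sp_2(q)=SL_2(q)$, and writing down a Hermitian form similarly yields an embedding $SU_2(q)\hookrightarrow SL_2(q)$, which is an isomorphism because both sides have order $q(q^2-1)$. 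For (i) and (ii), $PSL_2(q)$ acts faithfully on the $q+1$ points of the projective line, embedding $PSL_2(2)$ into $S_3$ and $PSL_2(3)$ into $S_4$; as $|PSL_2(2)|=6=|S_3|$ this proves (i), and as $|PSL_2(3)|=12$ the image is the unique subgroup of $S_4$ of that order, namely $A_4$, proving (ii).

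\textbf{The exterior-square and tensor-product cases (vi), (vii), (ix), (x).} Here the module is built from natural modules. For (vi), let $SL_2(q)$ act by conjugation on the $3$-dimensional space of trace-zero $2\times2$ matrices; the determinant restricts to a non-degenerate quadratic form on this space (necessarily of $\circ$-type by \hyperlink{0.1.13}{(0.1.13)} when $q$ is odd) preserved by the action, the kernel is $\{\pm I\}$, and since $PSL_2(q)$ is perfect its image lies in the derived subgroup $\Omega^{\circ}_3(q)$, with equality by an order count. For (vii), take $V=\mathbb{F}_q^2\otimes\mathbb{F}_q^2$ with the tensor product (in the sense of \hyperlink{0.1.3}{(0.1.3)}) of the two symplectic forms; this is a non-degenerate symmetric bilinear form, of plus-type since $\{e_1\otimes e_1,\ e_1\otimes e_2\}$ spans a totally singular plane, and $SL_2(q)\times SL_2(q)$ acts on it with kernel $\{(\pm I,\pm I):\ \text{product}=I\}\cong C_2$, giving exactly the central product $SL_2(q)\circ SL_2(q)$ after an order comparison. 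For (ix), $Sp_4(q)$ acts on $\Lambda^2 V$ ($\dim=6$) preserving the quadratic form induced by the wedge pairing $\Lambda^2 V\times\Lambda^2 V\to\Lambda^4 V\cong\mathbb{F}_q$; the symplectic form of $V$ determines an anisotropic fixed vector, the quotient $5$-space carries a $\circ$-type form, and $PSp_4(q)$ maps onto $\Omega^{\circ}_5(q)$. For (x), the same $\Lambda^2$-construction applied to $SL_4(q)$ produces a $6$-dimensional plus-type form (identified by the totally singular $3$-space $\langle e_1\wedge e_2,\ e_1\wedge e_3,\ e_1\wedge e_4\rangle$) with kernel $\{\pm I\}$, and passing to central quotients yields $P\Omega^{+}_6(q)\cong PSL_4(q)$.

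\textbf{The field-restriction (minus-type) cases (viii), (xi), and the main obstacle.} For (viii) one takes the $\mathbb{F}_{q^2}$-module $\mathbb{F}_{q^2}^2\otimes(\mathbb{F}_{q^2}^2)^{(q)}$ (the second factor being the $q$-power Frobenius twist) with the tensor product of the two symplectic forms, on which $SL_2(q^2)$ acts via $A\mapsto A\otimes\bar A$; this $4$-dimensional $\mathbb{F}_{q^2}$-space carries a natural $\mathbb{F}_q$-structure preserved by $SL_2(q^2)$ on which the form becomes $\mathbb{F}_q$-valued, the kernel is $\{\pm I\}$, and one reads off $\Omega^{-}_4(q)\cong PSL_2(q^2)$. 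For (xi) one runs the $\Lambda^2$-construction of (x) over $\mathbb{F}_{q^2}$ for $SU_4(q)$ and descends to $\mathbb{F}_q$ using the Hermitian structure, which converts the wedge form into a minus-type $\mathbb{F}_q$-form and gives $P\Omega^{-}_6(q)\cong PSU_4(q)$. I expect these last two cases to be the main obstacle, for two reasons: one must (a) verify that the restriction-of-scalars/Hermitian-descent construction genuinely produces a \emph{minus}-type form and not a plus-type one, which amounts to computing the dimension of the largest totally singular subspace and invoking \hyperlink{0.1.13}{(0.1.13.iv)}; and (b) pin the image of the map down to $\Omega$ (equivalently $P\Omega$) exactly, rather than to $SO$ or the full isometry group, which needs either a spinor-norm computation or the perfectness of the linear or unitary group involved. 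The remaining work --- carrying the relevant order formula through to equality in each of the eleven comparisons --- is routine bookkeeping.
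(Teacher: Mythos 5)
The paper does not prove this lemma itself: it delegates it to (\cite{kleidman_subgroup_1990}, pp.~43--46), so there is no in-paper argument for you to match. That said, your three-step programme --- realise the left-hand group on an explicit $F$-module carrying a classical form of the correct type, compute the kernel of the induced map, and close the gap with the order formulas and the form classification \hyperlink{0.1.13}{(0.1.13)} --- is exactly the one used in that reference and in the standard textbook treatments, and the module choices you name (projective line, trace-zero $2\times 2$ matrices, $\Lambda^2$ of the natural module, tensor square, and Frobenius-twist restriction of scalars for the minus types) are the right ones. The outline is therefore sound, and you are right to single out the Witt-type identification and the ``land in $\Omega$ rather than $SO$'' step as the real work in (viii) and (xi).

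Two further caveats beyond the ones you flag. First, the perfectness argument you use to force the image into $\Omega$ in (vi), (vii), (ix) and (x) fails precisely at the small $q$ where the source group is not perfect --- $PSL_2(3)\cong A_4$ in (vi), and $SL_2(2)\cong S_3$ or $SL_2(3)$ in (vii) --- yet the isomorphisms still hold there; in those cases one must instead compute a spinor norm or compare subgroup indices directly, so the caveat you reserve for (viii) and (xi) actually applies more widely. Second, in (iii) the clause ``writing down a Hermitian form similarly yields an embedding $SU_2(q)\hookrightarrow SL_2(q)$'' does not follow by the same reasoning as the symplectic case: the Hermitian group acts on a $2$-dimensional space over the quadratic extension, not over $\mathbb{F}_q$, so there is no ambient $GL_2(q)$ in which to compare the two groups, and the isomorphism $SU_2\cong SL_2$ requires an explicit descent (identifying a $2$-dimensional Hermitian space with a $2$-dimensional symplectic $\mathbb{F}_q$-space, or equivalently observing that $SU_2$ is a quasi-split hence split form of $SL_2$), not merely an order count. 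Both gaps are repairable; the remainder is, as you say, bookkeeping.
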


The following result states exactly which classical groups are simple, a proof of which can found in (\cite{cameron_notes_2000}, ch.2,4-6).

\begin{lemma}
\hypertarget{0.2.11}{Let} \(V\) be a \(n\)-dimensional \(\mathbb{F}_q\)-vector space and let \(f\) and \(Q\) be classical forms. If \(G=I(V,f)\) or \(I(V,Q)\), then \(G'/Z(G)\) is simple whenever one of the following hold.

\begin{enumerate}[label=(\roman*),ref=(\roman*)]

\item \(f\) is the zero form and \(n\geq 3\) or \(n=2\) and \(q\geq 4\).
\item \(f\) is a unitary form and \(n\geq 4\) or \(n=3\) and \(q\geq 3\).
\item \(f\) is a symplectic form and \(n\geq 5\) or \(n=4\) and \(q\geq 3\).
\item \(Q\) is an orthogonal form and \(n\geq 7\).
\end{enumerate}
\end{lemma}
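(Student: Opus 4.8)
The plan is to invoke Iwasawa's simplicity criterion: if a group $X$ acts faithfully and primitively on a set $\Omega$, if $X$ is perfect, and if for some $\alpha\in\Omega$ the stabiliser $X_\alpha$ has an abelian normal subgroup $A$ whose $X$-conjugates generate $X$, then $X$ is simple. We apply this with $X$ equal to the projective group of $G'$ in the sense of \hyperlink{0.2.10}{(0.2.8.iv)}, acting on the projective space $\mathbb{P}(V)$ or on a distinguished orbit therein. Since the kernel of the action of $G'$ on $\mathbb{P}(V)$ is precisely the group of scalars inside $G'$, showing that this $X$ is simple is exactly showing that $G'/Z(G)$ is simple in the sense of the statement.

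First, the linear case $f=0$, $G=GL(V)$. A standard computation shows $G'=SL(V)$ exactly when $n\geq 3$ or ($n=2$, $q\geq 4$); this is the first place the hypothesis is used, the excluded $SL_2(2)\cong S_3$ and $SL_2(3)$ failing to be perfect. Take $\Omega=\mathbb{P}(V)$: then $SL(V)$ is transitive on $\Omega$ with maximal point stabilisers, giving primitivity, and for $A$ one takes the group of all transvections with a fixed centre $\langle e\rangle$ and fixed hyperplane axis through $e$ — an abelian group, normal in the stabiliser of that flag, whose conjugates (all transvections) generate $SL(V)$. Iwasawa then yields simplicity of $PSL_n(q)$.

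The symplectic, unitary and orthogonal cases run on the same rails with three changes: (a) $\Omega$ becomes the set of isotropic (respectively singular) $1$-spaces, and primitivity of $G'$ on $\Omega$ follows from Witt's extension theorem together with an analysis of the relevant parabolic subgroups; (b) $A$ becomes the appropriate long root subgroup attached to an isotropic or singular vector $e$ — the symplectic transvections $v\mapsto v+\lambda f(v,e)e$ and their unitary and orthogonal analogues — again abelian, normal in a point stabiliser, and with conjugates generating $G'=Sp(V)$, $SU(V)$ or $\Omega(V,Q)$; (c) perfectness of these derived groups must be verified, and this is precisely where the numerical hypotheses enter ($n\geq 5$ or ($n=4$, $q\geq 3$) for symplectic, $n\geq 4$ or ($n=3$, $q\geq 3$) for unitary, $n\geq 7$ for orthogonal). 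The small groups just outside these ranges fail either perfectness or the generation hypothesis and are genuinely non-simple, as several of the exceptional isomorphisms of \hyperlink{0.2.10}{(0.2.10)} witness (for instance $PSL_2(2)\cong S_3$ and $PSL_2(3)\cong A_4$).

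The hard part is the orthogonal case, especially in characteristic $2$: one must correctly identify $G'=\Omega(V,Q)$ rather than $SO(V,Q)$, keep track of the spinor norm, confirm that the root subgroups still generate $\Omega(V,Q)$ and that its action on singular points remains primitive, and cope with the fact recorded in \hyperlink{0.1.7}{(0.1.7)} that $f_Q$ is alternating when $q$ is even, which forces separate treatment of the plus- and minus-types. Checking perfectness and the Iwasawa generation hypothesis uniformly across all the borderline small cases is the other delicate point, and is why this classical result is quoted from \cite{cameron_notes_2000} rather than reproved in full here.
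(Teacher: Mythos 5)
The paper does not prove this lemma: it is stated with a pointer to \cite{cameron_notes_2000}, ch.\ 2, 4--6, under the policy announced at the start of Section~0 that preliminary results are quoted with references rather than reproved. Your sketch is exactly the Iwasawa-criterion argument the cited reference carries out, and you correctly identify both where the numerical hypotheses are consumed (failure of perfectness or of the generation condition in the excluded small cases) and which exceptional isomorphisms of \hyperlink{0.2.10}{(0.2.10)} witness the genuine non-simplicity there; since you explicitly defer the full verification to the reference, as the paper itself does, the proposal matches the paper's posture and the intended proof.

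One detail in the linear case should be tightened, because it conflicts with the hypotheses of Iwasawa's criterion as you stated them. You set $\Omega=\mathbb{P}(V)$, so the criterion requires $A$ to be abelian and normal in a \emph{point} stabiliser $X_{\langle e\rangle}$. But the group you nominate --- transvections with fixed centre $\langle e\rangle$ \emph{and} fixed axis --- is normal only in the stabiliser of the flag $\bigl(\langle e\rangle,\,\text{axis}\bigr)$, and for $n\geq 3$ the action of $PSL(V)$ on such flags is not primitive (the flag stabiliser sits properly inside both the point stabiliser and the hyperplane stabiliser), so this $A$ does not live where the criterion needs it. The correct choice is the full root group: all transvections with centre $\langle e\rangle$, the axis allowed to range over hyperplanes through $e$. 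This group is isomorphic to the additive group of linear functionals on $V$ that vanish at $e$, hence abelian, and it \emph{is} normal in $X_{\langle e\rangle}$, since conjugating a transvection with centre $\langle e\rangle$ by any map fixing $\langle e\rangle$ yields another transvection with centre $\langle e\rangle$. The same distinction --- take the whole long root subgroup attached to an isotropic or singular point, not a one-parameter subgroup attached to a fuller flag --- carries over to the symplectic, unitary and orthogonal cases you outline.
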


\begin{remark}
This lemma should be understood in view of the isomorphisms stated in \hyperlink{0.2.10}{(0.2.10)}. For example, there are unitary forms that yield a simple group when \(n=2\), but by \hyperlink{0.2.10}{(0.2.10.iii)}, these groups are accounted for in the zero form case in part (i) of \hyperlink{0.2.11}{(0.2.11)}. 
\end{remark}

\subsubsection*{Normal and characteristic subgroups}

We now state some definitions and results regarding normal and characteristic subgroups. Proofs of the first two results can be found in (\cite{gorenstein_finite_1980}, p.17-20).

\begin{definition}
Let \(X\), \(Y\) and \(Z\) be groups. We define the shorthand notation \([X,Y,Z]\) to be the commutator \([[X,Y],Z]\).
\end{definition}

\begin{lemma}[Three Subgroup Lemma]
Let \(G\) be a group. \hypertarget{0.2.14}{If} \(X, Y, Z\leq G\) and \(L\unlhd G\) such that \([X,Y,Z]\leq L\) and \([Y,Z,X]\leq L\), then \([Z,X,Y]\leq L\).
\end{lemma}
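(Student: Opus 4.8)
The plan is to deduce the statement from the Hall--Witt identity, after first reducing to the case \(L = 1\). Since \(L \unlhd G\), the quotient \(\bar G := G/L\) is a group; writing \(\bar X, \bar Y, \bar Z\) for the images of \(X, Y, Z\), the image of the subgroup \([X,Y,Z] = [[X,Y],Z]\) in \(\bar G\) is exactly \([\bar X, \bar Y, \bar Z]\) (the image of a commutator of subgroups is the commutator of the images). Hence the hypotheses become \([\bar X, \bar Y, \bar Z] = 1\) and \([\bar Y, \bar Z, \bar X] = 1\), and it suffices to prove \([\bar Z, \bar X, \bar Y] = 1\), since this is equivalent to \([Z,X,Y] \leq L\). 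So from now on I would work inside \(\bar G\) and drop the bars; note that the two hypotheses now say precisely that \(Z\) centralises the subgroup \([X,Y]\) and that \(X\) centralises the subgroup \([Y,Z]\).

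Next I would invoke the Hall--Witt identity: for all elements \(x, y, z\) of a group,
\[
\bigl[[x, y^{-1}], z\bigr]^{\,y}\,\bigl[[y, z^{-1}], x\bigr]^{\,z}\,\bigl[[z, x^{-1}], y\bigr]^{\,x} = 1 ,
\]
where the exact placement of inverses depends on the commutator convention in force. This is a formal identity in the free group on three generators, verified by expanding the commutators; I would cite it rather than grind through the expansion. Now fix arbitrary \(x \in X\), \(y \in Y\), \(z \in Z\). Since \([x, y^{-1}] \in [X,Y]\) and \(Z\) centralises \([X,Y]\), the first factor of the identity is trivial; since \([y, z^{-1}] \in [Y,Z]\) and \(X\) centralises \([Y,Z]\), the second factor is trivial. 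The identity therefore forces \(\bigl[[z, x^{-1}], y\bigr]^{x} = 1\), hence \([[z, x^{-1}], y] = 1\), for all such \(x, y, z\).

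Finally I would upgrade this from generators to the whole commutator subgroup. Fix \(y \in Y\). As \(x\) runs over \(X\) so does \(x^{-1}\), so the elements \([z, x^{-1}]\) with \(z \in Z\), \(x \in X\) form a generating set of \([Z, X]\); by the previous step each of these lies in the centraliser of \(y\), and since a centraliser is a subgroup, \([Z,X]\) is contained in the centraliser of \(y\). As \(y \in Y\) was arbitrary, \(Y\) centralises \([Z,X]\), that is, \([Z, X, Y] = [[Z,X],Y] = 1\), which is what we wanted.

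The main obstacle here is not any hard computation but the bookkeeping in this last step: the hypotheses concern commutator \emph{subgroups}, not individual commutators of the given generators, so one cannot simply manipulate \([[z,x^{-1}],y]\) and stop — the passage back to \([Z,X]\) as a whole via the ``centraliser is a subgroup'' argument is essential. Performing the reduction to \(L = 1\) at the very start is what makes this transparent, since over a trivial \(L\) the hypotheses literally become the centralising statements used above.
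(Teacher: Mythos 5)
Your proof is correct: the reduction to $L = 1$, the application of the Hall--Witt identity, and the passage from individual generating commutators $[[z,x^{-1}],y]=1$ back to the full commutator subgroup $[Z,X]$ via the fact that a centraliser is a subgroup are all sound. The paper itself omits the proof, citing Gorenstein (pp.\ 17--20), and the argument given there is precisely this one via the Hall--Witt identity, so you have reproduced the intended approach.
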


\begin{prop}
\hypertarget{0.2.15}{If} \(L\) is a non-abelian minimal normal subgroup of \(G\), then \(L=Y_1\times \cdots \times Y_k\), where the \(Y_j\) are non-abelian simple subgroups of \(L\) that are conjugate in \(G\).   
\end{prop}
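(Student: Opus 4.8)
The plan is to exploit minimality twice: once to see that $L$ is generated by the $G$-conjugates of a single minimal normal subgroup of $L$, and once to assemble a suitable collection of these conjugates into a direct product filling out all of $L$.

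First I would pick a minimal normal subgroup $Y_1$ of $L$, which exists since $L$ is a nontrivial finite group. For any $g\in G$, conjugation by $g$ is an automorphism of $G$ fixing $L$ setwise (as $L\unlhd G$), so $Y_1^g$ is again a minimal normal subgroup of $L$. Setting $N=\langle Y_1^g : g\in G\rangle$, we have $1\neq N\leq L$ and $N\unlhd G$, so minimality of $L$ forces $N=L$; that is, the $G$-conjugates of $Y_1$ generate $L$.

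Next I would record the elementary fact that any two distinct minimal normal subgroups $M_1,M_2$ of $L$ satisfy $M_1\cap M_2=1$ — the intersection is normal in $L$ and properly contained in each — and hence $[M_1,M_2]=1$. Now choose from among the conjugates $\{Y_1^g : g\in G\}$ a subset $Y_1,\dots,Y_k$ maximal with respect to the property that the product $D:=Y_1Y_2\cdots Y_k$ is direct. I claim $D=L$: otherwise, since the conjugates generate $L$, some conjugate $Y:=Y_1^g$ fails to lie in $D$; then $Y\cap D$ is a normal subgroup of $L$ properly contained in the minimal normal subgroup $Y$, hence is trivial, so $YD=Y\times D$ is again direct, contradicting maximality. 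Therefore $L=Y_1\times\cdots\times Y_k$.

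Finally I would verify the three claimed properties of the factors. Each $Y_j$ is a $G$-conjugate of $Y_1$ by construction, so the $Y_j$ are pairwise conjugate in $G$. Each $Y_j$ is a minimal normal subgroup of $L$, and in a direct product every normal subgroup of a factor is normal in the whole product, so $Y_j$ admits no proper nontrivial normal subgroup and is therefore simple. And if one $Y_j$ — hence, by conjugacy, every $Y_j$ — were abelian, then $L=Y_1\times\cdots\times Y_k$ would itself be abelian, contrary to hypothesis; so the $Y_j$ are non-abelian simple. The only step needing real care is the maximality argument establishing $D=L$ with the product direct; the rest is routine bookkeeping with minimality.
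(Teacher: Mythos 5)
Your proof is correct. The paper itself provides no argument for this proposition, deferring instead to Gorenstein (p.\ 17--20), and your write-up is essentially the standard textbook proof found there: pick a minimal normal subgroup $Y_1$ of $L$, observe that its $G$-conjugates are again minimal normal in $L$ and generate an $L$-contained normal subgroup of $G$ which by minimality of $L$ must be all of $L$, then extract a maximal directly-multiplying subcollection and use minimality of each conjugate (together with normality of the partial product $D$ in $L$) to force $Y\cap D=1$ whenever $Y\not\leq D$, so that $D$ can be enlarged --- contradiction. The only point you glossed slightly is that you state the fact $[M_1,M_2]=1$ for distinct minimal normals but never actually invoke it; your maximality argument instead derives $YD=Y\times D$ directly from $Y,D\unlhd L$ and $Y\cap D=1$, which is cleaner. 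The verification that each $Y_j$ is simple (a normal subgroup of a direct factor is normal in the whole product, then use minimality in $L$), non-abelian (else $L$ would be abelian), and the $Y_j$ are $G$-conjugate by construction, is all sound.
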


Recall that for all groups \(H\) and \(G\) such that \(H\leq G\), there is a canonical map \(N_G(H)\longrightarrow Aut(H)\) with kernel \(C_G(H)\). Accordingly, we make use of the following shorthand notation.

\begin{definition}
\hypertarget{0.2.16}{Let} \(H\leq G\). We define \(Aut_G(H)\) to be the quotient group \(N_G(H)/C_G(H)\).
\end{definition}

The next result relates to the normaliser of particular subgroups of \(GL(V)\), which we will make use of in section eight. See (\cite{hup}, p.187-189) for a proof.

\begin{prop}
\hypertarget{0.2.17}{Let} \(F\) be a field of order \(q\), let \(V\) be a \(n\)-dimensional \(F\)-vectorspace and let \(H\leq GL(V,F)\) be a cyclic subgroup of order \(q^n-1\). If \(h\in H\) such that the order of \(h\) does not divide \(q^m-1\), for all \(m\) dividing (but not equal to) \(n\), then \(N_{GL(V)}(\langle h\rangle)=N_{GL(V)}(H)\).
\end{prop}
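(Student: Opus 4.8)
The plan is to reduce the whole statement to the single identity \(C_{GL(V)}(h)=H\): granted this, both inclusions between the two normalisers come for free. I would begin by noting that \(|H|=q^{n}-1\) is coprime to the characteristic \(p\) of \(F\), so \(H\) consists of semisimple elements, and that a cyclic subgroup of \(GL(V)\) of order \(q^{n}-1\) necessarily acts irreducibly on \(V\) (a \emph{Singer cycle}; this is standard, and is where a small number-theoretic input — essentially that \(q^{n}-1\) does not divide \(\mathrm{lcm}_{m<n}(q^{m}-1)\) — is hidden). By Schur's lemma and Wedderburn's little theorem, \(E:=\mathrm{End}_{FH}(V)\) is then a finite field containing \(F\); since \(V\) is an \(E\)-space we have \([E:F]\mid n\), and since \(H\) is abelian it embeds in \(E^{\times}\), whence \(q^{n}-1=|H|\le q^{[E:F]}-1\), forcing \([E:F]\ge n\) and hence (as \([E:F]\mid n\)) \([E:F]=n\). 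Thus \(E\cong\mathbb{F}_{q^{n}}\), \(\dim_{E}V=1\), \(H=E^{\times}\), and \(C_{GL(V)}(H)=E^{\times}=H\).

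Next I would feed in the hypothesis on \(h\). As \(h\in H=E^{\times}\subseteq E\cong\mathbb{F}_{q^{n}}\), the subring \(F[h]\) of \(\mathrm{End}_{F}(V)\) is a subfield of \(E\), say \(F[h]\cong\mathbb{F}_{q^{d}}\) with \(d\mid n\); then \(h\in F[h]^{\times}\) gives \(|h|\mid q^{d}-1\). By assumption \(|h|\) divides no \(q^{m}-1\) with \(m\) a proper divisor of \(n\), so \(d=n\) and \(F[h]=E\). Since an element of \(GL(V)\) centralises \(h\) exactly when it centralises the algebra \(F[h]=E\), this yields \(C_{GL(V)}(\langle h\rangle)=C_{GL(V)}(h)=\mathrm{End}_{E}(V)^{\times}=E^{\times}=H\). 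This is the crucial step.

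The two inclusions between the normalisers are now immediate. For \(N_{GL(V)}(H)\le N_{GL(V)}(\langle h\rangle)\): in the cyclic group \(H\) the subgroup \(\langle h\rangle\) is the unique subgroup of its order, hence characteristic in \(H\), hence stable under conjugation by anything normalising \(H\). For \(N_{GL(V)}(\langle h\rangle)\le N_{GL(V)}(H)\): for any subgroup \(K\) of any group \(G\) one has \(gC_{G}(K)g^{-1}=C_{G}(gKg^{-1})\), so \(N_{G}(K)\) normalises \(C_{G}(K)\); taking \(K=\langle h\rangle\) and using the identity just proved, \(N_{GL(V)}(\langle h\rangle)\) normalises \(C_{GL(V)}(\langle h\rangle)=H\). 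Combining the two gives \(N_{GL(V)}(\langle h\rangle)=N_{GL(V)}(H)\).

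I expect the only real obstacle to be the identity \(C_{GL(V)}(h)=H\), and inside it the fact that \(H\) is a Singer cycle (equivalently, that no cyclic subgroup of \(GL_{n}(q)\) of order \(q^{n}-1\) can fix a proper non-zero subspace) — this is the one place where more than formal reasoning is needed, and a fully self-contained proof would have to dispatch the corresponding number-theoretic estimate, or else cite it from the theory of Singer cycles, e.g.\ \cite{hup}. Once \(V\) is identified with \(\mathbb{F}_{q^{n}}\) and \(H\) with \(\mathbb{F}_{q^{n}}^{\times}\), the translation of the order condition on \(h\) into ``\(F[h]=\mathbb{F}_{q^{n}}\)'' and the two normaliser inclusions are routine.
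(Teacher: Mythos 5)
The paper does not supply a proof of this proposition; it cites Huppert (\cite{hup}, pp.\ 187--189), where the result is part of the standard theory of Singer cycles. Your argument is correct and is essentially that standard proof: identify $H$ with $\mathbb{F}_{q^n}^{\times}$ acting on $V\cong\mathbb{F}_{q^n}$, show $C_{GL(V)}(H)=H$, use the order hypothesis to force $F[h]=\mathbb{F}_{q^n}$ so that $C_{GL(V)}(\langle h\rangle)=H$ as well, and then get both normaliser inclusions formally from ``$\langle h\rangle$ characteristic in $H$'' and ``$N_G(K)$ normalises $C_G(K)$''. One small remark: you do not actually need the lcm estimate you flag as the hidden number-theoretic input. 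Once you know $H=\langle g\rangle$ has order $q^n-1$ coprime to $p$, the $F$-subalgebra $F[g]\subseteq\mathrm{End}_F(V)$ has $F$-dimension at most $n$, hence order at most $q^n$, while its unit group contains $H$ of order $q^n-1$; so $|F[g]|=q^n$, every nonzero element is a unit, and by Wedderburn $F[g]\cong\mathbb{F}_{q^n}$, which makes $V$ one-dimensional over $F[g]$ and hence irreducible as an $FH$-module for free. This replaces the appeal to ``standard Singer-cycle theory'' with a self-contained two-line count and lets the rest of your argument go through unchanged.
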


Next, we examine a particular subgroup that is of great importance for the proof of the main theorem. See (\cite{aschbacher_finite_2000}, p.156-159) for proofs of the stated results.

\begin{definition}
Let \(G\) be a group. A subgroup \(H\leq G\) is called a \textit{subnormal subgroup}, denoted \(H\unlhd\unlhd\, G\), if there is a chain: 
\[H=G_0\unlhd G_1\unlhd \cdots \unlhd G_k=G\]
\end{definition}

\begin{definition}
\hypertarget{quasi}{A} group \(G\) is called \textit{quasisimple} if it is perfect and \(G/Z(G)\) is simple.
\end{definition}

\begin{definition}
A \textit{component} of a group \(G\) is a quasisimple subnormal subgroup.
\end{definition}

Components of a group have the following two properties.

\begin{prop}
\hypertarget{0.2.21}{Let} \(H\unlhd\unlhd\, G\). The components of \(H\) are the components of \(G\) that are also contained in \(H\).
\end{prop}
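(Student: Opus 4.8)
The plan is to prove the asserted set equality by establishing the two inclusions separately, using nothing beyond the definition of a component (a quasisimple subnormal subgroup) together with two elementary facts about subnormality: that it is transitive, and that it is inherited by intermediate subgroups, i.e.\ if $K \unlhd\unlhd\, G$ and $K \le M \le G$, then $K \unlhd\unlhd\, M$.

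First I would record these two facts. Transitivity is immediate: if $L = H_0 \unlhd H_1 \unlhd \cdots \unlhd H_m = H$ and $H = G_0 \unlhd G_1 \unlhd \cdots \unlhd G_k = G$, concatenating the two chains exhibits $L \unlhd\unlhd\, G$. For the intermediate-subgroup fact, given a chain $K = G_0 \unlhd G_1 \unlhd \cdots \unlhd G_k = G$ and a subgroup $M$ with $K \le M \le G$, I would intersect termwise, setting $M_i := G_i \cap M$. Then $M_0 = K$ (since $K \le M$), $M_k = M$, and $M_i \unlhd M_{i+1}$: for $x \in G_{i+1} \cap M$ and $y \in G_i \cap M$ we have $xyx^{-1} \in G_i$ (as $G_i \unlhd G_{i+1}$) and $xyx^{-1} \in M$, hence $xyx^{-1} \in G_i \cap M = M_i$. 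This shows $K \unlhd\unlhd\, M$.

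With these in hand the proposition follows quickly. For the forward inclusion, let $L$ be a component of $H$; then $L$ is quasisimple and $L \unlhd\unlhd\, H$, and since $H \unlhd\unlhd\, G$ transitivity gives $L \unlhd\unlhd\, G$, so $L$ is a quasisimple subnormal subgroup of $G$ contained in $H$, i.e.\ a component of $G$ contained in $H$. Conversely, if $L$ is a component of $G$ with $L \le H$, then $L$ is quasisimple and $L \unlhd\unlhd\, G$, and the intermediate-subgroup fact applied with $M = H$ yields $L \unlhd\unlhd\, H$, so $L$ is a component of $H$. Combining the two inclusions gives the stated equality.

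Since the whole argument reduces to manipulations of subnormal chains, I do not expect a genuine obstacle; the only step needing a moment's care is checking that the termwise intersections $M_i = G_i \cap M$ again form a subnormal chain, which is the routine observation that normality is preserved on intersecting with a subgroup, carried out above.
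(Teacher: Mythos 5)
Your argument is correct, and both halves are handled cleanly: transitivity of subnormality gives the forward inclusion, and the termwise-intersection lemma (that $K \unlhd\unlhd\, G$ with $K \le M \le G$ implies $K \unlhd\unlhd\, M$) gives the reverse. The paper itself omits the proof, deferring to Aschbacher's \emph{Finite Group Theory} (pp.\ 156--159), where the same two subnormality facts underlie the argument; your write-out is essentially the standard proof found there, so there is nothing substantive to compare.
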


\begin{prop}
\hypertarget{0.2.22}{If} \(H\) is a component of a group \(G\), then:
\begin{enumerate}[label=(\roman*),ref=(\roman*)]

\item \(H\) commutes with all other components of \(G\).
\item \(H\) commutes with all \(H\)-invariant solvable subgroups of \(G\).
\end{enumerate}

\end{prop}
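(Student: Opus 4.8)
The plan is to deduce both parts from one structural lemma $(\star)$: if $H$ is a component of $G$ and $K\unlhd\unlhd G$, then either $H\leq K$ or $[H,K]=1$. Before proving $(\star)$ I would record three elementary facts. (F1) If $H\unlhd\unlhd G$ and $H\leq M\leq G$, then $H\unlhd\unlhd M$: intersect a subnormal series for $H$ in $G$ with $M$. (F2) Every proper normal subgroup $N$ of a quasisimple group $Q$ lies in $Z(Q)$: the image $NZ(Q)/Z(Q)$ is normal in the simple group $Q/Z(Q)$, so $NZ(Q)=Z(Q)$ or $NZ(Q)=Q$, and in the second case perfectness gives $Q=[Q,Q]=[NZ(Q),NZ(Q)]=[N,N]\leq N$, forcing $N=Q$. (F3) A quasisimple subnormal subgroup of a quasisimple group $Q$ equals $Q$: climb up a subnormal series, using (F2) together with the fact that a nontrivial perfect group cannot lie inside a centre.

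To prove $(\star)$ I would first handle the case $K\unlhd G$. Since $H$ normalises both $H$ and $K$, the intersection $H\cap K$ is normal in $H$, so by (F2) either $H\leq K$ or $H\cap K\leq Z(H)$. In the latter case, fix a subnormal series $H=H_0\unlhd H_1\unlhd\cdots\unlhd H_m=G$ and prove $[H,K\cap H_i]=1$ by induction on $i$. For $i=0$ one has $[H,K\cap H]\leq K\cap H\leq Z(H)$, hence $[H,K\cap H,H]=1$, and the Three Subgroup Lemma applied with $X=H$, $Y=K\cap H$, $Z=H$ (using $[H,H]=H$) yields $[H,K\cap H]=1$. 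For the inductive step, from $H\leq H_i\unlhd H_{i+1}$ and $K\cap H_{i+1}\unlhd H_{i+1}$ one gets $[K\cap H_{i+1},H]\leq K\cap H_i$, so the inductive hypothesis gives $[K\cap H_{i+1},H,H]=1$, and the same Three Subgroup Lemma argument gives $[H,K\cap H_{i+1}]=1$. Taking $i=m$ yields $[H,K]=1$. For the general case $K\unlhd\unlhd G$, induct on the length of a subnormal series for $K$: writing $K\leq K_{n-1}\unlhd G$, the normal case gives either $[H,K_{n-1}]=1$ (done, since $K\leq K_{n-1}$) or $H\leq K_{n-1}$; in the latter case $H$ is a component of $K_{n-1}$ by (F1), and $K$ has a shorter subnormal series in $K_{n-1}$, so induction applies.

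With $(\star)$ in hand, part (i) is immediate: if $H\neq K$ are components then $(\star)$ gives $H\leq K$ or $[H,K]=1$, and $H\leq K$ is impossible because $H$ would then be a quasisimple subnormal subgroup of the quasisimple group $K$, forcing $H=K$ by (F1) and (F3). For part (ii), if $A\leq G$ is $H$-invariant and solvable, then $HA$ is a subgroup with $A\unlhd HA$, and $H$ is a component of $HA$ by (F1); applying $(\star)$ inside $HA$ with $K=A$ gives $H\leq A$ or $[H,A]=1$, and $H\leq A$ is impossible since a nontrivial perfect group is never solvable. Hence $[H,A]=1$.

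The main obstacle is the normal case of $(\star)$: this is the only place where the subnormality of $H$ is used essentially, and the delicate point is arranging the commutator bookkeeping (the containment $[K\cap H_{i+1},H]\leq K\cap H_i$ and the vanishing of the two relevant triple commutators) so that the Three Subgroup Lemma can be reapplied at each rung of the climb up the subnormal series of $H$.
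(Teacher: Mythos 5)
The paper does not actually prove this proposition: it is stated without proof with a pointer to Aschbacher's \emph{Finite Group Theory}, p.~156--159, so there is no in-paper argument to compare against. Your proof is correct, and it is essentially the standard one (Wielandt's lemma that a component either centralises or is contained in any subnormal subgroup, established via repeated application of the Three Subgroup Lemma up a subnormal chain). Each of the three preliminary facts is right: (F1) by intersecting a subnormal series with $M$; (F2) by pushing a proper normal subgroup to the centre via simplicity of $Q/Z(Q)$ and perfectness of $Q$; (F3) by combining (F2) with the fact that a nontrivial perfect group cannot be abelian. The commutator bookkeeping in the normal case of $(\star)$ is also sound: $[K\cap H_{i+1},H]\leq K\cap H_i$ follows from $K\unlhd G$ together with $H\leq H_i\unlhd H_{i+1}$, the inductive hypothesis kills $[K\cap H_{i+1},H,H]$, and since $[H,K\cap H_{i+1},H]$ equals this same group, both Three-Subgroup-Lemma hypotheses hold, so $[H,H,K\cap H_{i+1}]=1$ and perfectness of $H$ finishes the step. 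Parts (i) and (ii) then follow exactly as you say; the only thing worth spelling out is that in (ii) the group $HA$ is genuinely a subgroup because $A$ is $H$-invariant, which you did note. No gaps.
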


These definitions allow us to define the following subgroup.

\begin{definition}
Let \(H_1,...\,,H_k\) be the components of a group \(G\). We define the \textit{layer} of \(G\) to be the group \(E(G)=H_1H_2\cdots H_k = H_1\circ...\,\circ H_k\).
\end{definition}

\begin{remark}
\hypertarget{0.2.24}{By} \hyperlink{0.2.11}{(0.2.11.i)}, whenever \(n\geq 3\), \(SL_n(q)\) is quasisimple and \(E(GL_n(q))=SL_n(q)\). 
\end{remark}

\begin{prop}
\hypertarget{0.2.25}{The} layer is a characteristic subgroup.
\end{prop}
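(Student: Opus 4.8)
The plan is to show that every automorphism of $G$ permutes the set of components of $G$, and therefore fixes their product $E(G)$ setwise. First I would fix an arbitrary $\phi\in Aut(G)$ and a component $H$ of $G$, and check that $\phi(H)$ is again a component. Since $\phi$ restricts to an isomorphism $H\longrightarrow\phi(H)$, and the two defining properties of quasisimplicity from \hyperlink{quasi}{(0.2.19)} — being perfect and having simple central quotient — are invariant under isomorphism, $\phi(H)$ is quasisimple. For subnormality, I would take a subnormal chain $H=G_0\unlhd G_1\unlhd\cdots\unlhd G_m=G$ and apply $\phi$ to it: because $\phi$ is an automorphism of $G$ it sends normal subgroups of $G_{i+1}$ to normal subgroups of $\phi(G_{i+1})$, so $\phi(H)=\phi(G_0)\unlhd\phi(G_1)\unlhd\cdots\unlhd\phi(G_m)=G$, exhibiting $\phi(H)$ as subnormal in $G$.

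Next, since $H_1,\dots,H_k$ is the complete list of components of $G$, the previous step shows that $\phi$ induces a permutation of the set $\{H_1,\dots,H_k\}$. Then I would compute
\[\phi(E(G))=\phi(H_1H_2\cdots H_k)=\phi(H_1)\phi(H_2)\cdots\phi(H_k).\]
Because the subgroups $\phi(H_i)$ are precisely the $H_j$ in some order, and the components pairwise commute by \hyperlink{0.2.22}{(0.2.22.i)} (so the product $H_1\cdots H_k$ does not depend on the order of the factors), this product equals $H_1H_2\cdots H_k=E(G)$. As $\phi\in Aut(G)$ was arbitrary, $E(G)$ is characteristic in $G$.

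There is no substantial obstacle here; the only points needing care are that both quasisimplicity and subnormality are preserved by $\phi$ (the latter because the image of a normal subgroup under an automorphism is again normal in the image of the ambient group), and that $E(G)$, written as a product of the components, is genuinely independent of the ordering of the factors — this is exactly what allows a permutation of the components to fix the product, and it is furnished by the commuting property recorded in \hyperlink{0.2.22}{(0.2.22)}.
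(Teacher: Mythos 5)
Your proof is correct. The paper itself does not include an argument for this proposition---it defers to a reference (Aschbacher, \emph{Finite Group Theory}, pp.~156--159)---but your argument is the standard one and it is complete: an automorphism $\phi$ of $G$ preserves quasisimplicity (an isomorphism invariant) and subnormality (by pushing a subnormal chain through $\phi$, noting that $\phi$ sends normal subgroups to normal subgroups and fixes $G$ setwise), hence permutes the finite set of components, and since the components pairwise commute by \hyperlink{0.2.22}{(0.2.22.i)} the product $H_1\cdots H_k$ is independent of the ordering of its factors, so $\phi(E(G))=E(G)$.
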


Part (i) in the statement of the main theorem says that our subgroup \(H\) has a unique normal quasisimple subgroup. In the language of our previous discussion, this is equivalent to saying that \(E(H)\) has a unique component.

\subsection{Modules and Representation Theory}

Here we discuss \(FG\)-modules and \(F\)-representations of a group \(G\); looking at some of their properties and establishing the link between them. In particular, we work towards an understanding of \textit{absolute irreducibility} of a module or representation.

\subsubsection*{FG-Modules}

We begin by recalling the definition of a \(FG\)-module.

\begin{definition}
\hypertarget{0.3.1}{Let} \(G\) be a group, \(F\) a field and \(V\) a \(F\)-vector space. We call \(V\) a \textit{\(FG\)-module} if there exists a map \(G\times V\longrightarrow V;\,(g,v)\mapsto g\cdot v\) (referred to as a \textit{\(G\)-action}) such that:
 \begin{enumerate}[label=(\roman*),ref=(\roman*)]
 \item \(g\cdot (v+w)=g\cdot v + g\cdot w\).
 \item \((g+h)\cdot v = g\cdot v + h \cdot v\).
 \item \(g\cdot (h\cdot v)=gh\cdot v\).
 \item \(g\cdot \lambda v= \lambda (g\cdot v)\).
 \item \(1_G\cdot v=v\).
 \end{enumerate}
for all \(g,h\in G\), \(v,w\in V\) and \(\lambda\in F\).
\end{definition}

\begin{definition}
Let \(V\) and \(W\) be \(FG\)-modules. We call a map  \(\phi:V\longrightarrow W\) a \textit{\(FG\)-homomorphism} if it is \(F\)-linear on the underlying vector spaces \(V\) and \(W\) and \(\phi(g\cdot v)=g\cdot\phi(v)\), for all \(g\in G\) and \(v\in V\). 
If in addition, \(\phi\) is an isomorphism on the underlying vector spaces, we call it an \textit{\(FG\)-isomorphism}.  
\end{definition}

\begin{definition}
Let \(V\) and \(W\) be \(FG\)-modules. We denote the set of all \(FG\)-homomorphisms from \(V\) to \(W\) by \(Hom_{FG}(V,W)\). If \(V=W\), then \(Hom_{FG}(V,W)\) is a ring, usually denoted \(End_{FG}(V)\).  
\end{definition}

\begin{definition}
Let \(V\) and \(W\) be \(FG\)-modules. We define a \textit{\(FG\)-module tensor product} to be the tensor product of the underlying vector spaces equipped with the \(G\)-action defined on a basis vector by \(g\cdot(v_i\otimes w_j)=g\cdot v_i\otimes g\cdot w_j\) and extended to the whole space by \(F\)-linearity. 
\end{definition}

The next two propositions show how \(Hom_{FG}\) and \(FG\)-module tensor products interact with direct sums. See (\cite{rotman_introduction_1979}, p.29-34) for proofs. 

\begin{prop}
\hypertarget{0.3.5}{If} \(V\), \(W\) and \(U\) are \(FG\)-modules, then:
\begin{enumerate}[label=(\roman*),ref=(\roman*)]
\item \(Hom_{FG}(V\oplus W, U)\cong Hom_{FG}(V, U)\oplus Hom_{FG}(W, U)\).
\item\(Hom_{FG}(V, W\oplus U)\cong Hom_{FG}(V, W)\oplus Hom_{FG}(V, U)\).
\end{enumerate}

\end{prop}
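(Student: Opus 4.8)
The plan is to prove both isomorphisms by exhibiting explicit, mutually inverse $F$-linear maps built from the structural maps of a direct sum; I will describe part (i) in detail since part (ii) is entirely dual. The one genuinely module-theoretic observation, which I would record first, is that for a direct sum of $FG$-modules the canonical inclusions $\iota_V\colon V\to V\oplus W$, $\iota_W\colon W\to V\oplus W$ and the canonical projections $\pi_V,\pi_W$ are themselves $FG$-homomorphisms; this is immediate from the componentwise definition of the $G$-action on $V\oplus W$ in \hyperlink{0.3.1}{(0.3.1)}, and it is essentially the only place the module structure (rather than just the vector-space structure) is used. Everything afterwards is assembled by composing and adding $FG$-homomorphisms, operations which manifestly preserve $FG$-linearity, so no further checking of the $G$-action is needed.

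For (i), define $\Phi\colon Hom_{FG}(V\oplus W,U)\longrightarrow Hom_{FG}(V,U)\oplus Hom_{FG}(W,U)$ by $\Phi(\phi)=(\phi\circ\iota_V,\ \phi\circ\iota_W)$, and define $\Psi$ in the other direction by $\Psi(\alpha,\beta)=\alpha\circ\pi_V+\beta\circ\pi_W$, i.e.\ the map sending $v+w\mapsto\alpha(v)+\beta(w)$. By the preliminary observation both land in the correct space. I would then check that $\Phi$ and $\Psi$ are $F$-linear — clear, since pre- and post-composition and addition of homomorphisms are additive and commute with scalar multiplication on $Hom_{FG}$ — and that they are mutually inverse: $\Psi(\Phi(\phi))$ sends $v+w$ to $\phi(\iota_V v)+\phi(\iota_W w)=\phi(v+w)$ by additivity of $\phi$, while $\Phi(\Psi(\alpha,\beta))=(\alpha,\beta)$ follows from the identities $\pi_V\iota_V=\mathrm{id}_V$, $\pi_W\iota_V=0$ and their mirrors. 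This proves (i). For (ii), dually set $\Phi'(\phi)=(\pi_W\circ\phi,\ \pi_U\circ\phi)$ for $\phi\in Hom_{FG}(V,W\oplus U)$ and $\Psi'(\gamma,\delta)=\iota_W\circ\gamma+\iota_U\circ\delta$, i.e.\ $v\mapsto(\gamma(v),\delta(v))$; the same verification goes through using $\pi_W\iota_W=\mathrm{id}_W$, $\pi_W\iota_U=0$ and $\iota_W\pi_W+\iota_U\pi_U=\mathrm{id}_{W\oplus U}$.

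I do not expect a real obstacle here: the argument is bookkeeping, and the only aspects needing care are interpreting ``$\cong$'' as an isomorphism of $F$-vector spaces (with $F$ acting pointwise on $Hom_{FG}$), so that there is something to verify beyond a set bijection, and never constructing a map ``by hand'' on elements when it can instead be assembled from the $\iota$'s, $\pi$'s and previously-verified $FG$-homomorphisms, so that $FG$-linearity comes for free. One could alternatively phrase the whole thing more slickly via the universal properties of $\oplus$ as simultaneously a product and a coproduct in the category of $FG$-modules, but the map-chasing version above is the more elementary and self-contained route, and it is the one I would write up.
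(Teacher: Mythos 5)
Your proof is correct and is the standard argument via the canonical injections and projections, assembled into a pair of mutually inverse $F$-linear maps and then checked on both sides using the identities $\pi_V\iota_V=\mathrm{id}$, $\pi_W\iota_V=0$, etc. The paper itself gives no proof for (0.3.5), deferring to Rotman (p.~29--34), where essentially this same bookkeeping argument appears, so there is no divergence to report.
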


\begin{prop}
\hypertarget{0.3.6}{If} \(V\), \(W\) and \(U\) are \(FG\)-modules, then:
\[V\otimes (W\oplus U)\cong(V\otimes W)\oplus (V\otimes U)\]
\end{prop}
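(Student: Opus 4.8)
The plan is to take the standard $F$-vector-space isomorphism $V \otimes (W \oplus U) \cong (V \otimes W) \oplus (V \otimes U)$ and verify that it is compatible with the $G$-actions, so that it upgrades to an $FG$-isomorphism.

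First I would pin the map down explicitly. Writing a general element of $W \oplus U$ as a pair $(w,u)$, define $\phi$ on simple tensors by $\phi\bigl(v \otimes (w,u)\bigr) = (v \otimes w,\, v \otimes u)$ and extend by linearity; well-definedness follows from the universal property of the tensor product applied to the $F$-bilinear map $(v,(w,u)) \mapsto (v\otimes w,\, v\otimes u)$ (equivalently, one checks the formula on a fixed basis, in the spirit of the definition of the $FG$-module tensor product). A candidate inverse $\psi$ is given on simple tensors by $\psi(v\otimes w,\, v'\otimes u) = v \otimes (w,0) + v' \otimes (0,u)$, well defined by the same principle applied to each summand. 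Using the identity $v \otimes (w,u) = v\otimes(w,0) + v\otimes(0,u)$, a short computation on simple tensors (which span both spaces) gives $\psi\circ\phi = \mathrm{id}$ and $\phi\circ\psi = \mathrm{id}$, so $\phi$ is an $F$-linear isomorphism.

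It then remains to check that $\phi$ is a $G$-map. By the definition of the $FG$-module tensor product, $g$ acts on $V\otimes(W\oplus U)$ by $g\cdot\bigl(v\otimes(w,u)\bigr) = (g\cdot v)\otimes(g\cdot w,\, g\cdot u)$ and on $(V\otimes W)\oplus(V\otimes U)$ coordinatewise, with $g$ acting on each tensor factor in the same fashion. Chasing a simple tensor around the relevant square yields
\[\phi\bigl(g\cdot(v\otimes(w,u))\bigr) = \bigl((g\cdot v)\otimes(g\cdot w),\,(g\cdot v)\otimes(g\cdot u)\bigr) = g\cdot\phi\bigl(v\otimes(w,u)\bigr),\]
and since simple tensors span and every map in sight is $F$-linear, $\phi$ intertwines the two $G$-actions on all of $V\otimes(W\oplus U)$. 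Combined with the previous paragraph, $\phi$ is the desired $FG$-isomorphism.

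I do not anticipate a genuine obstacle here. The only step that needs care is the well-definedness of maps prescribed on simple tensors, which is handled uniformly by the universal property of the tensor product (or, if one prefers, by working with an explicit basis as in the definition of the $FG$-module tensor product); everything else is a routine diagram chase performed on spanning elements.
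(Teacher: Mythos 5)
Your proof is correct: establish the standard $F$-linear isomorphism via the universal property and then verify $G$-equivariance on simple tensors, which is exactly the routine argument. The paper itself omits the proof of this proposition, deferring to \cite{rotman_introduction_1979}, so there is no in-text argument to compare against; your writeup fills that gap in the expected way.
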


If \(G\) is a direct product, then it can also give rise to a different \(FG\)-module tensor product, which we define below.

\begin{definition}
\hypertarget{0.3.7}{Let} \(G_1,...\,,G_m\) be groups and let \(V_i\) be a \(FG_i\)-module, for all \(1\leq i \leq m\). We define an action of \(G=G_1\times \cdots \times G_m\) on the vector space \(V_1\otimes \cdots \otimes V_m\) by \((g_1,...\,,g_m)\cdot (v_1\otimes \cdots \otimes v_m)=g_1\cdot v_1\otimes \cdots \otimes g_m\cdot v_m\), this action makes \(V_1\otimes \cdots \otimes V_m\) a \(FG\)-module.
\end{definition}

The next result allows us to relate a \(FG\)-module of a direct product to that of a central product, a proof is found in (\cite{gorenstein_finite_1980}, p.102).

\begin{prop}
\hypertarget{0.3.8}{If} \(G\) is a group and \(N\) is a normal subgroup, then \(F[G/N]\)-modules are in one-to-one correspondence with \(FG\)-modules on which \(N\) acts trivially. 
\end{prop}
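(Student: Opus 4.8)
The plan is to exhibit two mutually inverse constructions: one that turns an $F[G/N]$-module into an $FG$-module on which $N$ acts trivially (inflation along the quotient homomorphism $\pi\colon G\to G/N$), and one that descends the action of such an $FG$-module back down to the quotient. The proposition is then the statement that these constructions are inverse to each other.

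First I would describe inflation. Given an $F[G/N]$-module $W$, define a $G$-action on the same underlying $F$-vector space by $g\cdot w := (gN)\cdot w$. Checking the module axioms of \hyperlink{0.3.1}{(0.3.1)} is immediate: axiom (iii) holds because $\pi$ is a homomorphism, and the remaining axioms are inherited directly from those for $W$. Moreover $N$ acts trivially on this $FG$-module, since $nN = 1_{G/N}$ for every $n\in N$, so the construction does land in the claimed class of modules.

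Conversely, suppose $V$ is an $FG$-module on which $N$ acts trivially. Define an action of $G/N$ on the underlying vector space by $(gN)\cdot v := g\cdot v$. The only point requiring any care is well-definedness: if $gN = g'N$ then $g' = gn$ for some $n\in N$, whence $g'\cdot v = g\cdot(n\cdot v) = g\cdot v$ because $N$ acts trivially. Once this is settled, the $F[G/N]$-module axioms follow at once from those for $V$.

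Finally I would verify that the two assignments are mutually inverse — composing them in either order returns the original action on the same underlying vector space — which yields the asserted one-to-one correspondence. I would also note that an $F$-linear map $V\to V'$ is carried to itself by both constructions and is an $FG$-homomorphism for the original actions precisely when it is an $F[G/N]$-homomorphism for the descended ones, so the correspondence is compatible with isomorphism (indeed it is an equivalence of the associated module categories). I do not expect any genuine obstacle here: the result is a formal consequence of the universal property of $\pi\colon G\to G/N$, and the sole thing to be vigilant about is the well-definedness check in the descent direction.
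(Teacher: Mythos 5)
Your proposal is correct, and it is the standard inflation/descent argument via the quotient homomorphism $\pi\colon G\to G/N$, with the only subtle point — well-definedness of the descended action — handled properly. The paper itself omits the proof and simply cites Gorenstein for it, and the cited proof proceeds in essentially the same way, so there is no divergence to report.
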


If \(K\) is a finite field extension of \(F\), then \(K\) can be viewed as a \(F\)-vector space and if \(V\) is a \(n\)-dimensional \(F\)-vector space, we may then define the vector space tensor product \(V\otimes_F K\). Moreover, this can be viewed as a \(n\)-dimensional vector space over \(K\) as follows. If \(\{v_1,...\,,v_n\}\) is a \(F\)-basis for \(V\) and \(K\)-multiplication on \(V\otimes_F K\) is defined by \(k'\cdot (v\otimes k)=v\otimes k'k\), then \(V\otimes_F K\) is spanned by \(\{v_i\otimes 1|\, 1\leq i \leq n\}\) over \(K\) and these vectors are clearly \(K\)-linearly independent. This construction can be thought of as a way of extending the scalars of \(V\) to incorporate \(K\).

\begin{definition}
\hypertarget{0.3.9}{Let} \(K\) be a finite field extension of \(F\) and let \(V\) be a \(FG\)-module. We define \(V^K\) to be the \(KG\)-module \(V\otimes_F K\) on which \(g\in G\) acts by \(g\cdot (v\otimes k)=(g\cdot v)\otimes k\). 
\end{definition}

If \(V\) is an irreducible \(FG\)-module, it does not immediately follow that \(V^K\) is an irreducible \(KG\)-module. This motivates the following definition.

\begin{definition}
\hypertarget{0.3.10}{An} irreducible \(FG\)-module \(V\) is said to be \textit{absolutely irreducible} if \(V^K\) is an irreducible \(KG\)-module for every field \(K\) containing \(F\). 
 \end{definition}
 
The next lemma is a key result for proving absolute irreducibility and it will be used many times throughout our paper. See (\cite{curtis_representation_1988}, p.202-203) for a proof. 

\begin{lemma}
\hypertarget{0.3.11}{If} \(V\) is an irreducible \(FG\)-module, then the following are equivalent.
\begin{enumerate}[label=(\roman*),ref=(\roman*)]
\item \(V\) is absolutely irreducible.
\item \(End_{FG}(V)=F\).
\item \(C_{GL(V)}(G)=F^{\times}\).
\end{enumerate}
\end{lemma}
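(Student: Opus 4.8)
The plan is to prove the cycle of implications (i) $\Rightarrow$ (ii) $\Rightarrow$ (iii) $\Rightarrow$ (i). The ingredient that does most of the work is Schur's Lemma together with the hypothesis that $F$ is finite (so finite division rings are fields, by Wedderburn).

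For (i) $\Rightarrow$ (ii): Since $V$ is irreducible, $D := \mathrm{End}_{FG}(V)$ is a division ring by Schur's Lemma; since $F$ is finite and $\dim_F V < \infty$, $D$ is a finite division ring, hence a finite field by Wedderburn's little theorem, and it contains $F$ in its centre (scalar maps are always $FG$-homomorphisms). Suppose $D \supsetneq F$, say $[D:F] = d > 1$. The key observation is that $V$ is a $D$-vector space, and the $G$-action is $D$-linear, so $V$ is a $DG$-module; moreover $D \cong F(\alpha)$ is a (finite, hence separable) field extension of $F$, and one can embed $D$ into $\overline{F}$ and consider $V^{D}$ (or rather the base change $V \otimes_F D$). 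Decomposing $V \otimes_F D$ as a $DG$-module and using that $D$ sits inside it acting on one side, one finds that $V \otimes_F D$ is not irreducible — concretely, $V \otimes_F D \cong V_1 \oplus \cdots \oplus V_d$ as $DG$-modules (or it has an $\mathrm{End}$-ring strictly larger than $D$, forcing reducibility by a dimension count), contradicting absolute irreducibility. I would present this as: if $V$ were absolutely irreducible then $V \otimes_F D$ would be irreducible over $D$, but $\mathrm{End}_{DG}(V \otimes_F D) \supseteq D \otimes_F D \cong D^{\,d}$, which is not a division ring when $d > 1$, contradiction. Hence $d = 1$ and $\mathrm{End}_{FG}(V) = F$.

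For (ii) $\Rightarrow$ (iii): An element $g \in GL(V)$ centralises $G$ precisely when $g$, viewed as an $F$-linear map, commutes with every element of $G$, i.e. when $g \in \mathrm{End}_{FG}(V)$ and $g$ is invertible. So $C_{GL(V)}(G) = \mathrm{End}_{FG}(V)^{\times} = F^{\times}$ immediately from (ii). (The inclusion $F^{\times} \subseteq C_{GL(V)}(G)$ is automatic.)

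For (iii) $\Rightarrow$ (i): This is the only direction where I expect to do real work, and it is the main obstacle. Here one must show that if $V$ is irreducible with $C_{GL(V)}(G) = F^{\times}$, then for every extension field $K \supseteq F$ the module $V^K$ is irreducible. The cleanest route is again via endomorphism rings: one shows $\mathrm{End}_{KG}(V^K) \cong \mathrm{End}_{FG}(V) \otimes_F K = F \otimes_F K = K$, using that base change commutes with $\mathrm{Hom}$ over a field (a standard flatness/dimension argument: $\dim_K \mathrm{Hom}_{KG}(V^K, V^K) = \dim_F \mathrm{Hom}_{FG}(V,V)$). From $\mathrm{End}_{KG}(V^K) = K$ one deduces $V^K$ is irreducible: if $W \subsetneq V^K$ were a proper nonzero $KG$-submodule, then since $KG$ acts on the finite-dimensional $K$-space $V^K$ and — after passing to $\overline{K}$ if needed — semisimplicity fails or a nontrivial idempotent appears in $\mathrm{End}$, contradicting that the endomorphism ring is the field $K$. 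The subtlety is that $V^K$ need not be semisimple a priori, so I cannot simply split off $W$; instead I would argue that $\mathrm{End}_{KG}(V^K) = K$ forces $V^K$ to be indecomposable, and then combine this with the fact (from the (iii) hypothesis, pushed through the standard Brauer-character / Noether–Deuring style argument, or directly: $V$ irreducible with trivial centraliser algebra $F$ implies $FG$ acts on $V$ through a matrix algebra $M_n(F)$, i.e. the image of $FG$ in $\mathrm{End}_F(V)$ is all of $\mathrm{End}_F(V)$ by Jacobson density) that the image of $FG$ in $\mathrm{End}_F(V)$ is the full matrix algebra; tensoring up, the image of $KG$ in $\mathrm{End}_K(V^K)$ is the full matrix algebra $M_n(K)$, so $V^K$ is irreducible. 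The Jacobson density theorem is really the heart of this direction, so I would lead with it: (iii) gives $\mathrm{End}_{FG}(V) = F$, density gives that $FG \twoheadrightarrow \mathrm{End}_F(V)$, and this surjectivity is preserved under $- \otimes_F K$, yielding irreducibility of $V^K$ for all $K$. I expect the write-up to be short once Jacobson density is invoked; the danger is circular reasoning, so I must be careful that the density step uses only $\mathrm{End}_{FG}(V) = F$ and irreducibility, not absolute irreducibility.
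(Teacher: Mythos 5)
The paper gives no proof of this lemma; it merely cites (Curtis--Reiner, p.~202--203), so there is nothing internal to compare against. Your argument is correct and is essentially the standard one found in that and similar references. The (i)$\Rightarrow$(ii) step via Schur, Wedderburn, and base change along $D=\mathrm{End}_{FG}(V)$ is sound: $\mathrm{End}_{DG}(V\otimes_F D)\cong D\otimes_F D$, and since $D/F$ is a finite extension of finite fields it is separable, so $D\otimes_F D$ has nontrivial idempotents when $[D:F]>1$, contradicting Schur applied to the assumed irreducibility of $V\otimes_F D$. The equivalence (ii)$\Leftrightarrow$(iii) is immediate once Schur makes $\mathrm{End}_{FG}(V)$ a division ring, so its nonzero elements are exactly its units. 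And the hard direction, which you correctly isolate, is handled by Jacobson density: $\mathrm{End}_{FG}(V)=F$ and irreducibility give $FG\twoheadrightarrow \mathrm{End}_F(V)$, surjectivity is preserved by $-\otimes_F K$, and a module on which the full matrix algebra $M_n(K)$ acts has no proper nonzero submodules. Two small remarks. First, your discarded route via $\mathrm{End}_{KG}(V^K)=K$ is worth a second look: the worry that this yields only indecomposability rather than irreducibility is valid over a general field, but here $F$ is finite, hence perfect, and base change of a semisimple module along a separable extension remains semisimple, so indecomposable plus semisimple would in fact give irreducible. Density is cleaner, though, and uses fewer background facts, so your final choice is the right one. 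Second, in the write-up you should state (iii)$\Rightarrow$(ii) explicitly before invoking density, since the density argument actually consumes hypothesis (ii), not (iii).
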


The following result states the absolute irreducibility of some of the classical groups that concern us most. See (\cite{kleidman_subgroup_1990}, p.50-51) for a proof.

\begin{prop}

\hypertarget{0.3.12}{If} \(V\) is a \(F\)-vector space with a classical form \(f\) or \(Q\), then:
\begin{enumerate}[label=(\roman*),ref=(\roman*)]
\item \(S(V,f)\) or \(S(V,Q)\) acts absolutely irreducibly on \(V\) if and only if it is not isomorphic to \(SO^{\pm}_{2}(q)\), where \(q\) is odd.
\item\(I(V,f)\) or \(I(V,Q)\) acts absolutely irreducibly on \(V\) if and only if it is not isomorphic to \(O^{+}_{2}(2)\) or \(O^{+}_{2}(3)\).
\end{enumerate}

\end{prop}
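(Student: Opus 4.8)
The plan is to use the equivalence (0.3.11): an irreducible $FG$-module $V$ is absolutely irreducible if and only if $C_{GL(V)}(G)=F^{\times}$ (equivalently $\mathrm{End}_{FG}(V)=F$). So for each group $G$ in the statement I must verify that $V$ is $FG$-irreducible and that its centraliser in $GL(V)$ consists only of scalars. It suffices to do this for $G=S(V,f)$ (respectively $S(V,Q)$): since $S(V,f)\unlhd I(V,f)$, absolute irreducibility for $S(V,f)$ forces it for $I(V,f)$, because an $FI(V,f)$-submodule is a fortiori an $FS(V,f)$-submodule and $C_{GL(V)}(I(V,f))\subseteq C_{GL(V)}(S(V,f))$. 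Thus the work splits into a generic argument for $S(V,f)$ and a hand inspection of $I(V,f)$ in the few cases where $S(V,f)$ fails to be absolutely irreducible.

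The generic centraliser computation rests on one idea: each of these groups contains, for every $v$ in a suitable set $S$ of vectors, an element $g_v\in G$ with $\mathrm{im}(g_v-1)=\langle v\rangle$ one-dimensional; any $\phi\in C_{GL(V)}(G)$ commutes with $g_v$, hence preserves $\langle v\rangle$, so $\phi(v)=\mu_v v$. If $S$ spans $V$ and is connected, in the sense that one can move between its vectors by steps $v\mapsto v'$ with $v,v',v+v'\in S$ and $v,v'$ independent, then the scalars $\mu_v$ must all agree and $\phi\in F^{\times}$. For $G=SL(V)$ take $S=V\setminus\{0\}$ with $g_v$ a transvection of direction $v$; for $G=Sp(V)$ take the same $S$, with $g_v$ a symplectic transvection $x\mapsto x+f(x,v)v$; for $G=SU(V)$ take $S$ to be the set of isotropic vectors, with $g_v$ a unitary transvection (whose direction must be isotropic). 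In each case irreducibility comes for free: $SL(V)$ and $Sp(V)$ are transitive on $V\setminus\{0\}$, and for $SU(V)$ a nonzero submodule containing $w\neq0$ contains $f(w,v)v$ for every isotropic $v$ not perpendicular to $w$, and those span.

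For the orthogonal groups with $q$ odd and $\dim V\geq3$ the element $g_v$ is produced indirectly. The isometry group $O(V,Q)$ contains the reflection $r_v$ in each non-isotropic $v$, with $\mathrm{im}(r_v-1)=\langle v\rangle$, so anything commuting with $O(V,Q)$ preserves all non-isotropic lines and is scalar; this settles $I(V,Q)$ at one stroke. For $S(V,Q)=SO(V,Q)$, of index $2$, one instead uses products $r_u r_v$ of reflections in orthogonal non-isotropic pairs, which lie in $SO(V,Q)$ and have $\mathrm{im}(r_u r_v-1)=\langle u,v\rangle$; a commuting $\phi$ then preserves every non-degenerate $2$-plane, and when $\dim V\geq3$ every non-isotropic line is an intersection of two such planes, so $\phi$ is again scalar. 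Irreducibility of $V$ for these groups follows from the transitivity of the isometry group on totally isotropic and on non-degenerate subspaces of a fixed dimension (Witt's theorem, in the vein of (0.1.13)). In even characteristic the plus- and minus-type orthogonal groups of dimension at least $4$ are treated the same way with the appropriate root (Siegel) elements, and the $\circ$-type does not arise since an odd-dimensional quadratic form in characteristic $2$ has a nonzero radical, which is invariant, so $V$ is already reducible.

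What remains, and where the genuine difficulty lies, is the two-dimensional orthogonal groups. For $SO^{+}_2(q)$ with $q$ odd, in a hyperbolic basis $\{x,y\}$ one has $SO^{+}_2(q)=\{\mathrm{diag}(\lambda,\lambda^{-1}):\lambda\in F^{\times}\}$, which stabilises $\langle x\rangle$, so $V$ is not even irreducible; for $SO^{-}_2(q)$ with $q$ odd, $SO^{-}_2(q)\cong C_{q+1}$ is irreducible over $F$ --- a generator has no eigenvalue in $F$, since such an eigenvalue $\lambda$ would satisfy $\lambda^2=1$ by anisotropy of the form, hence $\lambda=\pm1$, impossible for a generator of $C_{q+1}$ --- but it diagonalises over $\mathbb{F}_{q^2}$, so $\mathrm{End}_{FG}(V)=\mathbb{F}_{q^2}\supsetneq F$ and $V$ is not absolutely irreducible; these give the exceptions in part~(i). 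Passing to the full groups for part~(ii): $O^{-}_2(q)\cong D_{2(q+1)}$ is always absolutely irreducible, because the adjoined reflection inverts the cyclic part and so acts $\mathbb{F}_{q^2}/F$-semilinearly on $V$, dragging any commuting endomorphism back into $F$; and $O^{+}_2(q)\cong D_{2(q-1)}$ is absolutely irreducible once $q\geq4$, whereas for $q\in\{2,3\}$ it is generated by scalars together with the swap $x\leftrightarrow y$, which fixes $x+y$, so $V$ is reducible --- exactly the two extra exceptions $O^{+}_2(2)$ and $O^{+}_2(3)$. The main obstacle is thus not a single hard step but this small-dimensional and even-characteristic bookkeeping: the orthogonal groups in dimensions $2$ and $3$ must be analysed case by case, and the result checked against the exceptional isomorphisms in (0.2.10), so that every genuine exception is found and none is spuriously added.
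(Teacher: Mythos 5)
The paper does not actually prove \hyperlink{0.3.12}{(0.3.12)}: it simply cites Kleidman--Liebeck (\emph{The Subgroup Structure of the Finite Classical Groups}, pp.~50--51), so there is no internal proof against which to compare yours. Taken on its own terms, your outline is the standard argument and correctly arrives at the same list of exceptions: transvections (respectively symplectic, unitary transvections; reflections or Siegel elements for the orthogonal cases) furnish elements $g_v$ with $\mathrm{im}(g_v-1)$ a prescribed line, and commuting with these pins down $C_{GL(V)}(G)$, after which only the $2$-dimensional orthogonal groups need hand inspection.

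A few points would need firming up before this is a complete proof. First, irreducibility of $S(V,Q)$ for $\dim V\geq 3$ does not literally follow from Witt's theorem, which gives transitivity of the full isometry group $I(V,Q)$, not of its index-$2$ subgroup $SO$; the usual fix is to work with $\Omega(V,Q)$ and its root (Siegel) elements, from which irreducibility of $SO\supseteq\Omega$ follows. Second, your ``connectedness'' criterion is clean for $SL$ and $Sp$, but for $SU$ and the orthogonal groups the relevant set of vectors is isotropic or non-isotropic respectively, and the claim that the eigenvalues $\mu_v$ glue to a single scalar deserves a sentence (e.g.\ a linear-algebra argument on eigenspaces, rather than purely combinatorial connectivity). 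Third, you do not mention $SO^\pm_2(q)$ for $q$ even: here $SO=O$ in characteristic $2$, so $SO^+_2(2)=O^+_2(2)\cong C_2$ is actually reducible; the ``iff'' in part (i) survives only because $SO^+_2(2)\cong C_2\cong SO^+_2(3)$ as abstract groups, so it is (literally) ``isomorphic to $SO^+_2(q)$ with $q$ odd.'' That reading is what the statement requires, but it is worth making explicit, since otherwise the even-characteristic, dimension-$2$ case looks like a counterexample. Finally, the trivial $n=1$ cases ($SL_1$, $SU_1$, $O^\circ_1$) fall outside the transvection argument and should be dispatched separately (they are absolutely irreducible because $V$ is one-dimensional).
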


\subsubsection*{Representation Theory}

Let \(G\) be a group and \(F\) a field. We will use the terminology \textit{\(F\)-representation} to refer to a homomorphism \(\rho:G\longrightarrow GL(V,F)\) and the terminology \textit{matrix representation} to refer to a homomorphism \(\rho:G\longrightarrow GL_n(q)\). There is an obvious correspondence between the two and when working with representations we will often switch between them, making use of whichever eases notation in a given context. There is also a correspondence between \(FG\)-modules and \(F\)-representations of \(G\). If \(V\) is a \(FG\)-module, then every element \(g\in G\) induces a linear map \(\varphi_g:V\longrightarrow V\) with inverse \(\varphi_{g^{-1}}\), thus \(\rho: G\longrightarrow GL(V,F);\,g\mapsto \varphi_g\) is a \(F\)-representation. On the other hand, if \(\rho: G\longrightarrow GL(V,F)\) is a \(F\)-representation, then the map \(G\times V\longrightarrow V:\,(g,v)\mapsto \rho(g)(v)\) is a \(G\)-action satisfying \hyperlink{0.3.1}{(0.3.1)}, thus \(V\) is a \(FG\)-module.

The reader should be familiar with the basic results of representation over \(\mathbb{C}\). However, our interest is in representations over a finite field \(F\) of characteristic \(p\) and it is not true in general that the same results hold over such a field. There is however, a ‘nice’ case where many of the fundamental result of \(\mathbb{C}\)-representation theory hold; this is when \(p\nmid |G|\). As this happens to be the only case we will need to apply such results to, we will not delve into the background theory here, but the reader is referred to (\cite{isaacs_character_2006}, p.262-269) for a gentle introduction to the theory of representations over fields of prime characteristic and how they relate to \(\mathbb{C}\)-representations. The reader is also referred to the same reference for a concrete justification that when \(F\) is a field of characteristic \(p\) and \(G\) is a group such that \(p\nmid |G|\), the following three results hold.

\begin{prop}
\hypertarget{0.3.13}{A} group \(G\) has the same number of irreducible \(F\)-representations as it has conjugacy classes.
\end{prop}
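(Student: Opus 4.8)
The plan is to translate the statement into the language of the group algebra $FG$ and to count two quantities in parallel. First I would invoke Maschke's theorem: since $p \nmid |G|$, the algebra $FG$ is semisimple, so by the Wedderburn--Artin theorem it decomposes as $FG \cong \bigoplus_{i=1}^{r} M_{n_i}(D_i)$ for some finite-dimensional division $F$-algebras $D_1,\dots,D_r$. Up to equivalence the irreducible $F$-representations of $G$ are the same data as the simple $FG$-modules, and these correspond bijectively to the Wedderburn components (the $i$-th simple module being the space of column vectors over $D_i$); so the number of irreducible $F$-representations of $G$ is exactly $r$. The role of the hypothesis $p \nmid |G|$ is precisely to make this decomposition available.

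Next I would identify the number of conjugacy classes of $G$ with $\dim_F Z(FG)$. A short direct check shows that $\sum_g \lambda_g g \in FG$ is central if and only if $g \mapsto \lambda_g$ is constant on each conjugacy class, so the class sums $\widehat{C} = \sum_{g \in C} g$ (over conjugacy classes $C$) form an $F$-basis of $Z(FG)$. Combining this with the Wedderburn decomposition and the identity $Z\big(M_{n_i}(D_i)\big) = Z(D_i)$ gives $\dim_F Z(FG) = \sum_{i=1}^{r} [Z(D_i):F]$. When $F$ is a splitting field for $G$ — equivalently, when each irreducible in play is absolutely irreducible — every $D_i$ equals $F$, so each term $[Z(D_i):F]$ is $1$ and $\dim_F Z(FG) = r$, which is the claimed equality.

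I expect the only genuine subtlety to be the hypothesis on the field rather than any of the algebra above. Over an arbitrary field of characteristic $p$, and in particular over an arbitrary finite field $\mathbb{F}_q$ (where $Z(D_i) = D_i$ by Wedderburn's little theorem on finite division rings), one only obtains $r \le \sum_i [D_i:F]$, the right-hand side being the number of conjugacy classes, and equality can fail in small cases such as $\mathbb{Z}/3$ over $\mathbb{F}_2$, where $\mathbb{F}_2[\mathbb{Z}/3] \cong \mathbb{F}_2 \oplus \mathbb{F}_4$. So the step that requires care is recording that $F$ must be taken to be a splitting field for $G$ — which, since $p \nmid |G|$, one may always arrange by a finite extension of $F$ — or else reading the statement as a count of absolutely irreducible representations. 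With that caveat in place the argument is the two-sided count just described; alternatively one could phrase the whole thing character-theoretically, showing that the irreducible characters (which for $p \nmid |G|$ are reductions of the ordinary complex irreducible characters) form a basis of the space of $F$-valued class functions on $G$, whose dimension is the number of conjugacy classes.
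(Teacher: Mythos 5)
The paper does not actually present a proof of this proposition; (0.3.13)--(0.3.15) are stated together with a citation to Isaacs, under the standing hypothesis that \(F\) has characteristic \(p\) with \(p\nmid |G|\), so there is no in-paper argument to compare against. Your Wedderburn--Artin route is the standard proof and is correct as far as it goes: Maschke gives semisimplicity of \(FG\); Wedderburn--Artin gives \(FG\cong\bigoplus_{i=1}^{r}M_{n_i}(D_i)\); the number of isomorphism classes of simple \(FG\)-modules is the number \(r\) of Wedderburn factors; and the class-sum basis identifies the number of conjugacy classes with \(\dim_F Z(FG)=\sum_i[Z(D_i):F]\). The caveat you raise is the substantive point: equality of the two counts holds precisely when every \(D_i=F\), i.e.\ when \(F\) is a splitting field for \(G\), and your example \(\mathbb{F}_2[C_3]\cong\mathbb{F}_2\times\mathbb{F}_4\) (two simple modules, three classes) correctly shows the proposition as literally stated fails without that hypothesis. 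So the paper's statement is missing a splitting-field assumption, and you were right to flag it rather than paper over it. What makes the omission harmless in context is that (0.3.13)--(0.3.15) are invoked only in Section~6, where \(F=\mathbb{F}_{p^e}\) is chosen with \(p\neq r\) and \(p^e\equiv 1\pmod{\,|Z(R)|\,}\) precisely so that \(F\) contains the relevant roots of unity, and the high-degree irreducibles of the symplectic-type \(r\)-group \(R\) are constructed explicitly over that \(F\) and shown to be absolutely irreducible; in that setting \(F\) is a splitting field for \(R\) and the count comes out exactly as claimed.
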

 
\begin{prop}
\hypertarget{0.3.14}{A} group \(G\) has \(|G/G'|\) \(1\)-dimensional irreducible \(F\)-representations.
\end{prop}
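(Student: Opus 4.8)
The plan is to reduce the count of \(1\)-dimensional irreducible \(F\)-representations of \(G\) to the arithmetic task of counting homomorphisms from the abelianisation \(G/G'\) into \(F^{\times}\), and then to carry out that count using \(p\nmid|G|\) together with the reading of this subsection, set up above, in which \(F\) plays the role of \(\mathbb{C}\), i.e.\ contains enough roots of unity to be a splitting field.

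First I would note that a \(1\)-dimensional \(F\)-representation of \(G\) is precisely a homomorphism \(\rho\colon G\longrightarrow GL(V,F)\) with \(\dim_F V=1\); identifying \(GL(V,F)\) with \(F^{\times}\), this is the same datum as a homomorphism \(G\longrightarrow F^{\times}\). Any such representation is irreducible, a \(1\)-dimensional space having no proper non-zero subspaces, and two of them are \(FG\)-isomorphic if and only if they coincide as homomorphisms into \(F^{\times}\) (conjugation in \(GL_1\) is trivial since \(F^{\times}\) is abelian). So the quantity we want equals \(|\mathrm{Hom}(G,F^{\times})|\).

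Next, since \(F^{\times}\) is abelian, every homomorphism \(G\to F^{\times}\) is trivial on \(G'\) and hence factors uniquely through \(G/G'\), while conversely every homomorphism \(G/G'\to F^{\times}\) inflates to one on \(G\); this yields \(\mathrm{Hom}(G,F^{\times})\cong\mathrm{Hom}(G/G',F^{\times})\), and it remains to show \(|\mathrm{Hom}(G/G',F^{\times})|=|G/G'|\). Put \(A:=G/G'\), a finite abelian group whose order divides \(|G|\) and so is coprime to \(p\). Writing \(A\cong C_{d_1}\times\cdots\times C_{d_r}\) by the structure theorem, we get \(\mathrm{Hom}(A,F^{\times})\cong\prod_i\{x\in F^{\times}:x^{d_i}=1\}\); since each \(d_i\) is coprime to \(p=\mathrm{char}\,F\) the polynomial \(x^{d_i}-1\) is separable and so has exactly \(d_i\) roots in \(\overline{F}\), and as \(F\) is large enough to contain them all, each factor has size \(d_i\), whence the product has size \(d_1\cdots d_r=|A|=|G/G'|\). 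Alternatively, and more in the spirit of the machinery just assembled, one can finish by applying \hyperlink{0.3.13}{(0.3.13)} to \(A\): it has \(|A|\) conjugacy classes, hence \(|A|\) irreducible \(F\)-representations, and over a splitting field every irreducible representation of an abelian group is \(1\)-dimensional (its image is a commutative algebra of operators acting irreducibly), so these \(|A|\) are exactly the homomorphisms \(A\to F^{\times}\).

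The one point requiring care is this final equality: it genuinely needs \(F\) to contain the relevant roots of unity, and this is where both hypotheses pull their weight — \(p\nmid|G|\) makes \(x^{d_i}-1\) separable so the \(d_i\) roots exist and are distinct in \(\overline{F}\), and the ``\(F\) like \(\mathbb{C}\)'' assumption puts them inside \(F\). Dropping the latter one would only obtain \(\prod_i\gcd(d_i,|F^{\times}|)\), which can be strictly smaller than \(|G/G'|\); everything else in the argument is formal.
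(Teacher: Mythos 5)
The paper provides no proof of this (or of \hyperlink{0.3.13}{(0.3.13)} and \hyperlink{0.3.15}{(0.3.15)}); all three are cited to (\cite{isaacs_character_2006}, p.262--269) under the blanket hypothesis \(p\nmid|G|\), so there is no ``paper proof'' to compare against. Your argument is correct and is the standard one: identify a \(1\)-dimensional representation with a homomorphism \(G\to F^{\times}\), observe that isomorphism is equality because conjugation in \(GL_1\) is trivial, pass to the abelianisation since \(F^{\times}\) is abelian, and count \(\mathrm{Hom}(C_{d_1}\times\cdots\times C_{d_r},F^{\times})\) by counting \(d_i\)-th roots of unity. Your final caveat is not a weakness in your proof but a genuine and worthwhile observation about the paper's statement: \(p\nmid|G|\) alone is not sufficient, and without \(F\) containing the relevant roots of unity one only obtains \(\prod_i\gcd(d_i,|F^{\times}|)\), which can be strictly less than \(|G/G'|\). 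The missing splitting-field hypothesis is what the paper's surrounding prose about \(F\) playing the role of \(\mathbb{C}\) is implicitly supplying, and it holds in the only place these results are actually invoked (Section 6), where the running constraint \(p^e\equiv 1\pmod{|Z(R)|}\) forces \(\mathbb{F}_{p^e}\) to contain the needed roots of unity for the symplectic-type \(r\)-groups under consideration. So: proof correct, and the hypothesis you identify is the right reading of what the paper intends but does not say.
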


\begin{prop}
\hypertarget{0.3.15}{If} \(G\) is a group and \(\{\rho_i|1\leq i \leq k\}\) is a set of representatives of the irreducible \(F\)-representations of \(G\), then:
\[|G|=\sum\limits_{i=1}^{k} deg(\rho_i)^2\]
\end{prop}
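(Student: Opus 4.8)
The plan is to extract the formula from the \emph{regular representation} of $G$, i.e.\ the group algebra $FG$ regarded as a module over itself. Its underlying $F$-vector space has the elements of $G$ as a basis, so $\dim_F(FG)=|G|$, and since $p\nmid|G|$ Maschke's theorem applies: every $FG$-module is completely reducible. In particular $FG\cong\bigoplus_{i=1}^{k}V_i^{\oplus m_i}$, where $V_i$ is the module affording $\rho_i$ and $m_i\geq 0$; no irreducible is omitted, since any irreducible module is a homomorphic image of $FG$ (send $1_G$ to a nonzero vector) and hence a direct summand by semisimplicity. Comparing $F$-dimensions already gives $|G|=\sum_{i=1}^{k}m_i\,deg(\rho_i)$, so the whole problem reduces to identifying the multiplicity as $m_i=deg(\rho_i)$.

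To compute $m_i$ I would evaluate $Hom_{FG}(FG,V_i)$ in two ways. First, an $FG$-homomorphism out of $FG$ is determined freely by the image of $1_G$, so $\phi\mapsto\phi(1_G)$ is an $F$-linear isomorphism $Hom_{FG}(FG,V_i)\cong V_i$; in particular $\dim_F Hom_{FG}(FG,V_i)=deg(\rho_i)$. Second, feeding the decomposition of $FG$ into the additivity of $Hom_{FG}$ in its first argument (this is \hyperlink{0.3.5}{(0.3.5)}) and then invoking Schur's lemma, so that $Hom_{FG}(V_j,V_i)=0$ for $j\neq i$, yields $Hom_{FG}(FG,V_i)\cong End_{FG}(V_i)^{\oplus m_i}$. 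Equating dimensions gives $deg(\rho_i)=m_i\cdot\dim_F End_{FG}(V_i)$.

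The one nontrivial input, and what I expect to be the real obstacle, is that $End_{FG}(V_i)=F$ for every $i$. By \hyperlink{0.3.11}{(0.3.11)} this is exactly absolute irreducibility of every $V_i$, i.e.\ the assertion that $F$ is a splitting field for $G$. Over an arbitrary field this can fail: $End_{FG}(V_i)$ is only forced to be a finite division ring, hence (by Wedderburn) a field possibly strictly larger than $F$, in which case $m_i=deg(\rho_i)/\dim_F End_{FG}(V_i)$ is strictly smaller than $deg(\rho_i)$ and the clean sum of squares breaks down; so the argument genuinely relies on working in (or passing to) a splitting field. Granting $End_{FG}(V_i)=F$, however, the previous paragraph gives $m_i=deg(\rho_i)$, and substituting into $|G|=\sum_i m_i\,deg(\rho_i)$ delivers $|G|=\sum_{i=1}^{k}deg(\rho_i)^2$. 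A shorter but essentially equivalent route is to invoke the Wedderburn--Artin structure theorem directly: under the splitting-field hypothesis $FG\cong\bigoplus_{i=1}^{k}M_{n_i}(F)$ with $n_i=deg(\rho_i)$, and taking $F$-dimensions on both sides is the formula.
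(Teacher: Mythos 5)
Your argument is correct and is the standard one (regular representation plus a $\mathrm{Hom}$-count, which is Wedderburn--Artin in disguise); the paper itself omits the proof and simply cites Isaacs, so there is no internal argument to compare against. What is most valuable is the obstacle you flag, because it is a genuine imprecision in the way the paper frames these results: the hypothesis $p\nmid|G|$ is \emph{not} by itself sufficient for the degree formula. As you compute, Maschke gives $FG\cong\bigoplus_i V_i^{\oplus m_i}$ and the evaluation $\mathrm{Hom}_{FG}(FG,V_i)\cong V_i$ together with additivity of $\mathrm{Hom}$ and Schur gives $m_i\cdot\dim_F\mathrm{End}_{FG}(V_i)=\deg\rho_i$, hence
\[|G|=\sum_i m_i\deg\rho_i=\sum_i\frac{\deg(\rho_i)^2}{\dim_F\mathrm{End}_{FG}(V_i)},\]
which collapses to the sum of squares exactly when every $\mathrm{End}_{FG}(V_i)=F$, i.e.\ when $F$ is a splitting field. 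A concrete witness that $p\nmid|G|$ alone does not deliver this: take $G=C_7$ over $F=\mathbb{F}_3$. Then $3\nmid 7$, but the multiplicative order of $3$ modulo $7$ is $6$, so $\mathbb{F}_3C_7\cong\mathbb{F}_3\times\mathbb{F}_{3^6}$, the irreducible degrees are $1$ and $6$, and $\sum_i\deg(\rho_i)^2=1+36=37\neq 7$. The same example falsifies \hyperlink{0.3.13}{(0.3.13)} and \hyperlink{0.3.14}{(0.3.14)} as literally stated, since $C_7$ has $7$ conjugacy classes and $|G/G'|=7$ but only two irreducible $\mathbb{F}_3$-representations, only one of them one-dimensional. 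So the proposition really needs the added hypothesis that $F$ is a splitting field for $G$. In the one place these results are invoked in the paper --- the representation theory of symplectic-type $r$-groups in section 6, over $\mathbb{F}_{p^e}$ with $p^e\equiv 1\pmod{|Z(R)|}$ --- the splitting-field condition does hold, because the $r^m$-dimensional irreducibles are constructed explicitly and shown to be absolutely irreducible, so the downstream applications are sound; but you are right that this is an extra input rather than a consequence of $p\nmid|G|$.
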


Recall Schur's Lemma. See (\cite{serre_linear_1996}, p.13) for a proof.

\begin{lemma}[Schur's Lemma]
Let \(G\) be a group, let \(V\) and \(W\) be \(F\)-vector spaces and let \(\rho_V:G\longrightarrow GL(V)\) and \(\rho_W:G\longrightarrow GL(W)\) be irreducible representations.
\begin{enumerate}[label=(\roman*),ref=(\roman*)]
\item If \(V\ncong W\), then \(Hom_{FG}(V,W)=\{0\}\).  
\item If \(V\cong W\) and \(F\) is algebraically closed, then \(Hom_{FG}(V,W)=\{\lambda\cdot id\,|\,\lambda\in F\}\).
\end{enumerate}

\end{lemma}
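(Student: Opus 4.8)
The plan is to use the standard module-theoretic argument resting on the observation that the kernel and image of an $FG$-homomorphism are themselves $FG$-submodules, so between irreducible modules they are forced to be trivial or everything. Throughout I view $V$ and $W$ as the $FG$-modules afforded by $\rho_V$ and $\rho_W$ via the correspondence recalled above, so that $Hom_{FG}(V,W)$ has its module-theoretic meaning.

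First I would prove part (i). Let $\phi\in Hom_{FG}(V,W)$ be nonzero. Then $\ker\phi$ is an $FG$-submodule of $V$ and $\operatorname{im}\phi$ is an $FG$-submodule of $W$. Irreducibility of $V$ forces $\ker\phi\in\{0,V\}$, and since $\phi\neq 0$ we get $\ker\phi=0$, so $\phi$ is injective. Irreducibility of $W$ forces $\operatorname{im}\phi\in\{0,W\}$, and since $\phi\neq 0$ we get $\operatorname{im}\phi=W$, so $\phi$ is surjective. Hence $\phi$ is an $FG$-isomorphism, contradicting $V\ncong W$; therefore $Hom_{FG}(V,W)=\{0\}$. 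I would then record the special case $W=V$ as a lemma for later use: by the same kernel/image argument (and finite-dimensionality of $V$), any nonzero element of $End_{FG}(V)$ is invertible.

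For part (ii), since $V\cong W$ as $FG$-modules, fixing such an isomorphism reduces the claim to showing $End_{FG}(V)=\{\lambda\cdot\mathrm{id}\mid\lambda\in F\}$. Let $\phi\in End_{FG}(V)$. Because $F$ is algebraically closed and $V$ is finite-dimensional, the characteristic polynomial of $\phi$ has a root $\lambda\in F$, i.e. $\phi$ has an eigenvalue $\lambda$. Then $\phi-\lambda\cdot\mathrm{id}\in End_{FG}(V)$ has nonzero kernel (the $\lambda$-eigenspace), so by the special case recorded above it cannot be invertible, hence $\phi-\lambda\cdot\mathrm{id}=0$, that is $\phi=\lambda\cdot\mathrm{id}$. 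The reverse inclusion is immediate since every scalar map commutes with the $G$-action.

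There is essentially no obstacle in this proof; the only step requiring care is the appeal to an eigenvalue in part (ii), and that is exactly where algebraic closedness (together with finite-dimensionality) is indispensable. Over a non-closed field $End_{FG}(V)$ may be a strictly larger division algebra over $F$, which is precisely the phenomenon that absolute irreducibility and the criterion $End_{FG}(V)=F$ in \hyperlink{0.3.11}{(0.3.11)} are designed to detect.
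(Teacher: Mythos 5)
Your proof is correct and is precisely the standard kernel/image plus eigenvalue argument that the paper's cited reference (Serre) uses; the paper itself omits the proof and only points to that reference. The only slight wrinkle is that the statement's conclusion $Hom_{FG}(V,W)=\{\lambda\cdot\mathrm{id}\}$ really presupposes $V=W$ (otherwise $\mathrm{id}$ is meaningless), and your move of fixing an isomorphism to identify $V$ with $W$ before invoking the eigenvalue argument is exactly the right way to make that precise.
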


Next we will explore when the image of a representation fixes a particular classical form on \(V\). In order to begin this discussion we need two definitions, the first of which is the analogue of absolute irreducibility for a \(F\)-representation. 

\begin{definition}
An irreducible representation \(\rho: G\longrightarrow GL(V,F)\)
is said to be \textit{absolutely irreducible} if the representation \(\rho_K: G\longrightarrow GL(V^K,K)\), where \(\rho_K(g)(v\otimes k)=\rho(g)(v)\otimes k\), is irreducible for every field \(K\) containing \(F\).
\end{definition}

\begin{definition}
Let \(\rho:G\longrightarrow GL_n(q)\) be a matrix representation. If \(\theta\in Aut(\mathbb{F}_q)\), then \(\rho^{\theta}:G\longrightarrow GL_n(q)\) is the representation defined by \(\rho^{\theta}(g)=(\rho(g))^{\theta}\).
\end{definition}

A proof of the next three results can be found in (\cite{kleidman_subgroup_1990}, p.48-56).

\begin{prop}
\hypertarget{0.3.19}{If} \(\rho:G\longrightarrow GL_n(p^e)\) is an absolutely irreducible representation, then:
\begin{enumerate}[label=(\roman*),ref=(\roman*)]
\item  \(\rho(G)\) fixes a unitary form if and only if \(e\) is even and \(\rho^{\theta}\) is equivalent to the dual representation \(\rho^{\ast}\), where \(\theta\) is the field automorphism of order two.
\item \(\rho(G)\) fixes a symplectic or non-degenerate symmetric bilinear form if and only if \(\rho\) is equivalent to the dual representation \(\rho^{\ast}\).

\end{enumerate}
\end{prop}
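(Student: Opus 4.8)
The plan is to convert the statement ``$\rho(G)$ fixes a classical form'' into a statement about intertwining operators, using the matrix descriptions of isometry groups in \hyperlink{0.1.18}{(0.1.18)}, and then to use absolute irreducibility via \hyperlink{0.3.11}{(0.3.11)} together with Schur's Lemma to pin down the form precisely. I will work with matrix representations throughout. As a preliminary, note that $\rho^{\ast}$ and $\rho^{\theta}$ are again absolutely irreducible: an intertwiner $T$ of $\rho^{\ast}$ satisfies $\rho(g)^{t}T=T\rho(g)^{t}$ for all $g$, so $T^{t}\in End_{FG}(\rho)=F$ by \hyperlink{0.3.11}{(0.3.11)}, whence $End_{FG}(\rho^{\ast})=F$; likewise an intertwiner $T$ of $\rho^{\theta}$ has $T^{\theta^{-1}}\in End_{FG}(\rho)=F$, so $End_{FG}(\rho^{\theta})=F$, and \hyperlink{0.3.11}{(0.3.11)} applies again.

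For (ii), identify $V^{\ast}$ with $V$ via the chosen basis, so $\rho^{\ast}(g)=(\rho(g)^{t})^{-1}$. By \hyperlink{0.1.18}{(0.1.18.i)}, $\rho(G)$ fixes a non-degenerate bilinear form iff there is an invertible $B$ with $\rho(g)B\rho(g)^{t}=B$ for all $g\in G$; rewriting this as $\rho(g)B=B\rho^{\ast}(g)$ shows it holds iff $B$ is an $FG$-isomorphism $\rho^{\ast}\to\rho$, i.e.\ iff $\rho\cong\rho^{\ast}$. Assume $\rho\cong\rho^{\ast}$. Since $\rho^{\ast}$ is absolutely irreducible and $End_{FG}(\rho)=F$, the space $Hom_{FG}(\rho^{\ast},\rho)$ is one-dimensional over $F$, so any such $B$ is unique up to a scalar. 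Transposing $\rho(g)B\rho(g)^{t}=B$ shows $B^{t}$ also has this property, so $B^{t}=\lambda B$ for some $\lambda\in F^{\times}$; transposing again forces $\lambda^{2}=1$. If $\lambda=1$ the associated form is symmetric and non-degenerate; if $\lambda=-1$ it is skew-symmetric, hence alternating, hence (being non-degenerate) symplectic. Either way $\rho(G)$ fixes a symplectic or a non-degenerate symmetric bilinear form. (When $q$ is even only $\lambda=1$ occurs and one gets a non-degenerate symmetric bilinear form; a finer look at the semilinear map $v\mapsto B(v,v)$ decides whether it is in fact alternating, but this refinement is not needed for the statement.)

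For (i), if $\rho(G)$ fixes a unitary form then such a form exists on $V$, so by \hyperlink{0.1.13}{(0.1.13.ii)} $q=p^{e}$ is a square, i.e.\ $e$ is even, and $\theta: x\mapsto x^{p^{e/2}}$ is the unique automorphism of order two. By \hyperlink{0.1.18}{(0.1.18.ii)}, fixing a non-degenerate sesquilinear form (with respect to $\theta$) means there is an invertible $B$ with $\rho(g)B\rho(g)^{t\theta}=B$ for all $g$; rewriting this as $\rho(g)B=B(\rho^{\theta})^{\ast}(g)$ shows it holds iff $B$ is an $FG$-isomorphism $(\rho^{\theta})^{\ast}\to\rho$, i.e.\ iff $\rho\cong(\rho^{\theta})^{\ast}=(\rho^{\ast})^{\theta}$, i.e.\ (applying $\theta$ and using $\theta^{2}=\mathrm{id}$) iff $\rho^{\theta}\cong\rho^{\ast}$. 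Conversely, if $e$ is even and $\rho^{\theta}\cong\rho^{\ast}$ the same computation produces such a $B$. To upgrade the form to a unitary one, use uniqueness up to scalar again: applying $(-)^{t\theta}$ to $\rho(g)B\rho(g)^{t\theta}=B$ shows $B^{t\theta}$ satisfies the same equation, so $B^{t\theta}=\lambda B$, and applying $(-)^{t\theta}$ once more gives $\lambda\lambda^{\theta}=1$. Thus $\lambda$ has norm $1$ for $\mathbb{F}_{q}/\mathbb{F}_{q^{1/2}}$, so by Hilbert's Theorem 90 (equivalently, a short counting argument over finite fields) $\lambda=c/c^{\theta}$ for some $c\in F^{\times}$; replacing $B$ by $cB$ makes it conjugate-symmetric and still invertible, so the form is unitary.

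The step I expect to be the main obstacle is the passage from ``$\rho$ is self-dual'' back to ``$\rho$ fixes a form of the correct type''. An abstract $FG$-isomorphism $\rho^{\ast}\to\rho$ (or $(\rho^{\theta})^{\ast}\to\rho$) only yields some invertible $B$; it is the uniqueness-up-to-scalar coming from absolute irreducibility, followed by the transpose-and-rescale arguments (and the Hilbert--90 normalisation in the unitary case), that forces $B$ to be symmetric, alternating, or conjugate-symmetric as needed. In even characteristic one must additionally take a little care over which classical form the symmetric matrix $B$ actually represents, though this does not affect the statement as given; everything else is a routine translation once the intertwiner dictionary is in place.
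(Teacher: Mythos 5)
The paper gives no proof of its own for this proposition, deferring to Kleidman--Liebeck pp.~48--56; the argument there runs along exactly the lines you take, namely encoding fixation of the form as an $FG$-intertwiner between $\rho$ and $(\rho^\theta)^\ast$ (resp.~$\rho^\ast$), using absolute irreducibility to make the intertwiner unique up to scalar, and then rescaling $B$ via the transpose or conjugate-transpose identity together with Hilbert~90 in the unitary case. Your proof is correct, and your parenthetical care in characteristic two (where $B^t=\pm B$ collapses to $B^t=B$, and deciding whether the resulting symmetric form is actually alternating would require examining the $G$-invariant kernel of $v\mapsto B(v,v)$) is well placed but, as you observe, not needed for the statement as given.
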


\begin{prop}
Let \(\rho:G\longrightarrow GL_n(q)\) be an absolutely irreducible representation. If \(\rho(G)\) fixes two symplectic, unitary or non-degenerate symmetric bilinear forms, then they are equal up to scalar multiplication.

\end{prop}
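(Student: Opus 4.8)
The plan is to convert ``\(\rho(G)\) fixes the form'' into a matrix identity and then read off a centralizer condition to which absolute irreducibility applies. Fix a basis of \(V\) and let \(B_1, B_2 \in GL_n(q)\) be the Gram matrices of the two invariant forms \(f_1, f_2\) with respect to it; these are invertible precisely because the forms are non-degenerate. By \hyperlink{0.1.18}{(0.1.18)}, the hypothesis that \(\rho(G)\) fixes \(f_i\) unwinds to the identity \(\rho(g)\, B_i\, \rho(g)^{t} = B_i\) for all \(g \in G\) when \(f_i\) is symplectic or a non-degenerate symmetric bilinear form, and to \(\rho(g)\, B_i\, \rho(g)^{t\theta} = B_i\) when \(f_i\) is unitary, where \(\theta\) is the field automorphism of order two. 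The argument below is uniform in these cases (and indeed shows that if \(f_1, f_2\) are of two \emph{different} types among these, no such configuration can occur, since the forced relation \(B_1 = \lambda B_2\) is then incompatible with the symmetry types of the Gram matrices).

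Next I would eliminate the symbol \(\rho(g)^{t}\) (respectively \(\rho(g)^{t\theta}\)) between the two identities. From the identity for \(i = 2\) one solves \(\rho(g)^{t} = B_2^{-1}\rho(g)^{-1}B_2\) (respectively \(\rho(g)^{t\theta} = B_2^{-1}\rho(g)^{-1}B_2\)); substituting this into the identity for \(i = 1\) gives \(\rho(g)\,(B_1 B_2^{-1})\,\rho(g)^{-1} = B_1 B_2^{-1}\) for every \(g \in G\). Hence \(B_1 B_2^{-1}\) lies in \(C_{GL_n(q)}(\rho(G))\). Since \(\rho\) is absolutely irreducible, \hyperlink{0.3.11}{(0.3.11)} yields \(C_{GL(V)}(\rho(G)) = F^{\times}\), so \(B_1 B_2^{-1} = \lambda I_n\) for some \(\lambda \in F^{\times}\); that is, \(B_1 = \lambda B_2\), which is exactly the assertion \(f_1 = \lambda f_2\). (If \(\rho\) is not faithful one simply applies \hyperlink{0.3.11}{(0.3.11)} to the group \(\rho(G)\), on which \(V\) is an absolutely irreducible module by hypothesis, and it is the centralizer of \(\rho(G)\) in \(GL(V)\) that appears above.)

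The only point requiring a little care is the unitary case, where the defining identities carry the Frobenius twist \(\theta\). The key observation is that \(\theta\) never has to be applied to \(B_1\) or \(B_2\): one solves directly for the \emph{symbol} \(\rho(g)^{t\theta}\) from the \(i = 2\) identity and substitutes it into the \(i = 1\) identity, so \(\theta\) cancels together with the transpose and the conclusion \(\rho(g)(B_1 B_2^{-1})\rho(g)^{-1} = B_1 B_2^{-1}\) comes out with no twist remaining. This is the step I expect to be the main (though mild) obstacle to state cleanly; everything else is the standard ``dual representation is unique up to scalars'' computation dressed in matrix form. One may further check afterwards, using the conjugate-symmetry of \(f_1\) and \(f_2\), that the scalar \(\lambda\) in fact lies in the subfield fixed by \(\theta\), although this refinement is not needed for the statement as given.
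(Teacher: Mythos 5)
The paper itself does not prove (0.3.20); it cites Kleidman--Liebeck (p.\,48--56), so there is no in-paper proof for me to compare against. Your core argument is a correct and standard one---view an invariant form as a matrix identity, solve for the transposed term, eliminate it between the two identities, and invoke \hyperlink{0.3.11}{(0.3.11)}---and it works whenever the two forms share a transpose symbol: both bilinear (symplectic and/or symmetric, the identity being \(\rho(g)B_i\rho(g)^{t}=B_i\)), or both unitary (the identity being \(\rho(g)B_i\rho(g)^{t\theta}=B_i\)). In each such case the same symbol appears in both equations, the elimination gives \(B_1B_2^{-1}\in C_{GL_n(q)}(\rho(G))\), and \hyperlink{0.3.11}{(0.3.11)} finishes the job. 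The remark that, with this forced relation in hand, a symplectic and a symmetric Gram matrix cannot both arise in odd characteristic is also correct.

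The parenthetical claim that the argument is ``uniform'' across all combinations, and that mixed types ``cannot occur,'' is where there is a genuine gap. If \(f_1\) is bilinear and \(f_2\) is unitary, the two identities carry distinct symbols \(\rho(g)^{t}\) and \(\rho(g)^{t\theta}\), and solving the second for \(\rho(g)^{t\theta}\) does not allow substitution into the first. If you first apply \(\theta\) to one identity you obtain only the twisted relation \(\rho(g)\,(B_1B_2^{-\theta})\,\rho(g)^{-\theta}=B_1B_2^{-\theta}\), i.e.\ \(B_1B_2^{-\theta}\) intertwines \(\rho^{\theta}\) with \(\rho\); that is the realized-over-a-subfield condition, not an ordinary centralizer condition, so \hyperlink{0.3.11}{(0.3.11)} does not apply directly. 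The accompanying claim that the symmetry types of the Gram matrices are then incompatible is also false: if \(B_2\) is an alternating matrix with entries in the \(\theta\)-fixed subfield \(k\) and \(\lambda\in F^\times\) satisfies \(\lambda^{\theta}=-\lambda\), then \(B_1:=\lambda B_2\) is Hermitian and non-degenerate. So a group realized over \(k\) and preserving a symplectic form on \(k^n\) (for instance the natural module for \(Sp_n(k)\) after extending scalars to the quadratic extension \(F\)) simultaneously fixes a symplectic and a unitary form, which \emph{are} scalar multiples---so the mixed case genuinely occurs, the conclusion of the proposition is still true there, but your elimination step does not establish it. To close the gap, the mixed case needs its own argument, e.g.\ pass to a basis in which \(\rho(G)\leq GL_n(k)\), observe that any \(\rho(G)\)-invariant \(\theta\)-sesquilinear form is then also bilinear-invariant, and invoke the one-dimensionality of the space of invariant bilinear forms that your own same-type argument already gives.
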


\begin{prop}
\hypertarget{0.3.21}{Let} \(\rho:G\longrightarrow GL(V)\) be an absolutely irreducible representation. If the image of \(\rho\) fixes a symplectic, unitary or non-degenerate symmetric bilinear form \(f\), then \(N_{GL(V)}(\rho(G))\leq \Delta(V,f)\).  If in addition \(\rho(G)=I(V,f)\), then equality holds. 

\end{prop}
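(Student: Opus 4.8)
The plan is to use the uniqueness-up-to-scalar of invariant forms, Proposition \hyperlink{0.3.20}{(0.3.20)}, by transporting $f$ along an element $g$ of the normaliser and checking the transported form is again $\rho(G)$-invariant. Write $H=\rho(G)$, and recall $H$ acts absolutely irreducibly on $V$ since $\rho$ is absolutely irreducible. First I would fix $g\in N_{GL(V)}(H)$ and define a new form $f^g$ on $V$ by $f^g(u,v)=f(g(u),g(v))$. A short check shows $f^g$ is a form of exactly the same kind as $f$: it is bilinear or sesquilinear with the same associated automorphism $\theta$ (because $g$ is $F$-linear), it inherits symmetry, alternation or conjugate-symmetry from $f$, and it is non-degenerate because $g$ is invertible.

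Next I would verify that $f^g$ is $H$-invariant. For $h\in H$, since $g^{-1}hg\in H$, equivalently $h':=ghg^{-1}\in H$, and $f$ is $H$-invariant, we get $f^g(hu,hv)=f(g(hu),g(hv))=f\big(h'(g(u)),h'(g(v))\big)=f(g(u),g(v))=f^g(u,v)$. Thus $f$ and $f^g$ are two $H$-invariant forms of the same (symplectic, unitary or non-degenerate symmetric bilinear) type, so Proposition \hyperlink{0.3.20}{(0.3.20)} yields $\lambda_g\in F^\times$ with $f^g=\lambda_g f$, i.e.\ $f(g(u),g(v))=\lambda_g f(u,v)$ for all $u,v\in V$. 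By Definition \hyperlink{0.1.10}{(0.1.10)}–\hyperlink{0.1.11}{(0.1.11)} this says exactly $g\in\Delta(V,f)$, proving $N_{GL(V)}(H)\leq\Delta(V,f)$.

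For the reverse inclusion under the extra hypothesis $H=I(V,f)$, I would take $g\in\Delta(V,f)$ with multiplier $\lambda_g$, observe $\lambda_{g^{-1}}=\lambda_g^{-1}$ (apply the similarity identity to $gg^{-1}$), and then for $h\in I(V,f)$ compute $f\big((ghg^{-1})u,(ghg^{-1})v\big)=\lambda_g f\big(h g^{-1}u,h g^{-1}v\big)=\lambda_g f(g^{-1}u,g^{-1}v)=\lambda_g\lambda_g^{-1}f(u,v)=f(u,v)$, so $ghg^{-1}\in I(V,f)$. Since $GL(V)$ is finite this gives $gI(V,f)g^{-1}=I(V,f)$, hence $\Delta(V,f)\leq N_{GL(V)}(I(V,f))$; combined with the first part this is equality.

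The argument is short, and the only genuine ingredient is the appeal to Proposition \hyperlink{0.3.20}{(0.3.20)}: this is exactly where absolute irreducibility is indispensable, since without it there could be several non-proportional $H$-invariant forms of the same type and the twisted form $f^g$ need not be a scalar multiple of $f$. The remaining obstacle, such as it is, is purely the routine verification that $f^g$ is a form of the same type as $f$ together with the bookkeeping of the multiplier $\lambda_g$.
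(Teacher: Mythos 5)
Your argument is correct and is precisely the standard proof of this fact; the paper itself does not prove this proposition but cites Kleidman--Liebeck (pp.\ 48--56), and the strategy you use — transport $f$ to $f^g$ along $g\in N_{GL(V)}(\rho(G))$, verify $f^g$ is of the same kind and $\rho(G)$-invariant, then invoke the uniqueness-up-to-scalar result \hyperlink{0.3.20}{(0.3.20)} — is exactly the argument given there. The only micro-gap is that you should note $\lambda_g\neq 0$, which follows since a zero multiplier would make $f^g$ the zero form, contradicting its non-degeneracy.
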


\section{Aschbacher Class \(\mathcal{C}_1\) - Subspace Stabilisers }

In the following eight sections, we will discuss each of the eight Aschbacher classes. In each section, we will provide a formal definition for the members of that class, explain their group structure, discuss the differences between the definitions stated here and those found in Aschbacher's paper as well as stating and proving any results necessary for the corresponding Lemma in the proof of the main theorem. 

In this first section, we will tackle the first class; beginning with the following definition. 

\begin{defn}
Let \(W\subset V\) be a proper non-trivial subspace. We will define \(N_{GL(V)}(W)\) to be the group of all \(g\in GL(V)\) such that \(g(W)= W\). 
\end{defn}

If \(\{v_1,...\,v_k\}\) is a basis for a subspace \(W\), we can extend this set by some elements \(v_{k+1},...\,, v_n \in V\) to form a basis for \(V\). With respect to this basis, an element of \(GL(V)\) that stabilises \(W\) takes the form of a block matrix:
\[\begin{pmatrix} A & B \\ 0 & D\end{pmatrix}\]

\noindent where \(A\in GL_{k}(q)\), \(D\in GL_{m}(q)\)  and \(B\in M_{k,m}(q)\), with \(m=n-k\). The subgroup \(G\leq GL_n(q)\) of all such matrices is therefore isomorphic to \(N_{GL(V)}(W)\). We can identify two important subgroups of this group \(G\).
\[Q:=\{\begin{pmatrix} I_k & B \\ 0 & I_m \end{pmatrix}| B\in M_{k,m}(q)\} \text{ }\text{ }\text{ }\text{    and    }\text{ }\text{ }\text{ } L:=\{\begin{pmatrix} A & 0 \\ 0 & D \end{pmatrix}| A\in GL_{k}(q), D\in GL_{m}(q)\}\]
These are referred to as the \textit{unipotent radical} and \textit{Levi complement} respectively. The following properties are easily observed.

\begin{pro}
If \(L,Q,G\leq GL_n(q)\) are defined as above, then:
\begin{enumerate}[label=(\roman*),ref=(\roman*)]

\item \(Q\cong \mathbb{F}_q^{km}\) and \(L\cong GL_k(q)\times GL_m(q)\) 
\item \(Q\cap L = 1\) 
\item \(Q\unlhd G\)
\item \(QL\cong G\) 
\end{enumerate}
\end{pro}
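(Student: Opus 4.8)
The plan is to verify the four items essentially by direct block-matrix computation, organising the argument around the natural projection homomorphism $\pi\colon G\to GL_k(q)\times GL_m(q)$ that discards the off-diagonal block. First I would establish (i). For $L$, the assignment $\begin{pmatrix} A & 0 \\ 0 & D\end{pmatrix}\mapsto (A,D)$ is a bijection onto $GL_k(q)\times GL_m(q)$, and block-diagonal matrix multiplication shows it respects the group operations, so it is an isomorphism. For $Q$, the assignment $\begin{pmatrix} I_k & B \\ 0 & I_m\end{pmatrix}\mapsto B$ is a bijection onto $M_{k,m}(q)$; multiplying two such matrices yields $\begin{pmatrix} I_k & B+B' \\ 0 & I_m\end{pmatrix}$, so the group operation on $Q$ corresponds to addition of matrices, and $(M_{k,m}(q),+)\cong \mathbb{F}_q^{km}$ as an (elementary abelian) group.

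Next, (ii) is immediate: a matrix lying in $L$ has zero off-diagonal block while a matrix lying in $Q$ has identity diagonal blocks, so the only common element is $I_n$.

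For (iii) and (iv) I would use $\pi$ as above. A short computation with the product of two block upper-triangular matrices shows $\pi$ is a homomorphism, and its kernel is exactly $Q$; hence $Q\unlhd G$, which is (iii). Since $\pi$ restricted to $L$ is the isomorphism of (i), $\pi$ is surjective, and for any $g\in G$ there is $\ell\in L$ with $\pi(g)=\pi(\ell)$, whence $g\ell^{-1}\in\ker\pi=Q$ and $g\in QL$; thus $G=QL$. Together with $Q\unlhd G$ and $Q\cap L=1$ this exhibits $G$ as the internal semidirect product $Q\rtimes L$, which is the content of (iv). Concretely one can also simply check $\begin{pmatrix} A & B \\ 0 & D\end{pmatrix}=\begin{pmatrix} I_k & BD^{-1} \\ 0 & I_m\end{pmatrix}\begin{pmatrix} A & 0 \\ 0 & D\end{pmatrix}$.

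There is no genuine obstacle here — everything reduces to routine verification — but two points deserve care. First, the isomorphism type in (i) should be stated correctly: $Q$ is elementary abelian of order $q^{km}$, naturally $\mathbb{F}_q^{km}$ (not cyclic), and (iv) must be read as the assertion $G=QL$ with the semidirect-product structure $Q\rtimes L$, rather than an abstract isomorphism divorced from the ambient embedding in $GL_n(q)$. Second, it is worth double-checking the conjugation formula $g\,u\,g^{-1}=\begin{pmatrix} I_k & AXD^{-1} \\ 0 & I_m\end{pmatrix}$ for $g=\begin{pmatrix} A & B \\ 0 & D\end{pmatrix}$ and $u=\begin{pmatrix} I_k & X \\ 0 & I_m\end{pmatrix}$, which gives a self-contained proof of (iii) not relying on $\pi$ and also makes transparent how $L$ acts on $Q$ in the semidirect decomposition.
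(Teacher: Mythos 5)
Your proof is correct and takes the same approach as the paper — direct verification via block-matrix computations — the only difference being that you spell out the details (the projection $\pi$, the explicit factorisation, and the conjugation formula) while the paper simply asserts that (i) and (ii) are clear, (iii) follows from block-matrix multiplication, and (iv) from (ii) and (iii).
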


\begin{proof}
Parts (i) and (ii) are clear from the definition. Part (iii) follows from block matrix multiplication. Part (iv) follows from (ii) and (iii). 
\end{proof}

We are now ready to define the first Aschbacher class.

\begin{defn}
A group \(G\leq GL(V)\) is a member of \(\mathcal{C}_1\) if \(G=N_{GL(V)}(W)\) for some proper non-trivial subspace \(W\subset V\). Such groups are isomorphic to \(\mathbb{F}_q^{km}\rtimes (GL_k(q)\times GL_m(q))\), where \(k\) is the dimension of \(W\) and \(m=n-k\).
\end{defn}

\begin{rem}
\hypertarget{1.4}{This} first class is significantly more complex in Aschbacher's original paper, owing to two reasons.
\begin{enumerate}[label=(\roman*),ref=(\roman*)]

\item Aschbacher's main theorem applies to groups related to each of the classical forms. When dealing with such a variety of forms, there is a need to distinguish between subspaces of \(V\) on which the form acts as the zero form (the formal language is \textit{totally singular}) and subspaces of \(V\) on which the form acts as a non-degenerate form. The extra conditions in Aschbacher's \(\mathcal{C}_1\) class are all to account for subspaces of \(V\) on which the specified form acts as a non-degenerate form. In our case, the only form we are concerned with is zero on the whole of \(V\), thus we need not be concerned with these additional conditions.

\item Aschbacher additionally defines a supplementary class \(\mathcal{C}'_1\) for dealing with a particular case when \(n>2\) and the classical group in question is a subgroup of \(Aut(SL_n(q))\) that contains the inverse-transpose automorphism (one of those excluded in \hyperlink{0.2.9}{(0.2.9)}). 
However, we are only concerned with the group \(GL_n(q)\), which does not contain this automorphism when \(n>2\), and hence we need not encompass the class \(\mathcal{C}'_1\) into our definition.     

\end{enumerate}

\end{rem}

\section{Aschbacher Class \(\mathcal{C}_2\) - Decomposition Stabilisers}

If \(V=\bigoplus_{i=1}^k V_i\) is a direct sum decomposition (we will be assuming \(k>1\)) in which each summand has dimension \(m\), we will refer to it as a \textit{\(m\)-decomposition}. 

\begin{defn}
Let \(V=\bigoplus_{i=1}^k V_i\) be a \(m\)-decomposition. A group \(G\leq GL(V)\) is said to \textit{stabilise} this decomposition if \(G\) permutes the summands \(V_1, ...\, ,V_k\). If \(G\) is the maximal group with this property, we call it a \textit{\(m\)-decomposition stabiliser}, denoted by \(N_{GL(V)}(\{V_1,...\, ,V_k\})\).
\end{defn}

The following result shows the structure of a \(m\)-decomposition stabiliser.

\begin{pro}
If \(V=\bigoplus_{i=1}^k V_i\) is a \(m\)-decomposition, then \(N_{GL(V)}(\{V_1,...\, ,V_k\})\cong (GL(V_1)\times \cdots \times GL(V_k))\rtimes S_k\cong GL(V_1)\wr S_k\).
\end{pro}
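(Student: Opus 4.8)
The plan is to construct the isomorphism $N_{GL(V)}(\{V_1,\dots,V_k\}) \cong GL(V_1)\wr S_k$ directly from the definitions. First I would set $N := N_{GL(V)}(\{V_1,\dots,V_k\})$ and define a map $\pi : N \longrightarrow S_k$ by sending $g\in N$ to the permutation it induces on the set of summands $\{V_1,\dots,V_k\}$; this is a well-defined homomorphism precisely because the summands are permuted by hypothesis. The kernel $K$ of $\pi$ consists of the elements of $N$ that fix every $V_i$ setwise, and since $V=\bigoplus_i V_i$ such an element is determined by its restrictions $g|_{V_i}\in GL(V_i)$, with no compatibility constraints between the blocks; hence $K\cong GL(V_1)\times\cdots\times GL(V_k)$. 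To see that $\pi$ is surjective, I would fix $F$-isomorphisms $\phi_i : V_1 \to V_i$ (possible since all summands have the same dimension $m$) and, for each $\sigma\in S_k$, build an explicit element $g_\sigma\in N$ that maps $V_i$ onto $V_{\sigma(i)}$ via $\phi_{\sigma(i)}\phi_i^{-1}$; then $\pi(g_\sigma)=\sigma$. Thus we have a short exact sequence $1\to GL(V_1)\times\cdots\times GL(V_k)\to N\to S_k\to 1$.

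Next I would exhibit this extension as split, and identify it with the wreath product. The assignment $\sigma\mapsto g_\sigma$ from the previous paragraph can be chosen to be a homomorphism (i.e.\ $g_{\sigma\tau}=g_\sigma g_\tau$) provided the $\phi_i$ are chosen coherently — concretely, pick a basis of $V_1$ and let $\phi_i$ carry it to a fixed basis of $V_i$, so that the $g_\sigma$ are exactly the "block permutation" maps; verifying $g_\sigma g_\tau = g_{\sigma\tau}$ is a routine check on basis vectors. This gives a complement $S\cong S_k$ to $K$ in $N$, so $N = K\rtimes S$. Finally, computing the conjugation action of $g_\sigma$ on $K$ shows it permutes the factors $GL(V_i)$ according to $\sigma$ (with the identifications $\phi_i$ absorbed), which is exactly the defining action of $S_k$ on $GL(V_1)^k$ in the wreath product of Definition~\hyperlink{0.2.4}{(0.2.4)}. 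Hence $N\cong GL(V_1)^k\rtimes S_k = GL(V_1)\wr S_k$, and since $GL(V_i)\cong GL(V_1)$ for all $i$ this also equals $(GL(V_1)\times\cdots\times GL(V_k))\rtimes S_k$.

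The main obstacle is bookkeeping rather than conceptual: one must choose the isomorphisms $\phi_i$ (equivalently, compatible bases of the summands) carefully enough that the section $\sigma\mapsto g_\sigma$ is genuinely a group homomorphism and that the conjugation action on $K$ comes out as honest coordinate permutation with no stray "twist" by automorphisms of $GL(V_1)$. Once a coherent system of bases is fixed, every summand $V_i$ is canonically identified with $F^m$, the block maps $g_\sigma$ literally permute coordinate blocks, and all the required identities reduce to immediate matrix computations. I would therefore organise the write-up so that the choice of bases is made once at the start, after which both the splitting and the wreath-product action are essentially forced.
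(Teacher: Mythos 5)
Your approach is essentially the same as the paper's: both build the semidirect product inside $N_{GL(V)}(\{V_1,\dots,V_k\})$ by embedding $GL(V_1)\times\cdots\times GL(V_k)$ as the block-diagonal part and a copy of $S_k$ as the block-permutation maps, verify the conjugation action is coordinate permutation, and then obtain the reverse containment by twisting an arbitrary $h\in N$ by a block permutation to land in the diagonal part. The only real difference is presentational — you phrase it as a split short exact sequence and are slightly more explicit than the paper about fixing coherent isomorphisms $\phi_i:V_1\to V_i$ so that the section $\sigma\mapsto g_\sigma$ is genuinely a homomorphism, a point the paper's map $\phi_2$ glosses over by implicitly identifying the summands.
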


\begin{proof}
Let \(N=N_{GL(V)}(\{V_1,...\, ,V_k\})\) and \(G=GL(V_1)\times \cdots \times GL(V_k)\). Define the homomorphism \(\phi_1:G\longrightarrow N\) such that \(\phi_1((g_1,...\,,g_k))(v_1,...\,,v_k)=(g_1(v_1),...\,,g_k(v_k))\), this map is evidently faithful, thus we have an embedding \(G\xhookrightarrow{} N\). Next we define the homomorphism \(\phi_2:S_k\longrightarrow N\) such that \(\phi_2(\sigma)(v_1,...\,,v_k))=(v_{\sigma^{-1}(1)},...\,,v_{\sigma^{-1}(k)})\), since this map is also faithful, we have an embedding \(S_k\xhookrightarrow{} N\). We observe that \(\phi_1(G)\,\cap\, \phi_2(S_k)=1\). Furthermore, \(\phi_2(S_k)\) acts on \(\phi_1(G)\) by permuting its coordinates. Indeed, if \(\sigma\in S_k\), \((g_1,...\,,g_k)\in G\), \((v_1,...\,,v_k)\in \bigoplus_{i=1}^k V_i\) and for all \(1\leq i\leq k\) we define \(w_i:=g_i(v_i)\), then:
\[\phi_2(\sigma) (\phi_1((g_1, ... , g_k)) (v_1,...\,,v_k))= \phi_2(\sigma)(w_1,...\,,w_k)=(w_{\sigma^{-1}(1)},...\,,w_{\sigma^{-1}(k)})\]
\[=(g_{\sigma^{-1}(1)}(v_{\sigma^{-1}(1)}),...\,,g_{\sigma^{-1}(k)}(v_{\sigma^{-1}(k)}))=\phi_1((g_{\sigma^{-1}(1)}, ... , g_{\sigma^{-1}(k)}))\phi_2(\sigma)((v_1,...\,,v_k))\]

Thus \(\phi_2(\sigma)\phi_1((g_1, ... , g_k))\phi_2(\sigma)^{-1}= \phi_1((g_{\sigma^{-1}(1)}, ... , g_{\sigma^{-1}(k)}))\) as claimed. We can deduce then, that the semi-direct product \(\phi_1(G)\rtimes\phi_2(S_k)\) with respect to this action, is a subgroup of \(N\). 

To show the reverse containment, let \(h\in N\). There exists \(\sigma\in S_k\) such that \(h(V_i)=V_{\sigma^{-1}(i)}\), for all \(1\leq i\leq k\). Thus \(h\phi_2(\sigma)\) is an element in \(\phi_1(G)\) and the result follows.  
\end{proof}

\begin{defn}
A group \(G\leq GL(V)\) is a member of \(\mathcal{C}_2\) if there exists a \(m\)-decomposition \(V=\bigoplus_{i=1}^k V_i\), such that \(G=N_{GL(V)}(\{V_1, ...\, ,V_k\})\). Such groups are isomorphic to \(GL_m(q)\wr S_k\).   
\end{defn}

In the proof of the main theorem, \hyperlink{lemma 2}{Lemma 2} relies on a well known-result by Alfred H. Clifford. Before stating this, we must define some further concepts in representation theory.
 
 \begin{defn}
Let \(F\) be a field, let \(G\) be a group and let \(V\) be a \(FG\)-module with \(\bigoplus_{i=1}^k V_i\) a decomposition of \(V\) into its irreducible \(FG\)-modules. For a fixed integer \(1\leq j\leq k\), we define the \textit{homogeneous component} associated with \(V_j\) to be the direct sum of the irreducible \(FG\)-modules of \(V\) that are isomorphic to \(V_j\).
 \end{defn}
 
 \begin{defn}
 Let \(N\) be a normal subgroup of a group \(G\). Two matrix representations \(\rho\), \(\rho'\)\(:N\xrightarrow{} GL_n(q)\) are said to be \textit{conjugate} in \(G\), if there exist \(g\in G\) such that for all \(n\in N\), we have \(\rho'(n)=\rho(g^{-1}ng)\). Two \(FN\)-modules are said to be \textit{conjugate} in \(G\) if there exists a basis for each, such that the corresponding matrix representations are conjugate.
 \end{defn}
 
We now state Clifford's theorem, a proof of which can be found in (\cite{gorenstein_finite_1980}, p.70-72).
 
 \begin{thm}[Clifford's Theorem]
 \hypertarget{2.6}{Let} \(V\) be an irreducible \(FG\)-module and \(N\unlhd G\). If \(V_1\subseteq V\) is an irreducible \(FN\)-submodule, then:
 \begin{enumerate}[label=(\roman*),ref=(\roman*)]
 
 \item\(V=\bigoplus_{i=1}^k V_i\), where the \(V_i\) are irreducible \(FN\)-modules each conjugate to \(V_1\) in \(G\).
 
 \item \(G\) permutes the \(FN\)-homogeneous components of \(V\) transitively.
 
 \item Each \(FN\)-homogeneous component is stabilised by \(C_G(N)N\). 
 \end{enumerate}
 \end{thm}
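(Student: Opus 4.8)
The plan is to prove parts (i), (ii), (iii) in order, the key input being the way $G$ moves $FN$-submodules around using the normality of $N$. The central observation, which I would establish first, is a \emph{translation principle}: for every $g\in G$ the subspace $gV_1$ is again an irreducible $FN$-submodule of $V$, and it is conjugate to $V_1$ in $G$ in the sense of the definition above. That $gV_1$ is an $FN$-submodule follows from $N\unlhd G$, since $n(gv)=g(g^{-1}ng)v\in gV_1$ for all $n\in N$ and $v\in V_1$; and if $\{u_1,\dots,u_d\}$ is a basis of $V_1$ affording the matrix representation $\rho\colon N\to GL_d(q)$, then $\{gu_1,\dots,gu_d\}$ is a basis of $gV_1$ affording the representation $n\mapsto\rho(g^{-1}ng)$, which is precisely $\rho$ conjugated by $g$. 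Since $v\mapsto gv$ is an $F$-linear bijection intertwining these two actions, it induces an inclusion-preserving bijection between the $FN$-submodules of $V_1$ and those of $gV_1$; in particular $gV_1$ is irreducible.

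Granting this, part (i) is quick: $\sum_{g\in G}gV_1$ is a nonzero $G$-invariant subspace of $V$, hence equals $V$ by irreducibility of $V$ as an $FG$-module, so $V$ is a sum of irreducible $FN$-submodules each conjugate to $V_1$. A standard argument then extracts a direct sum --- take a maximal subfamily of the $gV_1$ whose sum is direct; if that sum were a proper submodule $W$, some $gV_1\not\subseteq W$, and irreducibility of $gV_1$ forces $gV_1\cap W=0$, contradicting maximality --- which yields $V=\bigoplus_{i=1}^k V_i$.

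For part (ii), I would use that $g$ maps isomorphic irreducible $FN$-submodules to isomorphic ones: if $U'\cong U$ as $FN$-modules then $gU'\cong gU$ (conjugate both sides of an isomorphism by $g$), so $g$ sends the homogeneous component of type $[U]$ onto that of type $[gU]$, that is, $G$ permutes the homogeneous components. For transitivity, group the summands of (i) by isomorphism type to write $V$ as the direct sum of its homogeneous components $W_1,\dots,W_t$; the sum of those $W_j$ lying in a single $G$-orbit is a nonzero $G$-submodule of $V$, hence all of $V$, which forces there to be only one orbit. Part (iii) is then the observation that $N$ stabilises each homogeneous component (a homogeneous component is in particular an $FN$-submodule), while for $c\in C_G(N)$ the conjugated representation $n\mapsto\rho(c^{-1}nc)$ equals $\rho$, so $cU\cong U$ for every irreducible $FN$-submodule $U$; hence $c$ fixes the type of each homogeneous component and so stabilises it.

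The main obstacle I anticipate is not any single hard step but keeping the bookkeeping honest at two points: first, matching the abstract ``translate by $g$'' operation with the matrix-level definition of conjugate $FN$-modules given above, so that the $V_i$ in (i) really are conjugate to $V_1$ in the stated sense; and second, justifying the isotypic decomposition $V=\bigoplus_j W_j$ into well-defined homogeneous components and checking that $G$ acts on the index set of types --- this is the semisimple-module fact underpinning both (ii) and (iii), and it needs the irreducible $FN$-submodules found in (i) to be handled with a little care, in particular that the homogeneous component of a given type does not depend on the chosen decomposition.
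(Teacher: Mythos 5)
The paper does not prove Clifford's Theorem in-house; it states the result and cites Gorenstein for the argument, so there is no internal proof to compare against. Your proof is the standard textbook one and it is correct: $gV_1$ is an irreducible $FN$-submodule conjugate to $V_1$ (matching the paper's Definition 2.5, since the matrix representation on $gV_1$ in the translated basis is $n\mapsto\rho(g^{-1}ng)$); $\sum_{g\in G}gV_1$ is a nonzero $FG$-submodule and hence equals $V$, from which a maximal direct subfamily yields (i); $G$ permutes isomorphism types of irreducible $FN$-submodules and acts transitively on the homogeneous components again by irreducibility of $V$, giving (ii); and $N$ stabilises every $FN$-submodule while each $c\in C_G(N)$ preserves $FN$-isomorphism types, giving (iii). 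The one thing I would nudge you to make explicit rather than merely flag: the paper's Definition 2.4 appears to tie the homogeneous component to a chosen decomposition, so before (ii) you should record the semisimplicity fact that the sum of the $V_i$ of a fixed isomorphism type in any decomposition equals the sum of \emph{all} irreducible $FN$-submodules of that type --- that is what makes the homogeneous components canonical and the $G$-action on them well defined, and it is a one-line argument once you have (i).
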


\section{Aschbacher Class \(\mathcal{C}_3\) - Extension Field Stabilisers}
 
Let \(K\) be a finite extension field of \(F=\mathbb{F}_q\) such that \(|K:F|=r\). We can view \(K\) as a \(r\)-dimensional \(F\)-vector space and then if \(V\) is a \(m\)-dimensional \(K\)-vector space, \(V\) can be viewed as a \(mr\)-dimensional \(F\)-vector space. Since the maps in \(GL(V,K)\) are \(K\)-linear, they must also be \(F\)-linear, and so \(GL(V,K)\) is a subgroup of \(GL(V,F)\). Since \(Z(GL(V,K))\cong K^{\times}\), we make use of the convention of identifying \(Z(GL(V,K))\leq GL(V,F)\) by \(K^{\times}\). Accordingly, this process can be thought of as embedding the extension field \(K\) into \(GL(V,F)\). We also note that \(C_{GL(V,F)}(K^{\times})=GL(V,K)\).

Considering \(V\) as a \(K\)-vector space, the field automorphisms of \(K\) sending \(k\mapsto k^{q^j}\), for \(1\leq j\leq r-1\), induce maps \(\phi_{q^j}:V\longrightarrow V\) defined by:
\[\phi_{q^j}(\sum_{i=1}^{m}\lambda_iv_i)=\sum_{i=1}^{m}\lambda_i^{q^j}v_i\]
\noindent where \(\lambda_i\in K\) and \(\{v_1,...\,,v_m\}\) is a \(K\)-basis of \(V\). Furthermore, for all \(k\in K^{\times}\), we have:
\[\phi_{q^j}^{-1}k\phi_{q^j}(\sum_{i=1}^{m}\lambda_iv_i)=\phi_{q^j}^{-1}(\sum_{i=1}^{m}k\lambda_i^{q^j}v_i)=k^{q^{r-j}}(\sum_{i=1}^{m}\lambda_iv_i)\]
Since \(q\) is the order of \(F\), these maps are also \(F\)-linear and so \(\langle \phi_q\rangle\) is a subgroup of \(GL(V,F)\) that normalises \(K^{\times}\). Therefore \(GL(V,K)\langle \phi_q\rangle\leq N_{GL(V,F)}(K^{\times})\).

We can show, on the other hand, that this is the whole normaliser of \(K^{\times}\). Let \(g\in GL(V,F)\) be a element that normalises, but does not centralise, \(K^{\times}\). If \(k,k'\in  K^{\times}\), then \(g^{-1}(k+k')g=g^{-1}kg+g^{-1}k'g\) and hence \(g\) preserves, not just the multiplicative structure of \(K^{\times}\), but also the additive structure. Thus, all such elements embed into \(Aut(K)\) (where \(K\) is the field here). Since these elements are \(F\)-linear, their images fix the subfield \(F\), thus the image of all such maps is a subgroup of \(\{\alpha_j: k\mapsto k^{q^j}\,|\,0\leq j \leq r-1\}\cong C_r\). Therefore \(|N_{GL(V)}(K^{\times})|\leq |C_{GL(V,F)}(K^{\times})||C_r|\) and we may conclude that \(N_{GL(V,F)}(K^{\times})=GL(V,K)\langle \phi_q\rangle\).

We are now ready to state the main definition of this section. 

\begin{defn}
\hypertarget{3.1}{A} group \(G\) is a member of \(\mathcal{C}_3\) if \(G=N_{GL(V,F)}(K^{\times})\), where \(K\) is a finite field extension of \(F\) such that \(|K:F|=r\) is a prime divisor of \(n\). Such groups are isomorphic to \(GL_m(q^r)C_r\), where \(m=\tfrac{n}{r}\). 
\end{defn}
 
 \begin{rem} We make the following remarks about the definition above.
 \begin{enumerate}[label=(\roman*),ref=(\roman*)]
 
 \item If \(|K:F|=p_1p_2\cdots p_t\), where each \(p_i\) is prime, then \(K\) has a unique subfield \(K_i\) such that \(|K_i:F|=p_i\) and therefore \(K_i^{\times}\) is characteristic in \(K^{\times}\). So any element in \(GL(V,F)\) normalising \(K^{\times}\) also normalises \(K_i^{\times}\) and therefore \(N_{GL(V,F)}(K^{\times})\) is not maximal in general. This explains why we require the extension field \(K\) to be of prime index.
 
 \item Aschbacher includes an additional condition for \(G\) to be a member of \(\mathcal{C}_3\); namely that \(C_{I(V,f)}(K^{\times})\) acts irreducibly on \(V\), where \(f\) (or \(Q\)) is the specified classical form. However, in our case \(I(V,f)=GL(V,F)\) and \(C_{GL(V,F)}(K^{\times})\cong GL(V,K)\) acts transitively on the non-zero vectors of \(V\). So for us, \(C_{I(V,f)}(K^{\times})\) is always irreducible on \(V\). 
 \end{enumerate}
 \end{rem}

For the rest of this section, let \(G\) be a group and \(V=\bigoplus^m_{i=1} V_i\) be a homogeneous \(FG\)-module, such that the \(V_i\) are isomorphic irreducible submodules of dimension \(d\). We define \(E:=End_{FG}(V_1)\) and identify \(F\) with the subring of scalar maps on \(V_1\). We will show that \(E\) is a finite field extension of \(F\) that can be embedded into \(GL(V,F)\). We begin by stating a well-known result by Joseph Wedderburn, for a proof of which we refer the reader to (\cite{lidl_finite_1983}, p.70-71).

\begin{lem}[Wedderburn's Little Theorem]
\hypertarget{wed}{A} finite division ring is a field.
\end{lem}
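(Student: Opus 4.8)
The plan is to prove Wedderburn's Little Theorem by combining the class equation of the multiplicative group with a counting argument via cyclotomic polynomials. Let $D$ be a finite division ring and let $Z = Z(D)$ be its centre. Since $Z$ is a commutative subring of $D$ closed under taking inverses, it is a finite field, say $|Z| = q$ with $q \geq 2$. Viewing $D$ as a left vector space over $Z$ gives $|D| = q^n$ for some integer $n \geq 1$, and the whole theorem reduces to showing $n = 1$, for then $D = Z$ is a field. I would argue by contradiction, assuming $n > 1$.

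First I would write down the class equation for the finite group $D^\times = D \setminus \{0\}$, which has order $q^n - 1$. For a noncentral element $a \in D^\times$, the centraliser $C_D(a) = \{x \in D : xa = ax\}$ is itself a division ring (if $x$ commutes with $a$, so does $x^{-1}$) containing $Z$, hence a $Z$-subspace, so $|C_D(a)| = q^{d}$ for some $d = d(a)$, and $C_{D^\times}(a) = C_D(a)^\times$ has order $q^d - 1$. Since $D$ is also a left vector space over the division ring $C_D(a)$, we get $q^n = (q^d)^m$ for some $m$, forcing $d \mid n$; and $a$ being noncentral forces $d < n$. The class equation then reads
\[
q^n - 1 \;=\; (q - 1) \;+\; \sum_{i} \frac{q^n - 1}{q^{d_i} - 1},
\]
where the sum ranges over representatives of the noncentral conjugacy classes and each $d_i$ is a proper divisor of $n$.

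Next I would introduce the $n$-th cyclotomic polynomial $\Phi_n(x) \in \mathbb{Z}[x]$, using the factorisation $x^n - 1 = \prod_{d \mid n} \Phi_d(x)$. This immediately shows that the integer $\Phi_n(q)$ divides $q^n - 1$. Moreover, for each proper divisor $d$ of $n$ one has $(x^n - 1)/(x^d - 1) = \prod_{e \mid n,\ e \nmid d} \Phi_e(x)$, and since $n \mid n$ while $n \nmid d$ (because $d < n$), the polynomial $\Phi_n(x)$ occurs as one of these factors; hence $\Phi_n(q)$ divides $(q^n - 1)/(q^{d_i} - 1)$ for every $i$. Substituting into the class equation shows that $\Phi_n(q)$ divides $q - 1$. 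To finish, recall $\Phi_n(q) = \prod_{\zeta}(q - \zeta)$ over the primitive $n$-th roots of unity $\zeta \in \mathbb{C}$, so $|\Phi_n(q)| = \prod_{\zeta}|q - \zeta|$. For $n > 1$ each such $\zeta \neq 1$, and writing $\zeta = a + bi$ with $a < 1$ one computes $|q - \zeta|^2 - (q-1)^2 = 2q(1-a) > 0$, so $|q-\zeta| > q - 1$; therefore $|\Phi_n(q)| > q - 1 \geq 1$, contradicting $\Phi_n(q) \mid q - 1$. Hence $n = 1$ and $D$ is commutative, i.e. a field.

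The step I expect to need the most care is the cyclotomic-polynomial bookkeeping: checking that $\Phi_n(q)$ really divides each summand $(q^n - 1)/(q^{d_i} - 1)$, and verifying that the modulus estimate $|q - \zeta| > q - 1$ is genuinely strict. These are precisely the two places where the standing assumption $n > 1$ is consumed, so they are the crux of the argument; the class-equation setup and the identification of centralisers as sub-division-rings are routine by comparison.
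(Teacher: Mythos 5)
Your proof is correct: it is the classical Witt argument via the class equation of $D^{\times}$ and the cyclotomic polynomial $\Phi_n$, and every step checks out (the centraliser $C_D(a)$ is a sub-division-ring over $Z$, $d\mid n$ follows from viewing $D$ as a left $C_D(a)$-space, $\Phi_n(q)$ divides each noncentral index $(q^n-1)/(q^{d_i}-1)$ because $n\mid n$ but $n\nmid d_i$, and $\lvert q-\zeta\rvert > q-1$ for every primitive $n$-th root of unity $\zeta\neq 1$ once $q\geq 2$). The paper itself omits the proof, deferring to the cited reference (\cite{lidl_finite_1983}, p.~70--71), which presents exactly this cyclotomic argument, so your approach coincides with the intended one.
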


\begin{pro}
\(E\) \hypertarget{3.4}{is} field.
\end{pro}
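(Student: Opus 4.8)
The plan is to show that $E$ is a finite division ring and then invoke Wedderburn's Little Theorem (\hyperlink{wed}{(3.3)}).

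First I would note that $E = End_{FG}(V_1)$ is a subring of $End_F(V_1)$, and since $V_1$ is a finite-dimensional vector space over the finite field $F$, the ring $End_F(V_1)$ is finite; hence $E$ is finite. Next, to see that $E$ is a division ring, let $\varphi\in E$ be nonzero. Then $\ker\varphi$ and $\mathrm{im}\,\varphi$ are $FG$-submodules of $V_1$, and since $\varphi\neq 0$ we have $\ker\varphi\neq V_1$ and $\mathrm{im}\,\varphi\neq 0$; irreducibility of $V_1$ then forces $\ker\varphi=0$ and $\mathrm{im}\,\varphi=V_1$, so $\varphi$ is bijective. Its inverse is visibly $F$-linear and commutes with the $G$-action, so $\varphi^{-1}\in E$. (This is the content of Schur's Lemma \hyperlink{0.3.16}{(0.3.16.i)} together with the observation that a nonzero endomorphism of an irreducible module is invertible; I would be careful not to appeal to part (ii) of that lemma, as $F$ need not be algebraically closed here.) Since $E$ contains $1\neq 0$, it is a division ring, and being finite it is a field by Wedderburn's Little Theorem.

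I would also remark, for use in the sequel and to justify the identification made just before the statement, that the scalar maps $\lambda\cdot\mathrm{id}_{V_1}$ with $\lambda\in F$ form a central subfield of $E$ isomorphic to $F$, so $E$ is in fact a finite field extension of $F$.

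I do not expect a genuine obstacle here: the entire content is Schur's Lemma plus Wedderburn's Little Theorem, and the only point requiring care is citing the correct half of Schur's Lemma (the division-ring statement, not the algebraically-closed statement) and noting the trivial finiteness of $End_F(V_1)$.
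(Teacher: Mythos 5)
Your proof is correct and follows the same route as the paper: establish that $E$ is a finite division ring via Schur's Lemma (nonzero endomorphisms of an irreducible module are invertible) and then invoke Wedderburn's Little Theorem. The extra care you take in spelling out the kernel/image argument and in flagging that only part (i) of Schur's Lemma applies over a non-algebraically-closed field is a sensible elaboration of, not a departure from, the paper's argument.
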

   
\begin{proof}
We know that \(E\) is a finite ring. By Schur's Lemma, the elements of \(E\) are isomorphisms, so it is a division ring and therefore by Wedderburn's little theorem, \(E\) is a field. 
\end{proof}

\begin{pro}
\(C_{GL(V)}(G)\cong GL_m(E)\), \hypertarget{3.5}{where} \(m=\tfrac{n} {d}\).
 \end{pro}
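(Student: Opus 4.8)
The plan is to identify $C_{GL(V)}(G)$ with the unit group of the ring $End_{FG}(V)$, to show this ring is isomorphic to the matrix ring $M_m(E)$, and then to read off its units; the fact that $E$ is a field, established in \hyperlink{3.4}{(3.4)}, is precisely what turns those units into $GL_m(E)$.

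First I would observe that an element $g\in GL(V)$ lies in $C_{GL(V)}(G)$ exactly when it commutes with the action of every element of $G$, i.e.\ exactly when it is an invertible $FG$-module endomorphism of $V$. Hence $C_{GL(V)}(G)$ is precisely the group of units $End_{FG}(V)^{\times}$, and it suffices to construct a ring isomorphism $End_{FG}(V)\cong M_m(E)$.

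To build that isomorphism, I would fix the decomposition $V=\bigoplus_{i=1}^{m}V_i$ together with its canonical inclusions $\iota_j:V_j\hookrightarrow V$ and projections $\pi_i:V\twoheadrightarrow V_i$, and, using that $V$ is homogeneous, fix $FG$-isomorphisms $\theta_i:V_1\xrightarrow{\ \sim\ }V_i$ with $\theta_1=\mathrm{id}$. For $\phi\in End_{FG}(V)$ the block $\phi_{ij}:=\pi_i\,\phi\,\iota_j\in Hom_{FG}(V_j,V_i)$ is defined, and repeated application of \hyperlink{0.3.5}{(0.3.5)} shows that $\phi\mapsto(\phi_{ij})_{i,j}$ is an $F$-linear bijection from $End_{FG}(V)$ onto $\bigoplus_{i,j}Hom_{FG}(V_j,V_i)$. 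Conjugating by the $\theta_i$ turns each $Hom_{FG}(V_j,V_i)$ into $End_{FG}(V_1)=E$ via $\phi_{ij}\mapsto\theta_i^{-1}\phi_{ij}\theta_j$, so that $\phi\mapsto\big(\theta_i^{-1}\phi_{ij}\theta_j\big)_{i,j}$ is an $F$-linear bijection $End_{FG}(V)\to M_m(E)$. Finally I would check it is multiplicative: since $\sum_j\iota_j\pi_j=\mathrm{id}_V$ and $\pi_i\iota_j=\delta_{ij}\,\mathrm{id}_{V_j}$, composition decomposes blockwise as $(\psi\phi)_{ik}=\sum_j\psi_{ij}\phi_{jk}$, and inserting and cancelling the $\theta$'s turns this into ordinary matrix multiplication.

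Taking units on both sides then gives $C_{GL(V)}(G)=End_{FG}(V)^{\times}\cong M_m(E)^{\times}=GL_m(E)$, the last identification being valid because $E$ is a field; and $m=n/d$ is immediate from $n=\dim_F V=\sum_{i=1}^m\dim_F V_i=md$. The only delicate point is the middle step: one must set up the block decomposition of \hyperlink{0.3.5}{(0.3.5)} with mutually compatible inclusions and projections and be consistent about conjugating on the left versus the right by the $\theta_i$ (harmless here since $E$ is commutative), so that composition of endomorphisms genuinely becomes matrix multiplication. Everything else is formal once those identifications are pinned down.
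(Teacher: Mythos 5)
Your proof is correct, and it takes a genuinely different route from the one in the paper. The paper works with the single object $W:=Hom_{FG}(V_1,V)\cong E^{m}$, shows it is an $m$-dimensional $E$-vector space (with $E$ acting by right composition), and then lets $C_{GL(V)}(G)$ act on $W$ by left composition; the heart of the argument is the check that this action is $E$-linear, faithful, and surjective onto $GL(W,E)$, with surjectivity proved by explicitly lifting an arbitrary $B\in GL(W,E)$ to a map $g_B\in C_{GL(V)}(G)$. You instead observe up front that $C_{GL(V)}(G)=End_{FG}(V)^{\times}$ and then build a ring isomorphism $End_{FG}(V)\cong M_m(E)$ via the block decomposition $\phi\mapsto(\theta_i^{-1}\pi_i\phi\iota_j\theta_j)_{ij}$, after which the group-level statement is the purely formal passage to units. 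Your route is more conceptually transparent — it makes visible that the result is really the Wedderburn-type fact that the endomorphism ring of a homogeneous semisimple module is a matrix ring over the endomorphism division ring of a simple summand, and it avoids a separate surjectivity argument because the block decomposition is manifestly bijective. What the paper's packaging buys is that the specific object $W=Hom_{FG}(V_1,V)$ with its $E$-basis $\{\alpha_1,\dots,\alpha_m\}$ and its two commuting (left and right) module structures is reused verbatim later, in the proof of \hyperlink{7.6}{(7.6)}, to manufacture the tensor decomposition $V\cong V_1\otimes A$; your version would need a small amount of extra unpacking there. You are also right that the only delicate point is keeping the left/right conventions straight; commutativity of $E$ (which is where \hyperlink{3.4}{(3.4)} and \hyperlink{wed}{Wedderburn} are genuinely used, not merely decoratively) is what lets one be cavalier about an opposite-ring twist, and the paper silently relies on the same fact when it checks that its left action is $E$-linear with respect to right scalar multiplication.
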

  
 \begin{proof}
 Define \(W:=Hom_{FG}(V_1,V)\). Since \(V=\bigoplus^m_{i=1} V_i\cong V_1^{\oplus\, m}\) and in view of \hyperlink{0.3.5}{(0.3.5)}, we observe the \(FG\)-isomorphisms:
 \[W=Hom_{FG}(V_1,\bigoplus^m_{i=1} V_i)\cong \bigoplus^m_{i=1}Hom_{FG}(V_1,V_i)\cong E^m\]
 Hence, \(W\) is a \(m\)-dimensional \(E\)-vector space, with scalar multiplication of \(E\) defined by right composition of maps i.e \(e\cdot w=w\circ e\), for \(e\in E\) and \(w\in W\). We will construct an isomorphism \(C_{GL(V)}(G)\xrightarrow{\text{ }\sim\text{ }}GL(W,E)\cong GL_m(E)\).
 
 For all \(1\leq i\leq m\), we fix \(FG\)-isomorphisms \(\alpha_i:V_1\longrightarrow V_i\) and then \(\mathcal{B}=\{\alpha_1,...,\alpha_m\}\subset W\) is an \(E\)-linearly independent subset of size \(m\) and hence it is an \(E\)-basis of \(W\). We define an action of \(C_{GL(V)}(G)\) on \(W\) by left composition of maps i.e \(c\ast w=c\circ w\), for \(w\in W\) and \(c\in C_{GL(V)}(G)\). This action is \(E\)-linear since:
 \[c\ast (e\cdot \alpha_i)=c\ast (\alpha_i\circ e)=c\circ \alpha_i\circ e=e\cdot (c\circ \alpha_i)=e\cdot (c\ast \alpha_i)\] 
 
 Furthermore, it is faithful. Indeed, if \(c\ast \alpha_i=\alpha_i\) for all \(i\), then \(c\) fixes every vector in \(V=\bigoplus^m_{i=1} V_i\) and therefore \(c=id\). Hence, we have an embedding \(\phi: C_{GL(V)}(G)\xhookrightarrow{}GL(W,E)\). We claim that \(\phi\) is also surjective. 
 
 Let \(B=(b_{ij})\) be an arbitrary element in \(GL(W,E)\), then \(B\,\alpha_i=\Sigma_{j=1}^m b_{ij}\cdot \alpha_j\), where \(b_{ij}\in E\). Fix an \(E\)-basis \(\{v_1,\:...\:,v_t\}\) of \(V_1\), then \(\{\alpha_i(v_j)\:|\: 1\leq i \leq m,\:1\leq j \leq t\}\) is an \(E\)-basis of \(V\). Therefore \(B\) gives rise to a linear map \(g_B:V\xrightarrow{}V;\,\alpha_i(v_j) \mapsto B\,\alpha_i(v_j)\). This map is invertible, since \(B\) is invertible and therefore \(g_B\in GL(V,E)\). Let \(g\) be an arbitrary element in \(G\) with \(g( \alpha_i(v_j))=\sum_{k=1}^{t} \lambda_{ik}\cdot \alpha_i(v_k)\), where \(\lambda_{ik}\in E\), then:
 \begin{align*}
 (g\circ g_B )(\alpha_i(v_j))&=g(B\,\alpha_i(v_j))\\&=B(\,g(\alpha_i(v_j)))\\&=B(\,\Sigma_{k=1}^t \lambda_{ik}\cdot \alpha_i(v_k)) \\ &=\Sigma_{k=1}^t \lambda_{ik}( B\,\alpha_i(v_k))\\&=\Sigma_{k=1}^t \lambda_{ik}(g_B(\alpha_i(v_k))) \\ &=g_B(\Sigma_{k=1}^t \lambda_{ik}\cdot \alpha_i(v_k))=(g_B\circ g )(\alpha_i(v_j)) 
 \end{align*}
 
 Therefore \(g_B\in C_{GL(V)}(G)\) and \(\phi (g_B)=B\). Thus \(\phi\) is surjective as claimed and \(C_{GL(V)}(G)\cong GL(W,E)\cong GL_m(E)\).
 \end{proof}
 
\noindent\hypertarget{3.6}{Therefore}, we may identify \(E^{\times}\) with \(Z(C_{GL(V)}(G))\) and the next corollary follows.
 
 \begin{cor}
\(N_{GL(V)}(G)\leq N_{GL(V)}(E^{\times})\).
 \end{cor}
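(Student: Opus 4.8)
The plan is to reduce the statement to two elementary facts about conjugation: that it respects the formation of centralisers, and that the centre of a group is characteristic. So first I would fix an arbitrary $g \in N_{GL(V)}(G)$; by definition conjugation by $g$ restricts to an automorphism of $G$, realised inside $GL(V)$ as an inner automorphism.

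Next I would check that $g$ normalises the whole centraliser $C_{GL(V)}(G)$. This is the routine observation that if $c \in C_{GL(V)}(G)$ then $gcg^{-1}$ commutes with every element of $gGg^{-1} = G$, so $gcg^{-1} \in C_{GL(V)}(G)$; running the same argument with $g^{-1}$ gives the reverse inclusion, hence $g C_{GL(V)}(G) g^{-1} = C_{GL(V)}(G)$, i.e. $g \in N_{GL(V)}\!\big(C_{GL(V)}(G)\big)$.

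Finally, since the centre of any group is a characteristic subgroup, and $g$ normalises $C_{GL(V)}(G)$, it follows that $g$ normalises $Z\!\big(C_{GL(V)}(G)\big)$. By \hyperlink{3.5}{Proposition (3.5)} and the identification made in \hyperlink{3.6}{(3.6)} we have $Z\!\big(C_{GL(V)}(G)\big) = E^{\times}$ as subgroups of $GL(V)$, so $g \in N_{GL(V)}(E^{\times})$. As $g$ was arbitrary, this gives $N_{GL(V)}(G) \leq N_{GL(V)}(E^{\times})$.

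There is no real obstacle here; the only thing requiring any care is bookkeeping — making sure the identification $E^{\times} = Z(C_{GL(V)}(G))$ coming out of \hyperlink{3.5}{(3.5)} is invoked correctly, and that conjugation by $g$ genuinely preserves $C_{GL(V)}(G)$ setwise rather than merely sending it into a conjugate. Everything else is immediate from the definitions.
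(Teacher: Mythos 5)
Your argument is correct and is exactly the paper's own proof, just spelled out in more detail: normalising $G$ forces normalising $C_{GL(V)}(G)$, and since the centre is characteristic this in turn forces normalising $Z(C_{GL(V)}(G)) = E^{\times}$. No differences of substance.
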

 
 \begin{proof}
All \(g\in N_{GL(V)}(G)\) must normalise \(C_{GL(V)}(G)\) and so it follows that they also normalise \(Z(C_{GL(V)}(G))=E^{\times}\) too.
 \end{proof}

\section{Aschbacher Class \(\mathcal{C}_4\) - Tensor Product Stabilisers}

Let \(V_1\) and \(V_2\) be \(F\)-vector spaces of dimensions \(n_1\) and \(n_2\) respectively. As defined in \hyperlink{0.3.7}{(0.3.7)}, \(GL(V_1)\times GL(V_2)\) acts naturally on the \(n_1n_2\)-dimensional vector space \(W=V_1\otimes V_2\). In general, this action will not be faithful. Indeed, if \(\lambda\) is a scalar map, then: \[(\lambda,\lambda^{-1})\cdot (v_1\otimes v_2) = \lambda v_1\otimes  \lambda^{-1}v_2= (\lambda\lambda^{-1})v_1\otimes v_2=v_1\otimes v_2\]
However, quotienting out by the central subgroup \(\{(\lambda,\lambda^{-1})\:|\:\lambda \in F^{\times})\}\) yields a central product that acts faithfully on \(W\). We denote this central product \(GL(V_1)\otimes GL(V_2)\) and refer to it as a \textit{tensor product stabiliser}. Thus, if \(n_1n_2=n\) we have an embedding \(GL(V_1)\otimes GL(V_2)\xhookrightarrow{} GL(V)\). 

In this section we consider the case when \(n_1\neq n_2\). The case when \(n_1= n_2\) has a different structure and these groups are accounted for in Aschbacher's \(\mathcal{C}_7\) class. 

We now state the main definition of this section.     

\begin{defn}
A group \(G\leq GL(V)\) is a member of \(\mathcal{C}_4\) if \(G=GL(V_1)\otimes GL(V_2)\), where \(V_1\) and \(V_2\) are \(F\)-vector spaces of dimensions \(n_1\) and \(n_2\), such that \(n_1\neq n_2\) and \(n_1n_2=n\). Such groups are isomorphic to \(GL_{n_1}(q)\circ GL_{n_2}(q)\).
\end{defn}

\begin{rem}
\hypertarget{4.2}{Our} definition of \(\mathcal{C}_4\) looks quite different to Aschbacher's original. He explicitly defines a representation which encompasses the embedding \(GL(V_1)\otimes GL(V_2)\xhookrightarrow{} GL(V)\) that we defined above. The domain of his representation is a larger subgroup of \(Aut(S(V_1,f_1))\times Aut(S(V_2,f_2))\) and he has defined the members of \(\mathcal{C}_4\) to be the images of particular subgroups of this domain. The generality of his definition is to account for the different possible forms and the more complicated overgroups of \(I(V,f)\) in \(Aut(S(V,f))\); however, insofar as it relates to \(GL(V)\), there is a perfect correspondence between the members of his definition and ours. 
   
\end{rem}

Though the members of \(\mathcal{C}_4\) are defined on two vector spaces, the general notion of the tensor product stabilisers can be extended to any finite number tensor factors. If \(V_1, ... \,, V_k\) are a collection of \(F\)-vector spaces, we can define \(GL(V_1)\otimes \cdots \otimes GL(V_k)\) as the quotient of \(GL(V_1)\times \cdots \times GL(V_k)\) by the subgroup \(\{(\lambda_1, ... , \lambda_{k-1}, \delta)\:|\:\lambda_i \in F^{\times},\: \delta=(\lambda_1\lambda_2\cdots\lambda_{k-1})^{-1}\}\).

If \(H=GL(V_1)\otimes \cdots \otimes GL(V_k)\), then a subgroup \(G_i\leq GL(V_i)\) can be identified with the subgroup \(1\otimes \cdots \otimes 1 \otimes\, G_i\, \otimes 1 \otimes\cdots \otimes 1\leq H\). For notational convenience, we make use of this identification in the next result; referring to \(G_i\) as a subgroup of \(H\). The statement and proof of this lemma is taken directly from (\cite{kleidman_subgroup_1990}, p.129-131) and it will be of great use to us, not only in the proof of the main theorem, but in numerous results in the subsequent sections. 

\begin{lem}
\hypertarget{4.3}{Let} \(V=\bigotimes_{i=1}^k V_i\) and \(G_i\leq GL(V_i)\), for \(1\leq i\leq k\). If \(V_k\) is an absolutely irreducible \(FG_k\)-module, then:
\begin{enumerate}[label=(\roman*),ref=(\roman*)]
 
\item \(C_{GL(V)}(G_k)=GL(V_1\otimes \cdots \otimes V_{k-1})\otimes 1\).
\item \(N_{GL(V)}(G_k)=GL(V_1\otimes \cdots \otimes V_{k-1})\otimes N_{GL(V_k)}(G_k)\).

\item If \(V_i\) is absolutely irreducible for all \(1 \leq i\leq k\), then \(\bigcap_{i=1}^k  N_{GL(V_i)}(G_i) = \bigotimes_{i=1}^k  N_{GL(V)}(G_i)\).
\end{enumerate}
\end{lem}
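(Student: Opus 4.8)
The plan is to prove (i) directly, deduce (ii) from it, and then obtain (iii) by induction on \(k\) using (ii). Throughout, write \(U=V_1\otimes\cdots\otimes V_{k-1}\), so that \(V=U\otimes V_k\) and \(G_k\) acts on \(V\) as \(1_U\otimes G_k\); in particular, as an \(FG_k\)-module \(V\cong V_k^{\oplus d}\), where \(d=\dim_F U\). For (i), the inclusion \(GL(U)\otimes 1\subseteq C_{GL(V)}(G_k)\) is immediate since \(\phi\otimes 1_{V_k}\) commutes with every \(1_U\otimes g\). For the reverse inclusion I note that \(C_{GL(V)}(G_k)\) is exactly the unit group of the ring \(End_{FG_k}(V)\); since \(V_k\) is absolutely irreducible, \((0.3.11)\) gives \(End_{FG_k}(V_k)=F\), whence \(End_{FG_k}(V)\cong M_d(F)\) has \(F\)-dimension \(d^2\). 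The subring \(End_F(U)\otimes 1\) sits inside \(End_{FG_k}(V)\) and already has \(F\)-dimension \(d^2\), so the two coincide; passing to unit groups yields \(C_{GL(V)}(G_k)=GL(U)\otimes 1\), as claimed.

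For (ii), the inclusion \(\supseteq\) is the computation \((a\otimes b)(1_U\otimes g)(a\otimes b)^{-1}=1_U\otimes bgb^{-1}\in 1_U\otimes G_k\). For \(\subseteq\) I use the chain \(C_{GL(V)}(G_k)\leq GL(U)\otimes N_{GL(V_k)}(G_k)\leq N_{GL(V)}(G_k)\): by (i) the left-hand term equals \(C:=GL(U)\otimes 1\), which is also the kernel of the conjugation map \(c:N_{GL(V)}(G_k)\to Aut(G_k)\). It therefore suffices to show that \(c\) sends \(N_{GL(V)}(G_k)\) into the image of \(GL(U)\otimes N_{GL(V_k)}(G_k)\), i.e. that any \(\sigma\in Aut(G_k)\) induced by conjugation by some \(h\in GL(V)\) is already induced by conjugation by some \(b\in N_{GL(V_k)}(G_k)\). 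Given such \(h\) and \(\sigma\), the two \(FG_k\)-representations \(g\mapsto 1_U\otimes g\) and \(g\mapsto 1_U\otimes\sigma(g)\) of \(G_k\) on \(V\) are equivalent (conjugate by \(h\)), so \(V_k^{\oplus d}\cong(V_k^{\sigma})^{\oplus d}\) as \(FG_k\)-modules, where \(V_k^{\sigma}\) denotes \(V_k\) with the action twisted by \(\sigma\). Since \(End_{FG_k}(V_k)=F\neq 0\) this forces \(Hom_{FG_k}(V_k,V_k^{\sigma})\neq 0\), and as both modules are irreducible, Schur's Lemma produces an \(FG_k\)-isomorphism \(b:V_k\to V_k^{\sigma}\); then \(b\in N_{GL(V_k)}(G_k)\) and conjugation by \(b\) induces \(\sigma\).

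For (iii) — the assertion identifying the group of \(h\in GL(V)\) normalising every \(G_i\) with the central product \(N_{GL(V_1)}(G_1)\otimes\cdots\otimes N_{GL(V_k)}(G_k)\) — I induct on \(k\), the case \(k=1\) being trivial, and one inclusion being again a direct conjugation check. For the other inclusion, take \(h\) normalising all \(G_i\). From \(h\in N_{GL(V)}(G_k)\) and (ii) I write \(h=h'\otimes b_k\) with \(h'\in GL(U)\) and \(b_k\in N_{GL(V_k)}(G_k)\). For each \(j<k\) the subgroup \(G_j\) lies in \(GL(U)\otimes 1\), and conjugation by \(h=h'\otimes b_k\) agrees on it with conjugation by \(h'\) inside \(GL(U)\); since \(h\) normalises \(G_j\), so does \(h'\). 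Thus \(h'\in\bigcap_{j=1}^{k-1}N_{GL(U)}(G_j)\), and the induction hypothesis applied to the decomposition \(U=V_1\otimes\cdots\otimes V_{k-1}\) (all factors absolutely irreducible) writes \(h'=b_1\otimes\cdots\otimes b_{k-1}\) with \(b_j\in N_{GL(V_j)}(G_j)\), so \(h=b_1\otimes\cdots\otimes b_k\) has the required form.

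The main obstacle is the \(\subseteq\) direction of (ii): descending the equivalence of the two inflated representations on \(V\) to an isomorphism \(V_k\cong V_k^{\sigma}\) at the level of \(V_k\) itself. This is precisely where absolute irreducibility of \(V_k\) is essential, as it gives \(End_{FG_k}(V_k)=F\), which is what lets me extract a nonzero \(FG_k\)-homomorphism \(V_k\to V_k^{\sigma}\) and then invoke Schur. The remaining steps are routine conjugation computations and dimension counts.
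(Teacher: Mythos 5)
Your proof is correct, but it takes a genuinely different route from the paper. The paper reduces to \(k=2\), fixes bases of \(V_1\) and \(V_2\), and works concretely with block matrices: for (i) Schur's Lemma forces every \(n_2\times n_2\) block of a centralising \(h\) to be a scalar matrix; for (ii) the identity \([\,g\,]_{\mathcal{B}_2}B_{ij}=B_{ij}[\,h^{-1}gh\,]_{\mathcal{B}_2}\) together with Schur's zero-or-invertible dichotomy produces an invertible block \(B\), from which a block-diagonal \(h'\) (all blocks equal to \(B\)) is built lying in \(GL(V_2)\cap N_{GL(V)}(G_2)\) with \(h(h')^{-1}\in C_{GL(V)}(G_2)\); and (iii) is then declared a direct corollary of (ii). You avoid matrices entirely: in (i) you compare \(F\)-dimensions of \(End_F(U)\otimes 1\) and \(End_{FG_k}(V)\cong M_d(F)\), which is the paper's block-scalar computation repackaged ring-theoretically; in (ii) you factor through the conjugation map \(c: N_{GL(V)}(G_k)\to Aut(G_k)\) and use twisted modules plus Schur to realise every automorphism in the image of \(c\) by an element of \(N_{GL(V_k)}(G_k)\). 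Your (ii) is conceptually cleaner in that it isolates the obstruction as lifting \(\sigma\) rather than exhibiting an explicit matrix witness, and you spell out the induction for (iii) that the paper leaves implicit.

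One small mis-attribution in your closing commentary: the claim that \(End_{FG_k}(V_k)=F\) is what ``forces'' \(Hom_{FG_k}(V_k,V_k^{\sigma})\neq 0\) is off. That non-vanishing follows from additivity of \(Hom\) over direct sums: \(Hom_{FG_k}(V_k^{\oplus d},(V_k^{\sigma})^{\oplus d})\) contains the isomorphism induced by \(h\), hence is nonzero, hence \(Hom_{FG_k}(V_k,V_k^{\sigma})\neq 0\). Ordinary irreducibility of \(V_k\) then lets Schur upgrade any nonzero homomorphism to an isomorphism \(b\). Absolute irreducibility enters your (ii) only through (i), which identifies \(\ker c\) as \(GL(U)\otimes 1\) --- precisely as in the paper. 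This does not create a gap, but the stated rationale for where absolute irreducibility is essential should be corrected.
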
 
\begin{proof}
We need only consider the case where \(k=2\), as the result follows by simple induction on \(k\). If \(\mathcal{B}_1=\{v_1, ... , v_{n_1}\}\) and \(\mathcal{B}_2=\{w_1, ... , w_{n_2}\}\) are bases for \(V_1\) and \(V_2\) respectively, then \(\mathcal{B}=\{v_i\otimes w_j | 1\leq i \leq n_1, 1\leq j \leq n_2\}\) is a basis for \(V\). If \(g\in G_2\), then with respect to the lexicographical ordering, \([\,g\,]_{\mathcal{B}}\) takes the form:
\[\begin{pmatrix} [\,g\,]_{\mathcal{B}_2} & & \\ & \ddots & \\ & & [\,g\,]_{\mathcal{B}_2}\end{pmatrix}\]
Let \(h\) be arbitrary in \(N_{GL(V)}(G_2)\) and write \([\,h\,]_{\mathcal{B}}\) in the form: 
\[\begin{pmatrix} B_{1,1} & \cdots & B_{1,n_1} \\ \vdots & \ddots & \vdots \\ B_{n_1,1} & \cdots & B_{n_1,n_1}\end{pmatrix}\]
where the \(B_{ij}\) are \(n_2\times n_2\) matrices with entries in \(F\). We observe the following identity.
\[\begin{bmatrix} \,g\, \end{bmatrix}_{\mathcal{B}_2}B_{ij}=B_{ij}\begin{bmatrix}\,h^{-1}gh\,\end{bmatrix}_{\mathcal{B}_2} \label{identity} \tag{eq 4.1}\]

In order to prove part (i), assume \(h\in C_{GL(V)}(G_2)\). Since \(V_2\) is an absolutely irreducible \(FG_2\)-module, Schur's Lemma tells us that each \(B_{ij}\) is a scalar multiple of the identity - call this scalar \(\lambda_{ij}\). Therefore \([\,h\,]_{\mathcal{B}_1}\) is the matrix:
\[\begin{pmatrix} \lambda_{1,1} & \cdots & \lambda_{1,n_1} \\ \vdots & \ddots & \vdots \\ \lambda_{n_1,1} & \cdots & \lambda_{n_1,n_1}\end{pmatrix}\]
The non-singularity of this matrix follows from the non-singularity of \([\,h\,]_{\mathcal{B}}\); the matrix with blocks \(\lambda_{ij}I_{n_2}\). Thus, we can conclude that \(h\in GL(V_1)\). Since it is clear that all of \(GL(V_1)\) centralises \(G_2\), this establishes part (i).

For part (ii) consider the identity \eqref{identity}. By Schur's Lemma each \(B_{ij}\) must be either the \(0\)-matrix or have an inverse. Since \(h\) is non-zero, we must have at least one \(B_{ij}\) that is non-zero - call this non-singular matrix \(B\). We therefore have the following identity. \[B^{-1}\begin{bmatrix} \,g\, \end{bmatrix}_{\mathcal{B}_2}B=\begin{bmatrix}\,h^{-1}gh\,\end{bmatrix}_{\mathcal{B}_2} \text{ for all } g\in G_2 \label{identity2} \tag{eq 4.2}\] 
Now if \(h'\in GL(V)\) such that \([\,h'\,]_{\mathcal{B}}\) is of the form:
\[\begin{pmatrix} B & & \\ & \ddots & \\ & & B\end{pmatrix} \label{identity3} \tag{eq 4.3}\]
then \([\,h'\,]_{\mathcal{B}_2}=B\) and \(h'\) is an element of \(GL(V_2)\). By \eqref{identity2}, \(h'\) normalises \(G_2\) and \(h(h')^{-1}\in C_{GL(V)}(G_2)=GL(V_1)\). Hence \(h \in GL(V_1)N_{GL(V_2)}(G_2)\). Once again, the reverse containment is clear, thus establishing part (ii). Part (iii) is a direct corollary of part (ii).
\end{proof}

\begin{pro}
\hypertarget{4.4}{If} \(V_1\) and \(V_2\) are \(FG\)-modules with \(G\) acting trivially on \(V_2\), then \(V_1\otimes V_2\) is \(FG\)-isomorphic to \(V_1^{\oplus\, dimV_2}\).
\end{pro}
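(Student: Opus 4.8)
The plan is to produce an explicit isomorphism after choosing a basis of \(V_2\). Write \(d = \dim_F V_2\) and fix an \(F\)-basis \(\{w_1,\dots,w_d\}\) of \(V_2\). Define
\[ \phi \colon V_1^{\oplus d} \longrightarrow V_1\otimes V_2, \qquad (u_1,\dots,u_d)\longmapsto \sum_{j=1}^d u_j\otimes w_j . \]
First I would record that \(\phi\) is an isomorphism of the underlying \(F\)-vector spaces: this is the standard fact that, since \(\{w_j\}\) is a basis of \(V_2\), every element of \(V_1\otimes V_2\) is uniquely expressible as \(\sum_j u_j\otimes w_j\) with \(u_j\in V_1\); thus \(\phi\) is \(F\)-linear, injective and surjective (a dimension count against \(\dim(V_1\otimes V_2)=d\cdot\dim V_1\) also confirms bijectivity).

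The substance of the argument is then just \(G\)-equivariance. For \(g\in G\) and \((u_1,\dots,u_d)\in V_1^{\oplus d}\), on one side \(\phi\bigl(g\cdot(u_1,\dots,u_d)\bigr)=\sum_j (g\cdot u_j)\otimes w_j\), while on the other side, by the definition of the \(FG\)-module tensor product, \(g\cdot\phi(u_1,\dots,u_d)=g\cdot\sum_j u_j\otimes w_j=\sum_j (g\cdot u_j)\otimes(g\cdot w_j)\). Since \(G\) acts trivially on \(V_2\) we have \(g\cdot w_j=w_j\) for every \(j\), so the two expressions coincide and \(\phi\) is an \(FG\)-isomorphism. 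This is the single place where the hypothesis is used, and it is exactly what upgrades a mere vector-space decomposition of \(V_1\otimes V_2\) into a decomposition as \(FG\)-modules.

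I would also note, as an alternative, the more structural route: because the \(G\)-action on \(V_2\) is trivial, \(V_2\) is a trivial \(FG\)-module and hence \(FG\)-isomorphic to \(F^{\oplus d}\) (a trivial module is determined up to isomorphism by its dimension; cf. \hyperlink{0.3.8}{(0.3.8)}). Applying \hyperlink{0.3.6}{(0.3.6)} \(d-1\) times together with the evident isomorphism \(V_1\otimes F\cong V_1\) gives \(V_1\otimes V_2\cong (V_1\otimes F)^{\oplus d}\cong V_1^{\oplus d}\). There is no genuine obstacle in either approach; the statement is elementary, and the only point demanding any care is to invoke the triviality of the \(G\)-action on \(V_2\) at precisely the right step.
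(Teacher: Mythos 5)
Your proof is correct. Your primary argument — writing down the explicit map $\phi\colon (u_1,\dots,u_d)\mapsto\sum_j u_j\otimes w_j$ and checking $G$-equivariance directly — is a more hands-on route than the paper takes, and it makes the role of the triviality hypothesis unmistakable: it is precisely what lets you drop the $g\cdot w_j$ in the defining formula of the tensor-product action. The paper instead proceeds structurally, exactly along the lines of the alternative you sketch at the end: fix a basis $\{v_1,\dots,v_d\}$ of $V_2$, note that $V_2=\bigoplus_i\langle v_i\rangle$ as $FG$-modules (each line being a submodule because the action is trivial), apply \hyperlink{0.3.6}{(0.3.6)} to distribute the tensor over the sum, and observe $V_1\otimes\langle v_i\rangle\cong V_1$. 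The trade-off is minor: your explicit map avoids invoking (0.3.6) and shows the isomorphism concretely, whereas the paper's version is shorter because it leans on a recorded lemma and leaves the final identification $V_1\otimes\langle v_i\rangle\cong V_1$ (which is again where triviality enters) to the reader.
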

\begin{proof}
Let \(\{v_1,\:...\:,v_d\}\) be a basis of \(V_2\). Recalling \hyperlink{0.3.6}{(0.3.6)}, the result follows from observing the \(FG\)-isomorphisms:
\[V_1\otimes V_2\cong V_1\otimes (\bigoplus_{i=1}^d\, \langle v_i\rangle)\cong \bigoplus_{i=1}^d (V_1\otimes \langle v_i\rangle)\cong V_1^{\oplus d}\]
\end{proof}

\section{Aschbacher Class \(\mathcal{C}_5\) - Subfield Stabilisers}

Let \(\mathcal{B}=\{v_1,\, ...\,,v_n\} \) be a \(F\)-basis of \(V\), let \(k\) be a proper subfield of \(F\) and define \(W\subset V\) to be the \(k\)-span of \(\mathcal{B}\). If \(v=\sum_{i=1}^{n} \lambda_iv_i\) is an arbitrary element of \(V\) and \(g\in GL(W,k)\) such that \(g(v_i)=:w_i\), then we have a natural action of \(GL(W,k)\) on \(V\) defined by \(g\cdot v=\sum_{i=1}^{n} \lambda_iw_i\). This action is faithful since if \(g\cdot v=v\) for all \(v\), then \(g\cdot v_i=v_i\) for all \(i\) and therefore \(g=id_V\).

Thus we get an embedding \(GL(W,k)\xhookrightarrow{}GL(V,F)\). Under this action, an element of \(GL(W,k)\) necessarily stabilises \(W\subset V\). Conversely, if \(g\in GL(V,F)\) and it stabilises \(W\), then it is also an element of the embedded copy of \(GL(W,k)\). Therefore \(GL(W,k)\cong N_{GL(V)}(W)\).   

\begin{rem}
In general, \(N_{GL(V)}(W)\) will not be maximal for two reasons.
\begin{enumerate}[label=(\roman*),ref=(\roman*)]
\item \hypertarget{5.1}{In} a similar manner to the comment above \hyperlink{3.1}{(3.1)}, if \(|F:k|\) is composite, there is a field \(K\) such that \(k< K< F\) and then \(N_{GL(V)}(W)< N_{GL(V)}(U)\), where \(U\) is the \(K\)-span of \(\mathcal{B}\). 
 
\item If \(\lambda \in F\,\backslash\, k\), then \(\lambda \cdot id \notin N_{GL(V)}(W)\). Thus \(N_{GL(V)}(W)< N_{GL(V)}(W)F^{\times}\).
\end{enumerate}
\end{rem}

With this in mind, we state the main definition of the section. 

\begin{defn}
A group \(G\) is a member of \(\mathcal{C}_5\) if \(G=N_{GL(V)}(W)F^{\times}\), where \(W\) is the \(k\)-span of some basis of \(V\) and \(k\) is a subfield of \(F\) such that \(|F:k|=r\) is a prime divisor of \(n\). Such groups are isomorphic to  \(GL_n(q^{1/r})\circ C_{q-1}\).
\end{defn}

\begin{rem}
Aschbacher includes an additional condition for \(G\) to be a member of \(\mathcal{C}_5\); namely that \(W\) is an absolutely irreducible \(kN_{I(V,f)}(W)\)-module. But in our case, \(I(V,f)=GL(V,F)\) and \(C_{GL(W,k)}(N_{GL(V,F)}(W))=C_{GL(W,k)}(GL(W,k))=k^{\times}\). So by \hyperlink{0.3.11}{(0.3.11)}, \(W\) is always an absolutely irreducible \(kN_{GL(V,F)}(W)\)-module and we need not state this condition.
\end{rem}

The next definition states what it means for a module to be \textit{realised} over a subfield. In some literature (e.g. Aschbacher), the terminology \textit{written} or \textit{defined} is preferred. In this definition, we abuse notation slightly by identifying the group \(G\) as a subgroup of \(GL(V,F)\), where \(V\) is a \(FG\)-module.

\begin{defn}
\hypertarget{5.4}{Let} \(k\) be a proper subfield of \(F\). A \(FG\)-module \(V\) is said to be \textit{realised} over \(k\) if there exist a \(F\)-basis \(\mathcal{B}\) of \(V\) such that \(G\leq N_{GL(V)}(W)\), where \(W\) is the \(k\)-span of \(\mathcal{B}\).
\end{defn}

\begin{pro}
If \(k\) is a proper subfield of \(F\) and \(V\) is an absolutely irreducible \(FG\)-module that can be realised over \(k\), then \(V=W^F\), for some absolutely irreducible \(kG\)-module \(W\).
\end{pro}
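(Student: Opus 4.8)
Since $V$ can be realised over $k$, by \hyperlink{5.4}{(5.4)} there is an $F$-basis $\mathcal{B}=\{v_1,\dots,v_n\}$ of $V$ with $G\leq N_{GL(V)}(W)$, where $W$ is the $k$-span of $\mathcal{B}$. Restricting the $G$-action makes $W$ a $kG$-module having $\mathcal{B}$ as a $k$-basis, and I propose to show that this $W$ is the required absolutely irreducible module. First I would verify $V\cong W^{F}$ as $FG$-modules: by the discussion preceding \hyperlink{0.3.9}{(0.3.9)} the vectors $v_i\otimes 1$ form an $F$-basis of $W\otimes_k F$, so the $F$-linear map $W^{F}=W\otimes_k F\to V$ sending $v_i\otimes 1\mapsto v_i$ is an $F$-vector space isomorphism; it is a $G$-map because, for $g\in G$, the matrix of $g$ acting on $W$ with respect to $\mathcal{B}$ has entries in $k$ and coincides with the matrix of $g$ acting on $V$ with respect to $\mathcal{B}$, so the two $G$-actions match. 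This step is just unwinding the definitions of ``realised over $k$'' and of $W^{F}$.

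Next I would show $W$ is irreducible as a $kG$-module. If $0\neq U\subsetneq W$ were a $kG$-submodule, then applying the exact functor $-\otimes_k F$ to the inclusion $U\hookrightarrow W$ yields an $FG$-embedding $U^{F}\hookrightarrow W^{F}\cong V$ with $1\leq \dim_F U^{F}=\dim_k U<\dim_k W=\dim_F V$, contradicting the irreducibility of $V$ (which holds since $V$ is absolutely irreducible). Hence $W$ is irreducible.

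For absolute irreducibility of $W$, let $K$ be any finite field containing $k$ and let $L=KF$ be the compositum of $K$ and $F$ inside a common algebraic closure of these finite fields, so $L\supseteq F$ and $L\supseteq K$. Since $V$ is absolutely irreducible over $F$ and $L\supseteq F$, the module $V^{L}$ is irreducible; by associativity of tensor products of field extensions, $W^{L}\cong (W^{F})^{L}\cong V^{L}$, so $W^{L}$ is irreducible. But $W^{L}\cong (W^{K})^{L}$, so a proper nonzero $KG$-submodule of $W^{K}$ would, on applying $-\otimes_K L$, produce a proper nonzero $LG$-submodule of $W^{L}$, a contradiction; hence $W^{K}$ is irreducible. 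As this holds for every finite $K\supseteq k$, $W$ is absolutely irreducible, completing the proof. (Alternatively, the last paragraph can be replaced by the base-change isomorphism $\operatorname{End}_{kG}(W)\otimes_k F\cong \operatorname{End}_{FG}(W^{F})=\operatorname{End}_{FG}(V)=F$, which forces $\dim_k\operatorname{End}_{kG}(W)=1$, so $\operatorname{End}_{kG}(W)=k$ and \hyperlink{0.3.11}{(0.3.11)} applies.)

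The only genuinely new idea beyond definition-chasing is the compositum trick in the last paragraph (needed because the definition of absolute irreducibility quantifies over overfields of $F$, not of $k$); the main thing to be careful about is the scalar-extension bookkeeping --- exactness of $-\otimes_k F$, the identifications $(W^{F})^{L}\cong W^{L}\cong(W^{K})^{L}$, and the dimension count that rules out proper submodules --- none of which is deep but all of which must be stated cleanly.
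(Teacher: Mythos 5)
Your proposal is correct and follows essentially the same route as the paper: you show $W$ is irreducible by applying the exact functor $-\otimes_k F$ to a hypothetical proper submodule, and you establish absolute irreducibility by passing to a common overfield $L\supseteq F,K$ and comparing $(W^K)^L$ with $V^L$. The paper's presentation of the last step is somewhat terser (and a little loose on which group ring the submodules live over); your version, with the compositum $L=KF$ made explicit, the identification $(W^F)^L\cong W^L\cong(W^K)^L$ spelled out, and the alternative $\operatorname{End}$-base-change argument noted, is a cleaner rendering of the same idea rather than a different one.
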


\begin{proof}
Since \(V\) can be realised over \(k\), we know that \(G\leq N_{GL(V)}(W)\) for some \(kG\)-module \(W\) such that \(V=W^F\). First, we claim that \(W\) is an irreducible \(kG\)-module. To see this, suppose there was some proper, non-trivial \(kG\)-submodule \(U\subset W\). But then \(U^F\) is a proper, non-trivial \(FG\)-submodule of \(W^F=V\), which contradicts the irreducibility of \(V\). 

Next we claim that \(W\) is an absolutely irreducible \(kG\)-module.
Indeed, there exist finite fields \(K\) and \(M\) such that \(K\) contains \(k\) and \(M\) contains both \(F\) and \(K\). Then \(V^M=W^F\otimes M=W^M=W^K\otimes M\) and so any \(FG\)-submodule of \(W^K\) is also a \(FG\)-submodule of \(V^M\), but  by the absolutely irreducibility of \(V\), the latter has no proper, non-trivial \(FG\)-submodules. Thus \(W\) is an absolutely irreducible \(kG\)-module.
\end{proof}

\begin{pro}
\hypertarget{5.6}{If} \(k\) is a subfield of \(F\) and \(V\) is an absolutely irreducible \(FG\)-module that can be realised over \(k\), then \(N_{GL(V,F)}(G)\leq N_{GL(V,F)}(W)F^{\times}\), where \(W\) is the \(k\)-span of some basis of \(V\).
\end{pro}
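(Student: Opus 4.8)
The plan is to exploit the preceding proposition, which tells us \(V=W^{F}\) for an absolutely irreducible \(kG\)-module \(W\). Concretely, by the definition of \emph{realised over \(k\)} (\hyperlink{5.4}{(5.4)}) we may take \(W\) to be the \(k\)-span of a suitable \(F\)-basis \(\mathcal{B}\) of \(V\), so that \(W\) is a \(G\)-invariant \(k\)-subspace of \(V\) with \(\dim_{k}W=\dim_{F}V=n\) whose \(F\)-span is all of \(V\). Fix \(x\in N_{GL(V,F)}(G)\) and put \(W':=x(W)\), which is the \(k\)-span of the basis \(x(\mathcal{B})\). Since \(xGx^{-1}=G\) we get \(GW'=(xGx^{-1})\,x(W)=x(GW)=x(W)=W'\), so \(W'\) is again a \(G\)-invariant \(k\)-form of \(V\); in particular \(W'\) is \(k\)-irreducible (a proper nonzero \(kG\)-submodule would \(F\)-span a proper nonzero \(FG\)-submodule of \(V\)), and the natural maps \(W\otimes_{k}F\to V\) and \(W'\otimes_{k}F\to V\) sending \(w\otimes\mu\mapsto\mu w\) are \(FG\)-isomorphisms. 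The proof then reduces to showing \(W'=\lambda W\) for some \(\lambda\in F^{\times}\): granting this, \((\lambda^{-1}x)(W)=W\), so \(\lambda^{-1}x\in N_{GL(V,F)}(W)\) and hence \(x\in F^{\times}N_{GL(V,F)}(W)=N_{GL(V,F)}(W)F^{\times}\), as required.

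To produce \(\lambda\) I would first show \(W\cong W'\) as \(kG\)-modules. Using the base-change isomorphism \(\operatorname{Hom}_{kG}(W,W')\otimes_{k}F\cong\operatorname{Hom}_{FG}(W\otimes_{k}F,\,W'\otimes_{k}F)\cong\operatorname{Hom}_{FG}(V,V)\), together with \hyperlink{0.3.11}{(0.3.11)} (so that \(\operatorname{End}_{FG}(V)=F\), as \(V\) is absolutely irreducible), we obtain \(\dim_{k}\operatorname{Hom}_{kG}(W,W')=1\). Pick \(0\neq\psi\in\operatorname{Hom}_{kG}(W,W')\); since \(W\) is \(k\)-irreducible \(\ker\psi=0\), and since \(\dim_{k}W=\dim_{k}W'\) the map \(\psi\) is a \(kG\)-isomorphism \(W\xrightarrow{\ \sim\ }W'\).

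Finally I would transport \(\psi\) up to \(F\). Extending scalars gives an \(FG\)-isomorphism \(\psi\otimes\mathrm{id}_{F}\colon W\otimes_{k}F\to W'\otimes_{k}F\); composing it with the two identifications \(W\otimes_{k}F\cong V\cong W'\otimes_{k}F\) from the first paragraph yields an \(FG\)-automorphism \(\widetilde{\psi}\) of \(V\). By \hyperlink{0.3.11}{(0.3.11)} once more, \(\widetilde{\psi}=\lambda\cdot\mathrm{id}_{V}\) for some \(\lambda\in F^{\times}\). Tracing a vector \(w\in W\) through the identifications (it maps to \(w\otimes1\), then to \(\psi(w)\otimes1\), then to \(\psi(w)\in W'\subseteq V\)) shows \(\widetilde{\psi}(w)=\psi(w)\), so \(\psi(w)=\lambda w\) for all \(w\in W\), whence \(W'=\psi(W)=\lambda W\). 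This supplies the needed \(\lambda\), and the reduction of the first paragraph closes the argument.

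The step I expect to be the main obstacle is the base-change isomorphism for \(\operatorname{Hom}\) — equivalently the Noether--Deuring phenomenon that two \(k\)-forms of one absolutely irreducible \(FG\)-module are \(kG\)-isomorphic, which is the only input not already available in the excerpt. If one prefers to avoid quoting it, the alternative is to argue directly that \(\operatorname{Hom}_{kG}(W,W')\neq 0\) — for instance by comparing the two \(\operatorname{Gal}(F/k)\)-actions on \(V\) whose fixed spaces are \(W\) and \(W'\) (a Galois-descent argument, legitimate because \(F/k\) is a separable, indeed cyclic, extension of finite fields) — but since everything here is finite-dimensional over finite fields this base-change fact is entirely routine, and I would simply cite it.
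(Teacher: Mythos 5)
Your argument is correct, but it takes a genuinely different route from the paper's. The paper regards \(V\) as a \(k\)-vector space, identifies it with the tensor product \(W\otimes_{k}F\) of \(k\)-spaces with \(G\) acting only on the \(W\)-factor, and then invokes the tensor-product normaliser lemma \hyperlink{4.3}{(4.3.ii)} to write
\(N_{GL(V,k)}(G)=GL(F,k)\otimes N_{GL(W,k)}(G)\).
Intersecting with \(GL(V,F)\) and using \hyperlink{0.3.11}{(0.3.11)} to see that the \(GL(F,k)\)-factor can only contribute scalars, it deduces \(N_{GL(V,F)}(G)\leq F^{\times}N_{GL(V,F)}(W)\). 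You instead argue pointwise on the normaliser: given \(x\in N_{GL(V,F)}(G)\) you form the \(G\)-invariant \(k\)-form \(W'=x(W)\), show via the Hom base-change (Noether--Deuring) isomorphism
\(\operatorname{Hom}_{kG}(W,W')\otimes_{k}F\cong\operatorname{End}_{FG}(V)=F\)
that \(W\cong W'\) as \(kG\)-modules, and then use Schur again to force the isomorphism to be a scalar, concluding \(W'=\lambda W\) and so \(\lambda^{-1}x\in N_{GL(V,F)}(W)\). Both arguments are sound. The paper's version recycles machinery already set up in Section 4 and is structurally shorter; yours is more conceptually transparent about \emph{why} the normaliser preserves the \(k\)-form up to a scalar, at the cost of importing the base-change fact for \(\operatorname{Hom}\) which (as you correctly note) is not among the stated preliminaries — a fair trade, and the Galois-descent alternative you sketch would also close that gap entirely from first principles.
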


\begin{proof}
By the previous proposition, \(V=W^F=W\otimes_k F\), for some absolutely irreducible \(kG\)-module \(W\) and therefore \(W\) is the \(k\)-span of some basis of \(V\). Define \(N=N_{GL(V,F)}(G)\) and let \(h\in N\). Since \(V\) is realised over \(k\), we know that \(GL(V,F)\) embeds into \(GL(V,k)\) and therefore we can consider \(G\) as a subgroup of the latter and \(h\) as an element of \(N_{GL(V,k)}(G)\). Thus \(N\leq N_{GL(V,k)}(G)\). By \hyperlink{4.3}{(4.3.ii)}, we observe: 
\[N_{GL(V,k)}(G)=GL(F,k)\otimes N_{GL(W,k)}(G)\]

\noindent and if \(N_0:=(GL(F,k)\otimes 1)\cap GL(V,F)\), we have:
\[N= N_0\otimes N_{GL(W,k)}(G)\]
Since \(G\) is contained in the right-hand tensor factor, \(N_0\) is contained in \(C_{GL(V,F)}(G)\), which is equal to \(F^{\times}\) by \hyperlink{0.3.11}{(0.3.11)}. Therefore \(N\leq F^{\times}\otimes N_{GL(W,k)}(G)\leq F^{\times}N_{GL(V,F)}(W)\).
\end{proof}

\section{Aschbacher Class \(\mathcal{C}_6\) - Symplectic-type Group Stabilisers}
Throughout this section, let \(r\) be a prime number.
\subsection*{Extraspecial and Symplectic-type \(r\)-groups}

\begin{defn}
A \(r\)-group \(R\) is said to be \textit{extraspecial} if the following hold.
\begin{enumerate}[label=(\roman*),ref=(\roman*)]
\item \(|R|=r^{1+2m}\), for some positive integer \(m\).
\item \(|Z(R)|=r\).
\item \(R/Z(R)\cong \mathbb{Z}_r^{2m}\).

\end{enumerate}
\end{defn}

Part (iii) of this definition highlights that if \(R\) is an extraspecial group, then \(R/Z(R)\) can be viewed as a \(2m\)-dimensional vector space over \(\mathbb{Z}_r\). Furthermore, the \textit{commutator map} \(R/Z(R)\times R/Z(R) \longrightarrow R'\,;\, (gZ(R)\,,\, hZ(R))\mapsto [g,h]\) functions as a symplectic form on \(R/Z(R)\). Indeed, combining (ii) and (iii) of the definition, we deduce that \(R'=Z(R)\cong\mathbb{Z}_r\). The commutator map is non-degenerate since the only elements of \(R\) commuting with all others are, by definition, in the centre, and it is alternating since every element commutes with itself. 

The classification of such forms in \hyperlink{0.1.13}{(0.1.13)} allows us to classify all extraspecial groups (see \cite{cameron_notes_2000}, p.86-88 for more details). We find that there are only two isomorphism types for any given order, denoted \(r^{1+2m}_{+}\) and \(r^{1+2m}_{-}\). If \(r\) is odd, the former can be distinguished as the extraspecial group of order \(r^{1+2m}\) in which all non-identity elements are of order \(r\). The latter contains an element of order \(r^2\). For the purposes of this paper, when \(r\) is odd, we are concerned only with the group \(r^{1+2m}_{+}\) and from now on we will denote it by \(r^{1+2m}\). If \(r=2\), then \(2^{1+2m}_{+}\) is distinguished from \(2^{1+2m}_{-}\) as the extraspecial group of order \(2^{1+2m}\) containing more elements of order \(2\).

The simplest examples of these extraspecial groups is when \(m=1\). If \(r=2\), then \(2^{1+2}_{+}\) and \(2^{1+2}_{-}\) are distinct non-abelian groups of order \(8\). Therefore, one must be isomorphic to \(D_8\) and the other to \(Q_8\). It is clear that \(D_8\) has more elements of order \(2\), so we deduce that \(2^{1+2}_{+}\cong D_8 \) and \( 2^{1+2}_{-} \cong Q_8\). If \(r\) is odd, then \(r^{1+2}\) is the group \hypertarget{pres}{presented}: \[\langle x,y,z\,|\, x^r=y^r=z^r=[x,z]=[y,z]=e\,,\,[x,y]=z,\,\rangle\]

Phillip Hall proved the following result which shows that all extraspecial groups can be built up from the case where \(m=1\), a proof of which can be found in (\cite{suzuki_group_1982}, p.69-71).

\begin{lem}
\hypertarget{6.2}{If} \(R\) is an extraspecial group of order \(r^{1+2m}\), then:
\begin{enumerate}[label=(\roman*),ref=(\roman*)]
\item If \(r\) is odd and \(R=r^{1+2m}\), then \(R\) is a central product of \(m\) copies of \(r^{1+2}\).
\item If \(R=2^{1+2m}_{+}\), then \(R\) is isomorphic to a central product of \(m\) copies of \(D_8\).
\item If \(R=2^{1+2m}_{-}\), then \(R\) is isomorphic to a central product of \(m-1\) copies of \(D_8\) and one copy of \(Q_8\).
\end{enumerate}
\end{lem}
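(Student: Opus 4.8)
The plan is to translate the statement into the theory of forms over $\mathbb{F}_r$, apply the classification of symplectic and orthogonal forms from \hyperlink{0.1.13}{(0.1.13)}, and then lift an adapted basis back to generators of $R$. To set up, write $Z=Z(R)=\langle z\rangle$; since $R/Z$ is abelian and $R$ is non-abelian we have $R'=Z$, of order $r$, and by the definition of an extraspecial group $\bar R:=R/Z$ is a $2m$-dimensional $\mathbb{F}_r$-space. As observed in the paragraph preceding the lemma, fixing the generator $z$ identifies $R'$ with $\mathbb{F}_r$ and makes the commutator descend to a non-degenerate alternating bilinear form $\beta$ on $\bar R$ (non-degenerate because the radical would consist of classes of central elements; alternating because $[g,g]=1$). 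When $r=2$ I would additionally define $Q\colon\bar R\to\mathbb{F}_2$ by $Q(\bar g):=g^2$, viewing $g^2\in\{1,z\}=\mathbb{F}_2$; this is well defined because $R$ has class $2$ and $z^2=1$, and the class-$2$ identity $(gh)^2=g^2h^2[h,g]$ shows immediately that $Q$ is a quadratic form with associated bilinear form $f_Q=\beta$. Thus $(\bar R,Q)$ is a non-degenerate orthogonal $\mathbb{F}_2$-space of dimension $2m$, hence of plus- or minus-type by \hyperlink{class}{(0.1.13.iv)}.

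For the odd case (i), the classification \hyperlink{0.1.13}{(0.1.13.i)} furnishes a symplectic basis $\bar x_1,\bar y_1,\dots,\bar x_m,\bar y_m$ of $(\bar R,\beta)$, so upon lifting each vector to an element of $R$ we have $[x_i,y_i]=z$ while every other pair among the $x_i,y_i$ commutes. Set $R_i:=\langle x_i,y_i\rangle$. Then $R_i$ is non-abelian of order $r^3$, and since $R=r^{1+2m}$ has exponent $r$ so does $R_i$; hence $R_i\cong r^{1+2}$. The relations make the $R_i$ commute pairwise, and reducing modulo $Z$ (where $R_i/Z=\langle\bar x_i,\bar y_i\rangle$ and these subspaces are independent) shows $R_i\cap\prod_{j\neq i}R_j=Z$; as the $\bar x_i,\bar y_i$ span $\bar R$ we get $R=R_1\cdots R_m$, i.e. $R$ is the internal --- equivalently external, with the common centre $\cong C_r$ amalgamated --- central product of $m$ copies of $r^{1+2}$.

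For (ii) and (iii) I would run the same argument with $Q$ in place of $\beta$. If $(\bar R,Q)$ is of plus-type, \hyperlink{class}{(0.1.13.iv)} gives a basis $\bar x_1,\dots,\bar x_m,\bar y_1,\dots,\bar y_m$ with $Q(\bar x_i)=Q(\bar y_j)=0$ and $f_Q(\bar x_i,\bar y_j)=\delta_{ij}$; lifting, every lift of $\bar x_i$ or $\bar y_j$ squares to $1$ and $[x_i,y_i]=z$, so $R_i:=\langle x_i,y_i\rangle\cong D_8$ and, as in the odd case, $R=R_1\circ\cdots\circ R_m$. If $(\bar R,Q)$ is of minus-type, a Witt decomposition (part of the classification behind \hyperlink{0.1.13}{(0.1.13)}) gives $m-1$ such hyperbolic pairs together with a $2$-dimensional anisotropic summand $\langle\bar u,\bar v\rangle$ with $Q(\bar u)=Q(\bar v)=f_Q(\bar u,\bar v)=1$; lifting the last piece, $u^2=v^2=z$ and $[u,v]=z$, so $\langle u,v\rangle\cong Q_8$ and $R$ is a central product of $m-1$ copies of $D_8$ and one $Q_8$. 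It remains to match these two central products with the labels $2^{1+2m}_{\pm}$: the set $\{g\in R:g^2=1\}$ is exactly the preimage of $Q^{-1}(0)\subseteq\bar R$, of size $2\,|Q^{-1}(0)|$, and $|Q^{-1}(0)|=2^{2m-1}+2^{m-1}$ in the plus case versus $2^{2m-1}-2^{m-1}$ in the minus case, so the plus-type group has strictly more involutions; since these two central products are precisely the two isomorphism types of extraspecial groups of order $2^{1+2m}$ and $2^{1+2m}_{+}$ is by definition the one with more involutions, we conclude $2^{1+2m}_{+}$ is the central product of $m$ copies of $D_8$ (giving (ii)) and $2^{1+2m}_{-}$ is the other one (giving (iii)).

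The main obstacle I anticipate is the characteristic-$2$ bookkeeping: verifying cleanly that $Q$ is a quadratic form with $f_Q=\beta$, extracting the anisotropic summand in the minus-type case, and computing $|Q^{-1}(0)|$ for each type so that the identification of $2^{1+2m}_{\pm}$ by counting involutions is unambiguous. The odd case is comparatively routine once a symplectic basis of $(\bar R,\beta)$ is in hand. (One can avoid the quadratic form in the even case at the price of the isomorphism $D_8\circ D_8\cong Q_8\circ Q_8$, but that identity is itself proved by the same form-type calculation.)
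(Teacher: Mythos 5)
The paper gives no proof of this lemma --- it attributes the result to Hall and cites Suzuki (p.~69--71) --- so there is nothing internal to compare against, but your argument is correct and is in essence the standard proof found in that reference. Translating $R/Z(R)$ into a symplectic $\mathbb{F}_r$-space via the commutator, adjoining the squaring quadratic form $Q(\bar g)=g^2$ when $r=2$ (well defined and with polar form $\beta$ by the class-$2$ identity $(gh)^2=g^2h^2[h,g]$), lifting an adapted basis to produce the central factors, and finally identifying $2^{1+2m}_{\pm}$ by counting $|\{g\in R:g^2=1\}|=2\,|Q^{-1}(0)|=2^{2m}\pm 2^{m}$ are all sound, and the odd case is handled correctly using the exponent-$r$ hypothesis to recognise each $\langle x_i,y_i\rangle$ as $r^{1+2}$. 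The only thing to tidy is the citation: as worded, (0.1.13.i) gives existence and uniqueness of a symplectic form but not literally a symplectic basis, and (0.1.13.iv) does not state the hyperbolic-plus-anisotropic Witt decomposition of a minus-type $\mathbb{F}_2$-space; you acknowledge this by appealing to ``the classification behind (0.1.13),'' but a clean write-up should point to the underlying source (KL pp.~22--28) where those bases are actually constructed rather than to the paper's summary statement.
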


We now move on to look at a related type of \(r\)-group.

\begin{defn}
\hypertarget{6.3}{A} \(r\)-group is said to be of \textit{symplectic-type} if every characteristic abelian subgroup is cyclic.
\end{defn}

Hall also proved the following result, classifying all symplectic-type \(r\)-groups. See (\cite{suzuki_group_1982}, p.75-79) for a proof.

\begin{lem}
\hypertarget{6.4}{If} \(R\) is a symplectic-type \(r\)-group, then \(R=E\circ S\), where: 
\begin{enumerate}[label=(\roman*),ref=(\roman*)]
\item \(E\) is either trivial or equal to \(r^{1+2m}\), \(2^{1+2m}_{+}\) or \(2^{1+2m}_{-}\).
\item \(S\) is either cyclic (including the trivial group) or \(r=2\) and \(S\) is isomorphic to \(D_{2^n}\), \(Q_{2^n}\) or \(SD_{2^n}\), where \(n\geq 4\).
\end{enumerate}
The subgroups identified in the quotient are \(Z(E)\) and an isomorphic copy in \(S\).  
\end{lem}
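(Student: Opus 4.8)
This is P.\ Hall's classification of $r$-groups of symplectic type, and the plan is to follow the cited reference, proceeding by induction on $|R|$. The starting point is that $Z(R)$, being a characteristic abelian subgroup, is cyclic; write $\langle z\rangle=\Omega_1(Z(R))$ for its unique subgroup of order $r$. If $R$ is abelian then $R=Z(R)$ is cyclic and we are done with $E=1$, $S=R$, so assume from now on that $R$ is non-abelian.

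The core of the argument is to split off an extraspecial characteristic factor. Among the characteristic subgroups of $R$ that are extraspecial with centre $\langle z\rangle$, either there are none, or we may choose one, $E$, maximal with respect to this property. In the latter case the key technical claim is the internal central product decomposition
\[
R \;=\; E\circ C_R(E),\qquad E\cap C_R(E)=Z(E)=\langle z\rangle .
\]
Since $C_E(E)=Z(E)$, the equality $E\cap C_R(E)=Z(E)$ is automatic, and the claim reduces to $R=E\,C_R(E)$, i.e.\ to showing that every automorphism of $E$ induced by conjugation in $R$ is inner; this is where the maximality of $E$ and the hypothesis that $R$ has no non-cyclic characteristic abelian subgroup must be combined, to rule out any outer automorphism of $E$ arising from $R$. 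Granting it, $C_R(E)$ is the centraliser of a characteristic subgroup, hence characteristic in $R$, hence itself of symplectic type (a characteristic abelian subgroup of $C_R(E)$ is characteristic in $R$, therefore cyclic), and it is proper in $R$. By induction $C_R(E)=E'\circ S$ with $E'$ extraspecial or trivial and $S$ as in the statement; here $Z(E')=\langle z\rangle$, since $Z(E')\le Z(C_R(E))$ and the latter is characteristic abelian in $R$, so has $\langle z\rangle$ as its unique subgroup of order $r$. Thus $R=(E\circ E')\circ S$, and $E\circ E'$ is again extraspecial by \hyperlink{6.2}{(6.2)} together with the classification of extraspecial $r$-groups; this is the required form, with amalgamated subgroups $Z(E\circ E')$ and an isomorphic copy inside $S$.

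It remains to treat the residual case in which $R$ is non-abelian but has no extraspecial characteristic subgroup with centre $\langle z\rangle$: here I would show that $R$ is itself dihedral, generalised quaternion, or semidihedral, and necessarily $r=2$. The plan is to deduce from the absence of such a subgroup that $R$ possesses a cyclic subgroup of index $r$, and then to appeal to the classical classification of $r$-groups having a cyclic maximal subgroup; among the possibilities $C_{r^n}$, $C_{r^{n-1}}\times C_r$, the modular group, $D_{2^n}$, $Q_{2^n}$ and $SD_{2^n}$, the non-cyclic abelian group and the modular group each possess a non-cyclic \emph{characteristic} abelian subgroup and so violate the hypothesis, leaving exactly $D_{2^n}$, $Q_{2^n}$, $SD_{2^n}$ with $n\ge 4$ (the groups $D_8$ and $Q_8$, being extraspecial, have already been absorbed into the non-residual branch). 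In this case we take $E=1$, $S=R$, with trivial amalgamated subgroup.

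I expect the main obstacle to be the central product step $R=E\circ C_R(E)$: showing that the maximal extraspecial characteristic subgroup really does split off --- equivalently that conjugation by elements of $R$ induces no outer automorphism of $E$ --- is the one point demanding genuine work, and it is exactly where the structural hypothesis on characteristic abelian subgroups is used in full. A secondary difficulty is the clean isolation of $D_{2^n}$, $Q_{2^n}$ and $SD_{2^n}$ in the residual case, which rests on the (standard but non-trivial) classification of $p$-groups with a cyclic maximal subgroup.
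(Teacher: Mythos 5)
The paper does not prove this lemma at all: it records it as Hall's classification and refers the reader to Suzuki for a proof. So there is no in-paper argument against which your sketch can be measured; any assessment has to be of the sketch on its own terms.

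As a reconstruction of Hall's theorem, your outline has roughly the right shape --- induct on $|R|$, split off an extraspecial central factor, recurse on the centraliser --- but it is an outline, not a proof, and the two steps you yourself flag as hard are exactly the steps that are not argued. First, the central-product claim $R = E\,C_R(E)$ for a maximal characteristic extraspecial $E$ with centre $\langle z\rangle$ is asserted without justification. An element $x\in R$ inducing an outer automorphism of $E$ witnesses $R\neq E\,C_R(E)$, so to make the claim you must show that any such $x$ forces either a strictly larger characteristic extraspecial subgroup of $R$ or a noncyclic characteristic abelian one. Neither implication is at all clear; ``maximality of $E$ plus the hypothesis'' is a hope, not an argument, and this is precisely the place where the hard work of Hall's proof lives. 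Second, your residual case rests on the unproved assertion that if $R$ is nonabelian of symplectic type and has no characteristic extraspecial subgroup with centre $\langle z\rangle$, then $R$ has a cyclic maximal subgroup; only then does the classical classification of $p$-groups with a cyclic maximal subgroup kick in. You give no indication of why that cyclic maximal subgroup must exist.

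It is also worth noting that the standard treatments (Suzuki; Gorenstein, \textit{Finite Groups}, \S5.4; Aschbacher, \textit{FGT}, \S23) do not take a ``maximal characteristic extraspecial subgroup'' as the pivot. Rather, they analyse the subgroup $\Omega(P)$ (with $\Omega=\Omega_1$ for $p$ odd and $\Omega_2$ for $p=2$) together with $Z_2(P)$, prove that $\Omega(P)$ is abelian or extraspecial, and assemble the central product from that. That particular characteristic subgroup is chosen because it gives the leverage needed for both of the two steps above; your replacement candidate has no evident analogous leverage, so even granting that your plan can be completed, it would have to be completed by an essentially different route from the one cited.
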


\subsection*{Symplectic-type \(r\)-groups of minimal exponent}

We will be particularly interested in symplectic-type \(r\)-groups satisfying a certain minimality condition. 

\begin{defn}
The \textit{exponent} of a finite group \(G\) is the lowest common multiple of the orders of elements in \(G\). 
\end{defn}

When \(r\) is odd, the minimal exponent amongst all symplectic-type \(r\)-groups is \(r\) and the only group satisfying this is \(r^{1+2m}\). When \(r=2\), the minimal exponent amongst symplectic-type \(2\)-groups is \(4\) and there are three groups satisfying this condition: \(2^{1+2m}_{+}, 2^{1+2m}_{-} \text{ and } C_4\circ 2^{1+2m}_{+}\).

It is these four symplectic-type \(r\)-groups of minimal exponent which will be the focus of our discussions for the rest of this section. We proceed by exploring some properties of these groups, beginning with the following isomorphism.

\begin{pro}
\(C_4\circ 2^{1+2m}_{+}\cong C_4\circ 2^{1+2m}_{-}\)
\end{pro}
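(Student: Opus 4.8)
The plan is to reduce everything to the single isomorphism $C_4\circ D_8\cong C_4\circ Q_8$ and then bootstrap using Hall's decompositions from \hyperlink{6.2}{(6.2)}. A preliminary point to settle is that an iterated central product of $2$-groups, each having a \emph{unique} central involution, is well defined and associative: in every factor that occurs below — $C_4$, $D_8$, $Q_8$, and the $2^{1+2m}_{\pm}$ — the subgroup of order two being amalgamated is canonical (it is the unique central involution of that group), so all the amalgamations are mutually compatible and expressions such as $C_4\circ D_8\circ D_8\circ\cdots\circ D_8$ may be reassociated freely. I would record this briefly before using it.

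Granting the base case, the argument is immediate. By \hyperlink{6.2}{(6.2.ii)}, $2^{1+2m}_{+}$ is a central product of $m$ copies of $D_8$, and by \hyperlink{6.2}{(6.2.iii)}, $2^{1+2m}_{-}$ is a central product of $m-1$ copies of $D_8$ with one copy of $Q_8$ (for $m=1$ these read $D_8$ and $Q_8$). Splitting off one $D_8$-factor and replacing $C_4\circ D_8$ by $C_4\circ Q_8$ via the base case gives
\[
C_4\circ 2^{1+2m}_{+}\;\cong\;(C_4\circ D_8)\circ(D_8\circ\cdots\circ D_8)\;\cong\;(C_4\circ Q_8)\circ(D_8\circ\cdots\circ D_8)\;\cong\;C_4\circ 2^{1+2m}_{-},
\]
where each bracket on the right denotes the central product of the remaining $m-1$ copies of $D_8$, and the last step uses commutativity of the central product together with \hyperlink{6.2}{(6.2.iii)}.

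The real content is the base case, which I would prove with presentations. Write $C_4=\langle t\rangle$ with $t^4=1$ and $D_8=\langle a,b\mid a^4=b^2=1,\ bab^{-1}=a^{-1}\rangle$, whose centre is $\langle a^2\rangle$; then $C_4\circ D_8$ is generated by $t,a,b$ subject to these relations together with the requirement that $t$ be central and $t^2=a^2$. Put $c:=tb$. Since $t$ is central, $c$ commutes with $t$, $cac^{-1}=bab^{-1}=a^{-1}$, and $c^2=t^2b^2=t^2=a^2$. Hence $\langle a,c\rangle$ satisfies $a^4=1$, $c^2=a^2$, $cac^{-1}=a^{-1}$, so it is a homomorphic image of $Q_8$; as $\langle a\rangle\unlhd\langle a,c\rangle$ and $c\notin\langle a\rangle$ (because $cac^{-1}=a^{-1}\neq a$) we get $[\langle a,c\rangle:\langle a\rangle]=2$, so $|\langle a,c\rangle|=8$ and $\langle a,c\rangle\cong Q_8$. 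Now $\langle t,a,c\rangle=\langle t,a,b\rangle=C_4\circ D_8$ has order $16$, $t$ centralises $\langle a,c\rangle$, and $\langle t\rangle\cap\langle a,c\rangle$ has order $(4\cdot 8)/16=2$, hence equals $\langle t^2\rangle=\langle a^2\rangle$. Therefore $C_4\circ D_8$ is the internal central product $\langle t\rangle\circ\langle a,c\rangle\cong C_4\circ Q_8$, as required.

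The main obstacle is not depth but bookkeeping. The step most needing care is verifying that the amalgamated central subgroup in $C_4\circ D_8\cong C_4\circ Q_8$ is the correct one — i.e.\ that $\langle t\rangle\cap\langle a,c\rangle$ has order exactly $2$ — and, in the inductive step, that reassociating the long central product never silently changes which order-two subgroups are being identified. Both are handled by the single observation that each group in sight has a unique involution in its centre, so beyond the order counts no nontrivial computation is needed.
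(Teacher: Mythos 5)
Your proof is correct and follows essentially the same route as the paper: reduce to the base case $C_4\circ D_8\cong C_4\circ Q_8$ via Hall's decomposition in \hyperlink{6.2}{(6.2)}, then exhibit a copy of $Q_8$ inside $C_4\circ D_8$ generated by the order-$4$ generator of $D_8$ together with the product of the central generator and the reflection (your $c=tb$ is the paper's $cy$), and observe that the whole group is the internal central product of this $Q_8$ with the $C_4$. Your added remarks on uniqueness of the amalgamated involution and on the precise order of the intersection $\langle t\rangle\cap\langle a,c\rangle$ make the bookkeeping more explicit than the paper's, but the argument is the same.
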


\begin{proof}
By \hyperlink{6.2}{(6.2)}, it is enough to show that \(C_4\circ D_8\cong C_4\circ Q_8\). The former can be presented \(\langle x, y, c\,|\, x^4=c^4=y^2=e, x^2=c^2, cx=xc, cy=yc, yxy=x^{-1}\rangle\). The subgroup \(\langle x, cy\rangle\) is of order \(8\) since \(x^2=(cy)^2\) and it is non-abelian since these generators do not commute. Both generators are of order \(4\), therefore it must be isomorphic to \(Q_8\). Finally, we can observe that the given presentation is equal to \(\langle  x, cy\rangle\circ \langle c \rangle\), where \(\langle (cy)^2\rangle\) is associated with \(\langle c^2\rangle\).      
\end{proof}

We will therefore refer to \(C_4\circ 2^{1+2m}_{+}\) and \(C_4\circ 2^{1+2m}_{-}\) simply as \(C_4\circ 2^{1+2m}\). Next, we state a condition that allows us to identify when a symplectic-type \(r\)-group is of minimal exponent. 

\begin{pro}
\hypertarget{6.7}{If} \(R\) is a non-abelian symplectic-type \(r\)-group with no proper non-central characteristic subgroup, then it is of minimal exponent.
\end{pro}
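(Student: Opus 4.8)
The plan is to apply Hall's classification of symplectic-type \(r\)-groups, \hyperlink{6.4}{(6.4)}, to write \(R=E\circ S\) as there, and then to show that every choice of \(S\) other than a cyclic group of order at most \(r\) (when \(r\) is odd) or at most \(4\) (when \(r=2\)) produces a proper non-central characteristic subgroup of \(R\), contradicting the hypothesis. Since the groups surviving this elimination are precisely those singled out in the discussion preceding this proposition, they are exactly the symplectic-type \(r\)-groups of minimal exponent, which is what we want.

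First I would reduce to the case where \(S\) is cyclic. Because \(R\) is non-abelian, \(S\) cannot be cyclic while \(E\) is trivial, so either \(E\neq 1\), or \(S\) is one of \(D_{2^n},Q_{2^n},SD_{2^n}\) with \(n\geq 4\). In the latter situation, since \([E,S]=1\) we have \(R'=E'S'=S'\), using that \(E'=Z(E)\) is the identified copy of \(C_2\), which coincides with \(Z(S)\) and lies in \(S'\) for \(n\geq 4\); on the other hand \(Z(R)=Z(E)Z(S)=Z(S)\cong C_2\). As \(S'\cong C_{2^{n-2}}\) has order at least \(4\), the derived subgroup \(R'\) is characteristic, proper (since \(R\) is nilpotent), and not contained in \(Z(R)\) --- a contradiction. (The sub-case \(E=1\), \(R=S\) is the identical computation.) Hence \(S\) is cyclic, so \(S\leq Z(R)\), and \(E\) must be non-trivial; thus \(E\) is extraspecial of order \(r^{1+2m}\) with \(m\geq 1\), in particular non-abelian.

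Next I would treat the cyclic case; write \(|S|=r^k\). Since \(S\) is central, \((es)^j=e^js^j\) for all \(e\in E\), \(s\in S\), \(j\geq 1\). If \(r\) is odd, \(E\) has exponent \(r\), so \((es)^r=s^r\); as the order-\(r\) subgroup of \(S\) is exactly \(Z(E)\leq E\), an element \(es\) has order dividing \(r\) iff \(s\in Z(E)\), so the elements of \(R\) of order dividing \(r\) form the subgroup \(E\). This subgroup is characteristic, non-central (as \(E\) is non-abelian), and proper exactly when \(k\geq 2\); hence \(k\leq 1\) and \(R=E=r^{1+2m}\), of exponent \(r\). If \(r=2\), \(E\) has exponent \(4\), so \((es)^4=s^4\), and the elements of \(R\) of order dividing \(4\) form the subgroup \(E\cdot T\), where \(T\leq S\) is the subgroup of order \(\min(4,2^k)\); when \(k\geq 2\) this is \(E\circ C_4=C_4\circ 2^{1+2m}\) (invoking the isomorphism \(C_4\circ 2^{1+2m}_{+}\cong C_4\circ 2^{1+2m}_{-}\)), and when \(k\leq 1\) it is \(E\). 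Either way it is characteristic and non-central, and proper exactly when \(k\geq 3\); hence \(k\leq 2\), and \(R\) is one of \(2^{1+2m}_{+}\), \(2^{1+2m}_{-}\), \(C_4\circ 2^{1+2m}\), each of exponent \(4\). In every surviving case \(R\) has exponent \(r\) (for \(r\) odd) or \(4\) (for \(r=2\)), that is, \(R\) is of minimal exponent.

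The main obstacle is the bookkeeping forced by the central-product identification: one must check carefully that the central \(C_2\) of \(E\) is identified with \(Z(S)\) (so that \(Z(R)\) remains as small as \(C_2\) while \(R'\), respectively \(S'\), is comparatively large), and that each candidate characteristic subgroup exhibited above --- the set of elements of bounded order, or the derived subgroup --- is genuinely proper in \(R\), which ultimately comes down to the order count \(|R|=|E||S|/r\). The rest is a routine case check driven by Hall's classification.
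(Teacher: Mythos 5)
Your proof is correct and follows essentially the same strategy as the paper: invoke Hall's decomposition \(R=E\circ S\) from \hyperlink{6.4}{(6.4)} and exhibit a proper non\mbox{-}central characteristic subgroup whenever \(S\) is too large. The cyclic\mbox{-}\(S\) subcase is handled the same way in both proofs, via the subgroup \(\{x\in R : x^r=1\}\) (resp.\ \(\{x\in R : x^4=1\}\)) of elements of bounded order. Where you diverge is in ruling out \(S\in\{D_{2^n},Q_{2^n},SD_{2^n}\}\): you observe that \(R'=E'S'=S'\cong C_{2^{n-2}}\) has order at least \(4\) while \(Z(R)\cong C_2\), so the derived subgroup itself is a proper non\mbox{-}central characteristic subgroup. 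The paper instead singles out a distinguished cyclic index\mbox{-}\(2\) subgroup of \(S\) (when \(E=1\)) or the elements of a given order centralised by \(E\) (when \(E\neq 1\)). Your route is cleaner: it treats the \(E=1\) and \(E\neq 1\) cases uniformly and avoids the paper's claim that \(D_{2^n}\), \(Q_{2^n}\), \(SD_{2^n}\) have a \emph{unique} index\mbox{-}\(2\) subgroup, which as stated is not true for \(D_{2^n}\) (it has three); the derived\mbox{-}subgroup argument needs no such uniqueness.
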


\begin{proof}
First, consider the case when \(r\) is odd. By \hyperlink{6.4}{(6.4)}, \(R\cong C_n\circ E\), where \(E\) is either trivial or \(r^{1+2m}\). If \(E\) is trivial, then \(R\) is abelian, so we must have \(R\cong C_n\circ r^{1+2m}\). We note that \(n\) is a multiple of \(r\) and that \(C_r\circ r^{1+2m}\cong r^{1+2m}\). If \(n>r\), then \(r^{1+2m}\) is the subgroup generated by all elements of order \(r\) in \(R\) and it is therefore characteristic. Thus, we must have \(R= r^{1+2m}\).

Now let \(r=2\). By \hyperlink{6.4}{(6.4)}, \(R= S\circ E\), where \(S\) is either cyclic of even order or isomorphic to \(D_{2^n}\), \(Q_{2^n}\) or \(SD_{2^n}\) and \(E\) is trivial or isomorphic to \(2^{1+2m}_{+}\) or \(2^{1+2m}_{-}\). Suppose \(E\) is trivial, then since \(R\) is non-abelian \(S\) cannot be cyclic. Each of the other possible groups has a unique index \(2\) (non-central) subgroup, which is therefore characteristic in \(R\). Thus \(E\) must be an extraspecial \(2\)-group.

Let \(S\) be any of the possible groups other than a cyclic group and consider the inclusion map \(\phi: S \longrightarrow R\). Let \(y\) be a generator of the index \(2\) subgroup of \(S\). Then the generators of \(\langle \phi(y) \rangle\) are the unique elements of their given order that are also centralised by \(E\). Therefore \(\langle \phi(y) \rangle\) is characteristic in \(R\) and thus \(R=C_n\circ 2^{1+2m}_{\pm}\). If \(n>4\), then the subgroup generated by all elements of order \(2\) and \(4\) is: a) characteristic; by reason of element orders b) non central; since it contains a copy of the extraspecial \(2\)-group and c) properly contained in \(R\); since the element \(ge\), where \(g\) is a generator of \(C_n\) and \(e\) is the identity element of the extraspecial group, is of order greater than \(4\). Noting that \(C_2\circ 2^{1+2m}_{\pm}\cong 2^{1+2m}_{\pm}\), the result follows.
\end{proof}

\subsection*{Representation theory of minimal exponent symplectic-type groups}

We will now turn our attention to discussing the representation theory of these groups. In order to establish which fields we can find irreducible representations over, we begin by stating the following well-known result. See (\cite{serre_linear_1996}, p.64-65) for a proof.

\begin{pro}
\hypertarget{6.8}{If} \(R\) is a \(r\)-group and \(F\) is a field of characteristic \(r\), then any irreducible \(F\)-representation \(\rho: R\longrightarrow GL(V)\) is trivial.
\end{pro}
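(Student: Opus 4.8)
The plan is to show that the image $\rho(R) \leq GL(V)$ must act trivially on $V$ by exploiting the fact that $R$ is a finite $r$-group and $F$ has characteristic $r$. The crucial tool is the observation that a $p$-group acting on a nonzero vector space over a field of characteristic $p$ always has a nonzero fixed vector. So first I would establish this fixed-point lemma: if $R$ is a nontrivial $r$-group and $F$ has characteristic $r$, then the fixed space $V^R = \{v \in V : g\cdot v = v \text{ for all } g \in R\}$ is nonzero. One way to see this is by induction on $|R|$. Take a central element $z \in Z(R)$ of order $r$ (which exists since $R$ is a nontrivial $r$-group). On the cyclic subgroup $\langle z \rangle$, the vector space $V$ is a module over $F[\langle z \rangle] \cong F[t]/(t^r - 1) = F[t]/((t-1)^r)$, since $t^r - 1 = (t-1)^r$ in characteristic $r$. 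This ring is local with nilpotent maximal ideal $(t-1)$, so the endomorphism $z - 1$ is nilpotent on $V$, hence has nontrivial kernel; that is, $V^{\langle z \rangle} \neq 0$. Since $z$ is central, $V^{\langle z \rangle}$ is an $R$-submodule, and we may pass to the quotient group $R/\langle z \rangle$ acting on $V^{\langle z \rangle}$ (or just iterate); by induction $V^R \neq 0$.

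Next I would apply this to the irreducible representation $\rho$. By the lemma, $V^R$ is a nonzero $R$-submodule of $V$. Since $V$ is irreducible, we must have $V^R = V$, which says precisely that $R$ acts trivially, i.e. $\rho(g) = \mathrm{id}_V$ for all $g \in R$. (Irreducibility forces $\dim V = 1$ as well, and $\rho$ is the trivial representation on that one-dimensional space.) If $R$ is itself trivial there is nothing to prove.

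I expect the main obstacle, or at least the only point requiring care, to be the base case of the fixed-point lemma — verifying that in characteristic $r$ a single element of order $r$ acts with a nontrivial fixed space. This reduces to the algebraic identity $t^r - 1 = (t-1)^r$ over $\mathbb{F}_r$ (and hence over any field of characteristic $r$), from which the nilpotency of $z - 1$, and therefore the existence of a nonzero kernel, follows. Everything else — the reduction to a central element of prime order, the inductive passage to $V^{\langle z\rangle}$ and $R/\langle z\rangle$, and the final appeal to irreducibility — is routine. No results beyond basic facts about $r$-groups (existence of a nontrivial centre, hence a central element of order $r$) and elementary linear algebra are needed.
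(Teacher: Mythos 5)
Your proof is correct and is essentially the standard argument; the paper does not give its own proof but defers to Serre, \emph{Linear Representations of Finite Groups}, pp.\ 64--65, where precisely this line of reasoning appears. The fixed-point lemma via the nilpotency of $z-1$ (using $(t-1)^r = t^r - 1$ in characteristic $r$ for a central element $z$ of order $r$), followed by induction on $|R|$ and then an appeal to irreducibility, is the canonical proof, and you have carried it out without gaps.
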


Therefore, if we want to construct a non-trivial irreducible representation for a symplectic-type \(r\)-group, we must do so over a field with characteristic \(p\neq r\). Note that this puts us in the situation discussed above \hyperlink{0.3.13}{(0.3.13)}. 

We will construct these representations shortly and we shall see that, in order to do so, the field must contain all \(k^{th}\) roots of unity, where \(k:=|Z(R)|\). This places further restrictions on the size of the field, since \(F\) contains the \(k^{th}\) roots of unity if and only if \(k\) divides \(p^e-1\). Therefore, from this point on, let \(F\) be a field of order \(p^e\), where \(p\) is a prime not equal to \(r\), such that \(p^e \equiv 1\: (\,mod\, |Z(R)|\,)\).

We can deduce how many irreducible representations a symplectic-type \(r\)-group of minimal exponent has by some nice counting arguments. We divide this task into separate two cases; first when \(R\) is one of the three extraspecial groups, secondly when \(R\cong C_4\circ 2^{1+2m}\).

\begin{pro}
If \(R\) is an extraspecial symplectic-type \(r\)-group of minimal exponent, then \(R\) has \(r^{2m}+r-1\) irreducible representations; \(r-1\) of which are \(r^m\)-dimensional and the rest are \(1\)-dimensional.
\end{pro}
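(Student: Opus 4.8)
The plan is to read off everything from the three counting results \hyperlink{0.3.13}{(0.3.13)}, \hyperlink{0.3.14}{(0.3.14)} and \hyperlink{0.3.15}{(0.3.15)}, which apply here since \(R\) is an \(r\)-group and \(F\) has characteristic \(p\neq r\), so that \(p\nmid|R|\). Write \(|R|=r^{1+2m}\) and \(Z=Z(R)\). Because \(R/Z\) is abelian we have \(R'\leq Z\), and since \(R\) is non-abelian while \(|Z|=r\) is prime, in fact \(R'=Z\cong C_r\). The strategy is: first count the conjugacy classes of \(R\) (this gives the total number of irreducibles), then count the linear ones, and finally pin down the degrees of the remaining ones with the sum-of-squares formula.

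First I would count conjugacy classes. The \(r\) elements of \(Z\) are each a singleton class. For \(g\notin Z\), the map \(h\mapsto[g,h]\) is a homomorphism \(R\to Z\) (using \(R'\leq Z\leq Z(R)\)), and it is non-trivial since \(g\) is non-central, hence surjective onto \(Z\cong C_r\); therefore the conjugacy class of \(g\) is exactly the coset \(gZ\), of size \(r\). Thus the \(r^{1+2m}-r\) non-central elements fall into \(r^{2m}-1\) classes, and \(R\) has \(r+(r^{2m}-1)=r^{2m}+r-1\) conjugacy classes. By \hyperlink{0.3.13}{(0.3.13)}, \(R\) has exactly \(r^{2m}+r-1\) irreducible \(F\)-representations.

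Next, by \hyperlink{0.3.14}{(0.3.14)} the number of \(1\)-dimensional irreducible \(F\)-representations is \(|R/R'|=|R/Z|=r^{2m}\), so precisely \((r^{2m}+r-1)-r^{2m}=r-1\) of the irreducibles are non-linear. Since \(R\) is an \(r\)-group, in the coprime-characteristic setting at hand every irreducible degree divides \(|R|\) and is therefore a power of \(r\); write the non-linear degrees as \(r^{a_1},\dots,r^{a_{r-1}}\) with each \(a_i\geq 1\). Then \hyperlink{0.3.15}{(0.3.15)} gives
\[\sum_{i=1}^{r-1} r^{2a_i}=|R|-r^{2m}=r^{1+2m}-r^{2m}=r^{2m}(r-1).\]
If some \(a_i\geq m+1\), that single summand already exceeds \(r^{2m+2}>r^{2m}(r-1)\), a contradiction, so \(a_i\leq m\) for all \(i\); hence every summand is at most \(r^{2m}\), and a sum of \(r-1\) such summands equal to \((r-1)r^{2m}\) forces each to equal \(r^{2m}\), i.e. \(a_i=m\). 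Thus the \(r-1\) non-linear irreducibles are all \(r^m\)-dimensional, which with the \(r^{2m}\) linear ones is exactly the claim.

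No deep obstacle is expected; the two points that need care are the conjugacy-class computation above and the (standard) fact that the degrees of the irreducible \(F\)-representations of the \(r\)-group \(R\) are powers of \(r\) — the latter being part of the transfer of ordinary representation theory to the coprime-characteristic case already invoked earlier in the paper. The final degree argument is then purely arithmetic.
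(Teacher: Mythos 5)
Your proof is correct and follows essentially the same three-step route as the paper: count conjugacy classes to get the total via \hyperlink{0.3.13}{(0.3.13)}, count linear characters via \hyperlink{0.3.14}{(0.3.14)}, and force the remaining \(r-1\) degrees to equal \(r^m\) via \hyperlink{0.3.15}{(0.3.15)} using the fact that degrees of irreducibles of the \(r\)-group are powers of \(r\). The only cosmetic difference is in how the non-central conjugacy class size is pinned down to \(r\): you argue it via surjectivity of the commutator homomorphism \(h\mapsto[g,h]\) onto \(Z(R)\), while the paper observes that the class is sandwiched between size \(>1\) (a positive power of \(r\)) and size \(\le|Z(R)|=r\); both are valid and interchangeable.
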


\begin{proof}
Let \(x\) be a non-central element of \(R\). On the one hand, since \(R/Z(R)\) is abelian, the conjugacy class of \(x\) must be contained in \(xZ(R)\). On the other hand, the size of the conjugacy class is a positive power of \(r\) and therefore it must be exactly \(r\). Hence the conjugacy class is all of \(xZ(R)\). So, we get \(r^{2m}-1\) conjugacy classes from the non-central elements and one from each central element. Thus \(R\) has a total of \(r^{2m}+r-1\) conjugacy classes and by \hyperlink{0.3.13}{(0.3.13)}, \(R\) has the same number of irreducible representations. 

By \hyperlink{0.3.14}{(0.3.14)}, \(|R/R'|=r^{2m}\) of these irreducibles are \(1\)-dimensional and by \hyperlink{0.3.15}{(0.3.15)}, the sum of the remaining \(r-1\) irreducibles degree's squared is \(r^{1+2m}-r^{2m}=r^{2m}(r-1)\). Suppose one of these irreducibles has degree squared greater than \(r^{2m}\). Since the degree is a power of \(r\), it must be at least \(r^{1+2m}\); however, this is greater than \(r^{2m}(r-1)\), so it is not possible. Therefore, all \(r-1\) irreducibles must have degree squared equal to \(r^{2m}\); that is, they have degree \(r^{m}\).
\end{proof}

\begin{pro}
If \(R\cong C_4\circ 2^{1+2m}\), then \(R\) has \(2^{2m+1}+2\) irreducible representations; two of which are \(2^m\)-dimensional and the rest are \(1\)-dimensional.
\end{pro}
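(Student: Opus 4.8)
The plan is to mirror the proof of the preceding proposition almost verbatim: count conjugacy classes to get the total number of irreducibles, use the abelianization to count the one-dimensional ones, and pin down the two remaining degrees by a sum-of-squares argument.

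First I would record the basic numerical facts about \(R = C_4\circ 2^{1+2m}\). From the definition of the central product, \(|R| = |C_4|\cdot|2^{1+2m}|/2 = 2^{2m+2}\). The derived subgroup \(R'\) equals the derived subgroup of the extraspecial factor, namely \(Z(2^{1+2m})\cong C_2\), and this \(C_2\) is exactly the subgroup amalgamated in the central product. Finally \(Z(R)=C_4\): modulo \(C_4\) the group \(R\) is \(2^{1+2m}/Z(2^{1+2m})\cong\mathbb{Z}_2^{2m}\) equipped with its non-degenerate commutator (symplectic) form in the sense discussed after \hyperlink{6.1}{(6.1)}, whose radical is trivial, so nothing outside \(C_4\) is central.

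Next I would count conjugacy classes exactly as in the extraspecial case. For a non-central \(x\in R\), since \(R/Z(R)\) is abelian the conjugacy class of \(x\) is contained in \(xR'\), a set of size \(2\); since \(x\) is non-central its class has size a positive power of \(2\), hence exactly \(2\). So the \(|R|-|Z(R)| = 2^{2m+2}-4\) non-central elements fall into \((2^{2m+2}-4)/2 = 2^{2m+1}-2\) classes, which together with the \(4\) central classes gives \(2^{2m+1}+2\) conjugacy classes in all. Since \(p\neq 2\) we have \(p\nmid|R|\), so \hyperlink{0.3.13}{(0.3.13)} tells us \(R\) has \(2^{2m+1}+2\) irreducible \(F\)-representations.

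Then \hyperlink{0.3.14}{(0.3.14)} gives that \(|R/R'| = 2^{2m+2}/2 = 2^{2m+1}\) of these are \(1\)-dimensional, leaving exactly two others. By \hyperlink{0.3.15}{(0.3.15)} the sum of the squares of all the degrees is \(|R|=2^{2m+2}\); subtracting the \(2^{2m+1}\) contributed by the \(1\)-dimensional representations leaves \(2^{2m+1}\) to be expressed as \(d_1^2+d_2^2\) with \(d_1,d_2\) powers of \(2\). Writing \(d_1 = 2^a \le d_2 = 2^b\), the equation \(2^{2a}(1+2^{2(b-a)}) = 2^{2m+1}\) forces \(a=b\) (the factor \(1+2^{2(b-a)}\) is odd unless it equals \(1\)), hence \(d_1=d_2=2^m\). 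I do not expect a serious obstacle; the only points needing care are confirming \(R'\cong C_2\) and \(Z(R)\cong C_4\) from the central product structure, and the final \(2\)-adic valuation step forcing the two exceptional degrees to coincide.
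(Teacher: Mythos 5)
Your proposal is correct and follows essentially the same line as the paper: count conjugacy classes by observing that non-central classes have size exactly 2, then use (0.3.14) to count the linear characters and (0.3.15) to force the two remaining degrees to be $2^m$. The only differences are presentational — you explicitly verify $|R|=2^{2m+2}$, $R'\cong C_2$, and $Z(R)\cong C_4$, and you spell out the $2$-adic argument pinning down $d_1=d_2=2^m$, all of which the paper treats more briefly.
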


\begin{proof}
We begin by noting that \(R'\cong C_2\). If \(x\in R\backslash R'\), then the conjugacy class of \(x\) is contained in \(xR'\), since \(R/R'\) is abelian. Hence, conjugacy classes of non-central elements must be of size \(2\). Therefore \(R\) has \(2^{2m+1}+2\) conjugacy classes and irreducible representations. 

Of these representations, \(|R/R'|=2^{2m+1}\) are \(1\)-dimensional, so by the same argument as the extraspecial case, the sum of degrees squared of the two remaining irreducibles are \(2^{2m+2}-2^{2m+1}=2^{2m}\cdot 2\) and this means that they are of degree \(2^m\) .
\end{proof}

The \(1\)-dimensional representations of these groups are just those corresponding to the elementary abelian group \(R/R'\). It is the \(r^m\)-dimensional irreducibles that are of interest to us and it is simple enough to construct them. We need only construct such a representation for \(2_{\pm}^{1+2}\) and \(r^{1+2}\), since by \hyperlink{6.2}{(6.2)}, we may then tensor \(m\) irreducibles of the corresponding groups to get the desired representation for \(2_{\pm}^{1+2m}\) and \(r^{1+2m}\) (and for \(C_4\circ 2^{1+2m}\), we tensor with an additional irreducible of \(C_4\)). 

First, consider the case when \(R=2_{\pm}^{1+2}\). We have already noted that the two groups are isomorphic to \(D_8\) and \(Q_8\) and the reader should be familiar with the (absolutely) irreducible \(2\)-dimensional  representations of these. In the case of \(D_8=\langle x,y\rangle\), where \(x\) is of order \(4\) and \(y\) is of order \(2\):
\[\rho(x)=\begin{pmatrix} 0 & -1 \\ 1 & 0\end{pmatrix}\:\text{, }\: \rho(y)=\begin{pmatrix} 1 & 0 \\ 0 & -1\end{pmatrix}\]
In the case of \(Q_8=\langle i,j\rangle\), where \(i\) and \(j\) are of order \(4\): 
\[\rho(i)=\begin{pmatrix} \sqrt{-1} & 0 \\ 0 & -\sqrt{-1}\end{pmatrix}\:\text{, }\: \rho(j)=\begin{pmatrix} 0 & -1 \\ 1 & 0\end{pmatrix}\]

Tensoring the first of these representations \(m\) times gives us an absolutely irreducible \(2^m\)-dimensional representation of \(2_{+}^{1+2m}\). Tensoring \(m-1\) copies of the former and one of the latter, gives us the same for \(2_{-}^{1+2m}\). From these, we derive the two \(2^m\)-dimensional irreducibles for \(C_4\circ 2^{1+2m}\) by taking the above representation of \(2_{+}^{1+2m}\) and tensoring it with one of the two faithful irreducible representations of \(C_4\), each one producing a distinct absolutely irreducible \(2^m\)-dimensional representation. All of these representations are faithful since the individual factors are. 

Next we consider the case when \(r\) is odd and recall the presentation of \(R=r^{1+2}\) we stated at the beginning of this section. Let \(\lambda\) be a primitive \(r^{th}\) root of unity and consider the \(r\)-dimensional representation:
\[\rho_{\lambda}(x)=\begin{pmatrix} 1 & & & & \\ &\lambda & & & \\ & & \lambda^2 & & \\ & & &\ddots & \\ & & & &\lambda^{r-1} \end{pmatrix}\:\text{, }\: \rho_{\lambda}(y)=\begin{pmatrix} 0 & 0 & \cdots & 0 & 1 \\  1 & 0 &  \cdots & 0 & 0 \\ 0 & 1 & \ddots &\vdots& 0 \\ \vdots & \ddots & \ddots & 0 & \vdots\\ 0 & \cdots & 0 & 1 & 0 \end{pmatrix}\]
The relation \([x,y]=z\) determines that \(\rho_{\lambda}(z)=\lambda I_n\). The kernel of \(\rho_{\lambda}\) is a normal subgroup of \(R\) and thus has order \(1,r\) or \(r^2\). If the kernel is non-trivial then the quotient group is abelian and therefore the kernel contains \(R'=Z(R)\), which is a contradiction. Therefore \(\rho_{\lambda}\) is faithful. Additionally, the representation is irreducible. Indeed, any subspace \(W\subset V\) that is stabilised by \(\rho_{\lambda}(x)\), is a sum of its eigenspaces. Since \(\rho_{\lambda}(x)\) has \(r\) distinct eigenvalues, \(W\) must contain at least one standard basis vector \(e_i\). If \(W\) is also stabilised by the cyclic shift matrix \(\rho_{\lambda}(y)\), it then contains all standard basis vectors. Therefore \(W=V\) and \(\rho_{\lambda}(R)\) is irreducible.

Furthermore, since it has distinct eigenvalues, any matrix commuting with \(\rho_{\lambda}(x)\) must be diagonal and any matrix commuting with \(\rho_{\lambda}(y)\) must have the same entries along the diagonal. Therefore \(C_{GL(V)}(\rho_{\lambda}(R))\cong F^{\times}\) and thus by \hyperlink{0.3.11}{(0.3.11)}, \(\rho_{\lambda}(R)\) is absolutely irreducible. 

Exchanging \(\lambda\) for the other primitive \(r^{th}\) roots of unity, we get a set of \(r-1\) absolutely irreducible faithful \(r\)-dimensional representations \(\{\rho_{\lambda}, \rho_{\lambda^2}, . . . , \rho_{\lambda^{r-1}}\}\). Note that these representations are wholly determined by where they send a fixed generator of the centre i.e they are determined by \(\lambda\) (this same point can be said about the \(2^m\)-dimensional irreducibles of \(R\cong C_4\circ 2^{1+2m}\) and it is trivially true for the extraspecial \(2\)-groups, which only has one such irreducible).
Once again, tensoring one these representations \(m\) times we get a faithful absolutely irreducible \(r^m\)-dimensional representation for the group \(r^{1+2m}\). This discussion is summarised in the following result.

\begin{lem}
Let \(R\) be a symplectic-type \(r\)-group of minimal exponent and let \(F\) be a field of order \(p^e\), where \(p\) is a prime distinct from \(r\) and \(e\) is an integer such that \(p^e \equiv 1\: (\,mod\, |Z(R)|\,)\). 
\begin{enumerate}[label=(\roman*),ref=(\roman*)]
\item If \(R\) is an extraspecial \(r\)-group, then it has \(r-1\) absolutely irreducible faithful \(r^m\)-dimensional representations determined (up to equivalence) by where they send a fixed generator of the centre. 
\item If \(R\cong C_4\circ 2^{1+2m}\), then it has two absolutely irreducible faithful \(2^m\)-dimensional representations determined (up to equivalence) by where they send a fixed generator of the centre. 

\end{enumerate}

\end{lem}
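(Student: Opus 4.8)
The idea is to collect the three strands developed above --- the conjugacy-class count, the explicit base-case constructions, and the central-product decomposition of Lemma~\hyperlink{6.2}{(6.2)} --- and tie them together with a parametrisation by central characters. First I would record what the counting has already supplied: by the conjugacy-class analysis above together with \hyperlink{0.3.13}{(0.3.13)}--\hyperlink{0.3.15}{(0.3.15)}, an extraspecial symplectic-type $r$-group of minimal exponent has exactly $r-1$ irreducible $F$-representations of degree $r^m$ (the other $r^{2m}$ being $1$-dimensional), and $C_4\circ 2^{1+2m}$ has exactly two of degree $2^m$. So the cardinalities asserted in the statement are already fixed; what remains is to exhibit that many inequivalent faithful absolutely irreducible representations of the stated degree, and to show they are distinguished by the image of a central generator.

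For the construction I would use the explicit $2$-dimensional representations of $D_8$ and $Q_8$, and the $r$-dimensional $\rho_\lambda$ of $r^{1+2}$, written down above. Each is faithful (a non-trivial kernel would be a non-trivial normal --- hence central --- subgroup, on which the representation is visibly non-trivial) and absolutely irreducible, since in each case a matrix commuting with the image is forced to be scalar, so \hyperlink{0.3.11}{(0.3.11)} applies. By Lemma~\hyperlink{6.2}{(6.2)}, $R$ is a central product of $m$ such factors (with an extra $C_4$ factor when $R\cong C_4\circ 2^{1+2m}$), all of whose centres are identified. One then chooses on each factor an absolutely irreducible faithful representation whose restriction to the identified central subgroup agrees across all factors; this is precisely where the hypothesis $p^e\equiv 1\ (\mathrm{mod}\ |Z(R)|)$ is used, to guarantee that $F$ contains the primitive $|Z(R)|$-th roots of unity needed to make such a choice. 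The outer tensor product of these representations is trivial on the kernel of $E_1\times\cdots\to R$, hence by \hyperlink{0.3.8}{(0.3.8)} descends to a representation of $R$ of degree $r^m$ (resp. $2^m$); it is faithful because its restriction to each factor is a non-zero scalar multiple of a faithful representation, and absolutely irreducible because the endomorphism ring of an outer tensor product of absolutely irreducibles is $F\otimes\cdots\otimes F=F$, so \hyperlink{0.3.11}{(0.3.11)} again applies.

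For the parametrisation: since $Z(R)$ is central and such a representation is absolutely irreducible, Schur's Lemma forces $Z(R)$ to act by scalars, through a linear character whose value on a fixed generator $g$ of $Z(R)$ is a primitive $|Z(R)|$-th root of unity (a non-primitive value would give a non-trivial kernel meeting $Z(R)$, contradicting faithfulness). Restricting to a factor shows this central character coincides with the central character of each factor representation, and each factor representation is determined up to equivalence by that character; hence the whole representation is determined up to equivalence by the image of $g$. So the assignment sending a degree-$r^m$ irreducible to the image of $g$ is injective into the set of primitive $|Z(R)|$-th roots of unity in $F$, which has $r-1$ elements when $|Z(R)|=r$ and $\varphi(4)=2$ elements when $|Z(R)|=4$ --- matching the count from the first paragraph, so the assignment is a bijection and the statement follows.

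\textbf{Main obstacle.} The only step that is not purely formal is the central-product/tensor passage: verifying that the outer tensor product is well-defined on the central product (the matching-of-central-characters condition), that it remains irreducible after descending through \hyperlink{0.3.8}{(0.3.8)}, and --- crucially --- that varying the common central character over its faithful values yields \emph{all} the degree-$r^m$ irreducibles rather than merely some of them. The first two points are handled by \hyperlink{0.3.8}{(0.3.8)} and \hyperlink{0.3.11}{(0.3.11)}; the third requires no separate completeness argument, being forced by the exact conjugacy-class count matching the number of faithful central characters.
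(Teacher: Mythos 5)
Your proof takes essentially the same route the paper does: the conjugacy-class count fixes the number and degree of the representations, the explicit base representations of $D_8$, $Q_8$, $r^{1+2}$ and $C_4$ are tensored up via \hyperlink{6.2}{(6.2)}, and the resulting representations are parametrised by where they send a fixed central generator, with the matching count closing the argument. One small point to tighten: a nontrivial normal subgroup of an $r$-group need not itself be central, only meet $Z(R)$ nontrivially, so the faithfulness argument should be phrased as ``a nontrivial kernel would meet $Z(R)$ nontrivially, contradicting that the central character is primitive'' rather than asserting the kernel is central, and likewise the assertion that the tensored representation is faithful because each factor acts faithfully needs this same appeal to the centre rather than following from the factors alone.
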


Thus, if \(F\) is a field with the aforementioned restrictions and \(V\) is a \(r^m\)-dimensional \(F\)-vector space, we can embed such groups into \(GL(V)\) via these representations. We assume these conditions on \(F\) and \(V\) for the remainder of the section. 

\subsection*{Automorphisms of minimal exponent symplectic-type groups}

In the lead up to the main definition of this section, we must explore the structure of the automorphism groups of the symplectic-type \(r\)-groups of minimal exponent.

Using the shorthand \(A:=Aut(R)\), we now state the structure of \(C_{A}(Z(R))\), the group of automorphisms that fix every element in the centre of \(R\). The reason for our interest in this particular group of automorphsims will become apparent shortly.

\begin{lem}
\hypertarget{6.12}{Let} \(R\) be a symplectic-type \(r\)-group of minimal exponent. The structure of \(C_{A}(Z(R))\) is described in the table below.

\begin{center}

\begin{tabular}{ |p{3.5cm}||p{3.5cm}|}
 \hline
 \multicolumn{2}{|c|}{ \textbf{Table 6.12}} \\
 \hline
 \(R\)&  \(C_{A}(Z(R))\)\\
 \hline
\(r^{1+2m}\)   &    \(r^{2m}.Sp_{2m}(r)\)\\
 \(2^{1+2m}_{+}\)&  \(2^{2m}.O^{+}_{2m}(2)\)\\
 \(2^{1+2m}_{-}\) &  \(2^{2m}.O^{-}_{2m}(2)\)\\
\(C_4\circ 2^{1+2m}\)  & \(2^{2m}.Sp_{2m}(2)\)\\
 \hline
\end{tabular}
\end{center}

\end{lem}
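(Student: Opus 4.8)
The plan is to study $C_{A}(Z(R))$ via its action on the elementary abelian quotient $V':=R/Z(R)\cong\mathbb{Z}_r^{2m}$. Every $\alpha\in C_{A}(Z(R))$ stabilises $Z(R)$, so it descends to a linear automorphism $\bar\alpha$ of $V'$, and $\pi:\alpha\mapsto\bar\alpha$ is a homomorphism $C_{A}(Z(R))\longrightarrow GL(V')$. I would compute $\ker\pi$ and $\operatorname{im}\pi$ separately; since $|C_{A}(Z(R))|=|\ker\pi|\cdot|\operatorname{im}\pi|$ and $\ker\pi\unlhd C_A(Z(R))$, the structure $r^{2m}.(\cdots)$ in each row of the table will follow once both factors are identified.

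For the kernel, an automorphism $\alpha$ fixing $Z(R)$ pointwise and inducing the identity on $V'$ satisfies $\alpha(g)g^{-1}\in Z(R)$ for all $g$, and $\phi:g\mapsto\alpha(g)g^{-1}$ is then a homomorphism $R\to Z(R)$ killing $R'$ (its image is abelian) and $Z(R)$ (fixed pointwise), hence factoring through $V'$. Conversely every $\phi\in\operatorname{Hom}(V',Z(R))$ yields such an automorphism $g\mapsto g\,\phi(\bar g)$, and these compositions add, so $\ker\pi\cong\operatorname{Hom}(V',Z(R))$, which is elementary abelian. As $V'$ has rank $2m$, this is $r^{2m}$ when $Z(R)\cong C_r$ (the three extraspecial cases) and $2^{2m}$ when $Z(R)\cong C_4$, because a homomorphism out of the elementary abelian $V'$ must land in the unique subgroup of order $2$ of $C_4$. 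This accounts for the normal factor $r^{2m}$ (resp.\ $2^{2m}$).

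For the image, since $\alpha$ fixes $R'\le Z(R)$ pointwise we have $[\alpha(g),\alpha(h)]=[g,h]$, so $\bar\alpha$ preserves the non-degenerate alternating commutator form $c:V'\times V'\to R'$, $(\bar g,\bar h)\mapsto[g,h]$; hence $\operatorname{im}\pi\le Sp(V')=Sp_{2m}(r)$. This is already the correct overgroup for $R=r^{1+2m}$ ($r$ odd) and, I claim, for $R=C_4\circ 2^{1+2m}$: in the latter the squaring map $g\mapsto g^2$ is not a function of $\bar g$, since replacing $g$ by $gc$ with $c$ the central element of order $4$ changes $g^2$ by $c^2\neq 1$, so no orthogonal refinement is available. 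For $R=2^{1+2m}_{\pm}$ one has $Z(R)\cong C_2$, so $q(\bar g):=g^2\in R'\cong\mathbb{Z}_2$ is a well-defined quadratic form on $V'$ with associated bilinear form $c$; by \hyperlink{class}{(0.1.13.iv)} it is of plus- or minus-type precisely as $R$ is $2^{1+2m}_{+}$ or $2^{1+2m}_{-}$, and $\alpha$ preserves squares, so $\operatorname{im}\pi\le O^{\pm}_{2m}(2)$.

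It remains to show that $\pi$ is onto the relevant isometry group, and this surjectivity is the main obstacle. I would establish it via the central-product decomposition \hyperlink{6.2}{(6.2)}: fix generators of $R$ — one pair per copy of $r^{1+2}$, $D_8$ or $Q_8$, plus $z$, and $c$ in the $C_4\circ 2^{1+2m}$ case — whose images form a basis of $V'$ adapted to $c$ and, where relevant, to $q$. Given an isometry of the appropriate form, transport this basis and lift the images back to elements of $R$ with the orders dictated by the preserved values of $q$ (for $r$ odd there is no constraint, as every element has order dividing $r$; for $C_4\circ 2^{1+2m}$ one uses the freedom to rescale a lift by $c$), choosing the commutators to match, which is automatic since $c$ is preserved. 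One then verifies that the defining relations of $R$ hold for these lifted generators, so the assignment extends to an endomorphism, which is surjective (the lifts generate $V'$ together with $z$, hence all of $R$) and therefore an automorphism fixing $Z(R)$ pointwise and inducing the prescribed isometry. The technical care here is in tracking element orders in $2^{1+2m}_{-}$ and confirming that the quadratic form $q$ — and hence the type $\pm$ — is matched correctly, so that the image is exactly $O^{\pm}_{2m}(2)$ in those two rows rather than the other orthogonal group. Assembling the pieces, $C_{A}(Z(R))$ is an extension of the listed isometry group by $\operatorname{Hom}(V',Z(R))$, of order $r^{2m}$ or $2^{2m}$, exactly as recorded in Table 6.12.
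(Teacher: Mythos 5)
The paper does not actually prove this lemma; it merely cites Winter (1972) for the three extraspecial rows and Griess (1973) for the $C_4\circ 2^{1+2m}$ row, so there is no internal argument of the paper to compare against. Your argument is the standard one behind those references and its skeleton is sound: every $\alpha\in C_A(Z(R))$ induces $\bar\alpha\in GL(V')$ preserving the commutator form; the kernel of $\alpha\mapsto\bar\alpha$ is $\operatorname{Hom}(V',Z(R))$ via $g\mapsto\alpha(g)g^{-1}$, which you correctly compute to have order $r^{2m}$ (respectively $2^{2m}$ in the $C_4$-case, since a homomorphism out of elementary abelian $V'$ lands in the unique $C_2\le C_4$); and the image lies in $Sp(V')$, refined to $O^{\pm}_{2m}(2)$ precisely when the squaring map descends to a well-defined quadratic form on $V'$, i.e.\ when $Z(R)\cong C_2$.

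Two points are asserted rather than proved, and together they amount to a genuine gap. First, citing \hyperlink{class}{(0.1.13.iv)} does not by itself identify the type of the squaring form with the sign of $2^{1+2m}_{\pm}$: the paper defines $2^{1+2m}_{\pm}$ by counting elements of order $2$, so one needs the (small but real) argument that $D_8$ yields the plus-type and $Q_8$ the minus-type form on $\mathbb{F}_2^2$ and that types combine in the expected way over the central-product decomposition \hyperlink{6.2}{(6.2)}. Second, and more importantly, the surjectivity of $\pi$ onto the relevant isometry group — which you rightly call the main obstacle — is only gestured at. As written, you have proved only that $\operatorname{im}\pi$ is \emph{contained in} $Sp_{2m}(r)$ or $O^{\pm}_{2m}(2)$, which gives an upper bound on $|C_A(Z(R))|$, not the exact structure. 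Producing an automorphism over each isometry requires actually verifying that the transported and lifted generators satisfy the full presentation of $R$ (in particular the order constraints in $2^{1+2m}_{-}$, and the freedom to rescale by $c$ in $C_4\circ 2^{1+2m}$, must be checked against the relations, not merely invoked). None of this requires a new idea, but without it the proof establishes one inclusion rather than the equality recorded in the table.
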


\begin{proof}
A full proof for the first three rows of the table are proved in \cite{winter_automorphism_1972}, the last row is justified in (\cite{griess_automorphisms_1973}, p.403-404).
\end{proof}

\begin{pro}
\hypertarget{6.13}{Let} \(R\) be a symplectic-type \(r\)-group of minimal exponent. If \(\rho\) is a \(r^m\)-dimensional irreducible representation of \(R\), then \(Aut_{GL(V)}(\rho(R))\cong C_A(Z(R))\).
\end{pro}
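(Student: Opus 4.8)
The plan is to fix the identification of $R$ with its image $\rho(R)\leq GL(V)$ (recall $\rho$ is faithful) and study the canonical homomorphism $N_{GL(V)}(\rho(R))\longrightarrow Aut(\rho(R))\cong Aut(R)$, which by definition has kernel $C_{GL(V)}(\rho(R))$ and hence induces an injection $Aut_{GL(V)}(\rho(R))\hookrightarrow Aut(R)$. It then remains only to show that the image of this injection is exactly $C_A(Z(R))$. Before doing so I would note that any $r^m$-dimensional irreducible representation of $R$ must be one of those constructed before Lemma 6.11: the counting Propositions 6.9 and 6.10 show that $R$ has only $r-1$ (respectively two, when $R\cong C_4\circ 2^{1+2m}$) irreducible representations of dimension greater than one, all of dimension $r^m$, and the explicit construction already produces that many faithful absolutely irreducible ones. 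In particular $\rho$ is faithful and absolutely irreducible, so by \hyperlink{0.3.11}{(0.3.11)} we have $C_{GL(V)}(\rho(R))=F^{\times}$, and $Z(\rho(R))=\rho(Z(R))$ consists of scalar matrices.

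For the inclusion $\operatorname{image}\subseteq C_A(Z(R))$: conjugation by any element of $GL(V)$ fixes every scalar matrix, hence fixes $Z(\rho(R))=\rho(Z(R))$ pointwise. Therefore any automorphism of $R$ induced by an element of $N_{GL(V)}(\rho(R))$ restricts to the identity on $Z(R)$, i.e.\ lies in $C_A(Z(R))$.

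For the reverse inclusion, which is the heart of the argument: given $\phi\in C_A(Z(R))$, form the representation $\rho\circ\phi\colon R\longrightarrow GL(V)$. Since $\phi$ is an automorphism it is faithful and has the same image as $\rho$, so $\rho\circ\phi$ is again a faithful, absolutely irreducible, $r^m$-dimensional representation of $R$; and because $\phi$ fixes the centre pointwise, $\rho\circ\phi$ sends a chosen generator $z$ of $Z(R)$ to $\rho(\phi(z))=\rho(z)$, the same element $\rho$ sends it to. By Lemma 6.11, representations of this type are determined up to equivalence by where they send such a generator, so $\rho\circ\phi$ and $\rho$ are equivalent: there is $g\in GL(V)$ with $g\,\rho(x)\,g^{-1}=\rho(\phi(x))$ for all $x\in R$. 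Then $g\,\rho(R)\,g^{-1}=\rho(\phi(R))=\rho(R)$, so $g\in N_{GL(V)}(\rho(R))$, and the automorphism of $\rho(R)$ induced by conjugation by $g$ corresponds, under the isomorphism $\rho\colon R\to\rho(R)$, precisely to $\phi$. Hence $\phi$ lies in the image, and combining the two inclusions with the identification of the kernel gives $Aut_{GL(V)}(\rho(R))=N_{GL(V)}(\rho(R))/C_{GL(V)}(\rho(R))\cong C_A(Z(R))$.

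I expect the main obstacle to be this reverse inclusion, and it rests squarely on the uniqueness clause of Lemma 6.11 (``determined up to equivalence by where they send a fixed generator of the centre''); the only subtlety is to confirm that $\rho\circ\phi$ genuinely satisfies the hypotheses of that lemma, which it does, since faithfulness is immediate and irreducibility follows from $\rho\circ\phi$ having the same image as $\rho$. Everything else --- identifying the kernel of the canonical map with $C_{GL(V)}(\rho(R))$, and the scalar-matrix observation underlying the easy inclusion --- is routine once \hyperlink{0.3.11}{(0.3.11)} has been invoked.
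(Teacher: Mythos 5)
Your proposal is correct and rests on exactly the same two ingredients as the paper's proof: the uniqueness clause of Lemma~6.11 for the hard inclusion, and the observation that $\rho(Z(R))$ consists of scalar matrices (hence is fixed pointwise by conjugation) for the easy one. The only difference is organisational --- the paper constructs two separate injections, one in each direction, and concludes; you instead fix the single canonical homomorphism $N_{GL(V)}(\rho(R))\to Aut(R)$, identify its kernel as $C_{GL(V)}(\rho(R))$, and show its image is $C_A(Z(R))$, which is a marginally cleaner packaging of the same argument.
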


\begin{proof}
We begin by noting that if \(\alpha\in Aut(R)\) and \(\rho\) is a faithful irreducible representation of \(R\), then \(\rho \circ \alpha\) is also a faithful irreducible representation. Since the \(r^m\)-dimensional irreducibles are determined by where they send a generator of the centre, if \(\alpha\in C_A(Z(R))\), then \(\rho \circ \alpha\) must be equivalent to \(\rho\). So there exists \(g\in GL(V)\) such that \(\rho \circ \alpha=g\rho g^{-1}\) and if \(r,r'\in R\) such that \(\alpha(r)=r'\), we have:
\[g\circ \rho(r)\circ g^{-1}=\rho\circ \alpha (r)=\rho(r')\]

Since \(\rho\) is faithful, \(g\) induces a non-trivial automorphism on \(\rho(R)\) if and only \(g\notin C_{GL(V)}(\rho(R))\), which by the above, happens if and only if \(\alpha\neq 1\). Thus we have an injective map from \(C_A(Z(R))\) into \(Aut_{GL(V)}(\rho(R))\).

Let \(\varphi_g\in Aut_{GL(V)}(\rho(R))\), then \(\rho^{-1}\circ \varphi_g\circ\rho\) is a map from \(R\longrightarrow R\). This map fixes a central element \(z\), since \(\rho(z)\) is a scalar map. Furthermore, the faithfulness of \(\rho\) determines that this map is an isomorphism that is unique for unique elements of \(Aut_{GL(V)}(\rho(R))\). Thus, we have an injection from \(Aut_{GL(V)}(\rho(R))\) to \(C_A(Z(R))\) and the result follows.    
\end{proof}

The next proposition ascertains conditions for when the \(r^m\)-dimensional irreducible representations of \(R\) fix a classical form on \(V\).

\begin{pro}
\hypertarget{6.14}{Let} \(R\) be a symplectic-type \(r\)-group of minimal exponent. If \(\rho:R\longrightarrow GL_n(p^e)\) is a \(r^m\)-dimensional irreducible representation, then:
\begin{enumerate}[label=(\roman*),ref=(\roman*)]
 
\item\(\rho(R)\) fixes a symplectic or non-degenerate symmetric bilinear form if and only if \(R\cong 2_{\pm}^{1+2m}\). 
\item \(\rho(R)\) fixes a non-degenerate unitary form if and only if  \(e\) is even.
\end{enumerate}
\end{pro}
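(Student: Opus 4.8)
The plan is to reduce everything to the representation theory results in \hyperlink{0.3.19}{(0.3.19)}, which characterise when an absolutely irreducible representation fixes a classical form in terms of self-duality of the representation. Since the $r^m$-dimensional irreducibles of $R$ were shown above to be absolutely irreducible, this applies. The key invariant to track is where the representation sends a fixed generator $z$ of $Z(R)$: by the lemma just before this proposition, an $r^m$-dimensional irreducible $\rho$ is determined up to equivalence by the scalar $\rho(z) = \omega I_n$, where $\omega$ is a root of unity of order $|Z(R)|$. So I would first compute what the dual representation $\rho^*$ and the twist $\rho^\theta$ do to $z$, and then match them against $\rho$.

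For part (i): $\rho^*$ sends $z$ to $\omega^{-1} I_n$ (the dual representation inverts scalars). By the uniqueness statement, $\rho \cong \rho^*$ if and only if $\omega = \omega^{-1}$, i.e. $\omega^2 = 1$, i.e. $|Z(R)| \mid 2$. Now $|Z(R)| = r$ when $R = r^{1+2m}$ (which is $r^{1+2m}_+$ for odd $r$) or $R = 2^{1+2m}_\pm$, and $|Z(R)| = 4$ when $R \cong C_4 \circ 2^{1+2m}$. Hence $\omega^2 = 1$ forces $r = 2$ and $R \cong 2^{1+2m}_\pm$, and conversely in that case $\rho \cong \rho^*$. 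By \hyperlink{0.3.19}{(0.3.19.ii)} this is exactly the condition for $\rho(R)$ to fix a symplectic or non-degenerate symmetric bilinear form, giving part (i). (One should remark that for $r$ odd the argument needs $\rho$ faithful so that $\rho(z) \neq I_n$, which holds by the construction above; when $r$ odd and $R = r^{1+2m}$, $\omega$ has order $r \geq 3$ so $\rho \not\cong \rho^*$.)

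For part (ii): here I invoke \hyperlink{0.3.19}{(0.3.19.i)}, which says $\rho(R)$ fixes a unitary form if and only if $e$ is even and $\rho^\theta \cong \rho^*$, where $\theta$ is the order-two field automorphism, $x \mapsto x^{\sqrt q} = x^{p^{e/2}}$. The automorphism $\theta$ acts on the scalar $\rho(z) = \omega I_n$ by $\omega \mapsto \omega^{p^{e/2}}$, so $\rho^\theta$ sends $z$ to $\omega^{p^{e/2}} I_n$, while $\rho^*$ sends $z$ to $\omega^{-1} I_n$. By the uniqueness-by-central-character statement, $\rho^\theta \cong \rho^*$ if and only if $\omega^{p^{e/2}} = \omega^{-1}$, i.e. $\omega^{p^{e/2}+1} = 1$, i.e. $|Z(R)| \mid p^{e/2}+1$. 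The main point is then to show that, given $e$ even and the standing hypothesis $p^e \equiv 1 \pmod{|Z(R)|}$, this divisibility $|Z(R)| \mid p^{e/2}+1$ is automatic. Writing $k = |Z(R)| \in \{r, 4\}$: $k \mid p^e - 1 = (p^{e/2}-1)(p^{e/2}+1)$. When $k = r$ is an odd prime it divides exactly one of the two factors; I would argue it must be $p^{e/2}+1$ rather than $p^{e/2}-1$ using that $R$ was constructed over the field where $k \mid p^e-1$ but $k \nmid p^{e/2}-1$ — more carefully, the hypothesis only guarantees the representation exists over $\mathbb{F}_{p^e}$, so I may need to be a little careful and instead argue directly that $\rho$ is realisable over a subfield iff $r \mid p^{e/2}-1$, so that unitary-form-fixing (which is about not being realisable over the subfield while being self-conjugate-dual) forces $r \mid p^{e/2}+1$. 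The cleanest route, which I expect to be the main obstacle, is to pin down precisely the interplay between "$\rho$ realised over $\mathbb{F}_{p^{e/2}}$" and "$\rho^\theta \cong \rho^*$": when $r \mid p^{e/2}-1$ the representation is already defined over $\mathbb{F}_{p^{e/2}}$ (so $\rho^\theta \cong \rho$, and self-duality would instead give a symmetric/symplectic form over the subfield, handled by part (i) considerations), whereas when $r \mid p^{e/2}+1$ we genuinely get $\rho^\theta \cong \rho^* \not\cong \rho$ and a unitary form appears. For $k = 4$ the same analysis applies with $4 \mid p^e - 1$: since $p$ is odd, $p^{e/2}$ is odd, so exactly one of $p^{e/2} \pm 1$ is $\equiv 2 \pmod 4$ and the other $\equiv 0 \pmod 4$, and the subfield-realisability dichotomy again selects the $+1$ case for the unitary form. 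I would conclude that for every even $e$ satisfying the standing hypothesis, $\rho(R)$ fixes a unitary form, and for odd $e$ no order-two $\theta$ exists so it cannot; this gives part (ii).
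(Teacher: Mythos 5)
Your approach for both parts is precisely the one the paper takes: reduce to the self-duality criteria of \hyperlink{0.3.19}{(0.3.19)} and track where $\rho$, $\rho^*$ and $\rho^\theta$ send a fixed generator $z$ of $Z(R)$, using that the $r^m$-dimensional irreducible is determined up to equivalence by the central character. Part (i) is exactly the paper's argument and is complete.

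For part (ii), you have correctly put your finger on the delicate point, which is that the implication ``$e$ even $\Rightarrow |Z(R)|\mid p^{e/2}+1$'' does \emph{not} follow from the mere standing hypothesis $p^e\equiv 1\pmod{|Z(R)|}$. For instance with $r=3$, $p=7$, $e=2$ one has $49\equiv 1\pmod 3$ but $3\nmid 8=p^{e/2}+1$. However, your proposed circumvention --- arguing via realisability over $\mathbb F_{p^{e/2}}$ --- does not actually close the gap: ``$\rho$ not realisable over $\mathbb F_{p^{e/2}}$'' is equivalent to $|Z(R)|\nmid p^{e/2}-1$, so you would be deducing $|Z(R)|\nmid p^{e/2}-1$ from what is, in substance, the same assumption. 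The ingredient you need, and which the paper's proof invokes (a little tacitly) under ``the congruence conditions that we've established on the size of the field,'' is that $e$ is the \emph{smallest} integer with $p^e\equiv 1\pmod{|Z(R)|}$. That is the hypothesis that is actually in force when \hyperlink{6.14}{(6.14)} is applied inside \hyperlink{lemma 6}{Lemma~6} of the main proof, via \hyperlink{6.17.iii}{(6.17.iii)}. Granting minimality: if $e$ is even then $|Z(R)|\nmid p^{e/2}-1$ by minimality, and since $|Z(R)|$ divides $(p^{e/2}-1)(p^{e/2}+1)$, an odd prime $|Z(R)|=r$ must divide $p^{e/2}+1$, and when $|Z(R)|=4$ one of the two even factors is $\equiv 2\pmod 4$ (since $p^{e/2}$ is odd), so the other is divisible by $4$ and minimality forces it to be $p^{e/2}+1$. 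With that in place your computation of $\rho^\theta$ versus $\rho^*$ goes through exactly as you outlined, and the converse direction (unitary form $\Rightarrow e$ even) is immediate from \hyperlink{0.3.19}{(0.3.19.i)} as you noted.
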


\begin{proof}
We know that \(\rho\) is faithful and it is determined by where it sends a fixed generator \(z\in Z(R)\). So if \(\rho(z)=\lambda\cdot I_n\), by definition of the dual representation, we observe that: 
\[ \rho \text{ is equivalent to } \rho^{\ast} \iff \lambda = \lambda^{-1} \iff |Z(R)|=2.\]

Out of the symplectic type \(r\)-groups of minimal exponent, the right-hand side occurs if and only if \(R\cong 2_{\pm}^{1+2m}\). By \hyperlink{0.3.19}{(0.3.19)}, the left-hand side occurs if and only if \(\rho(R)\) fixes a non-degenerate symplectic or symmetric bilinear form, thus establishing (i). 

For part (ii), the only if statement follows immediately from \hyperlink{0.3.19}{(0.3.19)}. Furthermore, in view of this result, we need only show that \(\rho^{\theta} \text{ is equivalent to } \rho^{\ast}\) if and only if \(e\) is even, where \(\theta\) is the field automorphism of order two. However, by the congruence conditions that we've established on the size of the field, we know: 
\[ \text{ \(e\) is even} \iff \lambda^{p^{e/2}} = \lambda^{-1} \iff \rho^{\theta} \text{ is equivalent to } \rho^{\ast}\]

\noindent thus establishing (ii).
\end{proof}

If \(R\) is embedded in \(GL(V)\) and it does fix a classical form \(f\), we would like to determine when an element of \(Aut_{GL(V)}(R))\) is induced by an element of \(\Delta(V,f)\).

\begin{pro}
\hypertarget{6.15}{Let} \(R\) be a symplectic-type \(r\)-group of minimal exponent that acts absolutely irreducibly on \(V\). If \(R\) fixes a symplectic, unitary or non-degenerate symmetric bilinear form \(f\), then \(Aut_{\Delta(V,f)}(R)=Aut_{GL(V)}(R)\). 
\end{pro}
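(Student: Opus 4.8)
The plan is to reduce everything to the earlier result \hyperlink{0.3.21}{(0.3.21)}, which already contains the essential content. Since $R$ acts absolutely irreducibly on $V$, the inclusion map $R \hookrightarrow GL(V)$ is an absolutely irreducible representation whose image (namely $R$) fixes the form $f$, where $f$ is symplectic, unitary or a non-degenerate symmetric bilinear form. Hence \hyperlink{0.3.21}{(0.3.21)} applies directly and gives $N_{GL(V)}(R) \leq \Delta(V,f)$. This is the one genuine input; the rest is bookkeeping about normalisers and centralisers inside a subgroup.

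Next I would spell out that bookkeeping. For any subgroup $H \leq GL(V)$ one has $N_H(R) = N_{GL(V)}(R) \cap H$ and $C_H(R) = C_{GL(V)}(R) \cap H$. Taking $H = \Delta(V,f)$ and using the inclusion $N_{GL(V)}(R) \leq \Delta(V,f)$ just obtained, we get $N_{\Delta(V,f)}(R) = N_{GL(V)}(R)$. Moreover $C_{GL(V)}(R) \leq N_{GL(V)}(R) \leq \Delta(V,f)$, so $C_{\Delta(V,f)}(R) = C_{GL(V)}(R) \cap \Delta(V,f) = C_{GL(V)}(R)$. (If desired one can even identify $C_{GL(V)}(R) = F^{\times}$ via \hyperlink{0.3.11}{(0.3.11)}, but this is not needed.)

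Finally, recalling the notation $Aut_G(H) = N_G(H)/C_G(H)$ from \hyperlink{0.2.16}{(0.2.16)}, the two previous equalities give
\[
Aut_{\Delta(V,f)}(R) = \frac{N_{\Delta(V,f)}(R)}{C_{\Delta(V,f)}(R)} = \frac{N_{GL(V)}(R)}{C_{GL(V)}(R)} = Aut_{GL(V)}(R),
\]
which is the claim. There is no serious obstacle here: the only thing to be careful about is checking that the hypotheses of \hyperlink{0.3.21}{(0.3.21)} are met — i.e.\ that $f$ is of one of the three admissible types and that the relevant representation is absolutely irreducible — both of which are granted in the statement. The form $f$ is supplied by \hyperlink{6.14}{(6.14)}, which guarantees that such an $f$ exists precisely in the cases under consideration.
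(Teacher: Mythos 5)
Your proof is correct and takes essentially the same route as the paper: both hinge on \hyperlink{0.3.21}{(0.3.21)}, which gives $N_{GL(V)}(R) \leq \Delta(V,f)$, after which the identification of the two automorphiser quotients is bookkeeping. Your version is merely more explicit, deriving the equalities $N_{\Delta(V,f)}(R) = N_{GL(V)}(R)$ and $C_{\Delta(V,f)}(R) = C_{GL(V)}(R)$ directly, whereas the paper states only that one inclusion is immediate and the reverse follows from \hyperlink{0.3.21}{(0.3.21)}.
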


\begin{proof}
The inclusion from left to right is immediate and the inclusion from right to left is a result of \hyperlink{0.3.21}{(0.3.21)}.
\end{proof}

We are now ready to define the members of the sixth Aschbacher class.

\begin{defn}
A subgroup \(G\leq GL(V)\) is a member of \(\mathcal{C}_6\) if \(G=N_{GL(V)}(R)\), where \(R\) is a symplectic-type \(r\)-group of minimal exponent not isomorphic to \(D_8\) or \(2_{\pm}^{1+2m}\), for \(m>1\). In addition, we must have that  \(dim\,V=r^m\) and \(F=\mathbb{F}_{p^e}\), where \(e\) is both odd and the smallest integer such that \(p^e\equiv 1\: (mod\, |Z(R)|)\).
\end{defn}

\begin{rem}
The additional restrictions on \(R\) and \(F\) are to avoid overlap with other Aschbacher classes and are justified below. 

\begin{enumerate}[label=(\roman*),ref=(\roman*)]
 
\item If \(R=2_{\pm}^{1+2m}\), then by \hyperlink{6.14}{(6.14)}, \(R\) fixes a symplectic or non-degenerate symmetric bilinear form \(f\), and by \hyperlink{0.3.21}{(0.3.21)}, \(N_{GL(V)}(R)\leq \Delta(V,f)\). If \(m>1\), then the vector space on which this form acts has dimension greater than two and, as we shall see in section eight, such groups are contained in members of \(\mathcal{C}_8\).

\item If \(V\) is \(2\)-dimensional, we can define a non-degenerate symmetric bilinear form \(f\) on an orthogonal basis \(\{v_1,v_2\}\) by \(f(v_1,v_1)=f(v_2,v_2)=1\) and the \(2\)-dimensional representation of \(D_8\) that we defined earlier fixes \(f\). Thus, when \(R\cong D_8\), we have the containment \(R \leq I(V,f)\). By \hyperlink{0.3.21}{(0.3.21)}, \(N_{GL(V)}(R)\leq \Delta(V,f)\), thus it can never be maximal in \(GL(V)\), explaining its exclusion from this Aschbacher class. However, the case when \(R\cong Q_8\) is not excluded. In this case, \(R\) stabilises a symplectic form \(f\) on a \(2\)-dimensional vector space and due to the isomorphism \hyperlink{0.2.10}{(0.2.10.iii)}, the isometry group of \(f\) can be indeed be maximal.   

\item  \hypertarget{6.17.iii}{We} have already established that it is necessary for \(p\) and \(e\) to be such that \(p^e \equiv 1\:(\,mod\,|Z(R)|\,)\). Suppose, however, that there is a smaller integer \(d\) satisfying this condition. The embedding of \(R\) into \(GL(V,\mathbb{F}_{p^e})\) can be realised over the subfield \(\mathbb{F}_{p^d}\) and therefore by \hyperlink{5.6}{(5.6)}, \(R\) is contained in a member of \(\mathcal{C}_5\). Thus, we require that \(e\) is the smallest such integer.
\end{enumerate}

\end{rem}

The structure of \(N_{GL(V)}(R)\) follows immediately from \hyperlink{6.12}{(6.12)}, \hyperlink{6.13}{(6.13)} and \hyperlink{0.3.11}{(0.3.11)}.

\begin{pro}
For the groups \(R\) satisfying the definition above, the structure of \(N_{GL(V)}(R)\) is as follows.

\begin{enumerate}[label=(\roman*),ref=(\roman*)]
\item If \(R\cong r^{1+2m}\), then \(N_{GL(V)}(R)\cong (C_{q-1}\circ r^{1+2m}).Sp_{2m}(r)\) 
\item If \(R\cong C_4\circ 2^{1+2m}\), then \(N_{GL(V)}(R)\cong (C_{q-1}\circ 2^{1+2m}).Sp_{2m}(2)\) 
\item If \(R\cong Q_8\), then \(N_{GL(V)}(R)\cong (C_{q-1}\circ Q_8).O^{-}_{2}(2)\) 
\end{enumerate}

\end{pro}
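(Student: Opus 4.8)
The plan is to extract the structure from the short exact sequence
\[ 1 \longrightarrow C_{GL(V)}(R) \longrightarrow N_{GL(V)}(R) \longrightarrow Aut_{GL(V)}(R) \longrightarrow 1 \]
that comes from the definition \hyperlink{0.2.16}{(0.2.16)} of \(Aut_{GL(V)}(R)\), after first pinning down both outer terms. Here \(R\) is identified with its image under the chosen \(r^m\)-dimensional irreducible representation \(\rho\), and the three cases in the statement are exactly the symplectic-type \(r\)-groups of minimal exponent surviving the exclusions in \hyperlink{6.16}{(6.16)}: the groups \(r^{1+2m}\) with \(r\) odd, the group \(Q_8=2^{1+2}_{-}\), and the groups \(C_4\circ 2^{1+2m}\).

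For the outer terms: since \(\rho(R)\) acts absolutely irreducibly on \(V\) (this was verified during the construction of these representations), \hyperlink{0.3.11}{(0.3.11)} gives \(C_{GL(V)}(R)=F^{\times}\cong C_{q-1}\), while \hyperlink{6.13}{(6.13)} gives \(Aut_{GL(V)}(R)\cong C_A(Z(R))\), whose structure is recorded in Table 6.12 as \(r^{2m}.Sp_{2m}(r)\), \(2^{2m}.Sp_{2m}(2)\), and \(2^{2}.O^{-}_2(2)\) respectively. Next I would locate the normal subgroup \(R\circ F^{\times}\) inside \(N_{GL(V)}(R)\). The subgroup \(R\) is normal in \(N_{GL(V)}(R)\) and commutes with the scalars \(F^{\times}=C_{GL(V)}(R)\), and \(R\cap F^{\times}=Z(R)\): indeed \(\rho(z)\) is a scalar of order \(|Z(R)|\) for a generator \(z\) of \(Z(R)\), and the congruence \(q\equiv 1\ (\mathrm{mod}\ |Z(R)|)\) forces \(F^{\times}\) to contain a unique cyclic subgroup of that order. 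Hence \(RF^{\times}=R\circ F^{\times}\) is a central product, normal in \(N_{GL(V)}(R)\).

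Under the quotient map to \(Aut_{GL(V)}(R)\) the scalars \(F^{\times}\) are killed and \(R\) maps onto \(\mathrm{Inn}(R)\cong R/Z(R)\), the elementary abelian group of order \(r^{2m}\) (resp. \(2^{2m}\)); one checks that \(\mathrm{Inn}(R)\) is precisely the displayed normal \(r\)-subgroup of \(C_A(Z(R))\) in Table 6.12 — it is a normal \(r\)-subgroup of the right order, and the complementary quotient (\(Sp_{2m}(r)\), \(Sp_{2m}(2)\), \(O^{-}_2(2)\)) has no nontrivial normal \(r\)-subgroup. As the kernel \(F^{\times}\) lies inside \(R\circ F^{\times}\), the preimage of \(\mathrm{Inn}(R)\) is exactly \(R\circ F^{\times}\), so
\[ N_{GL(V)}(R)/(R\circ F^{\times})\;\cong\;C_A(Z(R))/\mathrm{Inn}(R), \]
which by Table 6.12 is \(Sp_{2m}(r)\), \(Sp_{2m}(2)\), and \(O^{-}_2(2)\) in the three cases.

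Finally I would assemble the pieces: \(N_{GL(V)}(R)\) is an extension of \(R\circ F^{\times}\) by the quotient just computed, and it only remains to rewrite \(R\circ F^{\times}\) in the notation of the statement — \(r^{1+2m}\circ C_{q-1}\) in the first case; \((C_4\circ 2^{1+2m})\circ C_{q-1}=C_{q-1}\circ 2^{1+2m}\) in the second, the \(C_4\) being absorbed into \(C_{q-1}\) since \(4\mid q-1\); and \(Q_8\circ C_{q-1}\) in the third. The argument is essentially bookkeeping; the only points needing a moment's care (rather than a genuine obstacle) are the equality \(R\cap F^{\times}=Z(R)\) — exactly where the minimality/congruence hypothesis on \(e\) enters — and the identification of the normal \(r\)-subgroup in Table 6.12 with \(\mathrm{Inn}(R)\), which is what guarantees that the top quotient is the named classical group rather than some other group of the same order.
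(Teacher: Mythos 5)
Your argument is correct and follows the same route the paper indicates: the paper simply asserts that the proposition ``follows immediately from (6.12), (6.13) and (0.3.11),'' and you have supplied exactly the bookkeeping that that assertion leaves implicit — identifying $C_{GL(V)}(R)=F^{\times}$ as the kernel via (0.3.11), $Aut_{GL(V)}(R)\cong C_A(Z(R))$ via (6.13) and Table 6.12, and then locating $R\circ F^{\times}$ as the preimage of $\mathrm{Inn}(R)$ to get the named top quotient. Your care over $R\cap F^{\times}=Z(R)$ and over why $\mathrm{Inn}(R)$ must coincide with the displayed normal $r$-subgroup (because $O_r$ of the classical quotient is trivial in all three cases) is sound and is the right thing to check.
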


\section{Aschbacher Class \(\mathcal{C}_7\) - Wreathed Tensor Product Stabilisers}

In section 4, we discussed the stabiliser of a tensor product where the dimensions of each factor were different. In this section, we will discuss the case when the tensor product factors are of the same dimension. We informally refer to these groups as \textit{wreathed tensor product stabilisers}, as the structure of the group is (effectively) a wreath product. This condition on the dimension also means that, unlike the \(\mathcal{C}_4\) class, we will need to consider when there are any finite number of factors.

Let \(V_1, ... \,, V_k\) be \(m\)-dimensional \(F\)-vector spaces. If \(\eta_j:V_1\longrightarrow V_j\) is a fixed \(F\)-isomorphism, for all \(1\leq j\leq k\), then \(V:=V_1\otimes \cdots \otimes V_k\) is spanned by the elements \(\eta_1(v_1)\otimes \cdots \otimes \eta_k(v_k)\), where the \(v_i\) range across the elements of \(V_1\). By defining isomorphisms \(\alpha_j:GL(V_1)\longrightarrow GL(V_j)\) such that  \(\alpha_j(g)(\eta_j(v))=\eta_j(g(v))\), for \(g\in GL(V_1)\) and \(v\in V_1\), we can observe a faithful action of \(GL(V_1)\otimes \cdots \otimes GL(V_k)\) on \(V\) defined by:
\[(h_1, ... , h_k)\cdot (\eta_1(v_1)\otimes \cdots \otimes \eta_k(v_k))= \eta_1(g_1(v_1))\otimes \cdots \otimes \eta_k(g_k(v_k))\]

\noindent where \(h_j\in GL(V_j)\) and \(g_j\in GL(V_1)\) such that \(\alpha_j(g_j)=h_j\in GL(V_j)\). So, as in the \(\mathcal{C}_4\) case, \(GL(V)\) contains the tensor product stabiliser \(GL(V_1)\otimes\, \cdots\, \otimes GL(V_k)\). However, unlike the \(\mathcal{C}_4\) case, this is not maximal. This leads us to the definition of the seventh Aschbacher class.

\begin{defn}
A group \(G\leq GL(V)\) is a member of \(\mathcal{C}_7\) if \(G=N_{GL(V)}(GL(V_1)\otimes \cdots \otimes GL(V_k))\), where \(V=V_1\otimes \cdots \otimes V_k\) and the \(V_i\) are \(m\)-dimensional \(F\)-vector spaces, with \(m>2\) and \(k>1\). Such groups are isomorphic to \(GL_m(q)\circ\cdots \circ GL_m(q)\rtimes S_k\), but the neatest way to state the structure is to observe that \(G/Z(G)\) is isomorphic to \(PGL_m(q)\wr S_k\).
\end{defn}

\begin{rem} 
We make the following observations about the definition above.
\begin{enumerate}[label=(\roman*),ref=(\roman*)]
\item The comment in \hyperlink{4.2}{(4.2)}, which we made in regards to the definition of \(\mathcal{C}_4\), suffices to explain the divergence between Aschbacher's definition of \(\mathcal{C}_7\) and ours (though the domain of the representation in Aschbacher's \(\mathcal{C}_7\) is larger than that of the \(\mathcal{C}_4\) case). 

\item The additional condition \(m>2\) is given to avoid overlap with the \(\mathcal{C}_8\) class. If \(m=2\), then by \hyperlink{0.2.10}{(0.2.10.iii)}, there exists symplectic forms \(f_i\) such that \(SL(V_i)\cong I(V_i,f_i)\), for \(1\leq i\leq k\). By the construction in \hyperlink{0.1.3}{(0.1.3)}, we can define the form \(f=f_1\otimes \cdots \otimes f_k\) on the vector space \(V_1\otimes \cdots \otimes V_k\). This form \(f\) is symplectic when \(k\) is odd and non-degenerate symmetric bilinear when \(k\) is even. Thus \(SL(V_1)\otimes \cdots \otimes SL(V_k)\leq I(V_1,f_1)\otimes \cdots \otimes I(V_k,f_k)\leq I(V,f)\). 
We will show in the next proposition that \(S:=SL(V_1)\otimes\, \cdots\, \otimes SL(V_k)\) is characteristic in \(G:=GL(V_1)\otimes\, \cdots\, \otimes GL(V_k)\), thus \(N_{GL(V)}(G)\leq N_{GL(V)}(S)\) and by \hyperlink{0.3.21}{(0.3.21)}, the latter is contained in \(\Delta(V,f)\), which is a member of \(\mathcal{C}_8\).

\end{enumerate}

\end{rem}

In the following two lemmas, we justify our claim about the structure of the \(\mathcal{C}_7\) members. We will again make use of the convention (noted above \hyperlink{4.3}{(4.3)}) of identifying a subgroup \(G_i\leq GL(V_i)\) with the subgroup \(1\otimes \cdots \otimes 1 \otimes\, G_i\, \otimes 1 \otimes\cdots \otimes 1\).

\begin{pro}
Let \(m>2\) and \(k>1\). If \(V_1, ... \,, V_k\) are \(m\)-dimensional \(F\)-vector spaces, then \(E(GL(V_1)\otimes\, \cdots\, \otimes GL(V_k))=SL(V_1)\otimes\, \cdots\, \otimes SL(V_k)\).
\end{pro}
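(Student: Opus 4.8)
The plan is to establish the two inclusions separately, working with components and the layer as developed in Section 0.2. Throughout write $G := GL(V_1)\otimes \cdots \otimes GL(V_k)$, write $S := SL(V_1)\otimes \cdots \otimes SL(V_k)$, and identify each factor $SL(V_i)$ with the subgroup $L_i := 1\otimes \cdots \otimes SL(V_i)\otimes \cdots \otimes 1$ of $G$ (and similarly $GL(V_i)$ with $GL(V_i)^\ast \leq G$).

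First I would show $S \subseteq E(G)$ by checking that each $L_i$ is a component of $G$. The central subgroup defining the central product consists of tuples of scalars with product the identity, so it meets $1\times \cdots \times SL(V_i)\times \cdots \times 1$ trivially; hence $L_i \cong SL_m(q)$, which is quasisimple since $m>2$ by \hyperlink{0.2.24}{(0.2.24)} (this is exactly where the hypothesis $m>2$ is used, as $SL_2(q)$ is in general not quasisimple). The subgroups $GL(V_i)^\ast$ pairwise commute and generate $G$, so each is normal in $G$; since $L_i \unlhd GL(V_i)^\ast \unlhd G$, the subgroup $L_i$ is subnormal, hence a component. By the definition of the layer, $E(G) \supseteq L_1\cdots L_k = S$.

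For the reverse inclusion I would show that every component $M$ of $G$ is one of the $L_i$. Since each $L_i \unlhd G$ we have $S \unlhd G$, and $G/S$ is abelian (it is a quotient of $\prod_i GL(V_i)/SL(V_i)\cong \prod_i F^\times$); as $M$ is perfect it maps trivially to $G/S$, so $M\leq S$, and then by \hyperlink{0.2.21}{(0.2.21)} $M$ is a component of $S$. It remains to identify the components of the iterated central product $S = L_1\circ \cdots \circ L_k$, which I would do by passing to $\bar S := S/Z(S)$: since $Z(L_i)\leq Z(S)$ and the $L_i$ commute pairwise, the images $\bar L_i := L_iZ(S)/Z(S)\cong PSL_m(q)$ are nonabelian simple and $\bar S = \bar L_1\times \cdots \times \bar L_k$ is their internal direct product. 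The image $\bar M$ of $M$ is a nontrivial subnormal subgroup of $\bar S$, hence a product of a subset of the $\bar L_i$; on the other hand $\bar M$ is a quotient of the quasisimple group $M$ by a central subgroup, and it is centreless (being subnormal in a direct product of nonabelian simple groups), so $\bar M$ is simple. Therefore $\bar M = \bar L_i$ for a single $i$, i.e. $MZ(S) = L_iZ(S)$; taking derived subgroups and using that $M$ and $L_i$ are perfect while $Z(S)$ is central gives $M = L_i$. Hence $E(G) = L_1\cdots L_k = S$.

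The main obstacle is this last step — pinning down the components of $S$ — because the factors $L_i$ genuinely overlap inside $Z(S)$, so one cannot reduce to the purely direct-product situation without first quotienting by the centre. The two facts that make the quotient argument go through are that subnormal subgroups of a direct product of nonabelian simple groups are themselves subproducts, and that a centreless central extension of a simple group is simple; both are elementary but should be stated carefully. Everything else (the commuting relations and isomorphism types in the central product, the perfectness of $SL_m(q)$) is routine.
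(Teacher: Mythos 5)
Your proof is correct and rests on the same key mechanism as the paper: each $L_i\cong SL_m(q)$ is quasisimple (this is exactly where $m>2$ enters, via \hyperlink{0.2.24}{(0.2.24)}) and subnormal through $L_i\unlhd GL(V_i)^\ast\unlhd G$, giving $S\leq E(G)$; and $G/S$ is abelian, so every component, being perfect, lands inside $S$. The divergence is what you do after that. The paper stops essentially at that point: since the layer is generated by the components and every component lies in $S$, one has $E(G)\leq S$ immediately, and combined with $S\leq E(G)$ this forces $E(G)=S$. You instead continue to identify the components of $S$ explicitly, quotienting by $Z(S)$ to expose $S/Z(S)\cong PSL_m(q)^k$, pinning down which subproduct $\bar M$ can be, and lifting back via perfectness of $M$ and $L_i$. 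That additional analysis is correct and establishes the strictly stronger statement that the components of $G$ are precisely $L_1,\dots,L_k$, but it is not required for the equality $E(G)=S$: once each component of $G$ is known to lie in $S$, you are already done.
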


\begin{proof}
Let \(G=GL(V_1)\otimes\, \cdots\, \otimes GL(V_k)\) and \(S=SL(V_1)\otimes\, \cdots\, \otimes SL(V_k)\). In view of \hyperlink{0.2.21}{(0.2.21)}, \(SL(V_i)\) is a component of \(G\), for all \(1\leq i\leq k\), and therefore \(S\leq E(G)\). What's more \(S\) is normal in \(E(G)\), since it is normal in \(G\). Observing that the quotient \(G/S\) is isomorphic to \(F^{\times}\otimes \cdots \otimes F^{\times}\), we deduce that the quotient \(E(G)/S\) is abelian. Therefore, \(E(G)\) has no further components and the result follows.
\end{proof}

With this lemma we can compute the structure of a member of \(\mathcal{C}_7\). To ease notation in the proof, we keep the shorthand \(G=GL(V_1)\otimes\, \cdots\, \otimes GL(V_k)\) and define \(N=N_{GL(V)}(G)\).

\begin{pro}
\hypertarget{7.4}{Let} \(m>2\) and \(k>1\). If \(V_1, ... \,, V_k\) are \(m\)-dimensional \(F\)-vector spaces and \(V=V_1\otimes \cdots \otimes V_k\), then \(N_{GL(V)}(GL(V_1)\otimes\, \cdots\, \otimes GL(V_k))=(GL(V_1)\otimes\, \cdots\, \otimes GL(V_k))\rtimes S_k\).
\end{pro}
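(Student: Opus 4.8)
The plan is to set $G := GL(V_1)\otimes\cdots\otimes GL(V_k)$ and $N := N_{GL(V)}(G)$, and to prove the two inclusions $G\rtimes S_k\leq N$ and $N\leq G\rtimes S_k$ separately; essentially all of the work lies in the second.

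For the first inclusion I would realise $S_k$ inside $GL(V)$ as the group permuting the tensor factors. Using the fixed isomorphisms $\eta_j:V_1\to V_j$ and the induced $\alpha_j:GL(V_1)\to GL(V_j)$ introduced above, each $\sigma\in S_k$ determines a linear map $P_\sigma\in GL(V)$ with
\[P_\sigma\big(\eta_1(u_1)\otimes\cdots\otimes\eta_k(u_k)\big) = \eta_1(u_{\sigma^{-1}(1)})\otimes\cdots\otimes\eta_k(u_{\sigma^{-1}(k)}),\]
extended $F$-linearly. A short check shows $\sigma\mapsto P_\sigma$ is a faithful homomorphism $S_k\to GL(V)$ with $\langle P_\sigma : \sigma\in S_k\rangle\cap G = 1$, and a direct computation shows that conjugating a simple tensor $g_1\otimes\cdots\otimes g_k\in G$ by $P_\sigma$ permutes its factors by $\sigma$; in particular $P_\sigma\, SL(V_i)\, P_\sigma^{-1} = SL(V_{\sigma(i)})$. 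Since $G\leq N$ and each $P_\sigma\in N$, this gives $G\rtimes S_k\leq N$.

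For the reverse inclusion, take $h\in N$. Conjugation by $h$ is an automorphism of $G$, hence permutes the set of its components. By the previous proposition (and its proof) the components of $G$ are precisely $SL(V_1),\dots,SL(V_k)$ — each is quasisimple since $m>2$ (\hyperlink{0.2.24}{(0.2.24)}) and normal, hence subnormal, in $G$. So there is $\sigma\in S_k$ with $h\,SL(V_i)\,h^{-1} = SL(V_{\sigma(i)})$ for all $i$, and then $h' := P_\sigma^{-1}h$ — which still lies in $N$ since $P_\sigma$ does — normalises each $SL(V_i)$ individually, so $h'\in\bigcap_{i=1}^k N_{GL(V)}(SL(V_i))$. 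Now I would apply Lemma \hyperlink{4.3}{(4.3.iii)} with $G_i = SL(V_i)$: each $V_i$ is the natural module of $SL(V_i) = S(V_i,f)$ for $f$ the zero form, which acts absolutely irreducibly by \hyperlink{0.3.12}{(0.3.12.i)} (it is not isomorphic to $SO^{\pm}_2(q)$ since $m>2$), so, reading the lemma as $\bigcap_{i=1}^k N_{GL(V)}(G_i) = \bigotimes_{i=1}^k N_{GL(V_i)}(G_i)$ and using $N_{GL(V_i)}(SL(V_i)) = GL(V_i)$, we obtain
\[\bigcap_{i=1}^k N_{GL(V)}(SL(V_i)) = GL(V_1)\otimes\cdots\otimes GL(V_k) = G.\]
Hence $h' = P_\sigma^{-1}h\in G$, so $h\in P_\sigma G\subseteq G\rtimes S_k$, giving $N\leq G\rtimes S_k$ and completing the proof.

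I expect the main obstacle to be the opening step: making the action $\sigma\mapsto P_\sigma$ precise through the identifications $\eta_j,\alpha_j$ and verifying cleanly that it conjugates $SL(V_i)$ onto $SL(V_{\sigma(i)})$ (and that its image meets $G$ trivially). Once that bookkeeping is done, the rest is a short chain of citations — the description of $E(G)$ from the previous proposition, the fact that automorphisms permute components, and Lemma \hyperlink{4.3}{(4.3)} to collapse the intersection of factor-normalisers back to $G$.
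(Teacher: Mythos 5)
Your proposal follows essentially the same route as the paper's proof: embed $S_k$ into $N$ via the tensor-factor permutation maps, observe that conjugation by any $h\in N$ permutes the components $SL(V_1),\dots,SL(V_k)$ of $G$ (the paper phrases this via $E(G)$ being characteristic, but it is the same fact), choose $\sigma$ to undo that permutation, and then invoke the absolute irreducibility of each $SL(V_i)$ via \hyperlink{0.3.12}{(0.3.12)} to apply \hyperlink{4.3}{(4.3.iii)} and collapse $\bigcap_i N_{GL(V)}(SL(V_i))$ back to $G$. The argument is correct and the bookkeeping worries you flag (faithfulness of $\sigma\mapsto P_\sigma$, trivial intersection with $G$, the index conventions) all check out.
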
 

\begin{proof}
Let \(\sigma\) be an element of the symmetric group \(S_k\). We can define a homomorphism \(\phi: S_k\longrightarrow N\) by: 
\[\phi(\sigma)(\eta_1(v_1)\otimes \cdots \otimes \eta_k(v_k))=\eta_1(v_{\sigma^{-1}(1)})\otimes \cdots \otimes \eta_k(v_{\sigma^{-1}(k)}))\]

Evidently this homomorphism is injective, thus we have an embedding \(S_k\xhookrightarrow{} N\). Furthermore, if \((h_1,...,h_k)\in G\) and \(g_j\in GL(V_1)\) such that \(\alpha_j(g_j)=h_j\) and we define \(w_j:=g_j(v_j)\) for all \(1\leq j\leq k\), then:
\[\phi(\sigma) ((h_1, ... , h_k) (\eta_1(v_1)\otimes \cdots \otimes \eta_k(v_k)))= \phi(\sigma)(\eta_1(w_1)\otimes \cdots \otimes \eta_k(w_k))\]
\[=\eta_1(w_{\sigma^{-1}(1)})\otimes \cdots \otimes \eta_k(w_{\sigma^{-1}(k)})=\eta_1(g_{\sigma^{-1}(1)}(v_{\sigma^{-1}(1)}))\otimes \cdots \otimes \eta_k(g_{\sigma^{-1}(k)}(v_{\sigma^{-1}(k)}))\]
\[=(h_{\sigma^{-1}(1)}, ... , h_{\sigma^{-1}(k)})(\phi(\sigma)(\eta_1(v_1)\otimes \cdots \otimes \eta_k(v_k)))\]

Thus \(\phi(\sigma)(h_1, ... , h_k)\phi(\sigma)^{-1}= (h_{\sigma^{-1}(1)}, ... , h_{\sigma^{-1}(k)})\) i.e. \(\phi(S_k)\) acts by permuting coordinates of \(G\) and therefore the group \(G \rtimes S_k\), where \(S_k\) acts on \(G\) via \(\phi\), is a subgroup of \(N\).

To show the reverse containment, let \(h\in N\). By the previous proposition \(E(G)=SL(V_1)\otimes\, \cdots\, \otimes SL(V_k)\) is characteristic in \(G\) and thus \(h\) acts on \(E(G)\) by permuting its factors. Therefore, there exists some \(\sigma\in S_k\) such that \(\phi(\sigma)h\in \bigcap_{j=1}^k N_{GL(V)}(SL(V_j))\). By \hyperlink{0.3.12}{(0.3.12)}, \(SL(V_j)\) is absolutely irreducible on \(V_j\) and therefore we can apply \hyperlink{4.3}{(4.3.iii)} to conclude that \(\phi(\sigma)h\in N_{GL(V_1)}(SL(V_1))\otimes \cdots \otimes N_{GL(V_k)}(SL(V_k))=G\). Thus \(h\in G\rtimes S_k\) and the result follows.   
\end{proof}

We conclude this section with two results about central products and layers that will be of use to us in \hyperlink{lemma 7}{Lemma 7} in proof of the main theorem.

\begin{pro}
\hypertarget{7.5}{Let} \(L\) be a group and \(\varphi:L\longrightarrow L/Z(L)\) the natural quotient map. If \(\varphi(L)=Y_1 \times \cdots \times Y_k\) is a product of non-abelian simple groups, then \(E(L)=Q_1\circ \cdots \circ Q_k\), where \(\varphi(Q_i)=Y_i\).  
\end{pro}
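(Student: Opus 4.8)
The plan is to identify the components of $L$ explicitly with the direct factors $Y_1,\dots,Y_k$ of $\varphi(L)$, and then invoke the definition of the layer $E(L)$ as the central product of all components of $L$ (the product being a central one by \hyperlink{0.2.22}{(0.2.22.i)}). Concretely, for each $i$ I will produce a component $Q_i$ of $L$ with $\varphi(Q_i)=Y_i$, and then show every component of $L$ arises this way.

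First I would fix $i$, regard $Y_i$ as the obvious direct factor of $\varphi(L)$, and study $P_i:=\varphi^{-1}(Y_i)$. Since $\ker\varphi=Z(L)\le P_i$, we have $P_i\unlhd L$ and $P_i/Z(L)\cong Y_i$. As $Y_i$ is non-abelian simple, it is perfect, so $P_i=P_i'Z(L)$. Put $Q_i:=P_i'=[P_i,P_i]$. Because $Z(L)$ is central, the identity $P_i=Q_iZ(L)$ yields $Q_i=[Q_iZ(L),Q_iZ(L)]=[Q_i,Q_i]$, so $Q_i$ is perfect. Moreover $Q_i\cap Z(L)\le Z(Q_i)$ and $Q_i/(Q_i\cap Z(L))\cong Q_iZ(L)/Z(L)=Y_i$ is simple; since $Y_i$ is non-abelian, $Z(Q_i)/(Q_i\cap Z(L))$ cannot be all of $Y_i$ (else $Q_i$ would be abelian, contradicting perfect and non-trivial), so $Z(Q_i)=Q_i\cap Z(L)$ and $Q_i/Z(Q_i)\cong Y_i$. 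Hence $Q_i$ is quasisimple, and $Q_i\unlhd P_i\unlhd L$ makes it subnormal, so $Q_i$ is a component of $L$ with $\varphi(Q_i)=\varphi(P_i)'=Y_i'=Y_i$.

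Next I would show these are \emph{all} the components. Let $H$ be any component of $L$. Since $H$ is quasisimple, $H\cap Z(L)\le Z(H)$, and $\varphi(H)\cong H/(H\cap Z(L))$ is non-trivial (otherwise $H\le Z(L)$ is abelian). A routine check of the central quotient then gives $Z(\varphi(H))=Z(H)/(H\cap Z(L))$ and $\varphi(H)/Z(\varphi(H))\cong H/Z(H)$, so $\varphi(H)$ is again quasisimple; being the image of a subnormal subgroup it is subnormal in $\varphi(L)=Y_1\times\cdots\times Y_k$. At this point I use the standard fact that the components of a direct product of finitely many non-abelian simple groups are exactly the factors: if $C\le\prod Y_j$ is quasisimple and subnormal, each projection $\pi_j(C)$ is subnormal in the simple group $Y_j$, hence $1$ or $Y_j$, and if $\pi_j(C)=Y_j$ for $j$ in a set $T$ then descending a subnormal chain from $\prod_{j\in T}Y_j$ to $C$ and using that normal subgroups of $\prod_{j\in T}Y_j$ are subproducts forces $C=\prod_{j\in T}Y_j$, so quasisimplicity gives $|T|=1$. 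Therefore $\varphi(H)=Y_i$ for some $i$, whence $HZ(L)=\varphi^{-1}(Y_i)=P_i$ and $H=H'=(HZ(L))'=P_i'=Q_i$.

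Combining, $i\mapsto Q_i$ is a bijection from $\{1,\dots,k\}$ onto the set of components of $L$: injective because the groups $Y_i=\varphi(Q_i)$ are distinct, surjective by the previous paragraph. Since $E(L)$ is by definition the central product of all components of $L$, we conclude $E(L)=Q_1\circ\cdots\circ Q_k$ with $\varphi(Q_i)=Y_i$, as claimed. The step I expect to be the main obstacle is the bookkeeping in the two ``central quotient'' arguments (that $P_i'$ and $\varphi(H)$ are quasisimple) combined with a clean treatment of subnormal, and in particular subdirect, subgroups of $Y_1\times\cdots\times Y_k$; once those are pinned down the rest is formal.
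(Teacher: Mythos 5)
Your proof is correct, and it reaches the same groups \(Q_i\) as the paper's (the minimal preimage of \(Y_i\) coincides with \((\varphi^{-1}(Y_i))'\)), but it goes about it rather differently. Where the paper defines \(Q_i\) as a \emph{minimal} preimage of \(Y_i\) and then uses the \hyperlink{0.2.14}{Three Subgroup Lemma} — first deducing \([Q_i,Q_j]\le Z(L)\) for \(i\ne j\) and then upgrading this to \([Q_i,Q_j]=1\) via perfectness — in order to pin down \(Z(Q_i)=Q_i\cap Z(L)\), you obtain the same equality directly from the simplicity of \(Y_i\cong Q_i/(Q_i\cap Z(L))\), with no need to compare the \(Q_i\) amongst themselves. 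Your treatment of the converse direction is also substantially more thorough: the paper dismisses additional components with the remark that \(\varphi(L)\) ``would have additional factors,'' whereas you prove the claim by checking that any component \(H\) maps under \(\varphi\) to a subnormal quasisimple subgroup of \(Y_1\times\cdots\times Y_k\), that such subgroups are precisely the factors \(Y_i\), and then that \(H=(HZ(L))'=Q_i\). This extra care is worthwhile; the paper's argument is terse there, and in fact its justification that \(Q_i\) is perfect (``\(\varphi(Q_i')\) is a proper normal subgroup of \(Y_i\)'') is imprecise — \(\varphi(Q_i')=\varphi(Q_i)'=Y_i'=Y_i\) is never proper, and minimality must be invoked directly — whereas your derived-subgroup construction sidesteps this entirely. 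On balance you pay for the clarity with more bookkeeping, particularly the digression into subnormal subgroups of a direct product of simple groups, which the paper avoids by leaning on \hyperlink{0.2.21}{(0.2.21)} implicitly.
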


\begin{proof}
For all \(1\leq i\leq k\), define \(Q_i\) to be a minimal preimage of \(Y_i\). If \(Q_i\) is not perfect, then \(\varphi(Q_i')\) is a proper normal subgroup of \(Y_i\), therefore it must be trivial. So \(Q_i\leq Z(L)\), which implies it is solvable, but this implies that \(Y_i\cong Q_i/(Z(L)\cap Q_i)\) is solvable, which is a contradiction to \(Y_i\) being non-abelian simple. Therefore \(Q_i\) is perfect.

Now \(\varphi([Q_i,Q_j])=[\varphi(Q_i),\varphi(Q_j)]=[Y_i,Y_j]=1\), therefore \([Q_i,Q_j]\leq Z(L)\) and so \([Q_i,Q_j,Q_i]=[Q_i,Q_j,Q_j]=1\). Thus by the \hyperlink{0.2.14}{Three Subgroup Lemma}, \([Q_i,Q_i,Q_j]=[Q_i,Q_j]=1\). So any element of \(Z(Q_i)\) commutes with the rest of \(L\) and therefore \(Z(Q_i)=Z(L)\cap Q_j\). Since \(Y_i\cong Q_i/(Z(L)\cap Q_i)\), we have shown that the \(Q_i\) are quasisimple, and hence are components of \(L\). Furthermore, these are all the components of \(L\), else \(\varphi(L)\) would have additional factors. Hence \(E(L)=Q_1\circ \cdots \circ Q_k\). 
\end{proof}

The final result of this section allows us to identify an absolutely irreducible module of a central product with a tensor product of absolutely irreducible modules of each factor of that central product. The proof uses very similar notation and argumentation to that of \hyperlink{3.5}{(3.5)}. 

\begin{lem}
\hypertarget{7.6}{Let} \(G=Q_1\circ \cdots \circ Q_k\) be a central product. If \(V\) is an absolutely irreducible \(FG\)-module and \(V_i\subset V\) is an irreducible \(FQ_i\)-module, for all \(1\leq i\leq k\), then \(V\) is \(FG\)-isomorphic to \(V_1\otimes \cdots \otimes V_k\) and the \(V_i\) are absolutely irreducible.
\end{lem}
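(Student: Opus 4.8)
The plan is to mimic the structure of the proof of \hyperlink{3.5}{(3.5)}, building a $K$-vector space out of $\mathrm{Hom}$-spaces and identifying the $G$-action as a tensor action. First I would set $E_i := \mathrm{End}_{FQ_i}(V_i)$ and observe that, since $V$ is absolutely irreducible, $\mathrm{End}_{FG}(V) = F$ by \hyperlink{0.3.11}{(0.3.11)}; as $Q_i \leq G$, each $E_i$ is a finite division ring (by Schur's Lemma, as in \hyperlink{3.4}{(3.4)}) and I will want to pin down that in fact $E_i = F$ too, i.e. that each $V_i$ is already absolutely irreducible. The cleanest route is: show $V$ restricted to $Q_i$ is a direct sum of copies of a single irreducible (using that $Q_i$ is normal in $G$, or rather that $Q_i$ commutes with all the other $Q_j$ whose product is $G$, so $C_G(Q_i) \supseteq Q_1\cdots\widehat{Q_i}\cdots Q_k$ and this together with $Q_i$ generates $G$, forcing homogeneity of $V|_{Q_i}$); then an element of $\mathrm{End}_{FQ_i}(V)$ that is not scalar would, via this homogeneous decomposition, produce a non-scalar element of $C_{GL(V)}(Q_i)$. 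I then need this to contradict $\mathrm{End}_{FG}(V)=F$ — this works because $C_{GL(V)}(G) = \bigcap_i C_{GL(V)}(Q_i)$, and more carefully, $\mathrm{End}_{FQ_i}(V)$ centralises $Q_i$ and, since the other factors $Q_j$ map into scalars on each homogeneous piece appropriately, one gets elements centralising all of $G$. Establishing $E_i = F$ is where I expect the real work to lie.

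Once each $V_i$ is absolutely irreducible (so $\mathrm{End}_{FQ_i}(V_i) = F$), I would proceed by induction on $k$, so it suffices to treat $k = 2$, writing $G = Q_1 \circ Q_2$ with $V_1, V_2$ absolutely irreducible $FQ_1$-, $FQ_2$-modules. I form $W := \mathrm{Hom}_{FQ_1}(V_1, V)$. Using \hyperlink{0.3.5}{(0.3.5)} together with the fact that $V|_{Q_1}$ is $Q_1$-homogeneous of type $V_1$ (say $V|_{Q_1} \cong V_1^{\oplus m}$ where $m = \dim V / \dim V_1$), we get $W \cong F^m$, an $F$-vector space, and the evaluation map gives a natural $FQ_1$-isomorphism $V_1 \otimes_F W \xrightarrow{\sim} V$ (via $v \otimes \phi \mapsto \phi(v)$), which is surjective and injective by dimension count. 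Now $Q_2$ commutes with $Q_1$, so each $q_2 \in Q_2$ acts on $V$ by an $FQ_1$-endomorphism, hence acts on $W$ by post-composition; this makes $W$ an $FQ_2$-module, and one checks that under the isomorphism $V \cong V_1 \otimes W$ the action of $G = Q_1Q_2$ is exactly the tensor-product action $g_1 g_2 \cdot (v \otimes w) = g_1 v \otimes g_2 w$ (the two factors commute so the action is well-defined on the central product).

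It then remains to identify $W$ with $V_2$ as an $FQ_2$-module, up to isomorphism, and to note $W$ is absolutely irreducible. For irreducibility of $W$: a proper $FQ_2$-submodule $W_0 \subsetneq W$ would give $V_1 \otimes W_0$, a proper $FG$-submodule of $V$, contradicting irreducibility of $V$; hence $W$ is irreducible, and since $V \cong V_1 \otimes W$ with $V_1$ absolutely irreducible, a Schur-type argument (any $FQ_2$-endomorphism of $W$ tensors up to an $FG$-endomorphism of $V$, which is scalar) shows $W$ is absolutely irreducible. Finally $V_2 \subset V$ is by hypothesis an irreducible $FQ_2$-submodule; restricting the $FG$-iso $V \cong V_1 \otimes W$ and using that $V|_{Q_2}$ is $Q_2$-homogeneous of type $W$ (again by the centraliser/commuting argument, now with the roles of $Q_1,Q_2$ swapped), we get $V_2 \cong W$ as $FQ_2$-modules. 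Substituting back, $V \cong V_1 \otimes V_2$, and both factors are absolutely irreducible, completing the $k=2$ case; the induction then carries it to general $k$ by applying the result to $Q_1$ and $Q_2 \circ \cdots \circ Q_k$, and then induction on the latter.
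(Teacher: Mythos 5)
Your proposal follows essentially the same line as the paper's proof: restrict to $Q_1$, form the Hom-space $W = \mathrm{Hom}_{FQ_1}(V_1, V)$, identify $V \cong V_1 \otimes W$ with $Q_2$ acting on $W$ by post-composition, and close by identifying $W$ with $V_2$ via homogeneity. The reduction to $k=2$ by induction, the use of Clifford-style homogeneity of $V|_{Q_1}$, and the deduction that $V_i$ is absolutely irreducible from $C_{GL(V)}(G) = F^\times$ are all the same ingredients the paper uses (the paper additionally passes through the direct-product case via \hyperlink{0.3.8}{(0.3.8)}, but for the $\mathrm{Hom}$-space construction this makes no essential difference).

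There is, however, one step where your sketch as written does not work and needs to be replaced by the correct argument, which you partially anticipated by flagging it as ``where the real work lies.'' You claim that a non-scalar element of $\mathrm{End}_{FQ_i}(V)$ centralises all of $G$ ``since the other factors $Q_j$ map into scalars on each homogeneous piece appropriately.'' This is false: the other $Q_j$ do not act as scalars on an $FQ_i$-homogeneous component. The correct route (which is the one the paper takes, via \hyperlink{3.5}{(3.5)} and \hyperlink{3.6}{(3.6)}) is to take $E_i := \mathrm{End}_{FQ_i}(V_1)$ and observe $E_i^\times = Z\bigl(C_{GL(V)}(Q_i)\bigr)$. Being in this centre, $E_i^\times$ commutes with all of $C_{GL(V)}(Q_i)$ --- which contains every $Q_j$ for $j \neq i$ --- and of course $E_i^\times$ commutes with $Q_i$ as well. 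Hence $E_i^\times \leq C_{GL(V)}(G) = F^\times$ by \hyperlink{0.3.11}{(0.3.11)}, forcing $E_i = F$ and each $V_i$ absolutely irreducible. With this fix in place the rest of your argument is sound and matches the paper.
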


\begin{proof}
It will suffice to prove the statement for a direct product, since in view of \hyperlink{0.3.8}{(0.3.8)}, the result then also holds for a central product. Furthermore, it suffices to prove for the case when \(k=2\), since the general result follows by simple induction on \(k\). 

Let \(G=Q_1\times Q_2\) and \(V = M_1 \oplus \cdots \oplus M_d\) be the decomposition of \(V\) into its \(FQ_1\)-homogeneous components. By \hyperlink{2.6}{(2.6.iii)}, \(C_{G}(Q_1)Q_1\) stabilises each \(M_j\) and therefore (since \(Q_2\leq C_{GL(V)}(Q_1)\)), we know that \(G\) also stabilises this decomposition, but by the irreducibility of \(V\) as a \(FG\)-module, we must then have that \(V = M_1\). Thus \(V\) is \(FQ_1\)-isomorphic to \(V_1^{\oplus\,m}\), for some positive integer \(m\). The same argument, replacing \(Q_1\) with \(Q_2\), shows that \(V\) is \(FQ_2\)-isomorphic to \(V_2^{\oplus\,l}\), for some positive integer \(l\).

Define \(E=End_{FQ_1}(V_1)\). By \hyperlink{3.5}{(3.5)} and \hyperlink{3.6}{(3.6)}, \(C_{GL(V)}(Q_1)\cong GL_m(E)\) and \(Z(C_{GL(V)}(Q_1))\cong E^{\times}\). Following our convention, we will be referring to this subgroup of \(GL(V,F)\) as \(E^{\times}\). Since \(E^{\times}\) commutes with \(Q_1\) and \(C_{GL(V)}(Q_1)\), it commutes with \(G\), but by \hyperlink{0.3.11}{(0.3.11)}, \(C_{GL(V)}(G)=F^{\times}\). Thus \(E=F\) and again by \hyperlink{0.3.11}{(0.3.11)}, \(V_1\) is an absolutely irreducible \(FQ_1\)-module. The same argument shows \(V_2\) is an absolutely irreducible \(FQ_2\)-module.

As discussed in the proof of \hyperlink{3.5}{(3.5)}, if \(\alpha_i:V_1\longrightarrow V_i\) is a \(FQ_1\)-isomorphism, then \(\{\alpha_i\,|\, 1\leq i\leq m\}\) is a \(F\)-basis for the vector space \(A:=Hom_{FQ_1}(V_1,V)\cong E^m\). And if \(\{v_1,\:...\:,v_t\}\) is a \(F\)-basis of \(V_1\), then \(\{\alpha_i(v_j)\,|\, 1\leq i \leq m,\:1\leq j \leq t\}\) is a \(F\)-basis of \(V\). Thus, we can define a \(F\)-isomorphism \(\varphi: A\otimes V_1\longrightarrow V\) sending basis vectors \(\alpha_i \otimes v_j\mapsto \alpha_i(v_j)\).

Making use of the isomorphisms \(C_{GL(V)}(Q_1)\cong GL_m(E)\cong GL(A,E)\), there is a natural action of \(Q_1C_{GL(V)}(Q_1)\) on \(V_1\otimes A\), defined by \(q_1q_2\cdot (v\otimes w)= q_1 v\otimes q_2 w\). Furthermore, observing that \(G\leq Q_1C_{GL(V)}(Q_1)\), this action makes \(\varphi\) a \(FG\)-isomorphism. Indeed, if \(g=q_1q_2\in G\), then: 
\[\varphi(q_1q_2\cdot \alpha_i(v_j))=\varphi(q_2\cdot \alpha_i(q_1 v_j))=q_1 v_j\otimes q_2 \alpha_i = q_1q_2\cdot \varphi(\alpha_i(v_j))\]

Since \(V\) is an irreducible \(FG\)-module, this action of \(G\) on \( V_1\otimes A\) is also irreducible. Therefore \(A\) is an irreducible \(FQ_2\)-submodule of \(V\) and so by our previous observation, it must be \(FQ_2\)-isomorphic \(V_2\). Thus \(V\) is \(FG\)-isomorphic to \(V_1\otimes V_2\).
\end{proof}

\section{Aschbacher Class \(\mathcal{C}_8\) - Classical Form Stabilisers}

Before we state the main definition of this section, we prove a result that will enable the \(\mathcal{C}_8\) class to be emptied of any groups defined on a \(2\)-dimensional vector space.

\begin{pro}
\hypertarget{8.1}{Let} \(V\) be a \(2\)-dimensional \(\mathbb{F}_q\)-vector space, where \(q\) is odd. If \(Q\) is an orthogonal form of plus or minus type, then \(\Delta(V,Q)\) is contained in a member of \(\mathcal{C}_2\) or \(\mathcal{C}_3\).
\end{pro}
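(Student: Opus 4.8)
The plan is to split according to the type of $Q$: I will show that a plus-type $\Delta(V,Q)$ lands inside a member of $\mathcal{C}_2$, while a minus-type $\Delta(V,Q)$ in fact equals a member of $\mathcal{C}_3$.

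\textbf{Plus-type.} By \hyperlink{0.1.13}{(0.1.13.iv)} with $m=1$ there is a basis $\{x,y\}$ of $V$ with $Q(x)=Q(y)=0$ and $f_Q(x,y)=1$; using bilinearity of $f_Q$ together with $Q(\lambda v)=\lambda^2 Q(v)$ one gets $Q(ax+by)=ab$ for all $a,b\in F$. Hence the nonzero singular vectors of $Q$ are exactly those lying in $\langle x\rangle\cup\langle y\rangle$. Any $g\in\Delta(V,Q)$ scales $Q$ by some nonzero factor $\lambda_g$, so it carries singular vectors to singular vectors and therefore permutes the two lines $\langle x\rangle,\langle y\rangle$; that is, $g$ stabilises the $1$-decomposition $V=\langle x\rangle\oplus\langle y\rangle$. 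Thus $\Delta(V,Q)\le N_{GL(V)}(\{\langle x\rangle,\langle y\rangle\})$, which is a member of $\mathcal{C}_2$, and the plus-type case is done.

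\textbf{Minus-type.} Since minus-type forms are unique up to isometry by \hyperlink{0.1.13}{(0.1.13.iv)}, I would identify $(V,Q)$ with $(\mathbb{F}_{q^2},N)$, where $\mathbb{F}_{q^2}$ is regarded as a $2$-dimensional $F$-space and $N(v)=v^{q+1}$ is the norm form, which is a non-degenerate quadratic form with no nonzero singular vector and hence of minus-type. Under this identification, multiplication by any $c\in\mathbb{F}_{q^2}^\times$ is a similarity of $Q$ with factor $N(c)$, and the Frobenius $\phi_q\colon v\mapsto v^q$ is an isometry of $Q$; so $H:=\mathbb{F}_{q^2}^\times\rtimes\langle\phi_q\rangle\le\Delta(V,Q)$, a subgroup of order $2(q^2-1)$. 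By the discussion preceding \hyperlink{3.1}{(3.1)}, $H=GL(V,\mathbb{F}_{q^2})\langle\phi_q\rangle=N_{GL(V,F)}(\mathbb{F}_{q^2}^\times)$, and since $r=[\mathbb{F}_{q^2}:F]=2$ divides $n=2$, this group is a member of $\mathcal{C}_3$ by \hyperlink{3.1}{(3.1)}. To upgrade $H\le\Delta(V,Q)$ to an equality I would count orders: the similarity-factor homomorphism $\Delta(V,Q)\to F^\times$, $g\mapsto\lambda_g$, has kernel $I(V,Q)\cong O_2^-(q)\cong D_{2(q+1)}$ by \hyperlink{0.2.10}{(0.2.10.iv)}, and it is surjective because $N$ maps onto $F^\times$, so every $\lambda\in F^\times$ is already the factor of a multiplication map in $H$; hence $|\Delta(V,Q)|=2(q+1)(q-1)=|H|$, forcing $\Delta(V,Q)=H=N_{GL(V,F)}(\mathbb{F}_{q^2}^\times)$.

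The plus-type case is essentially immediate, so the real work sits in the minus-type case, where two points need care. First, the identification of $(V,Q)$ with the concrete model $(\mathbb{F}_{q^2},N)$ must be seen to be harmless: it is, because being a $\mathcal{C}_3$ member is preserved under change of basis — conjugating $N_{GL(V,F)}(K^\times)$ by $g\in GL(V,F)$ just replaces $K$ by the isomorphic copy $g^{-1}Kg$ of a degree-$2$ extension sitting inside $GL(V,F)$. Second, one must correctly pin down $|\Delta(V,Q)|$, i.e. read off the group of similarity factors as all of $F^\times$ and combine this with $|O_2^-(q)|$ from \hyperlink{0.2.10}{(0.2.10.iv)}; this is the step most prone to an off-by-a-factor error. (Alternatively one could bypass the order count by checking directly that every element of $\Delta(V,Q)$ normalises $\mathbb{F}_{q^2}^\times$, but the exact-sequence argument above is cleaner.)
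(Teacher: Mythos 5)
Your plus-type argument is essentially identical to the paper's: both use the hyperbolic basis from \hyperlink{0.1.13}{(0.1.13.iv)}, locate the singular lines $\langle x\rangle,\langle y\rangle$, and conclude $\Delta(V,Q)\le N_{GL(V)}(\{\langle x\rangle,\langle y\rangle\})\in\mathcal{C}_2$. (The paper computes $f_Q(\lambda_1x+\lambda_2y,\lambda_1x+\lambda_2y)=2\lambda_1\lambda_2$ rather than $Q(ax+by)=ab$, but for $q$ odd these are equivalent statements.)

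For the minus-type case your route is genuinely different from the paper's. You replace $(V,Q)$ by the concrete model $(\mathbb{F}_{q^2},N)$ with $N(v)=v^{q+1}$, exhibit the subgroup $H=\mathbb{F}_{q^2}^{\times}\rtimes\langle\phi_q\rangle\le\Delta(V,Q)$, identify $H$ with the $\mathcal{C}_3$ member $N_{GL(V,F)}(\mathbb{F}_{q^2}^\times)$ via the discussion preceding \hyperlink{3.1}{(3.1)}, and then force equality $\Delta(V,Q)=H$ by an order count built from the exact sequence $1\to I(V,Q)\to\Delta(V,Q)\to F^\times\to1$ (using surjectivity of the norm) together with $I(V,Q)\cong O_2^-(q)\cong D_{2(q+1)}$ from \hyperlink{0.2.10}{(0.2.10.iv)}. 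The paper instead works abstractly: it notes by \hyperlink{0.2.10}{(0.2.10.v)} that $S=S(V,Q)$ is non-scalar hence irreducible on $V$, shows its endomorphism ring $E=End_{FS}(V)$ is a field strictly bigger than $F$ by \hyperlink{0.3.12}{(0.3.12)} and \hyperlink{0.3.11}{(0.3.11)}, deduces $E\cong\mathbb{F}_{q^2}$ by dimension count, and then uses \hyperlink{0.2.17}{(0.2.17)} plus \hyperlink{0.3.21}{(0.3.21)} to get the containment $\Delta(V,Q)=N_{GL(V)}(I(V,Q))\le N_{GL(V)}(S)=N_{GL(V)}(E^\times)\in\mathcal{C}_3$. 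Your version is more concrete and actually proves the stronger fact of equality (which the statement does not require), at the cost of having to justify that the norm form represents the minus-type isometry class and of carrying out the order arithmetic; the paper's version stays intrinsic to $(V,Q)$, only proves containment, and dovetails more directly with the machinery of Section 3 (the field $E$ arises exactly as in \hyperlink{3.4}{(3.4)} and \hyperlink{3.5}{(3.5)} rather than being posited from outside). Both proofs are correct.

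One small remark: when you invoke the ``discussion preceding \hyperlink{3.1}{(3.1)}'' to identify $\mathbb{F}_{q^2}^\times\langle\phi_q\rangle$ with $N_{GL(V,F)}(\mathbb{F}_{q^2}^\times)$, it helps to observe explicitly that here $m=\dim_{\mathbb{F}_{q^2}}V=1$, so $GL(V,\mathbb{F}_{q^2})=\mathbb{F}_{q^2}^\times$; that is what makes the $\mathcal{C}_3$ member collapse to something of order only $2(q^2-1)$ and allows your order count to close.
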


\begin{proof}
Recall that since \(q\) is odd, \(\Delta(V,Q)=\Delta(V,f_Q)\). First we consider the case when \(Q\) is of plus-type. By \hyperlink{0.1.13}{(0.1.13.iv)}, \(V\) admits a basis \(\{x,y\}\) such that \(f_Q(x,x)=f_Q(y,y)=0\) and \(f_Q(x,y)=1\). For all \(\lambda_1,\lambda_2\in F^{\times}\), we observe that \(f_Q(\lambda_1x+\lambda_2 y,\lambda_1x+\lambda_2 y)=2\lambda_1\lambda_2\), thus the scalar multiples of \(x\) and \(y\) are the only non-zero vectors that \(Q\) sends to zero. Therefore any similarity of \(Q\) permutes the subspaces \(\langle x \rangle\) and \(\langle y \rangle\), hence \(\Delta(V,Q)\leq N_{GL(V)}(\{\langle x \rangle,\langle y \rangle\})\in \mathcal{C}_2\).   

Next, suppose \(Q\) is of minus-type. By \hyperlink{0.2.10}{(0.2.10.v)}, \(S=S(V,Q)\) cannot be contained in the scalars and then, since \(V\) is \(2\)-dimensional, \(S\) is irreducible on \(V\). By the same argument of \hyperlink{3.4}{(3.4)}, \(E=End_{FS}(V)\) is a field containing \(F\), but by \hyperlink{0.3.12}{(0.3.12)}, \(S\) is not absolutely irreducible and therefore \(E\neq F\), by \hyperlink{0.3.11}{(0.3.11)}. Now \(V\) can be seen as vector space over \(E\), where scalar multiplication is just the action of the map. Since \(V\) is a \(F\)-vector space of dimsion \(2\) and \(E\) is a \(F\)-vector space of dimension greater than one, the \(E\)-dimension of \(V\) is \(1\) and thus \(E\cong \mathbb{F}_{q^2}\). 

So \(E^{\times}\) is a cyclic subgroup of order \(q^2-1\) and \(S\leq E^{\times}\) is of order \(q+1\), thus \(N_{GL(V)}(S)=N_{GL(V)}(E^{\times})\) by \hyperlink{0.2.17}{(0.2.17)}, and since \(S\) is characteristic in \(I=I(V,Q)\), any element normalising \(I\) will also normalise \(S\). Therefore \(\Delta(V,Q)=N_{GL(V)}(I)\leq N_{GL(V)}(S)=N_{GL(V)}(E^{\times})\in \mathcal{C}_3\), where the first equality holds by \hyperlink{0.3.21}{(0.3.21)}. 
\end{proof}

We are now ready to state the definition of a member of \(\mathcal{C}_8\).

\begin{defn}
A subgroup \(G\leq GL(V)\) is a member of \(\mathcal{C}_8\) if \(G= \Delta(V,f)\) where one of the following hold.

\begin{enumerate}[label=(\roman*),ref=(\roman*)]
 
\item The form \(f\) is unitary, \(q\) is a square and \(n\geq 3\). Such groups are isomorphic to \(GU_n(q^{1/2})\circ C_{q-1}\).
\item The form \(f\) is symplectic, \(n\geq 4\) and even. Such groups are isomorphic to \(GSp_n(q)\).
\item The form \(f\) is non-degenerate symmetric bilinear, \(q\) is odd and \(n\geq 3\). Such groups are isomorphic to \(GO^{\pm}_n(q)\).
\end{enumerate}
\end{defn}

\begin{rem}
\hypertarget{8.3}{The} conditions on \(q\) and \(n\) in our definition are either to ensure the group is well-defined or to avoid overlap with other Aschbacher classes. Further explanation is given below: 

\begin{enumerate}[label=(\roman*),ref=(\roman*)]
 
\item If \(f\) is unitary, then \(q\) must be a square for the form to exist on \(V\). The condition that \(n\geq 3\) is due to the isomorphism \hyperlink{0.2.10}{(0.2.10.iii)}, from which we deduce that if \(n=2\), then \(G\) is contained in \(\mathcal{C}_5\).

\item If \(f\) is symplectic, \(n\) must be even for the form to exist on \(V\). The condition that \(n\geq 4\) is due to the isomorphism \hyperlink{0.2.10}{(0.2.10.iii)}, from which we deduce that if \(n=2\), then \(G\geq SL(V)\); a case that is excluded by the statement of our main theorem.

\item For part (iii), we first note that the orthogonal groups were defined with the quadratic form \(Q\), not the associated symmetric bilinear form \(f\), thus it is primarily the quadratic form that we are interested in. However, if we have a quadratic form \(Q\), and \(q\) is even, then \(\Delta(V,Q)\leq \Delta(V,f')\), where \(f'\) is a symplectic form (as noted in \hyperlink{0.1.12}{(0.1.12)}) and therefore \(G\) is contained in a group covered by part (ii). Since \(q\) must be odd, \(\Delta(V,f)= \Delta(V,Q)\) by \hyperlink{0.1.12}{(0.1.12)}, and we can justifiably define \(G\) with respect to the associated symmetric bilinear form \(f\). We have chosen to define it this way to provide easy correspondence with results such as \hyperlink{0.3.19}{(0.3.19)} and \hyperlink{6.15}{(6.15)}. The condition that \(n\geq 3\) is explained by \hyperlink{8.1}{(8.1)}.

\item Aschbacher defines the \(\mathcal{C}_8\) class to consist of the group of \textit{semi-linear maps} that stabilise the forms mentioned in (i)-(iii) of our definition. This is an overgroup of \(\Delta(V,f)\); however, insofar as it relates to \(GL(V)\), both definitions yield the same members of \(C_8\).
\end{enumerate}

\end{rem}

\section{Proof of the Main Theorem}
 We are now ready to prove the main theorem of our paper. In the statement and proof of which, when we refer to a \textit{classical form} we use our regular definition \hyperlink{0.1.8}{(0.1.8)}, with the exclusion of two types of form on a \(2\)-dimensional space; a symplectic form and an orthogonal form over a field of even characteristic.

\begin{theorem*}(Aschbacher's Theorem for the General Linear Group)\\
Let \(F\) be a finite field and let \(V\) be a \(n\)-dimensional \(F\)-vector space, for some positive integer \(n\). If \(H\) is a subgroup of \(GL(V,F)\), not containing \(SL(V,F)\), then \(H\) is either contained in a member of one of the Aschbacher classes \(\mathcal{C}_1 - \mathcal{C}_8\) or the following hold.
\begin{enumerate}[label=(\roman*),ref=(\roman*)]
 \item \(H\) has a unique normal quasisimple subgroup \(L\).
 \item \(V\) is an absolutely irreducible \(FL\)-module that cannot be realised over any proper subfield of \(F\) and \(L\) does not fix any classical form on \(V\).
 \end{enumerate}
      
\end{theorem*}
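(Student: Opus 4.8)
The plan is to assume that \(H\) lies in no member of \(\mathcal{C}_1,\dots,\mathcal{C}_8\) and deduce (i) and (ii), feeding this assumption to the eight classes in turn; "Lemma \(i\)'' will record the structural restriction on \(H\) that follows from avoiding \(\mathcal{C}_1,\dots,\mathcal{C}_i\), and the restrictions accumulated by Lemma \(8\) will be exactly (i) and (ii). First I would dispose of \(\mathcal{C}_1,\mathcal{C}_2,\mathcal{C}_3,\mathcal{C}_5\), which between them pin down the module-theoretic part of (ii). If \(V\) had a proper non-zero \(FH\)-submodule \(W\) then \(H\le N_{GL(V)}(W)\in\mathcal{C}_1\); so \(V\) is irreducible. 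If \(V\) restricted to some \(N\unlhd H\) were not homogeneous, Clifford's Theorem (2.6) would exhibit a non-trivial direct-sum decomposition of \(V\) permuted transitively by \(H\), placing \(H\) in a member of \(\mathcal{C}_2\); so \(V\) is homogeneous over every normal subgroup. For such an \(N\), write the restriction as \(V_1^{\oplus m}\): by Schur's Lemma and Wedderburn's Theorem (3.4), \(E:=End_{FN}(V_1)\) is a field containing \(F\), and by (3.5)--(3.6) we have \(E^{\times}=Z(C_{GL(V)}(N))\), so \(H\) normalises \(E^{\times}\); if \(E\neq F\) then \(V\) is an \(E\)-space, a subfield of prime index is characteristic in \(E^{\times}\) and of index a prime dividing \(n\), and \(H\) lands in a member of \(\mathcal{C}_3\). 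Hence every \(End_{FN}(V_1)=F\); in particular, taking \(N=H\) and using (0.3.11), \(V\) is an absolutely irreducible \(FH\)-module. Finally, if \(V\) could be realised over a proper subfield then, reducing to a subfield of prime index and invoking (5.6), \(H\le N_{GL(V)}(W)F^{\times}\in\mathcal{C}_5\); so \(V\) is realised over no proper subfield. The module-theoretic half of (ii) is now secured.

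Next I would analyse the normal nilpotent structure of \(H\) and invoke \(\mathcal{C}_6\). Any abelian normal subgroup of \(H\) acts homogeneously with absolutely irreducible, hence \(1\)-dimensional, constituents, so it consists of scalars; thus \(Z(H)=H\cap F^{\times}\) and every abelian normal subgroup is central. If \(r=p\), then \(O_p(H)\) (the largest normal \(p\)-subgroup) acts trivially on \(V\) by (6.8) and the semisimplicity of \(V\) over \(O_p(H)\) from (2.6), so \(O_p(H)=1\). If \(r\neq p\), every characteristic abelian subgroup of \(O_r(H)\) is normal in \(H\), hence central, hence cyclic, so \(O_r(H)\) is of symplectic type (6.3); and if some \(O_r(H)\) is non-abelian it contains, by (6.7) and Hall's classification (6.4), a characteristic symplectic-type subgroup \(R_0\) of minimal exponent, normal in \(H\), and — unless \(R_0\) or the ambient field has one of the shapes excluded in the definition of \(\mathcal{C}_6\), which by (6.14), (6.17) and (5.6) already place \(H\) in \(\mathcal{C}_5\) or \(\mathcal{C}_8\) — we get \(H\le N_{GL(V)}(R_0)\in\mathcal{C}_6\). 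Therefore every \(O_r(H)\) is cyclic central, so the Fitting subgroup \(F(H)\) is a group of scalars.

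Finally I would run the layer through \(\mathcal{C}_4,\mathcal{C}_7,\mathcal{C}_8\). Since \(O_p(H)=1\) and \(F(H)\) is scalar, the generalised Fitting subgroup is \(F^{*}(H)=F(H)E(H)=Z(H)E(H)\) and \(C_H(F^{*}(H))\le F^{*}(H)\) (\cite{aschbacher_finite_2000}); if \(E(H)=1\) this forces \(H=C_H(F^{*}(H))\le Z(H)\), so \(H\) is scalar and (as \(n\ge 2\)) \(V\) is reducible, contradicting \(\mathcal{C}_1\). Hence \(E(H)=L_1\circ\cdots\circ L_k\ne 1\), a central product of the components of \(H\) ((0.2.21), (0.2.25)), permuted by \(H\). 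By (7.6) the restriction of \(V\) to \(E(H)\) is \((W_1\otimes\cdots\otimes W_k)^{\oplus t}\) with each \(W_i\) absolutely irreducible over \(FL_i\), and since \(C_{GL(V)}(E(H))\cong GL_t(F)\) the factorisation \(V\cong W_1\otimes\cdots\otimes W_k\otimes F^{t}\) is respected by \(H\) up to permutation of equal-dimensional factors; if \(k\ge 2\) or \(t\ge 2\), grouping the factors by dimension and applying (4.3) shows \(H\) to lie in the normaliser of a tensor product stabiliser, a member of \(\mathcal{C}_4\) or \(\mathcal{C}_7\) (or, in the two-dimensional coincidences recorded in Remarks (7.2) and (8.3), of \(\mathcal{C}_8\)) — always a contradiction. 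So \(k=t=1\): \(L:=E(H)\) is quasisimple, normal in \(H\), and \(V\) is an absolutely irreducible \(FL\)-module. Every normal quasisimple subgroup of \(H\) is a component of \(H\), hence equals \(L\), giving (i). And if \(L\) fixed a classical form \(f\), then by (0.3.21) applied to the absolutely irreducible \(FL\)-module \(V\) we would have \(H\le N_{GL(V)}(L)\le\Delta(V,f)\in\mathcal{C}_8\) — the two excluded low-dimensional forms cannot intervene here, since a quasisimple group acting absolutely irreducibly on a \(2\)-space is \(SL_2(q)\) and would force \(H\ge SL(V)\). This contradiction completes (ii).

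The main obstacle I expect is the \(\mathcal{C}_4/\mathcal{C}_7\) step: controlling precisely how \(H\) permutes the components and the induced tensor factors, and then absorbing the low-dimensional exceptional isomorphisms (\(SL_2\cong Sp_2\cong SU_2\), \(\Omega_4^{+}\cong SL_2\circ SL_2\), two-dimensional tensor factors, the \(O_2^{\pm}\) coincidences) that fall outside \(\mathcal{C}_4\) and \(\mathcal{C}_7\) and must be rerouted into \(\mathcal{C}_8\) — this is exactly what forces the dimension and isomorphism restrictions built into the definitions of \(\mathcal{C}_7\) and \(\mathcal{C}_8\) and into Remarks (7.2) and (8.3). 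The analogous but milder bookkeeping recurs in the \(\mathcal{C}_6\) step, where a non-abelian \(O_r(H)\) of a forbidden type must be shown instead to place \(H\) in \(\mathcal{C}_5\) or \(\mathcal{C}_8\); and one must be a little careful that the self-centralising property of \(F^{*}(H)\) is available in the finite setting, which it is (\cite{aschbacher_finite_2000}).
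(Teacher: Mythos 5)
Your proposal follows the same broad sweep as the paper (each Aschbacher class is fed one restriction at a time) but the internal architecture is genuinely different, and a couple of points as written are gaps rather than legitimate shortcuts.

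The paper works with the set $\mathcal{L}(H)=\{L\unlhd H: L\nleq F^{\times}\}$ and in Lemmas $2$--$5$ shows, for \emph{every} $L\in\mathcal{L}(H)$, that $V$ is an absolutely irreducible $FL$-module not realisable over a proper subfield. After that, Lemma $6$ takes a \emph{minimal solvable} $L\in\mathcal{L}(H)$: minimality immediately shows $L$ has no proper non-central characteristic subgroup, and then (6.7) gives that $L$ is symplectic-type of minimal exponent in one shot. Finally Lemma $7$ takes a minimal $L$, shows $E(L)=L$ and applies (7.6) to write $V=V_1\otimes\cdots\otimes V_k$ and rule out $k>1$ via $\mathcal{C}_7$ (multiplicity one having already been ruled out via $\mathcal{C}_4$ in Lemma $4$). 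Your version instead reasons about $O_r(H)$ for the solvable case and about $E(H)$ directly, wrapping the multiplicity argument ($t\ge 2\Rightarrow\mathcal{C}_4$) and the multi-component argument ($k\ge 2\Rightarrow\mathcal{C}_7$) into one combined tensor step. This is a legitimate alternative scaffolding, but it trades the cleanness of the paper's minimality argument for extra bookkeeping that you don't carry out. Specifically, the gaps are these.

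First, in the $\mathcal{C}_6$ step you assert that a non-abelian $O_r(H)$ of symplectic type contains, ``by (6.7) and Hall's classification (6.4),'' a characteristic subgroup $R_0$ of minimal exponent. Neither result says that: (6.7) is a \emph{sufficient} condition for being minimal exponent (no proper non-central characteristic subgroup $\Rightarrow$ minimal exponent), not an existence statement for such an $R_0$ inside an arbitrary symplectic-type group. You would need to construct $R_0$ explicitly (e.g.\ $\Omega_1(O_r(H))\cdot Z(O_r(H))$ for $r$ odd, with the analogous subgroup generated by elements of order at most $4$ for $r=2$) and verify it is non-central and of symplectic type. The paper dodges this entirely by starting from a \emph{minimal} solvable member of $\mathcal{L}(H)$, for which the hypotheses of (6.7) hold automatically.

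Second, your $n=2$ treatment in the final $\mathcal{C}_8$ paragraph is wrong as stated: it is not true that ``a quasisimple group acting absolutely irreducibly on a $2$-space is $SL_2(q)$.'' By (0.3.12) and (0.2.10.iii), $SU_2(q^{1/2})\cong SL_2(q^{1/2})$ is quasisimple and absolutely irreducible on a $2$-dimensional $F_q$-space, fixing a unitary form; it does not force $H\geq SL(V)$. The paper is careful here: its Lemma $8$ reroutes the $n=2$ unitary case into $\mathcal{C}_5$ (via (8.3.i)) and the $n=2$ odd orthogonal case into $\mathcal{C}_2$ or $\mathcal{C}_3$ (via (8.1)). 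Your conclusion happens to hold, but for reasons you have not given.

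Third, a smaller but real mismatch: you establish ``$V$ cannot be realised over a proper subfield'' as an $FH$-module after disposing of $\mathcal{C}_5$, and declare the module-theoretic half of (ii) secured. But (ii) requires non-realisability of $V$ as an $FL$-module, where $L$ is the normal quasisimple subgroup identified much later, and realisability over a subfield for the $L$-action is strictly weaker than realisability for the $H$-action, so your claim does not transfer automatically. The paper proves Lemmas $4$ and $5$ for an \emph{arbitrary} $L\in\mathcal{L}(H)$ precisely so they apply to the $L$ identified in Lemma $7$; once you have fixed $L=E(H)$ you would need to re-run (5.6) with $G=L$ (using the absolute irreducibility of $V$ over $L$ that you derive at the end and the fact that $L\unlhd H$ gives $H\leq N_{GL(V)}(L)$).
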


\begin{proof}
Suppose that \(H\) is not contained in a member of any Aschbacher class. We may additionally assume without loss of generality that \(H\) contains \(F^{\times}\) (adopting our convention of identifying \(F^{\times}\) with the scalars of \(GL(V)\)). Indeed, in view of \hyperlink{0.2.21}{(0.2.21)} and the fact that \(HF^{\times}/H\) is abelian, the groups \(H\) and \(HF^{\times}\) have the same set of components.  

With these assumptions in place, we will arrive at conditions (i) and (ii) by proving a sequence of lemmas about \(H\), the first of which being: 
\begin{lemmat}
\(V\) \hypertarget{lemma 1}{is} an irreducible \(FH\)-module.
\end{lemmat} 

\begin{proof}
If \(V\) is reducible, then there exists a non-trivial subspace \(U\) stabilised by \(H\). Thus, \(H\leq N_{GL(V)}(U)\) is contained in a member of \(\mathcal{C}_1\), which is a contradiction. Therefore \(V\) is irreducible.
\end{proof}

For the next set of lemmas we will be considering the normal subgroups of \(H\). We define \(\mathcal{L}(H)= \{L\unlhd H | L\nleq F^{\times}\}\), which is clearly a non-empty set since \(H\) is a member. Let \(L\) be an arbitrary element of \(\mathcal{L}(H)\). 

\begin{lemmat}
\(V\) \hypertarget{lemma 2}{is} a homogeneous \(FL\)-module. 
\end{lemmat}
\begin{proof}
Let \(V = M_1 \oplus \cdots \oplus M_k\) be the decomposition of \(V\) into its \(FL\)-homogeneous components. By \hyperlink{2.6}{Clifford's theorem}, \(dim \:M_i = dim \:M_j\), for all \(\:1\leq i,j\leq k\), and \(H\) permutes the set \(\{M_1,...\,,M_k\}\). If \(k > 1\), then \(H\leq N_{GL(V)}(\{M_1 ,\:...\:, M_k\})\) is contained in a member of \(\mathcal{C}_2\), which is a contradiction. Therefore \(k = 1\) and \(V = M_1\) is \(FL\)-homogeneous.
\end{proof}

\begin{lemmat}
\hypertarget{lemma 3}{Each} irreducible \(FL\)-module is absolutely irreducible. 
\end{lemmat}
\begin{proof}
Let \(V = \bigoplus_{i=1}^d V_i\) be the decomposition of \(V\) into its irreducible \(FL\)-modules and define \(E_i=End_{FL}(V_i)\). By \hyperlink{3.5}{(3.5)}, we may identify \(E_i^{\times}\) with \(Z(C_{GL(V)}(L))\). 

We have shown in \hyperlink{3.4}{(3.4)} that \(E_i\) is a field containing \(F\). If \(E_i\neq F\), then there exists a field \(k\) such that  \(F\leq k \leq E_i\) and \(|k:F|\) is prime. By \hyperlink{3.6}{(3.6)}, \(N_{GL(V)}(L)\leq N_{GL(V)}(E_i^{\times})\) and thus \(H\) normalises \(E_i\). Since \(k\) is the unique subfield of its size in \(E_i\), \(H\) also normalises \(k\). So \(H\leq N_{GL(V)}(k)\) is contained in a member of \(\mathcal{C}_3\), which is a contradiction. Therefore \(E_i=F\) and by \hyperlink{0.3.11}{(0.3.11)}, \(V_i\) is an absolutely irreducible \(FL\)-module. 
\end{proof}

\begin{lemmat}
\(V\) \hypertarget{lemma 4}{is} an absolutely irreducible \(FL\)-module.
\end{lemmat}

\begin{proof}
We know that \(V\) is \(FL\)-isomorphic to \(V_1^{\,\oplus\, d}\), where \(d=\tfrac{n}{dim V_1}\). Suppose that \(d>1\) and let \(U\) be a \(F\)-vector space of dimension \(d\) on which \(L\) acts trivially. By \hyperlink{4.4}{(4.4)}, we can observe the \(FL\)-isomorphisms
\(V_1\otimes U \cong V_1^{\,\oplus\, d}\ \cong V\) and then by \hyperlink{4.3}{(4.3.ii)}, \(N_{GL(V)}(L) = GL(U)\otimes N_{GL(V_1)}(L)\) and hence we have the embeddings:
\[H\xhookrightarrow{      } GL(U)\otimes N_{GL(V_1)}(L) \xhookrightarrow{      } GL(U)\otimes GL(V_1)\]
So \(H\) is contained in a member of \(\mathcal{C}_4\), which is a contradiction. Therefore \(d=1\) and \(V=V_1\) is an absolutely irreducible \(FL\)-module.
\end{proof}

\begin{lemmat}
\(L\) \hypertarget{lemma 5}{cannot} be realised over any proper subfield of \(F\).
\end{lemmat}
\begin{proof}
Suppose that \(L\) can be realised over a subfield \(k\subset F\). The index of this subfield must divide \(n\) and by the remark \hyperlink{5.1}{(5.1.i)}, we can assume it is prime. In view of \hyperlink{lemma 4}{Lemma 4} above and \hyperlink{5.6}{(5.6)}, \(N_{GL(V)}(L)\leq N_{GL(V)}(W)F^{\times}\), where \(W\) is the \(k\)-span of some basis of \(V\). So \(H\leq N_{GL(V)}(W)F^{\times}\) is contained in a member of \(\mathcal{C}_5\), which is a contradiction. Therefore \(L\) cannot be realised over any proper subfield of \(F\).
\end{proof}

\begin{lemmat}
\(L\) \hypertarget{lemma 6}{is} not solvable.
\end{lemmat}
\begin{proof}
Suppose the set \(\{L\in \mathcal{L}(H)\:|\: L\text{ solvable}\}\) is non-empty and let \(L\) be a minimal element. Then \(L\) acts absolutely irreducibly by \hyperlink{lemma 4}{Lemma 4} above and so, in view of \hyperlink{0.3.11}{(0.3.11)}, \(Z(L)\leq F^{\times}\). By the definition of \(\mathcal{L}(H)\), \(L\neq Z(L)\), and by the minimality of \(L\), we can conclude that \(Z(L)\) is the unique maximal characteristic proper subgroup of \(L\). Therefore the derived subgroup \(L'\) is either equal to the whole group or contained in the centre, however the former is ruled out by the assumption that \(L\) is solvable. 

Since \(L'\) is central then, \(L\) is nilpotent and a direct product of its Sylow subgroups, each of which are characteristic in \(L\). At least one of these Sylow subgroups, say \(S\), must be non-central, but then it follows from our minimality supposition that \(S=L\). Thus, we have shown that \(L\) is a \(r\)-group for some prime \(r\). Furthermore, we have already seen that the unique maximal characteristic subgroup (and therefore every characteristic subgroup) is a subgroup of the scalars and hence is cyclic, so in accordance with definition \hyperlink{6.3}{(6.3)}, \(L\) is a symplectic-type \(r\)-group.

By \hyperlink{6.7}{(6.7)}, if \(r\) is odd then \(L=r^{1+2m}\), and if \(r=2\) then \(L=2_\pm^{1+2m}\) or \(C_4\circ 2^{1+2m}\). By \hyperlink{6.8}{(6.8)}, \(q=p^e\), where \(p\) is a prime distinct from \(r\) and by \hyperlink{6.17.iii}{(6.17.iii)}, \(e\) is the smallest integer such that \(p^e\equiv 1\: (mod\: |Z(L)|)\). If \(|Z(L)|>2\), \(e\) must be even, else by definition, \(N_{GL(V)}(L)\) (and therefore \(H\)) is contained in a member of \(\mathcal{C}_6\). Whereas if \(|Z(L)|=2\), then evidently \(e=1\). 

In either of these two possible cases, \(L\) satisfies one of the conditions in \hyperlink{6.14}{(6.14)} and therefore it must preserve a unitary, symplectic or non-degenerate symmetric bilinear form \(f\) on \(V\). But then by \hyperlink{6.15}{(6.15)}:
\[H\leq N_{GL(V)}(L) \leq \Delta(V,f) \in \mathcal{C}_8\]

\noindent yielding a contradiction. Therefore the set \(\{L\in \mathcal{L}(H)\:|\: L\text{ solvable}\}\) is empty. 
\end{proof}

\begin{lemmat}
\(H\) \hypertarget{lemma 7}{has} a unique normal quasisimple subgroup.  
\end{lemmat}
\begin{proof}
Let \(L\) be minimal in \(\mathcal{L}(H)\). Then \(L/Z(L)\) is a minimal normal subgroup of \(H/Z(L)\) and so by \hyperlink{0.2.15}{(0.2.15)}, we know that \(L/Z(L)=Y_1 \times \cdots \times Y_k\), where the \(Y_i\) are non-abelian simple subgroups of \(L/Z(L)\) that are conjugate in \(H/Z(L)\). Therefore, by \hyperlink{7.5}{(7.5)}, \(E(L)=Q_1 \circ \cdots \circ Q_k\), where the \(Q_i\) are quasisimple normal subgroups of \(L\) and conjugate in \(H\), which implies the \(Q_i\) are components of \(H\). Furthermore, since \(E(L)\) is non-central and characteristic in \(L\), by the minimality condition, we must have that \(L=E(L)\). Thus \(L\) is a product of components of \(H\).   

By \hyperlink{0.2.22}{(0.2.22)}, \(F^{\times}\) and all other components of \(H\) commute with \(L\), but by \hyperlink{lemma 4}{Lemma 4} and \hyperlink{0.3.11}{(0.3.11)}, \(C_H(L)\leq C_{GL(V)}(L)=F^{\times}\). Therefore, \(H\) can have no components other than those in \(L\). In other words, \(E(H)=L\).

Now let \(L=Q_1\circ \cdots \circ Q_k\) and suppose \(k>1\). By \hyperlink{7.6}{(7.6)}, \(L\) stabilises \(V=V_1\otimes \cdots \otimes V_k\), where \(V_i\) is an absolutely irreducible \(FQ_i\)-module, for each \(1\leq i\leq k\). Since \(H\) acts on \(E(H)\) by permuting the \(Q_i\), it follows that \(H\) also permutes \(C_{GL(V)}(Q_i)\). By defining \(C_j=\bigcap_{i\neq j} C_{GL(V)}(Q_i)\), it also follows that \(H\) permutes the \(C_j\) and so \(H\leq N_{GL(V)}(C_1\circ \cdots \circ C_k)\). But by \hyperlink{4.3}{(4.3.i)}, we can observe the isomorphisms \(C_j\cong \bigcap_{i\neq j} GL(\bigotimes_{t\neq i} V_t)\cong GL(V_j)\) and thus \(C_1\circ \cdots \circ C_k \cong GL(V_1)\otimes \cdots \otimes GL(V_k)\). Hence \(H\leq N_{GL(V)}(GL(V_1)\otimes \cdots \otimes GL(V_k))\) is contained in a member of \(\mathcal{C}_7\), which is a contradiction. Therefore \(k=1\) and \(E(H)=L=Q_1\) is quasisimple.
\end{proof}

\begin{lemmat}
\(E(H)\) \hypertarget{lemma 8}{does} not fix any classical form on \(V\).
\end{lemmat}

\begin{proof}
Suppose that \(L=E(H)\) does fix a classical form on \(V\). If \(n=2\), then the form is either unitary or orthogonal over an odd characteristic field (recalling our comment at the beginning of this section). If the former, then \(L\) is contained in a member of \(\mathcal{C}_5\) by \hyperlink{8.3}{(8.3.i)}. If the latter, then \(L\) is contained in a member of \(\mathcal{C}_2\) or \(\mathcal{C}_3\), by \hyperlink{8.1}{(8.1)}. If \(n>2\), then whatever type of form is fixed, \(L\) is contained in a member of \(\mathcal{C}_8\). In each case, we arrive at a contradiction. Therefore \(L\) does not fix a classical form on \(V\). 
\end{proof}

In view of \hyperlink{lemma 4}{Lemma 4}, \hyperlink{lemma 5}{Lemma 5}, \hyperlink{lemma 7}{Lemma 7} and \hyperlink{lemma 8}{Lemma 8}, we have established parts (i) and (ii) of the main theorem. 
\end{proof}

\clearpage

\bibliographystyle{abbrvnat}
\bibliography{citations2.bib}

\end{document}